\numberwithin{equation}{section}
\newtheorem*{thm}{Theorem}
\newtheorem{thmx}{Theorem}
\newtheorem*{prop}{Proposition}
\newtheorem*{lem}{Lemma}
\newtheorem*{cor}{Corollary}
\theoremstyle{definition}
\newtheorem*{definition}{Definition}
\newtheorem*{example}{Example}
\theoremstyle{remark}
\newtheorem*{remark}{Remark}
\newtheorem*{notation}{Notation}
\newcommand{\N}{\mathbb{N}}
\newcommand{\Z}{\mathbb{Z}}
\newcommand{\g}{\mathfrak{g}}
\newcommand{\bo}{\mathfrak{b}}
\newcommand{\h}{\mathfrak{h}}
\newcommand{\n}{\mathfrak{n}}
\newcommand{\Ca}{\mathscr{C}_R}
\newcommand{\Da}{\mathscr{D}}
\newcommand{\cohDn}{\text{coh}(\widehat{\Dflag})}
\newcommand{\A}{\mathcal{A}_q}
\newcommand{\Aw}{\mathcal{A}_{q,w}}
\newcommand{\B}{\mathcal{B}_q}
\newcommand{\C}{\mathcal{C}}
\newcommand{\D}{\mathcal{D}}
\newcommand{\E}{\mathcal{E}}
\newcommand{\F}{\mathcal{F}}
\newcommand{\I}{\mathcal{I}}
\newcommand{\LH}{\mathcal{LH}}
\newcommand{\M}{\mathcal{M}}
\newcommand{\K}{\mathcal{K}}
\newcommand{\Nn}{\mathcal{N}}
\newcommand{\flag}{\mathcal{M}_{B_q}(G_q)}
\newcommand{\Dflag}{\D_{B_q}^\lambda (G_q)}
\newcommand{\flaghat}{\widehat{\flag}}
\newcommand{\Ubar}{U_k}
\newcommand{\Ufbar}{U^{\text{fin}}_k}
\newcommand{\Of}{\mathcal{O}}
\newcommand{\Oq}{\mathcal{O}_q}
\newcommand{\Oqhat}{\widehat{\Oq}}
\newcommand{\Oqwhat}{\widehat{\Of_{q,w}}}
\newcommand{\OqB}{\mathcal{O}_q(B)}
\newcommand{\OqBhat}{\widehat{\OqB}}
\newcommand{\Uqnhat}{\widehat{U_q}}
\newcommand{\Uqbnhat}{\widehat{U_q^{\geq 0}}}
\newcommand{\Dqnhat}{\widehat{\D_q}}
\newcommand{\ban}{\mathbf{Ban}_L}
\newcommand{\Mod}[1]{\mathbf{Mod}(#1)}
\newcommand{\Comod}[1]{\mathbf{Comod}(#1)}
\newcommand{\norm}[1]{\left|\left| #1 \right|\right|}
\newcommand{\abs}[1]{\left| #1 \right|}
\newcommand{\ten}[2]{#1\widehat{\otimes}_L#2}
\newcommand{\tenR}[2]{#1\widehat{\otimes}_R#2}
\DeclareMathOperator{\Hom}{\text{Hom}}
\DeclareMathOperator{\gr}{\text{gr}}
\DeclareMathOperator{\id}{\text{id}}
\DeclareMathOperator{\coker}{\text{coker}}
\DeclareMathOperator{\im}{\text{Im}}
\DeclareMathOperator{\coim}{\text{Coim}}
\DeclareMathOperator{\htt}{\text{ht}}
\DeclareMathOperator{\ad}{\text{ad}}
\DeclareMathOperator{\loc}{\text{Loc}}
\DeclareMathOperator{\coh}{\text{coh}}
\DeclareMathOperator{\ind}{\text{Ind}}
\DeclareMathOperator{\res}{\text{Res}}
\begin{document}

\title{A Beilinson-Bernstein Theorem for analytic quantum groups}
\date{}
\author{Nicolas Dupr\'e}
\address{Department of Pure Mathematics and Mathematical Statistics\\
Centre for Mathematical Sciences, Wilberforce Road\\
Cambridge, CB3 0WB, United Kingdom}
\email{nd332@cam.ac.uk}

\begin{abstract}
We introduce a $p$-adic analytic analogue of Backelin and Kremnizer's construction of the quantum flag variety of a semisimple algebraic group, when $q$ is not a root of unity and $\vert q-1\vert<1$. We then define a category of $\lambda$-twisted $D$-modules on this analytic quantum flag variety. We show that when $\lambda$ is regular and dominant and when the characteristic of the residue field does not divide the order of the Weyl group, the global section functor gives an equivalence of categories between the coherent $\lambda$-twisted $D$-modules and the category of finitely generated modules over $\widehat{U_q^\lambda}$, where the latter is a completion of the ad-finite part of the quantum group with central character corresponding to $\lambda$. Along the way, we also show that Banach comodules over the Banach completion $\OqBhat$ of the quantum coordinate algebra of the Borel can be naturally identified with certain topologically integrable modules.
\end{abstract}

\maketitle
\thispagestyle{empty}

\setcounter{tocdepth}{1}
\tableofcontents

\section{Introduction}

\subsection{Background and motivation}\label{motiv}

Let $L$ be a complete discrete valuation field of mixed characteristic $(0,p)$, with discrete valuation ring $R$, uniformizer $\pi$ and residue field $k$. We fix an element $q\in R^\times$ and assume that $q\equiv 1\pmod{\pi}$ and that $q$ is not a root of unity. Ardakov and Wadsley have recently started an ongoing program aiming to develop $p$-adic analytic analogues of $D$-modules in order to understand $p$-adic representation theory, see \cite{Wadsley1, Wadsley2, Wadsley3, Ardakov1}. Their aim is to use $p$-adic analytic localisation results analogous to the classical theorem of Beilinson-Bernstein \cite{BB} in order to better understand locally analytic representations of $p$-adic groups, which were introduced by Schneider and Teitelbaum in a series of papers including \cite{SchTeit01, SchTeit02, SchTeit03}. There have also been other approaches at using localisation techniques to understand locally analytic representations, notably by Schmidt \cite{Schmidt3} and Patel, Schmidt and Strauch \cite{PSS1, PSS2, PSS3}.

Let us  briefly recall one of Ardakov and Wadsley's main results. Let $\mathbf{G}$ be a simply connected split semisimple algebraic group over $R$ with $R$-Lie algebra $\g$ and let $X$ be its flag scheme $\mathbf{G}/\mathbf{B}$. In \cite{Wadsley1}, they defined a family $(\widehat{\mathcal{U}_{n,L}})_{n\geq 0}$ of Banach completions of the envloping algebra $U(\g_L)$ of the $L$-Lie algebra $\g_L:=\g\otimes_R L$. Moreover, for a weight $\lambda$, they introduced a family $(\widehat{\D_{n,L}^\lambda})_{n\geq 0}$ of sheaves of completed deformed twisted crystalline differential operators on $X$. Their theorem then states:

\begin{thm}[\cite{Wadsley1}] For any $n\geq 0$ and for $\lambda$ regular and dominant, the global section functor gives an equivalence of categories between coherent sheaves of $\widehat{\D_{n,L}^\lambda}$-modules and finitely generated $\widehat{\mathcal{U}_{n,L}}$-modules with central character corresponding to $\lambda$.
\end{thm}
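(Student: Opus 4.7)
The plan is to adapt the classical Beilinson--Bernstein argument, whose two pillars are $\Gamma$-exactness of coherent $\D^\lambda$-modules for dominant $\lambda$ and the existence of a quasi-inverse localisation functor, to the Banach-completed setting. The strategy is to first establish the analogous statements at the integral (uncompleted) level, where the classical machinery applies verbatim, and then lift them through the $\pi$-adic completion by an interplay of noetherian bookkeeping and cohomological limit arguments.

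First I would identify the algebra of global sections. For the integral sheaf $\D^\lambda$ on $X$ there is the familiar identification $\Gamma(X, \D^\lambda) \cong U(\g)/I_\lambda$, where $I_\lambda$ is the kernel of the central character attached to $\lambda$. Writing $\widehat{\D_{n,L}^\lambda}$ as the $L$-linear extension of a $\pi$-adic completion of an integral sheaf of deformed crystalline operators, and commuting $\Gamma$ with the inverse limit using that $X$ is projective over $R$ (through Grothendieck's finiteness theorem together with a vanishing of $\varprojlim{}^1$), should yield
\[
\Gamma(X,\widehat{\D_{n,L}^\lambda}) \cong \widehat{\mathcal{U}_{n,L}}/\widehat{I_\lambda},
\]
i.e.\ the central quotient $\widehat{\mathcal{U}_{n,L}^\lambda}$ of the completed enveloping algebra.

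Next I would prove the higher cohomology vanishing. Given a coherent $\widehat{\D_{n,L}^\lambda}$-module $\M$, I would choose an integral lattice stable under the unit ball of $\widehat{\D_{n,L}^\lambda}$ and filter it by order of differential operators; the associated graded is a coherent sheaf on a thickening of $T^\ast X$ twisted by the line bundle $\mathscr{L}_\lambda$. Kempf's vanishing theorem, using dominance of $\lambda$, then kills the higher cohomology of each graded piece, and a standard spectral sequence gives the vanishing of $H^i(X,\M/\pi^k\M)$ for $i\geq 1$; inverting $\pi$ and passing to the limit yields the desired vanishing for $\M$ itself. The hypothesis that the residue characteristic does not divide $|W|$ enters here by ensuring that central characters behave well under reduction mod $\pi$ via the Harish-Chandra isomorphism.

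Finally, I would construct the localisation functor
\[
\loc(M) := \widehat{\D_{n,L}^\lambda}\,\widehat{\otimes}_{\widehat{\mathcal{U}_{n,L}^\lambda}}\,M
\]
on finitely generated $\widehat{\mathcal{U}_{n,L}^\lambda}$-modules and verify that $\Gamma$ and $\loc$ are mutually inverse. The unit $M\to \Gamma\loc(M)$ is controlled by the global sections computation together with flatness of $\widehat{\D_{n,L}^\lambda}$ over $\widehat{\mathcal{U}_{n,L}^\lambda}$, while the counit is controlled by the fact that $\loc$ sends a nonzero module to a nonzero sheaf, which uses regularity of $\lambda$ exactly as in the classical Beilinson--Bernstein argument. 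The main obstacle, as I see it, is the interaction of sheaf cohomology with the Banach completion: establishing both the commutation of $\Gamma$ with the defining inverse limit of $\widehat{\D_{n,L}^\lambda}$ and the flatness of this sheaf over its algebra of global sections requires a careful $\pi$-adic argument together with noetherianness of the completed algebras and control of the $\varprojlim{}^1$ terms that arise from the completion.
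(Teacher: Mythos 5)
The filtration-plus-Kempf argument you give for the cohomology vanishing step has a real gap. Filtering a coherent $\widehat{\D_{n,L}^\lambda}$-module by order of differential operators produces as associated graded a \emph{coherent sheaf} over (a $\pi$-adic thickening of) the pushforward of $\Of_{T^*X}$; it does not produce a line bundle twist, and Kempf vanishing --- or its positive-characteristic strengthening due to Kumar--Lauritzen--Thomsen --- only kills higher cohomology of $\tau^*\mathscr{L}(\mu)$, not of an arbitrary coherent sheaf on the cotangent bundle. Your argument therefore only establishes acyclicity of $\widehat{\D_{n,L}^\lambda}$ itself and of its line-bundle twists, not of a general coherent $\widehat{\D_{n,L}^\lambda}$-module. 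This is precisely why the strategy sketched in Section~1.3 of the paper is a two-step reduction: one first shows that suitable twists of $\widehat{\D_{n,L}^\lambda}$ form a family of acyclic generators (where your filtration argument is sound), and only then deduces acyclicity of arbitrary coherent modules by resolving them by these generators and invoking the finite cohomological dimension of $\Gamma$. That resolution step, and in particular the fact that every coherent module receives a surjection from a finite direct sum of twists, is carried at the \emph{uncompleted} level via the classical Beilinson--Bernstein theorem and then transported through the $\pi$-adic completion; your proposal never makes this detour and so never actually invokes the uncompleted theorem, which is the load-bearing input.

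Two smaller points. First, the condition that the residue characteristic does not divide $|W|$ does not appear in the Ardakov--Wadsley statement being proved here: you have imported a hypothesis that belongs to the \emph{quantum} analogue proved later in this paper (where it is needed to make the quantum Harish-Chandra map an isomorphism modulo $\pi$). Second, the standard argument that the unit $M\to\Gamma\loc(M)$ is an isomorphism does not go through flatness of $\widehat{\D_{n,L}^\lambda}$ over $\widehat{\mathcal{U}_{n,L}^\lambda}$; it goes through exactness of $\Gamma$ on coherent $D$-modules together with right-exactness of $\loc$ and the Five Lemma applied to a free presentation of $M$. Flatness would be an overly strong (and, as far as I know, unavailable) hypothesis here.
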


Our aim is to prove an analogue of the above Theorem when working with quantum groups, where for simplicity we only treat the case $n=0$ in this paper. The study of quantum groups in a $p$-adic analytic setting was first proposed by Soibelman in \cite{Soibelman08}, where he introduced quantum deformations of the algebras of locally analytic functions on $p$-adic Lie groups and of the corresponding distribution algebra. His ideas were also heavily influenced by the aforementioned work of Schneider and Teitelbaum. This paper of Soibelman then inspired a short note of Lyubinin \cite{Lyubinin1} and also a different approach for GL$_2$ in \cite{wald}. Recently, there has also been a new approach at constructing $p$-adic analytic quantum groups using Nichols algebra in \cite{Craig2}. However, besides these, not much work has been done in this area. In \cite{Nico2}, we constructed quantum analogues of the Arens-Michael envelope of $\g_L$ and of the algebra of rigid analytic functions on the analytification of $G_L$, and proved that these were Fr\'echet-Stein algebras. We also constructed several Banach completions of those algebras, and some of these objects feature in this paper. Our hope is that more work will be done to pursue these efforts. The theory of quantum groups has strong links with the representation theory of algebraic groups in positive characteristic. We expect that a successful theory of $p$-adic analytic quantum groups would have similar links with the representation theory of $p$-adic groups, and we view our work as a first effort towards developing such a theory.

Recently, there has also been some work hinting at noncommutative analogues of rigid analytic geometry in \cite{KreBB}. In this light, we think that defining noncommutative analogues of analytic flag varieties as we do in this paper is interesting in its own right. It would be interesting to compare our constructions with their general framework.

\subsection{Quantum flag varieties and quantum $D$-modules}

The proof of Theorem \ref{motiv} relied on the classical Beilinson-Bernstein theorem, and similarly we will use a quantum group analogue of it due to Backelin and Kremnizer \cite{QDmod1}\footnote{We note that there exists a different approach to quantum $D$-modules and Beilinson-Bernstein by Tanisaki \cite{Tanisaki}.}. We briefly recall their constructions. Let $U_q$ be the quantized enveloping algebra of $\g_L$. Let $\Oq$ be the quantized coordinate algebra of $G_L$, and let $\OqB$ be the quotient Hopf algebra of $\Oq$ corresponding to a Borel subgroup of $G_L$. Backelin and Kremnizer then define the quantum flag variety to be the category $\flag$ of $\OqB$-equivariant $\Oq$-modules. Specifically, an object of this category is an $\Oq$-module equipped with a right $\OqB$-comodule structure such that $\Oq$-action map is a comodule homomorphism. In this language, the global section functor $\Gamma$ is the functor of taking $\OqB$-coinvariants. They then define the ring of quantum differential operators on $G_L$ to be the smash product algebra $\D_q=\Oq\# U_q$, and a $\lambda$-twisted $D$-module becomes an object $M$ of the quantum flag variety equipped with an additional $\D_q$-action such that the $\OqB$-coaction and the action of the quantum Borel subalgebra $U_q^{\geq 0}\subset U_q\subset \D_q$ `differ by $\lambda$' (here $\lambda$ is an element of the character group $T_P$ of the weight lattice). There is also a distinguished object $\D_q^\lambda$ which represents global sections in the category of $\lambda$-twisted $D$-modules. The precise definitions are made in Section 4. Their main theorem is that, when $\lambda$ is regular and dominant, the global section functor gives an equivalence of categories between $\lambda$-twisted $D$-modules and modules over $\Gamma(\D_q^\lambda)$.

Nothing stops us from making completely analogous definitions using certain Banach completions $\Oqhat$, $\OqBhat$ and $\widehat{\D_q}$ of these algebras (see section \ref{intro3} below). That allows us to define what we call the analytic quantum flag variety as the category $\flaghat$ of $\OqBhat$-equivariant Banach $\Oqhat$-modules, meaning that the objects of this category are Banach $\Oqhat$-modules which are also Banach $\OqBhat$-comodules such that the $\Oqhat$-action map is a comodule homomorphism (see section \ref{compflag}). We note that this category is not abelian. Instead it fits into Schneiders' framework of quasi-abelian categories \cite{qacs}. In particular it has a derived category and, under suitable conditions, we can right derive left exact functors. The global section functor $\Gamma$ here is also the functor of taking $\OqBhat$-coinvariants, and we use this framework of quasi-abelian categories to make sense of the cohomology of $\Gamma$. We can then define $\lambda$-twisted $D$-modules to be objects $\M$ in $\flaghat$ which are equipped with an additional $\widehat{\D_q}$-action such that the $\OqBhat$-coaction and the action of $\widehat{U_q^{\geq 0}}$ differ by $\lambda$. There is also a distinguished object $\widehat{\D_q^\lambda}$ which represents global sections. Again, the precise definitions are made in section \ref{defofdhat}.

\subsection{General strategy}\label{intro3}

Let us briefly outline the argument used by Ardakov and Wadsley in \cite{Wadsley1} to prove that one gets an equivalence of categories in Theorem \ref{motiv}. We will employ essentially the same strategy.

\begin{enumerate}
\item They first work with integral versions of classical algebraic $D$-modules and show that large enough twists of coherent $D$-modules are acyclic and generated by their global sections. Using this, they then show that the category of coherent $\widehat{\D_{n,L}^\lambda}$-modules has a family of generators obtained from taking certain twists of $\widehat{\D_{n,L}^\lambda}$. In particular those are $\pi$-adic completions of algebraic $D$-modules.
\item The first step essentially reduces the problem to working with those coherent $\widehat{\D_{n,L}^\lambda}$-modules which can be `uncompleted'. They then show that these are generated by their global sections. This uses the classical Beilinson-Bernstein theorem.
\item Finally, they show that completions of acyclic coherent $D$-modules are also acyclic. This uses technical facts about the cohomology of a projective limit of sheaves.
\item Once you know that coherent $\widehat{\D_{n,L}^\lambda}$-modules are acyclic and generated by their global sections, the result follows from standard general facts.
\end{enumerate}

In order to adapt this, we are first required to work with integral forms of quantum groups and the corresponding integral quantum flag variety, see sections \ref{Lusztig}, \ref{prelimonOq} \& \ref{Rflag}. Specifically, there is an integral form $\A$ of $\Oq$ which was first defined by Andersen, Polo and Wen \cite{Andersen}. By taking $\B$ to be its image in the quotient Hopf algebra $\OqB$, we are then able to define the category $\Ca$ of $\B$-equivariant $\A$-modules. We can also define an integral form $\D$ of the ring $\D_q$, and use it to define $\lambda$-twisted $D$-modules in $\Ca$ (here $\lambda$ is an element of $T_P^R$, the character group over $R$ of the weight lattice). These integral forms allow us to define the Banach completions we mentioned above by simply setting $\Oqhat:=\widehat{\A}\otimes_R L$, $\OqBhat:=\widehat{\B}\otimes_R L$ and $\widehat{\D_q}:=\widehat{\D}\otimes_R L$ respectively.

Unlike in the first step above, we are not able to show that large enough twists of coherent $\D$-modules are acyclic and generated by global sections, but we manage to show it for those which are annihilated by $\pi$. This turns out to be enough for the first two steps to work. Most of this paper is then spent developing the correct tools from noncommutative algebraic geometry in the category $\flaghat$ in order for the ideas used in the third step to even make sense.

\subsection{\v{C}ech complexes}

To have a version of step (iii) above, we need to work with the right sort of complexes, computing the cohomology of global sections, in order to apply the argument on the cohomology of a projective limit. To do so, it is convenient to work with proj categories. Indeed, the classical flag variety is isomorphic to Proj$(\mathcal{O}(G/N))$, and Backelin-Kremnizer showed that $\flag$ is equivalent to Proj$(\Oq(G/N))$ in the sense of Artin-Zhang \cite{ArtinZhang}. We show that the integral quantum flag variety enjoys the same property. To obtain this result, one problem we ran into is that, while it is well-known that the algebra $\Oq$ is Noetherian, it isn't known in general whether its integral form $\A$ is also Noetherian (in type $A$, it is known to be true from Polo's appendix in \cite{Andersen}). That makes it non-trivial to define the objects which should play the role of coherent modules. Thankfully, we were able to prove that the integral form of $\Oq(G/N)$ is Noetherian, and using this we showed that the Noetherian objects in $\Ca$ are precisely those which are finitely generated over $\A$, see Theorem \ref{coherent}. Once this obstacle is cleared, the proof that we have a noncommutative projective scheme is essentially identical to the one in \cite{QDmod1}.

This result is essential because it allows us to define our promised complex which computes the cohomology of global sections for these integral forms. We think of this as a \v{C}ech-like complex. Using the Proj description of $\Ca$, one can in a suitable sense cover the category with analogues of the Weyl group translates of the big cell, see sections \ref{ore} \& \ref{Cech}. The complexes are then obtained using general constructions from Rosenberg \cite{Rosenberg}. After taking $\pi$-adic completions, the objects of $\Ca$ are then naturally sent to another intermediate category, which we unoriginally call $\widehat{\Ca}$ and which is in some sense an integral form of $\flaghat$. We use the Weyl group localisations mentioned above to write down an analogue of our \v{C}ech-like complexes in this new integral category. After extending scalars, this gives us a \v{C}ech-like complex in the category $\flaghat$. This is the right object in order to apply the arguments from step (iii).

\subsection{Main results}

At several stages of this paper, we work with Banach comodules over $\OqBhat$. We first give a more explicit description of these objects. We begin by defining what we call topologically integrable modules over a certain completion $\widehat{U^{\text{res}}(\bo)}$ of $U_q^{\geq 0}$, see section \ref{integrable}. Roughly, these are modules where the torus acts topologically semisimply and the positive part acts locally topologically nilpotently. The definition is partly inspired from work of F\'eaux de Lacroix \cite{Lacroix}, who developed a notion of semisimplicity for topological Fr\'echet modules (note that we already used the notion of topological semisimplicity in our previous work \cite[Section 5]{Nico2}). Our first main result is then:

\begin{thmx}\label{thmA}
The category $\Comod{\OqBhat}$ of Banach right $\OqBhat$-comodules is canonically equivalent to the category of topologically integrable $\widehat{U^{\text{res}}(\bo)}_L$-modules.
\end{thmx}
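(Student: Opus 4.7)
The plan is to construct explicit functors in both directions and verify that they are mutually inverse, as a Banach-theoretic analogue of the classical equivalence between $\OqB$-comodules and integrable $U_q(\bo)$-modules. Throughout I would exploit the natural continuous pairing between $\OqBhat$ and $\widehat{U^{\text{res}}(\bo)}_L$ that presents one as a subspace of the continuous dual of the other.

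\textbf{Forward functor (comodule to module).} Given a Banach right $\OqBhat$-comodule $(V,\rho)$ with $\rho\colon V \to V\widehat{\otimes}_L \OqBhat$, I would define the action of $u \in \widehat{U^{\text{res}}(\bo)}_L$ on $v \in V$ by
\[
u \cdot v := (\id_V \widehat{\otimes}\, \langle u, -\rangle)(\rho(v)).
\]
Coassociativity of $\rho$ together with compatibility of the pairing with the Hopf algebra structures yield an algebra action, and continuity of $\rho$ and of the pairing give continuity of the action. To see that $V$ becomes topologically integrable, I would restrict the coaction along the quotient of $\OqBhat$ onto the torus part to read off closed weight subspaces $V_\mu$, using F\'eaux de Lacroix's framework cited in Section 5 of \cite{Nico2} to identify this with topological semisimplicity of the torus action. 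Local topological nilpotence of the positive part should follow from the fact that, classically, every finite-dimensional $\OqB$-subcomodule is integrable, so the nilpotence witnessed on finite-dimensional pieces approximating $\rho(v)$ passes to a topological nilpotence statement on $V$.

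\textbf{Backward functor (module to comodule).} Starting from a topologically integrable module $M$, the topological semisimplicity of the torus gives a dense subspace $M^{\text{fin}} \subseteq M$ of vectors supported on finitely many weight components and annihilated by a large enough power of each positive generator. By the classical equivalence, $M^{\text{fin}}$ carries an algebraic right $\OqB$-comodule structure $\rho_0 \colon M^{\text{fin}} \to M^{\text{fin}} \otimes_L \OqB$ that recovers the given action on $M^{\text{fin}}$. The task is then to extend $\rho_0$ by continuity to a map $\rho\colon M \to M\widehat{\otimes}_L \OqBhat$ and verify it is a coaction.

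\textbf{Main obstacle and conclusion.} The heart of the argument is the backward direction, specifically showing that the coaction $\rho_0$ on $M^{\text{fin}}$ extends continuously and that its image actually lies in the completed tensor product $M\widehat{\otimes}_L \OqBhat$, which is considerably larger than $M \otimes_L \OqB$. To control this I would filter $\OqBhat$ by the natural norm (or equivalently by the filtration coming from $\B$) and, for each $v \in M^{\text{fin}}$, estimate the components of $\rho_0(v)$ in the weight and positive-degree directions using both the topological semisimplicity and the local topological nilpotence hypothesis; the latter is what prevents arbitrarily large contributions from high-degree elements of $\OqBhat$. Once both functors are constructed, checking that they are mutually inverse reduces to agreement on algebraic pieces via the classical equivalence, together with the uniqueness of continuous extensions from a dense subspace.
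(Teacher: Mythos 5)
Your high-level plan (build functors in both directions and check they are mutually inverse) matches the paper, but the actual mechanism you propose diverges from the paper's in a way that leaves the crucial steps unsubstantiated, and I think the gap is a genuine one rather than a matter of filling in routine details.

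The paper never works at the $L$-level with density and limiting arguments. Instead, it passes to the unit ball $\M^\circ$, which is a $\pi$-adically complete lattice, and reduces the whole problem to the mod-$\pi^n$ quotients $\M^\circ/\pi^n\M^\circ$. This is what makes the direction ``topologically integrable $\Rightarrow$ comodule'' essentially free: the definition of topological integrability is designed exactly so that $\M^\circ/\pi^n\M^\circ$ is an honest algebraic integrable $U^{\text{res}}(\bo)$-module for each $n$, hence a $\B$-comodule, and one simply takes an inverse limit of the comodule maps $\rho_n$. Your proposed backward functor tries instead to locate a dense subspace $M^{\text{fin}}$ on which a classical coaction $\rho_0$ exists and then extend by continuity; you correctly identify the extension as the hard part, but the filtration-and-estimates strategy you sketch is not carried out and is not obviously available without first descending to the integral lattice. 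Moreover, it is not even clear that a vector in $M^{\text{fin}}$ (supported on finitely many weights and killed by large divided powers) carries an algebraic $\OqB$-coaction landing in $M^{\text{fin}}\otimes_L\OqB$: the classical equivalence you appeal to requires the $U_q^{\geq 0}$-submodule generated by the vector to be finite-dimensional, and over $L$ that submodule need not stay inside $M^{\text{fin}}$.

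For the other direction (comodule $\Rightarrow$ topologically integrable), the paper does not deduce nilpotence from ``finite-dimensional pieces approximating $\rho(v)$.'' It embeds $\M$ isometrically into $\ten{\M}{\OqBhat}$ via $\rho$, observes that $(\ten{\M}{\OqBhat})^\circ = \M^\circ\widehat{\otimes}_R\B$ is the $\pi$-adic completion of the genuinely integrable $\B$-comodule $\M^\circ\otimes_R\B$ with structure $1\otimes\Delta$, concludes by Proposition \ref{integrable}(ii) that the ambient space is topologically integrable, and then invokes the topological semisimplicity theorem (\cite[Theorem 5.1]{Nico2}) to force each weight component $m_\lambda$ of $m\in\M$ to lie in the closed subspace $\M$ itself. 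Your proposal mentions the semisimplicity framework from \cite{Nico2} only in passing, and the embedding into $\ten{\M}{\OqBhat}$ --- which is what makes that theorem applicable --- is missing. So while your pairing-based action and your instinct about using topological semisimplicity both point in the right direction, the organizing idea of the paper, namely to reduce everything to the $\pi$-adic lattice picture where the objects become algebraically integrable, is absent, and without it the two key verifications remain unestablished.
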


This result allows for a more intuitive understanding of what these comodules are, and also draws further parallels between our constructions and standard notions that appear in $p$-adic representation theory. We note that Banach comodules over a Banach coalgebra have also been studied in a more general, categorical setting in \cite{Craig1}.

Our next result is that the cohomology of $\Gamma$ in $\flaghat$ can be computed using the \v{C}ech-like complexes described above:

\begin{thmx}\label{thmB}
For any $\M\in\flaghat$, the standard complex $\check{C}(\M)$ computes $R\Gamma(\M)$.
\end{thmx}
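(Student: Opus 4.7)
The plan is to adapt the classical proof that \v{C}ech cohomology computes sheaf cohomology, working in Schneiders' quasi-abelian framework. The argument splits into two standard pieces: first, show that $\check{C}(\M)$, suitably augmented by $\M$, is a strictly exact resolution of $\M$ in $\flaghat$; second, show that each term $\check{C}^n(\M)$ appearing in nonnegative degrees is $\Gamma$-acyclic, i.e.\ satisfies $R^i\Gamma(\check{C}^n(\M))=0$ for $i>0$. Combining these via the standard dimension-shifting / double-complex argument then yields $\check{C}(\M)\simeq R\Gamma(\M)$ in the derived category of $\flaghat$.

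For the first step, I would first verify the analogous statement for the integral \v{C}ech-like complex, working in $\Ca$ and then in $\widehat{\Ca}$. The Weyl group translates of the big cell correspond to a finite family of Ore localisations of $\A$ whose joint ``vanishing locus'' is empty in the Proj-description established earlier in the paper, so Rosenberg's general formalism (\cite{Rosenberg}) applied to such a covering of a noncommutative projective scheme produces the required Koszul-type strictly exact sequence. Strictness is the critical point here: it should follow from the fact that the localisation functors are exact and that the relevant modules, constructed from colimits of finitely generated objects over $\A$, carry well-behaved topologies compatible with $\pi$-adic completion. The passage from $\Ca$ to $\widehat{\Ca}$ and then, by $-\otimes_R L$, to $\flaghat$ preserves strict exactness because these operations are essentially flat base changes.

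For the acyclicity of the terms, the idea is that each $\check{C}^n(\M)$ is a pushforward from a noncommutative affine open, so should be $\Gamma$-acyclic for the same reason that quasi-coherent sheaves on an affine scheme have no higher cohomology. Concretely, one needs to show that $\OqBhat$-coinvariants has vanishing higher derived functors on modules obtained from these Weyl group localisations. Theorem \ref{thmA} enters here: reinterpreting $\OqBhat$-comodules as topologically integrable $\widehat{U^{\text{res}}(\bo)}_L$-modules lets one rephrase the vanishing as an assertion about the exactness of a coinvariants functor for integrable modules, which should reduce to a well-behaved induction/coinduction picture on the appropriate localisations.

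The main obstacle will be the topological subtleties inherent to the quasi-abelian framework. Both the strict exactness of the \v{C}ech resolution and the vanishing of higher $\Gamma$-cohomology must be checked at the level of Banach modules, not merely at the level of underlying abelian groups: strict epimorphisms in $\flaghat$ require open surjections, and $R^i\Gamma$ in the sense of \cite{qacs} is sensitive to strictness. The technical heart of the proof will therefore be to show that each step in the integral construction of $\check{C}(\M)$ commutes with $\pi$-adic completion and $-\otimes_R L$ while preserving strictness, most likely using the careful compatibility results from sections \ref{ore} and \ref{Cech} together with the Noetherianity established in Theorem \ref{coherent}.
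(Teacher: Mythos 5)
The overall two-step decomposition you propose --- a strictly exact resolution whose terms are $\Gamma$-acyclic --- is exactly the paper's strategy, and so is the plan to reduce to the integral \v{C}ech complex via $\widehat{\Ca}$. But there is a genuine gap at the crucial passage. You assert that ``the passage from $\Ca$ to $\widehat{\Ca}$ and then, by $-\otimes_R L$, to $\flaghat$ preserves strict exactness because these operations are essentially flat base changes.'' That is not correct: tensoring with $L$ is flat, but $\pi$-adic completion is an inverse limit, not a base change, and exactness does not in general survive an inverse limit. The paper's argument instead chooses a lattice $F_0\M\in\widehat{\Ca}$ via Proposition \ref{basechange2}, identifies $C^{\text{aug}}(F_0\M)$ with $\varprojlim_n C^{\text{aug}}(F_0\M/\pi^n F_0\M)$, checks that each complex in the tower is exact by the integral result (Proposition \ref{Cech} applied to $F_0\M/\pi^n F_0\M\in\Ca$), and then invokes the Mittag--Leffler criterion of \cite[Proposition 0.13.2.3]{EGAIII.1}; this works because the transition maps $C^{\text{aug}}(F_0\M/\pi^{n+1}F_0\M)\to C^{\text{aug}}(F_0\M/\pi^n F_0\M)$ are surjective. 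Without this control on the inverse system --- or an equivalent verification that the lim$^1$ terms vanish --- the strict exactness claim does not follow.

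A secondary point: you suggest proving $\Gamma$-acyclicity of the terms by invoking Theorem \ref{thmA} and an induction/coinduction picture for topologically integrable modules. The paper does not do this, and it is not obviously workable directly at the Banach level, since one needs to control the higher derived functors of coinvariants in the quasi-abelian sense. In Proposition \ref{acyclic1}, acyclicity of objects pushed forward from $\flaghat_w$ is again obtained by choosing an integral lattice, reducing modulo $\pi^n$, appealing to the integral affineness of $\Ca^w$ (Lemma \ref{Cech}), and passing to the inverse limit via Lemma \ref{MLcond}. So the Mittag--Leffler mechanism does double duty: it underlies both halves of the theorem, and identifying it is the real technical content rather than an optional refinement.
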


The precise definition of this complex is made in section \ref{ore2}. We note that as a consequence of this, we obtain in Corollary \ref{Cech2} that $\Gamma$ has finite cohomological dimension (something which wasn't obvious beforehand!). Both of these are essential in order to obtain a Beilinson-Bernstein theorem, but we also think of them as interesting results in their own right. We view our analytic quantum flag variety as being in some sense a noncommutative analytic space, and these results make it feasible to work with it.

Finally, with all the above at hand, we are able to run the strategy from section \ref{intro3} to obtain our version of Beilinson-Bernstein localisation. Before stating it, we need to introduce a few more notions. We call a $D$-module in $\flaghat$ coherent if it is finitely generated over $\widehat{\D_q}$. Moreover, $U_q$ contains a subalgebra $U_q^{\text{fin}}$, called its finite part, which is the subalgebra of elements on which the adjoint action of $U_q$ is locally finite. This has an integral form $U^{\text{fin}}\subseteq U$ which contains the centre of $U$, and given $\lambda$ we may form a quotient $U^\lambda=U^{\text{fin}}\otimes_{Z(U)}R_\lambda$. Completing, we obtain an algebra $\widehat{U_q^\lambda}=\widehat{U^\lambda}\otimes_R L$ which is a Noetherian Banach algebra. Our Beilinson-Bernstein localisation then states:

\begin{thmx}\label{thmC}
Suppose $\lambda\in T_P^k$ is regular and dominant, and assume that $p$ is a very good prime for the root system of $\g$. Then the functor $\Gamma$ of global sections and the localisation functor \emph{Loc}$_\lambda$ are quasi-inverse equivalences of categories between the category \emph{$\cohDn$} of $\lambda$-twisted coherent $\Dqnhat$-modules on the analytic quantum flag variety and the category of finitely generated modules over $D:=\Gamma(\widehat{\D^\lambda_q})$. Moreover, there is a surjective algebra homomorphism $\widehat{U_q^\lambda}\to D$ which is an isomorphism whenever $p$ does not divide the order of the Weyl group.
\end{thmx}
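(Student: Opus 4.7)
The plan is to follow the four-step strategy of \cite{Wadsley1} outlined in Section~\ref{intro3}, adapted to the quasi-abelian setting of $\flaghat$. A standard criterion reduces the equivalence statement to three properties: (a) $\Gamma$ has finite cohomological dimension on $\cohDn$; (b) every coherent $\lambda$-twisted $\Dqnhat$-module is $\Gamma$-acyclic; and (c) every such module is generated by its global sections. Property (a) follows immediately from Theorem~\ref{thmB} together with the boundedness of the \v{C}ech-like complex $\check{C}(\M)$, so the substance of the theorem lies in establishing (b) and (c).

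First I would work inside the integral category $\Ca$ and its $\pi$-adic refinement. For a coherent $\D$-module killed by $\pi$, reduction mod $\pi$ and the classical Backelin-Kremnizer theorem \cite{QDmod1} together yield, under the regular-dominance of $\lambda$ and the very-good-prime hypothesis on $p$, both acyclicity and generation by global sections for sufficiently large Serre-type twists. This is the quantum replacement of the first step of \cite{Wadsley1}, restricted to $\pi$-torsion as advertised in Section~\ref{intro3}. Dévissaging along the $\pi$-adic filtration then produces a family of generators of $\cohDn$ consisting of $L$-extensions of $\pi$-adic completions of coherent integral objects, which is enough to make the analogue of the second step go through.

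The core of the argument is the third step: transferring acyclicity through $\pi$-adic completion and then through extension of scalars to $L$. Here Theorems~\ref{thmA} and~\ref{thmB} carry most of the weight. Using the \v{C}ech-like complex built from the Weyl-translate Ore localisations of Sections~\ref{ore}--\ref{Cech}, one rewrites $R\Gamma(\widehat{\M})$ as the cohomology of the projective limit of the integral complexes $\check{C}(\M/\pi^n\M)$. A Mittag-Leffler vanishing of $R^{1}\!\varprojlim$ at each level, combined with the finite cohomological dimension, pushes acyclicity through the completion, while exactness of $-\otimes_R L$ on the resulting Noetherian Banach data handles the last extension of scalars. I expect this step to be the main technical obstacle, since it requires the \v{C}ech formalism to interact cleanly both with $\pi$-adic completion and with the quasi-abelian derived functor machinery. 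Once (b) and (c) are in place, the conclusion that $\Gamma$ and $\loc_\lambda$ are quasi-inverse equivalences is a formal argument of Gabriel type.

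For the algebra homomorphism, left multiplication inside $\Dqnhat$ induces a natural continuous $L$-algebra map $\widehat{U_q^\lambda} \to D$. Surjectivity reduces, via density of the integral form and the Banach structure of $D$, to the corresponding algebraic surjection $U_q^{\text{fin}}\otimes_{Z(U_q)}L_\lambda \twoheadrightarrow \Gamma(\D_q^\lambda)$ established by Backelin-Kremnizer. When in addition $p$ does not divide $|W|$, the classical version of the map is an isomorphism by the Harish-Chandra central-character argument of \cite{QDmod1}; this isomorphism then propagates to the Banach completion because both sides are Noetherian Banach $L$-algebras with compatible integral forms whose reductions agree.
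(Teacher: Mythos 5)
Your sketch of the equivalence-of-categories part is in the right spirit and essentially matches the paper's route: work with integral coherent $\D$-modules killed by $\pi$ (Theorem \ref{twists}), lift to $\pi$-torsion-free lattices (Theorem \ref{defofdhat}, Theorem \ref{generators}), transfer generation and acyclicity across completion and extension of scalars via the \v{C}ech complexes and Mittag-Leffler (Theorems \ref{MtoMhat}, \ref{acyclic2}), then conclude formally (Theorem \ref{BB2}). That part is fine as a high-level plan.

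The paragraph on the algebra homomorphism, however, contains a genuine gap, and in fact pursues exactly the route the paper deliberately abandons. You propose to reduce surjectivity of $\widehat{U_q^\lambda}\to D$ to the algebraic surjection $U_q^\lambda\twoheadrightarrow\Gamma(\D_q^\lambda)$ of Backelin--Kremnizer and then to ``propagate the isomorphism to the Banach completion.'' There are two problems. First, as discussed in the introduction, the computation of $\Gamma(\D_q^\lambda)$ in \cite[Proposition 4.8]{QDmod1} is exactly the result that the paper refuses to rely on because its proof may have gaps; the whole point of Section 7 (and of Section 3, the Harish--Chandra analysis over $R$ and its reduction mod $\pi$) is to circumvent it. Second, even granting the uncompleted isomorphism, the ``propagation'' step is not automatic: it would require knowing a priori that $\Gamma(\widehat{\D^\lambda_q})$ is the completion of $\Gamma(\D_q^\lambda)$, which is itself one of the nontrivial conclusions (Corollary at the end of \ref{global1}) and not an input. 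The paper instead constructs the map integrally, analyses the maps $f_{\lambda,n}\colon V/\pi^nV\to\Gamma(\D^\lambda/\pi^n\D^\lambda)$ by induction on $n$, reduces mod $\pi$ to the non-quantum situation and invokes \cite[Proposition 3.4.1]{BMR} together with the mod-$\pi$ Harish--Chandra isomorphism of Theorem \ref{HC}, and only then passes to inverse limits, using acyclicity of $\D^\lambda_k$ to keep the towers surjective. Your argument would need to be replaced by this characteristic-$p$ computation.
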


See section \ref{BB2} for the definitions of the localisation functor Loc$_\lambda$, and see sections \ref{cohomology} and \ref{condonp} for the meaning of the conditions on $p$ and for the definition of the set $T_P^k$. We simply note here that the condition that $p$ does not divide $|W|$ is automatically satisfied if $p$ is larger than the Coxeter number.

Thus, we may think of the above Theorem as saying that the category $\flaghat$ is $D$-affine, and moreover we can identify this category with the category of finitely generated $\widehat{U_q^\lambda}$-modules under some reasonable condition on $p$. We note that in order to just get $D$-affinity without any statement on global sections, the condition on $p$ can be weakened to say that it is a good prime, see Theorem \ref{BB2}.

\subsection{Computation of global sections} We were made aware that there may be gaps in the proof of the computation of global sections in \cite[Proposition 4.8]{QDmod1}, see \cite[Remark 5.4]{Tanisaki2}. We simply point out here that these potential issues do not affect our work as we never use their computation of global sections.

Firstly, in our proof of Theorem \ref{thmC}, we only use the $D$-affinity of quantum flag variety in order to obtain the corresponding result for Banach completions. And indeed, the equivalence of categories given by the global section functor from the category of $\lambda$-twisted $D$-modules on the quantum flag variety to the category of modules over the global sections of $\D^\lambda_q$ in \cite{QDmod1} does not require the full computation of global sections. It does rely on a quantum analogue of the Beilinson-Bernstein `key lemma', but that only needs for there to be a map $U_q^\lambda\to \Gamma(\D^\lambda_q)$ in order to interpret global sections of $D$-modules as modules over $U_q^{\text{fin}}$. That way one obtains a splitting of some particular maps at the level of global sections which can be used to prove that every $D$-module is acyclic and generated by its global sections, see the proof of \cite[Theorem 4.12]{QDmod1}. But we do not need to know that $U_q^\lambda\to \Gamma(\D^\lambda_q)$ is an isomorphism for that part of the Beilinson-Bernstein theorem to hold (this is also true classically).

Secondly, our computation of global sections via the homomorphism $\widehat{U_q^\lambda}\to D$ in Theorem \ref{global1} does not use the computation of global sections at the uncompleted level. Instead, our arguments go via reduction modulo $\pi$, where $q$ becomes 1 and the situation becomes non-quantum. Therefore, what we crucially need is instead the computation of global sections of the sheaves of twisted crystalline differential operators on the flag variety in positive characteristic, obtained in \cite[Proposition 3.4.1]{BMR}.

\subsection{Future work}

We only deal with the analogue of the case $n=0$ from Theorem \ref{motiv} in this paper. We plan to extend our results to an analogue of the $n>0$ case. This would hopefully allow us to then obtain a localisation theorem for the (finite part of) the quantum Arens-Michael envelope $\wideparen{U_q}$ defined in \cite{Nico2}, analogously to the non-quantum situation \cite{Ardakov1}.

\subsection{Structure of the paper}

In Section 2, we recall all the necessary facts about quantum groups and their integral forms. In particular, we give an explicit description of the categories of $\A$-comodules and $\B$-comodules as the categories of integrable modules over Lusztig's integral forms of $U_q$ and $U_q^{\geq 0}$ respectively. We believe this to be well-known, but we could not find any suitable reference for this, so we included a proof. This needed some general facts about Hopf $R$-algebras which we included in an appendix.

In Section 3, we give a detailed account of some of the theory regarding the finite part of the quantum group and the Harish-Chandra isomorphism. We specifically prove analogues of known results for the integral form of $U_q^{\text{fin}}$, partly as an attempt to demystify this finite part and also in order to understand its structure after reducing modulo $\pi$. We then use this analysis to prove that the quantum Harish-Chandra isomorphism restricts to an isomorphism at the level of the integral form, if $p$ does not divide $|W|$. This is very useful later in the paper, when we compute global sections.

The main body of our work starts in Section 4, where we recall all the main definitions and constructions from \cite{QDmod1} and replicate them for integral forms. We then prove that this integral category is a noncommutative projective scheme. In doing so, we make heavy use of results about the cohomology of the induction functor for quantum groups from Andersen, Polo and Wen \cite{Andersen}. Then we use this to construct a \v{C}ech-like complex which computes the cohomology of global sections. Finally, we define $D$-modules and we prove that if a coherent $D$-modules is annihilated by $\pi$, then large enough twists of it are acyclic and generated by their global sections. This requires for $p$ to be a good prime. The main tool required here is the identification of the cohomology of (twists of) $\D^\lambda/\pi \D^\lambda$ with the sheaf cohomology of sheaves of twisted differential operators on the flag variety

In Section 5, we recall facts about completed tensor products and introduce Banach comodules over a Banach coalgebra. We then introduce topologically integrable modules over the Banach completion of Lusztig's integral form for $U_q^{\geq 0}$, and show that these are equivalent to Banach $\OqBhat$-comodules. Using results on topological semisimplicity from our previous work \cite{Nico2}, it follows from the fact that any Banach $\OqBhat$-comodule $\M$ embeds topologically into $\ten{\M}{\OqBhat}$, equipped with the comodule structure $1\widehat{\otimes} \widehat{\Delta}$.

In Section 6, we then introduce the categories $\widehat{\Ca}$ and $\flaghat$, and recall all the necessary facts on quasi-abelian categories. We then construct a \v{C}ech-like complex and prove that it computes the cohomology of global sections. The main technical tool we need here is some flatness results for completed tensor products from \cite{Andreas1}. The theorem then follows essentially by using the fact that it holds for lattices modulo $\pi^n$ for every $n$. Finally, in Section 7 we put everything together to prove our Beilinson-Bernstein theorem. The arguments for the equivalence of categories are essentially those from \cite{Wadsley1}, with some adjustments. As explained earlier, the computation of global sections makes heavy use of the corresponding computation in \cite{BMR} and of the modulo $\pi$ computations of Section 3.

\subsection{Acknowledgements} Most of the material in this paper formed a significant part of the author's PhD thesis, which was being produced under the supervision of Simon Wadsley. We are very grateful to him for his continued support and encouragement throughout this research, without which writing this paper would not have been possible. We would also like to thank him for communicating privately a proof to us which inspired our arguments in Section 5. We are also thankful to Andreas Bode for his continued interest in our work, and for communicating  Proposition \ref{exactloc} to us before his work was written up. Finally, we wish to thank Kobi Kremnizer for a useful conversation on quantum groups and proj categories. The author's PhD was funded by EPSRC.

\subsection{Conventions and notation}

Unless explicitly stated otherwise, the term ``module'' will be used to mean \emph{left} module, and Noetherian rings are both left and right Noetherian. Also, all of our filtrations on modules or algebras will be positive and exhaustive unless specified otherwise. Following \cite[Def 2.7]{Wadsley1}, an $R$-submodule $W$ of an $L$-vector space will be called a \emph{lattice} if $V=LW$ and $W$ is $\pi$-adically separated, i.e $\bigcap_{n\geq 0}\pi^n W=0$. Given an $R$-module $M$, we denote by $\widehat{M}$ its $\pi$-adic completion and write $\widehat{M_L}:=\widehat{M}\otimes_R L$.

Given an $L$-normed vector space $X$, we denote by $X^\circ$ its unit ball. Given a Banach algebra $A$, a Banach $A$-module $M$ will always be assumed to have action map of norm at most 1, i.e $M^\circ$ will always be assumed to be an $A^\circ$-module.

In a Hopf algebra $H$, we use Sweedler's notation for the comultiplication, i.e we write $\Delta(h)=\sum h_1\otimes h_2$. All our comodules will be \emph{right} comodules unless stated otherwise.

Finally, while we talked about $R$-group schemes and their corresponding Lie algebras in this introduction, quantum groups are defined purely in terms of the root system and are traditionally defined starting from complex Lie algebras and algebraic groups, regardless of what the base field is. This is the convention we follow as well. Hence we let $\g$ be a complex semisimple Lie algebra. We fix a Cartan subalgebra $\h\subseteq \g$ contained in a Borel subalgebra. We choose a positive root system and we denote the simple roots by $\alpha_1, \ldots, \alpha_n$. Let $C=(a_{ij})$ denote the Cartan matrix. We let $G$ be the simply connected semisimple algebraic group corresponding to $\g$, and we let $B$ be the Borel subgroup corresponding to the positive root system, and let $N\subset B$ be its unipotent radical. Let $\bo=\text{Lie}(B)$ and $\n=\text{Lie}(N)$. Let $W$ be the Weyl group of $\g$, and let $\langle\, , \rangle$ denote the standard normalised $W$-invariant bilinear form on $\h^*$. Let $P\subset\h^*$ be the weight lattice and $Q\subset P$ be the root lattice. Let $T_P$ denote the abelian group $\Hom_\Z(P, L^\times)$. We will use the additive notation for this group. Let $d$ be the smallest natural number such that $\langle  \mu, P\rangle\subset \frac{1}{d}\Z$ for all $\mu\in P$. Let $d_i=\frac{\langle \alpha_i,\alpha_i\rangle}{2}\in\{1, 2, 3\}$ and write $q_i:=q^{d_i}$.

We make the following two assumptions. First, we assume that $q^\frac{1}{d}$ exists in $R$ and that $q^\frac{1}{d}\equiv 1\pmod{\pi}$. Then for each $\lambda\in P$, we have an associated element in $T_P$ sending a given $\mu\in P$ to $q^{\langle \lambda, \mu\rangle}$, which we will also denote by $\lambda$. Secondly, we assume that $p>2$ and, if $\g$ has a component of type $G_2$, we furthermore restrict to $p>3$. This ensures that $p$ does not divide any non-zero entry of the Cartan matrix.

All the above algebraic groups and Lie algebras have $k$-forms, and we write $G_k, \g_k,\ldots$ etc to denote them.

\section{Preliminaries on quantum groups and their integral forms}

\subsection{Quantized enveloping algebra}\label{PrelimonUq} We begin by recalling basic facts about quantized enveloping algebras (see eg \cite[Chapter I.6]{BroGoo02} for more details). For $n\in\Z$ and $t\in L$, we write $[n]_t:=\frac{t^n-t^{-n}}{t-t^{-1}}$. We then set the quantum factorial numbers to be given by $[0]_t!=1$ and $[n]_t!:=[n]_t[n-1]_t\cdots [1]_t$ for $n\geq 1$. Then we set
$$
{n \brack i}_t:=\frac{[n]_t!}{[i]_t![n-i]_t!}
$$
when $n\geq i\geq 1$.

\begin{definition}
The simply connected quantized enveloping algebra $U_q(\g)$ is defined to be the $L$-algebra with generators $E_{\alpha_1},\ldots, E_{\alpha_n}$, $F_{\alpha_1}, \ldots, F_{\alpha_n}$, $K_\lambda$, $\lambda\in P$, satisfying the following relations:
\begin{align*}
& K_\lambda K_\mu=K_{\lambda+\mu},\quad K_0=1,\\
& K_\lambda E_{\alpha_i} K_{-\lambda}=q^{\langle\lambda, \alpha_i\rangle}E_{\alpha_i},\quad K_\lambda F_{\alpha_i} K_{-\lambda}=q^{-\langle\lambda, \alpha_i\rangle}F_{\alpha_i},\\
& [E_{\alpha_i}, F_{\alpha_j}]=\delta_{ij}\frac{K_{\alpha_i}-K_{-\alpha_i}}{q_i-q_i^{-1}},\\
& \sum_{l=0}^{1-a_{ij}}(-1)^l {1-a_{ij}\brack l}_{q_i} E_{\alpha_i}^{1-a_{ij}-l}E_{\alpha_j}E_{\alpha_i}^l=0\quad (i\neq j),\\
& \sum_{l=0}^{1-a_{ij}}(-1)^l {1-a_{ij}\brack l}_{q_i} F_{\alpha_i}^{1-a_{ij}-l}F_{\alpha_j}F_{\alpha_i}^l=0\quad (i\neq j).
\end{align*}
\end{definition}

We will also abbreviate $U_q(\g)$ to $U_q$ when no confusion can arise as to the choice of Lie algebra $\g$. We can define Borel and nilpotent subalgebras, namely $U_q^{\geq 0}$ is the subalgebra generated by all the $K's$ and the $E's$, and $U_q^+$ is the subalgebra generated by all the $E's$. Similarly, $U_q^-$ is defined to be the subalgebra generated by all the $F's$. There is also a Cartan subalgebra given by $U_q^0:=L[K_\lambda: \lambda\in P]$, which is isomorphic to the group algebra $LP$. There is an algebra automorphism $\omega$ of $U_q$ defined by $\omega(E_{\alpha_i})=F_{\alpha_i}$, $\omega(F_{\alpha_i})=E_{\alpha_i}$ and $\omega(K_\lambda)=K_{-\lambda}$.

Recall that $U_q$ is a Hopf algebra with operations given by
$$
\begin{array}{l l l}
\Delta(K_\lambda)=K_\lambda\otimes K_\lambda\quad & \varepsilon(K_\lambda)=1\quad & S(K_\lambda)=K_{-\lambda}\\
\Delta(E_{\alpha_i})=E_{\alpha_i}\otimes 1 + K_{\alpha_i}\otimes E_{\alpha_i}\quad & \varepsilon(E_{\alpha_i})=0\quad & S(E_{\alpha_i})=-K_{-\alpha_i}E_{\alpha_i}\\
\Delta(F_{\alpha_i})=F_{\alpha_i}\otimes K_{-\alpha_i} + 1\otimes F_{\alpha_i}\quad & \varepsilon(F_{\alpha_i})=0\quad & S(F_{\alpha_i})=-F_{\alpha_i}K_{\alpha_i}
\end{array}
$$
for $i=1,\ldots, n$ and all $\lambda\in P$. Then $U_q^{\geq 0}$ is a sub-Hopf algebra of $U_q$.

Also recall that there is a triangular decomposition
$$
U_q\cong U_q^-\otimes_L U_q^0\otimes_L U_q^+
$$
and that $U_q^\pm$ have bases consisting of PBW type monomials. More specifically, if $\beta_1,\ldots, \beta_N$ are the positive roots, ordered in a particular way, then there are elements $E_{\beta_1}, \ldots, E_{\beta_N}$ of $U_q^+$ such that the set of all ordered monomials $E_{\beta_1}^{m_1}\cdots E_{\beta_N}^{m_N}$ forms a basis for $U_q^+$. We now let $F_{\beta_j}:=\omega(E_{\beta_j})$ and the corresponding monomials in the $F$'s will form a basis of $U_q^-$. The triangular decomposition immediately gives a PBW type basis for $U_q$, namely it consists of monomials of the form
$$
M_{\boldsymbol{r}, \boldsymbol{s}, \lambda}:=\boldsymbol{F^r}K_\lambda\boldsymbol{E^s}
$$
where $\boldsymbol{r}, \boldsymbol{s}\in\Z^N_{\geq 0}$. We recall that the \emph{height} of such a monomial is defined to be
$$
\htt(M_{\boldsymbol{r}, \boldsymbol{s}, \lambda}):=\sum_{j=1}^N(r_j+s_j)\htt(\beta_j)
$$
where $\htt(\beta):=\sum_{i=1}^n a_i$ for a positive root $\beta=\sum_i a_i \alpha_i$. This gives rise to a positive filtration on $U_q$  defined by
$$
F_iU_q:=L\text{-span}\{M_{\boldsymbol{r}, \boldsymbol{s}, \lambda} : \htt(M_{\boldsymbol{r}, \boldsymbol{s}, \lambda})\leq i\}.
$$
This filtration can actually be extended to a multifiltration as follows. The associated graded algebra $U^{(1)}=\gr U_q$ can be seen to have the same presentation as $U_q$, with the exception that now all the $E$'s commute with all the $F$'s. Moreover it is isomorphic to $U_q$ as a vector space. We can then make $U^{(1)}$ into a $\Z^{2N}_{\geq 0}$-filtered algebra, by assigning to each monomial $M_{\boldsymbol{r}, \boldsymbol{s}, \lambda}$ the degree $(r_1,\ldots,r_N, s_1,\ldots, s_N)$ where we impose the reverse lexicographic orderin ordering on $\Z^{2N}_{\geq 0}$. Denote the corresponding associated graded algebra of $U^{(1)}$ by $U^{(2N+1)}$. This algebra is known to be $q$-commutative over $L$ (see \cite[Proposition 10.1]{DeConProc}). Here we say that an $L$-algebra $A$ is $q$-commutative over a subalgebra $B$ if it is finitely generated over $B$, say by $x_1,\ldots, x_m$, such that the $x_i$ normalise $B$ and for all $1\leq i\leq j\leq m$ there are $n_{ij}\in \frac{1}{d}\Z$ such that $x_ix_j=q^{n_{ij}}x_jx_i$. We regord here a noncommutative analogue of Hilbert's basis theorem, which follows directly from \cite[Theorem 1.2.10]{noncomalg} and induction.

\begin{lem} If $A$ is $q$-commutative over $B$ and $B$ is Noetherian, then so is $A$.
\end{lem}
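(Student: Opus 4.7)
The strategy is induction on $m$, the number of generators $x_1,\ldots,x_m$ of $A$ over $B$. The case $m=0$ is trivial. For the inductive step, let $B_{m-1}:=B\langle x_1,\ldots,x_{m-1}\rangle$; this is itself $q$-commutative over $B$ (with one fewer generator and the same normalisation and commutation relations), so by the inductive hypothesis it is Noetherian.

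The core of the argument is to identify $A=B_m$ as a skew polynomial extension $B_{m-1}[x_m;\sigma_m]$ for a suitable automorphism $\sigma_m$ of $B_{m-1}$, at which point the skew-polynomial Hilbert basis theorem cited as Theorem 1.2.10 in the noncommutative algebra reference closes the induction. To build $\sigma_m$, I first use the normalisation $x_m B=B x_m$ to obtain an algebra automorphism $\sigma$ of $B$ determined by $x_m b=\sigma(b)x_m$ for all $b\in B$. The $q$-commutation $x_i x_m=q^{n_{im}}x_m x_i$ for $i<m$ rearranges as $x_m x_i=q^{-n_{im}}x_i x_m$ (using that $q^{1/d}\in R$), so one extends $\sigma$ to $B_{m-1}$ by the prescription $\sigma_m(x_i):=q^{-n_{im}}x_i$.

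The main (mild) obstacle is checking that $\sigma_m$ is a well-defined algebra automorphism of the whole of $B_{m-1}$, i.e. that the prescribed scalings of the $x_i$ together with $\sigma$ on $B$ are compatible with every defining relation of $B_{m-1}$: both the $q$-commutations $x_i x_j=q^{n_{ij}}x_j x_i$ for $i<j<m$ and the normalisation relations $x_i b=\sigma_i(b)x_i$ coming from each $x_i$ normalising $B$. Each such relation is sent to a scalar multiple of itself under the prescribed action, so the check is routine given the $q$-commutativity hypothesis. Once $\sigma_m$ is in place, $A\cong B_{m-1}[x_m;\sigma_m]$ as an Ore extension with zero derivation, and Theorem 1.2.10 of the reference yields that $A$ is Noetherian, completing the induction.
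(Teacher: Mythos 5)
Your high-level strategy---induction on the number of generators together with the skew-polynomial Hilbert basis theorem---is exactly what the paper has in mind; its terse proof just cites \cite[Theorem~1.2.10]{noncomalg} and induction. However, there is a gap in your construction of $\sigma_m$. You prescribe $\sigma_m$ on generators ($\sigma$ on $B$, and $x_i\mapsto q^{-n_{im}}x_i$) and propose to verify compatibility with ``every defining relation of $B_{m-1}$'', listing only the $q$-commutations among $x_1,\dots,x_{m-1}$ and the normalisation relations $x_ib=\sigma_i(b)x_i$. But $B_{m-1}$ is a concrete subalgebra of $A$, not the universal $B$-algebra on those generators modulo those relations: the definition of $q$-commutativity imposes the listed relations but does not assert that they \emph{present} $B_{m-1}$, and in general they will not (nothing rules out, say, a further relation such as $x_1^3=0$). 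Verifying that the listed relations are preserved therefore does not by itself produce a well-defined endomorphism, so ``the check is routine'' does not close the argument. The same issue already shows up at the previous step for $\sigma$ on $B$: $x_mB=Bx_m$ does not by itself define a map $B\to B$.

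The standard repair is to define $\sigma_m$ implicitly. The set $C=\{a\in B_{m-1}: x_ma\in B_{m-1}x_m\}$ is a subalgebra of $B_{m-1}$ containing $B$ (because $x_m$ normalises $B$) and each $x_i$ (by $q$-commutativity), so $C=B_{m-1}$. Assuming $x_m$ is regular in $A$---which holds in all of the paper's applications, where the relevant algebras are domains, and is implicit in the cited theorem---the element $\sigma_m(a)$ with $x_ma=\sigma_m(a)x_m$ is unique, regularity forces $\sigma_m$ to be an injective algebra endomorphism, and the symmetric inclusion $B_{m-1}x_m\subseteq x_mB_{m-1}$ gives surjectivity; so $\sigma_m$ is an automorphism agreeing with your prescription on generators. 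One smaller point: $A\cong B_{m-1}[x_m;\sigma_m]$ is stronger than you know (there may be further relations involving $x_m$), but $A$ is a homomorphic image of $B_{m-1}[x_m;\sigma_m]$, which is all the Noetherian conclusion requires.
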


Hence we see that $U_q$ is a Noetherian $L$-algebra.

\subsection{Integral forms of $U_q$}\label{Lusztig} We now recall details about two integral forms that we will work with. First recall the notation:
$$
E_{\alpha_i}^{(s)}:=\frac{E_{\alpha_i}^s}{[s]_{q_i}!},\quad F_{\alpha_i}^{(s)}:=\frac{F_{\alpha_i}^s}{[s]_{q_i}!},
$$
for any integer $s\geq 0$. Then Lusztig's integral form $U^{\text{res}}$ is defined to be the $R$-subalgebra of $U_q$ generated by $K_\lambda$ ($\lambda\in P$) and all $E_{\alpha_i}^{(r)}$ and $F_{\alpha_i}^{(r)}$ for $r\geq 0$ and $1\leq i\leq n$. Recall that for $1\leq i\leq n$, $c,t\in\Z$ with $t\geq 0$ we define
$$
{K_{\alpha_i}; c \brack t}=\prod_{j=1}^t\frac{K_{\alpha_i}q_i^{c-j+1}-K_{\alpha_i}^{-1}q_i^{-(c-j+1)}}{q_i^j-q_i^{-j}}.
$$
Then by \cite[11.1, p.238]{Jantzen} we have that all such ${K_{\alpha_i}; c \brack t}$ lie in $U^{\text{res}}$. Also note that by \cite[Theorem 6.7]{Lusztig1} $U^{\text{res}}$ has a triangular decomposition and a PBW type basis, so that $U^{\text{res}}$ is free over $R$.

There is an $R$-subalgebra $(U^{\text{res}})^0$ generated by all $K_\lambda$ and all ${K_{\alpha_i}; c \brack t}$. We let $U^{\text{res}}(\bo)$ denote the $R$-subalgebra of $U^{\text{res}}$ generated by $(U^{\text{res}})^0$ and all $E_{\alpha_i}^{(r)}$ for $r\geq 0$ and $1\leq i\leq n$. By \cite[Lemma 1.1]{Andersen}, for each $\lambda\in P$ there is a unique character $\psi_\lambda:(U^{\text{res}})^0\to R$ defined by
\begin{equation}\label{character}
\psi_\lambda(K_\mu)=q^{\langle \lambda, \mu\rangle}\quad\text{and}\quad \psi_\lambda\left( {K_{\alpha_i}; c \brack t} \right)= {\langle \lambda, \alpha_i^\vee\rangle+ c \brack t}_{q_i}.
\end{equation}
We will say these characters are of type $\boldsymbol{1}$.

Given a $U^{\text{res}}$-module $M$ and a character $\psi$ as above of $(U^{\text{res}})^0$, we write $M_\psi$ for the elements $m\in M$ such that $um=\psi(u)m$ for all $u\in(U^{\text{res}})^0$. We now recall the notion of integrable module from \cite[1.6]{Andersen}:

\begin{definition} A $U^{\text{res}}$-module $M$ is said to be \emph{integrable of type $\boldsymbol{1}$} if it is a sum of weight spaces which all correspond to a character of type $\boldsymbol{1}$ as described above and if in addition, for every $m\in M$, there is $r>>0$ such that $m$ is killed by $E^{(r)}$ and $F^{(r)}$. Similarly we define a $U^{\text{res}}(\bo)$-module to be integrable of type $\boldsymbol{1}$ if it is the sum of its weight spaces corresponding to type $\boldsymbol{1}$ characters and for every $m\in M$, $E^{(r)}m=0$ for $r>>0$.
\end{definition}

Since all our characters will always be of type $\boldsymbol{1}$ we will often just say `integrable' to mean `integrable of type $\boldsymbol{1}$'.

The second integral form we will need is the De Concini-Kac integral form $U$. This is defined to be the $R$-subalgebra of $U_q$ generated by $E_{\alpha_i}, F_{\alpha_i} (1\leq i\leq n), K_\lambda (\lambda\in P)$. This algebra has a similar presentation to $U_q$. If we write $[K_{\alpha_i};m]:={K_{\alpha_i}; m \brack 1}$ for $m\in\Z$ and $1\leq i\leq n$, then $U$ is generated as an $R$-algebra by $E_{\alpha_i}, F_{\alpha_i}, [K_{\alpha_i};0] (1\leq i\leq n), K_\lambda (\lambda\in P)$ with the same relations as $U_q$ except that the commutator relation between $E_{\alpha_i}$ and $F_{\alpha_j}$ is replaced by the two relations
\begin{align*}
&[E_{\alpha_i}, F_{\alpha_j}]=\delta_{ij} [K_{\alpha_i};0],\\
&(q_i-q_i^{-1})[K_{\alpha_i};0]=K_{\alpha_i}-K_{\alpha_i}^{-1}.
\end{align*}
Note that $U$ is a Hopf $R$-algebra. For example we have the identity
$$
\Delta([K_{\alpha_i};0])=[K_{\alpha_i};0]\otimes K_{\alpha_i} +K_{\alpha_i}^{-1}\otimes [K_{\alpha_i};0].
$$
Note that we also have the equality
$$
[K_{\alpha_i};m]=[K_{\alpha_i};0]q_i^{-m}+K_{\alpha_i}[m]_{q_i}
$$
for all $m\in \Z$, and so $U$ contains all $[K_{\alpha_i};m]$. 

We showed in \cite[Section 4]{Nico2} that $U$ has a triangular decomposition $U\cong U^-\otimes_R U^0\otimes_R U^+$ where $U^{\pm}$ is the $R$-subalgebra generated by the $E_{\alpha_i}$'s, respectively $F_{\alpha_i}$'s, and $U^0$ is the $R$-subalgebra generated by $[K_{\alpha_i};0] (1\leq i\leq n), K_\lambda (\lambda\in P)$. Moreover $[K_{\alpha_i};m]\in U^0$ for all $m\in\Z$ by the above. We also showed that $U^\pm$ has a PBW basis, more specifically that the PBW monomials which form an $L$-basis of $U_q^\pm$ are also an $R$-basis of $U^\pm$.

Note that both of these integral forms are $\pi$-adically separated since $U\subset U^{\text{res}}$ and $U^{\text{res}}$ is free over $R$. We finish by describing the relationship between the reduction modulo $\pi$ of $U$ and $U^{\text{res}}$ and classical objects. We write $U_k:=U/\pi U$ and $U^{\text{res}}_k=U^{\text{res}}/\pi U^{\text{res}}$.

\begin{prop}\begin{enumerate}
\item \emph{(\cite[Proposition 9.2.3]{ChaPre})} The quotient $k$-algebra $U_k/(K_{\varpi_1}-1,\ldots, K_{\varpi_n}-1)$ is isomorphic to $U(\g_k)$.
\item \emph{(\cite[8.15]{Lusztig1})} The quotient $U^{\text{res}}_k/(K_{\varpi_1}-1,\ldots, K_{\varpi_n}-1)$ is isomorphic to the hyperalgebra of the group $G_k$.
\end{enumerate} 
\end{prop}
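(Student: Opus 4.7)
The plan for both parts is to build explicit algebra homomorphisms matching generators on each side and then verify bijectivity using a PBW-type basis comparison.

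For (i), I would define $\phi\colon U(\g_k)\to U_k/(K_{\varpi_1}-1,\ldots,K_{\varpi_n}-1)$ by sending the Chevalley generators $e_i, f_i, h_i$ to the images of $E_{\alpha_i}, F_{\alpha_i}, [K_{\alpha_i};0]$ respectively. Well-definedness reduces to checking the Serre presentation of $U(\g_k)$ holds in the quotient. Since the fundamental weights $\varpi_i$ generate $P$, killing the $K_{\varpi_i}-1$ forces $K_\lambda=1$ for every $\lambda\in P$, so the conjugation relations $K_\lambda E_{\alpha_i}K_{-\lambda}=q^{\langle\lambda,\alpha_i\rangle}E_{\alpha_i}$ collapse to tautologies. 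The defining relation $[E_{\alpha_i},F_{\alpha_j}]=\delta_{ij}[K_{\alpha_i};0]$ immediately yields $[e_i,f_j]=\delta_{ij}h_i$. Finally, reducing modulo $\pi$ (so that $q\equiv 1$) gives $[n]_{q_i}\equiv n$ and ${n\brack l}_{q_i}\equiv\binom{n}{l}$ in $k$, so the quantum Serre relations specialise exactly to the classical Serre relations. For bijectivity I would use the triangular decomposition $U\cong U^-\otimes_R U^0\otimes_R U^+$ recalled above. Its PBW monomials $\boldsymbol{F^r}K_\lambda [K_{\alpha};0]^{\boldsymbol{a}}\boldsymbol{E^s}$ descend to a basis of $U_k$, and after quotienting by $K_{\varpi_i}-1$ only the ordered monomials $\boldsymbol{F^r}\,[K_{\alpha};0]^{\boldsymbol{a}}\,\boldsymbol{E^s}$ survive; under $\phi$ these biject with the standard PBW basis of $U(\g_k)$.

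For (ii), the target is the hyperalgebra $\text{Hyp}(G_k)=k\otimes_\Z\text{Dist}(G_\Z)$, generated over $k$ by the Kostant divided powers $e_i^{(r)}, f_i^{(r)}$ and the binomial elements $\binom{h_i}{t}$. The analogous map sends
\[
E_{\alpha_i}^{(r)}\mapsto e_i^{(r)},\qquad F_{\alpha_i}^{(r)}\mapsto f_i^{(r)},\qquad {K_{\alpha_i};0\brack t}\mapsto \binom{h_i}{t},
\]
after reduction modulo $\pi$ and modding out the $K_{\varpi_i}-1$. Well-definedness requires Lusztig's higher commutation formulas between $E_{\alpha_i}^{(r)}$ and $F_{\alpha_j}^{(s)}$ (whose right-hand sides involve the ${K_{\alpha_i};c\brack t}$) to specialise to the Kostant commutation formulas in $\text{Dist}(G_\Z)$; again this reduces to the fact that $q$-integers and $q$-binomials become ordinary integers and binomials modulo $\pi$. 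Bijectivity follows by matching Lusztig's $R$-basis of $U^{\text{res}}$ (coming from \cite[Theorem 6.7]{Lusztig1}, recalled in Section~\ref{Lusztig}) with Kostant's $\Z$-basis of $\text{Dist}(G_\Z)$ after base change to $k$.

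The main obstacle in both cases is verifying that the mod-$\pi$ specialisation is compatible with the non-trivial commutation formulas; for (i) this is purely combinatorial on $q$-integers, while for (ii) the analysis of divided-power commutators is significantly more delicate, which is precisely why part (ii) is the substantive content of \cite[8.15]{Lusztig1}. In our setting this causes no trouble since we may invoke those references directly once the generators have been identified as above.
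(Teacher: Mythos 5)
The paper does not prove this proposition---it recalls the two facts and delegates entirely to the cited references---so your sketch is supplementary rather than a replacement. That said, two steps in your outline for (i) are incorrect or incomplete. First, well-definedness of $\phi$ requires verifying \emph{all} relations in the Serre presentation of $U(\g_k)$, and the relations $[h_i,e_j]=a_{ij}e_j$ and $[h_i,f_j]=-a_{ij}f_j$ are not covered by the observation that the $K_\lambda$-conjugation relations become tautologies: $h_i$ is mapped to the image of $[K_{\alpha_i};0]$, not of a $K_\lambda$, and one must actually compute
\[
[K_{\alpha_i};0]E_{\alpha_j}-E_{\alpha_j}[K_{\alpha_i};0]=E_{\alpha_j}\bigl((q_i^{-a_{ij}}-1)[K_{\alpha_i};0]+[a_{ij}]_{q_i}K_{\alpha_i}\bigr)
\]
and check that this reduces to $a_{ij}E_{\alpha_j}$ modulo $\pi$ and modulo the $K_{\varpi_i}-1$; this is precisely the kind of computation the paper carries out later in the proof of Proposition~\ref{intfinite2}.

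Second, the monomials $\boldsymbol{F^r}K_\lambda[K_{\alpha};0]^{\boldsymbol{a}}\boldsymbol{E^s}$ do not descend to a $k$-basis of $U_k$: already over $R$ the relation $(q_i-q_i^{-1})[K_{\alpha_i};0]=K_{\alpha_i}-K_{\alpha_i}^{-1}$ makes the two families of generators dependent, and modulo $\pi$ it becomes $K_{\alpha_i}^2=1$, so that the $K_\lambda$ for $\lambda\in P$ are already linearly dependent in $U^0_k$. Identifying an actual basis of $U^0_k$---namely $U^0_k\cong S(\h_k)\otimes_kk(P/2Q)$---is exactly the content of Lemma~\ref{Weyl}, and only from that description does one see that quotienting by the $K_{\varpi_i}-1$ yields $S(\h_k)$ and hence that $\phi$ is bijective. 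For (ii) you correctly defer to Lusztig after identifying generators; this matches the paper, which gives no proof here either.
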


\subsection{Quantized coordinate rings and their integral forms} \label{prelimonOq} We now recall the construction of the quantized coordinate algebra $\Oq$. For any module $M$ over an $L$-Hopf algebra $H$, and for any $f\in H^*$ and $v\in M$, the matrix coefficient $c^M_{f,v}\in H^*$ is defined by
$$
c^M_{f,v}(x):=f(xv)\quad\quad\text{for } x\in H.
$$
Also recall from \cite[Theorem 5.10]{Jantzen} that for each $\lambda\in P$ there is a unique irreducible representation of type $\boldsymbol{1}$, $V(\lambda)$, of $U_q$ and that these form a complete list of such representations. The quantized coordinate ring $\Oq$ is then defined to be the $L$-subalgebra of the Hopf dual $U_q^\circ$ generated by the matrix coefficients of the modules $V(\lambda)$ for $\lambda\in P^+$. In fact, from \cite[I.7-I.8]{BroGoo02}, it is a finitely generated, Noetherian $L$-algebra, and it is a sub-Hopf algebra of $U_q^\circ$. There is also a quantized coordinate algebra of the Borel $\OqB$. Since $U_q^{\geq 0}$ is a Hopf-subalgebra of $U_q$, the restriction maps yields a Hopf algebra homomorphism $\Oq\to (U_q^{\geq 0})^\circ$ and we let $\OqB$ denote its image.

We now recall how the integral forms of $\Oq$ and $\OqB$ are defined. Let $U^{\text{res}}$ be Lusztig's integral form defined in above. Let $\mathcal{J}$ denote the set of ideals $I$ in $U^{\text{res}}$ such that $U^{\text{res}}/I$ is a finite free $R$-module. We now consider the set $\mathscr{I}$ consisting of ideals $I\in\mathcal{J}$ such that $I\cap(U^{\text{res}})^0$ contains a finite intersection of ideals $\ker(\psi_\lambda)$. Note that for any $R$-module $M$, we may view $\Hom_R(U^{\text{res}},M)$ as a $U^{\text{res}}$-module via $(x\cdot f)(y)=f(yx)$ for all $x,y\in U^{\text{res}}$. In \cite[Definition 1.10]{Andersen}, a so-called induction functor from the trivial subalgebra was defined. It takes any $R$-module $M$ to the subrepresentation $H(M)$ of $\Hom_R(U^{\text{res}},M)$ given by all elements in the sum the weight spaces in $\Hom_R(U^{\text{res}},M)$ which are killed by all $E_{\alpha_i}^{(r)}$ and $F_{\alpha_i}^{(r)}$ for $r>>0$. In other words $H(M)$ is the largest integrable subrepresentation of $\Hom_R(U^{\text{res}},M)$. We then define the integral form of the quantized coordinate algebra to be $\A:=H(R)$. By \cite[Corollary 1.30]{Andersen}, we have $f\in H(M)$ if and only if $f$ kills an ideal $I\in \mathscr{I}$. In particular,
$$
\A=\{f\in (U^{\text{res}})^* : f|_I=0 \text{ for some } I\in\mathscr{I}\}.
$$
So $\A$ is a sub-Hopf algebra of $(U^{\text{res}})^\circ$ (see Definition \ref{hopfdual}) and it may be viewed as the algebra of matrix coefficients of finite free $U^{\text{res}}$-modules of type $\boldsymbol{1}$. In particular the comultiplication on it makes it into a $(U^{\text{res}})^\circ$-comodule and hence we may view it as a $U^{\text{res}}$-module by Proposition \ref{comod} (and that agrees with the definition of the $U^{\text{res}}$-action on $H(R)$). Moreover by \cite[Theorem 1.33]{Andersen}, $\A$ is free over $R$.

Next, we look at the Borel subalgebra $U^{\text{res}}(\bo)$ of $U^{\text{res}}$. Let $\I$ be the set of $f\in\A$ such that $f|_{U^{\text{res}}(\bo)}=0$. The Hopf algebra homomorphism $\A\to U^{\text{res}}(\bo)^\circ$ given by restriction has kernel precisely $\I$ and so we see that $\I$ is a Hopf ideal and that $\B:=\A/\I\subseteq U^{\text{res}}(\bo)^\circ$ is a Hopf algebra. Similarly to the above, \cite{Andersen} defined an induction functor from the trivial subalgebra to $U^{\text{res}}(\bo)$ in a completely analogous way: if $M$ is an $R$-module, we define $\mathcal{H}(M)$ to be the largest integrable submodule of $\Hom_R(U^{\text{res}}(\bo),M)$. By \cite[Proposition 2.7(ii) and (iii)]{Andersen} we have that $\B=\mathcal{H}(R)$ and so it is integrable, and it is free as an $R$-module.

\subsection{The categories of comodules}\label{comodcat} We now recall how the category of $\A$-comodules (respectively $\B$-comodules) can be identified with integrable $U^{\text{res}}$-modules (respectively $U^{\text{res}}(\bo)$-modules). We expect this to be well-known but we did not find a suitable reference for it, so we provide proofs. To that end, we use general results about $R$-Hopf algebras which we've written in the appendix.

Since $\A=H(R)$, it is integrable with the $U^{\text{res}}$-module structure described above. Note that for any $R$-module $M$ there is a natural map $M\otimes_R \A\to H(M)\subseteq \Hom_R(U^{\text{res}},M)$ which is the composite of the map $M\otimes_R \A\to M\otimes_R (U^{\text{res}})^*$, coming from the inclusion $\A\subseteq (U^{\text{res}})^*$, and the map $\theta_M$ from Corollary \ref{comod}. By abuse of notation we also denote this map by $\theta_M$. For the Borel, we have again a map $\theta_M: M\otimes_R \B\to \mathcal{H}(M)$ for any $R$-module $M$. Moreover we have again that $f\in \mathcal{H}(M)$ if and only if $f$ kills an ideal $I$ of $U^{\text{res}}(\bo)$ such that $U^{\text{res}}(\bo)/I$ is finitely generated and $I\cap(U^{\text{res}})^0$ contains a finite intersection of ideals $\ker(\psi_\lambda)$.

The next result immediately follows from the above:

\begin{lem} If $M$ is torsion-free as an $R$-module then $\Hom_R(U^{\text{res}},M)/H(M)$ and $\Hom_R(U^{\text{res}}(\bo),M)/\mathcal{H}(M)$ are torsion-free. In particular $(U^{\text{res}})^*/\A$ and $U^{\text{res}}(\bo)^*/\B$ are torsion free.
\end{lem}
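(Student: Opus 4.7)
The plan is to give a direct proof using the ideal-theoretic characterization of $H(M)$ and $\mathcal{H}(M)$ stated just before the lemma. Concretely, I would unpack both factor modules as follows: to show $\Hom_R(U^{\text{res}},M)/H(M)$ is torsion-free when $M$ is torsion-free, I would take a nonzero $r\in R$ and $f\in\Hom_R(U^{\text{res}},M)$ with $rf\in H(M)$ and show $f\in H(M)$. By the cited characterization (Corollary 1.30 of \cite{Andersen}, recalled above), $rf\in H(M)$ means there exists an ideal $I\in\mathscr{I}$ such that $(rf)|_I=0$, i.e.\ $r\cdot f(x)=0$ in $M$ for every $x\in I$. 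Since $M$ is $R$-torsion-free and $r\neq 0$, this forces $f(x)=0$ for all $x\in I$, so $f$ itself kills $I\in\mathscr{I}$, hence $f\in H(M)$.

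For the Borel statement I would repeat the same argument verbatim, using the analogous characterization of $\mathcal{H}(M)$: an element $f\in\Hom_R(U^{\text{res}}(\bo),M)$ lies in $\mathcal{H}(M)$ iff it kills an ideal $I$ of $U^{\text{res}}(\bo)$ with $U^{\text{res}}(\bo)/I$ finitely generated and $I\cap (U^{\text{res}})^0$ containing a finite intersection of $\ker(\psi_\lambda)$. If $rf$ kills such an $I$, then torsion-freeness of $M$ forces $f$ to kill $I$, so $f\in\mathcal{H}(M)$.

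Finally, the ``in particular'' part follows by taking $M=R$ (which is torsion-free over itself): this gives $\Hom_R(U^{\text{res}},R)/H(R) = (U^{\text{res}})^*/\A$ and $\Hom_R(U^{\text{res}}(\bo),R)/\mathcal{H}(R) = U^{\text{res}}(\bo)^*/\B$ torsion-free.

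There is really no obstacle here; the only thing to be careful about is that the key input is precisely the ideal-theoretic description of $H(M)$ and $\mathcal{H}(M)$ (rather than the more abstract description as the largest integrable submodule), since that description makes the torsion-free cancellation argument immediate. Everything else is formal.
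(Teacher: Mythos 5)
Your proof is correct and follows essentially the same route as the paper: both reduce to the ideal-theoretic characterization of $H(M)$ (resp.\ $\mathcal{H}(M)$) and use torsion-freeness of $M$ to cancel the scalar (you use a general nonzero $r\in R$, the paper uses $\pi^n$, which is equivalent since $R$ is a DVR), then specialize to $M=R$ for the final assertion.
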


\begin{proof}
If $\pi^n f\in\Hom_R(U^{\text{res}},M)$ kills an ideal in $\mathscr{I}$, then so does $f$ as $M$ is torsion-free. An analogous argument applies to $\mathcal{H}(M)$. The last part follows by putting $M=R$.
\end{proof}

Since $\A$ and $\B$ are sub Hopf algebras of $(U^{\text{res}})^\circ$ and $U^{\text{res}}(\bo)^\circ$ respectively, it follows that any comodule over $\A$ (respectively $\B$) is a comodule over $(U^{\text{res}})^\circ$ (respectively $U^{\text{res}}(\bo)^\circ$). Thus we may view comodules over $\A$ and $\B$ as locally finite modules over $U^{\text{res}}$ and $U^{\text{res}}(\bo)$ respectively. This defines functors from the categories of $\A$-comodules and $\B$-comodules to the categories of locally finite $U^{\text{res}}$-modules and $U^{\text{res}}(\bo)$-modules respectively.

\begin{remark} The following observations will be useful in the next proof and also at several points later on. Suppose that $M$ is a $\B$-comodule, with coaction $\rho_M:M\to M\otimes_R \B$. Note that by the axioms of comodules, the composite
$$
(1\otimes \varepsilon)\circ \rho_M=1_M
$$
so that the map $\rho$ splits and $M$ is a direct summand of $M\otimes_R \B$ as an $R$-module. Moreover, the diagram
\[
\begin{tikzcd}
M \arrow{r}{\rho_M} \arrow[swap]{d}{\rho_M} & M\otimes_R\B \arrow{d}{\text{id}\otimes\Delta} \\
M\otimes_R\B\arrow{r}{\rho_M\otimes\text{id}} & M\otimes_R\B\otimes_R\B
\end{tikzcd}
\]
commutes. But note that the map $1\otimes\Delta$ makes $M\otimes_R\B$ into a $\B$-comodule, so that the above diagram and the splitting says that $M$ identifies via $\rho_M$ with a subcomodule of $M\otimes_R\B$ where the latter is given the comodule structure $1\otimes\Delta$. Of course all of the above applies more generally to a comodule over an arbitrary coalgebra.
\end{remark}

\begin{thm} The category of $\A$-comodules, respectively $\B$-comodules, is isomorphic to the category of integrable $U^{\text{res}}$-modules, respectively $U^{\text{res}}(\bo)$-modules.
\end{thm}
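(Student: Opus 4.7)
The plan is to explicitly construct functors in both directions and verify that they are mutually inverse. In one direction, the inclusions $\A \subseteq (U^{\text{res}})^\circ$ and $\B \subseteq U^{\text{res}}(\bo)^\circ$, together with the appendix correspondence between comodules over a sub-Hopf algebra of $H^\circ$ and a suitable class of $H$-modules, restrict a comodule $(M,\rho_M)$ to a module with action $u \cdot m = (\id \otimes \mathrm{ev}_u)\rho_M(m)$. In the other direction, given an integrable module $M$ and $m \in M$, I would combine the two integrability conditions to produce an ideal $I \in \mathscr{I}$ annihilating $m$, so that $u \mapsto um$ factors through the finite-free quotient $U^{\text{res}}/I$, and then define $\rho_M(m)$ by pushing the resulting element through the natural isomorphism $\Hom_R(U^{\text{res}}/I,M) \cong M \otimes_R (U^{\text{res}}/I)^*$ followed by the inclusion $(U^{\text{res}}/I)^* \hookrightarrow \A$.

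For the forward direction, given a comodule $(M, \rho_M)$ and writing $\rho_M(m) = \sum_i m_i \otimes f_i$, each $f_i$ kills some $I_i \in \mathscr{I}$, and hence $I := \bigcap_i I_i$ annihilates $m$; since $U^{\text{res}}/I$ embeds into the finite-free product $\prod_i U^{\text{res}}/I_i$, it is itself finite free over $R$. The PBW theorem shows that the divided powers $E_{\alpha_i}^{(r)}$ and $F_{\alpha_i}^{(r)}$ have unbounded height, so they must lie in $I$ for large $r$, yielding the local nilpotency part of integrability. The weight-space condition follows from $I \cap (U^{\text{res}})^0 \supseteq \bigcap_{j=1}^k \ker(\psi_{\lambda_j})$ by extracting orthogonal idempotents in the finite product of copies of $R$ that this quotient maps to, then using them to split $m$ into weight components indexed by the $\psi_{\lambda_j}$.

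Conversely, for integrable $M$, the existence of a suitable $I \in \mathscr{I}$ annihilating each $m$ follows by combining local nilpotency (which gives $E^{(r)}, F^{(r)} \in \mathrm{ann}(m)$ for $r \gg 0$) with the weight decomposition (which ensures $\bigcap_{j}\ker(\psi_{\lambda_j})$ annihilates $m$ for finitely many $\lambda_j$); one cuts down further if necessary to ensure finite-freeness of $U^{\text{res}}/I$. Well-definedness of $\rho_M(m)$ independent of $I$ follows from the compatible inclusions $(U^{\text{res}}/I_1)^* \hookrightarrow (U^{\text{res}}/I_2)^*$ whenever $I_2 \subseteq I_1$, and the comodule axioms follow from the induced coalgebra structure on $\A$. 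That the two functors are mutually inverse is essentially by construction, since coaction and action determine each other through the pairing $\A \times U^{\text{res}} \to R$. The $\B$ case is entirely parallel, using the induction functor $\mathcal{H}$ and the analogous ideal set for $U^{\text{res}}(\bo)$, together with the observation from the remark that every $\B$-comodule embeds into $M \otimes_R \B$ under $1\otimes\Delta$. The main anticipated obstacle is handling the weight-space decomposition over the ring $R$ rather than over a field: two characters $\psi_\lambda, \psi_\mu$ distinct over $L$ may become congruent modulo $\pi$, so producing the orthogonal idempotents needed to split $m$ into weight components requires additional care, potentially invoking the assumption that $q$ is not a root of unity to ensure the relevant characters remain separated in the finite quotients that arise.
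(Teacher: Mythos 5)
Your proposal takes a more explicit route than the paper, essentially re-deriving the Andersen--Polo--Wen machinery from scratch, but it has two genuine gaps. First, the claim that ``the PBW theorem shows that the divided powers $E_{\alpha_i}^{(r)}$ and $F_{\alpha_i}^{(r)}$ have unbounded height, so they must lie in $I$ for large $r$'' does not follow. Finite generation of $U^{\text{res}}/I$ over $R$ only forces the images of $E^{(0)},E^{(1)},\ldots$ to be $R$-linearly \emph{dependent}, not for individual $E^{(r)}$ to vanish in the quotient. What you actually need is $E^{(r)}\cdot m=0$, and the standard way to get this is via a weight-space decomposition (so $E$ strictly raises weights and must eventually annihilate a module with finitely many weights). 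That pushes the whole burden onto the weight decomposition --- which is exactly the second gap.

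Second, the orthogonal-idempotent approach to extracting weight components cannot work over $R$ here, and your proposed fix makes matters worse rather than better. Since $q\equiv 1\pmod\pi$, every $\psi_\lambda$ satisfies $\psi_\lambda(K_\mu)\equiv 1\pmod\pi$, so all the type $\boldsymbol{1}$ characters become the \emph{same} modulo $\pi$; the assumption that $q$ is not a root of unity is irrelevant to this collapse. Consequently $(U^{\text{res}})^0/\bigcap_j\ker\psi_{\lambda_j}$ is a proper subring of $R^k$ with trivial idempotents, and you cannot split $m$ into weight pieces that way. (The weight decomposition is in fact true over $R$, but the reason is structural: elements of $\A$, being matrix coefficients of $R$-free integrable modules, automatically restrict on $(U^{\text{res}})^0$ to $R$-linear combinations of the $\psi_\mu$, so the coefficients $m'_\mu$ land in $M$ without any idempotent trick.)

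The paper circumvents both problems by invoking \cite[Theorem 1.31(iii)]{Andersen}, which asserts directly that $\theta_M:M\otimes_R\A\to \Hom_R(U^{\text{res}},M)$ is an isomorphism onto $H(M)$; combined with the torsion-freeness Lemma and Proposition \ref{key_hopf_fact} from the appendix, this yields that integrable modules are $\A$-comodules, while full faithfulness comes from Proposition \ref{fullyfaithful}. The converse direction (comodule $\Rightarrow$ integrable) is handled cleanly by embedding $M$ into $M\otimes_R\B$ with coaction $1\otimes\Delta$ and citing closure of integrable modules under submodules, rather than building the coaction by hand. Your direct construction is in principle reconstructable, but to be a proof you would need to replace the unbounded-height claim and the idempotent argument by the structural description of $\A|_{(U^{\text{res}})^0}$ sketched above --- at which point you have essentially reproved the relevant parts of \cite{Andersen}.
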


\begin{proof}
We first show that the above functors are fully faithful. This is the exact same argument as in Proposition \ref{fullyfaithful}, using Lemma \ref{comod} with $A=U^{\text{res}}$, $B=R$ and $C=\A$ for $\A$-comodules and with $A=U^{\text{res}}(\bo)$, $B=R$ and $C=\B$ for $\B$-comodules. For these to apply we need to show that $(U^{\text{res}})^*/\A$ and $U^{\text{res}}(\bo)^*/\B$ are torsion-free, but this is just the previous Lemma.

Next, the key fact we use is \cite[Theorem 1.31(iii)]{Andersen}: for any $R$-module $M$ the natural map $\theta_M:M\otimes_R \A\to \Hom_R(U^{\text{res}},M)$ is an isomorphism onto $H(M)$. Now suppose that $M$ is an integrable $U^{\text{res}}$-module. Then for all $m\in M$, the action map $\varphi_M(m):x\mapsto x\cdot m$ belongs to $H(M)$. So by the above facts the maps $\varphi_M(m)$ all belong to the image of $\theta_M$. By Lemma \ref{key_hopf_fact} with $C=\A$ we conclude that $M$ must be an $\A$-comodule. An analogous argument shows that integrable $U^{\text{res}}(\bo)$-modules are $\B$-comodules using \cite[Proposition 2.7(iv)]{Andersen}, which states that the natural map $\theta_M:M\otimes_R \B\to \Hom_R(U^{\text{res}}(\bo),M)$ is an isomorphism onto $\mathcal{H}(M)$.

Thus since the functors are fully faithful we are now reduced to showing that any $\A$-comodule (respectively $\B$-comodule) is integrable when viewed as a $U^{\text{res}}$-module (respectively $U^{\text{res}}(\bo)$-module). We prove it for $\B$, the proof for $\A$ being entirely analogous. Suppose $M$ is a $\B$-comodule. Then by the above remark the map $\rho:M\to M\otimes_R \B$ is an injective comodule homomorphism where the right hand side is given the coaction map $1\otimes \Delta$. In other words, in the language of $U^{\text{res}}(\bo)$-modules, this is saying the action on $M\otimes_R \B$ is the tensor product of the trivial action on $M$ with the usual action on $\B$, i.e for $u\in U^{\text{res}}(\bo)$ we have $u(m\otimes f)=m\otimes uf$ for all $m\in M$ and $f\in \B$. Thus, since $\B$ is integrable, so is $M\otimes_R \B$ with that structure. But now the result follows since integrable modules are closed under taking submodules by \cite[Note added in proof p.59]{Andersen}.
\end{proof}

\subsection{Some Noetherianity conditions}\label{Noetherian} We record here some conditions under which we can lift the Noetherian property from the reduction mod $\pi$ of a ring to the ring itself. These will be useful later in the paper.

\begin{prop} \begin{enumerate}
\item Suppose that $A$ is an $R$-algebra such that $A/\pi A$ is Noetherian. Then the $\pi$-adic completion $\widehat{A}$ is also Noetherian.
\item Let $n\geq1$ and suppose that we have $\Z^n$-graded $R$-algebra $\mathcal{R}=\oplus_{\mathbf{m}\in \Z^n} \mathcal{R}_{\mathbf{m}}$ such that each graded piece $\mathcal{R}_{\mathbf{m}}$ is finitely generated over $R$. If $\mathcal{R}/\pi\mathcal{R}$ is Noetherian, then $\mathcal{R}$ is graded Noetherian.
\end{enumerate}
\end{prop}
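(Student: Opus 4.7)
The proof splits naturally into two parts, which I handle separately using different ideas adapted to each setting.

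For (i), the plan is to exploit the centrality of $\pi$ to reduce to a graded Noetherianity statement. First, because $A$ is dense in its $\pi$-adic completion $\widehat{A}$, the natural map $A/\pi A\to\widehat{A}/\pi\widehat{A}$ is surjective, so $\widehat{A}/\pi\widehat{A}$ is Noetherian. The associated graded ring $G:=\gr_\pi\widehat{A}=\bigoplus_{n\geq 0}\pi^n\widehat{A}/\pi^{n+1}\widehat{A}$ is generated over its degree-zero part $\widehat{A}/\pi\widehat{A}$ by the image of the central element $\pi$ in degree one, so it is a quotient of the polynomial ring $(\widehat{A}/\pi\widehat{A})[t]$ and hence Noetherian by Hilbert's basis theorem. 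Given a left ideal $I\subseteq\widehat{A}$, the graded ideal $\gr_\pi I=\bigoplus(I\cap\pi^n\widehat{A})/(I\cap\pi^{n+1}\widehat{A})$ is finitely generated by homogeneous elements, which I lift to $x_1,\ldots,x_m\in I$ with $x_i\in\pi^{n_i}\widehat{A}$. A successive-approximation argument then shows $I=\sum_i\widehat{A}x_i$: at each stage the image of the current residual $y^{(N)}\in I\cap\pi^N\widehat{A}$ in $(\gr_\pi I)_N$ lifts to an $\widehat{A}$-combination of the $x_i$, and subtracting leaves a new residual in $I\cap\pi^{N+1}\widehat{A}$. The resulting series of coefficients converges in $\widehat{A}$ by $\pi$-adic completeness, giving $y$ as an $\widehat{A}$-combination of the $x_i$. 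Centrality of $\pi$ means exactly the same argument handles right ideals, yielding the two-sided Noetherian property in the paper's convention.

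For (ii), the plan is graded Nakayama, working one multidegree at a time. Given a graded ideal $I\subseteq\mathcal{R}$, its image $\bar I$ in $\mathcal{R}/\pi\mathcal{R}$ is a graded ideal of a Noetherian graded ring and is therefore generated by finitely many homogeneous elements. Lifting these to homogeneous $x_1,\ldots,x_m\in I$ and setting $J=\sum_i\mathcal{R}x_i\subseteq I$ gives $I\subseteq J+\pi\mathcal{R}$, so the graded $\mathcal{R}$-module $M:=I/J$ satisfies $M=\pi M$. For each multidegree $\mathbf{m}$, the piece $\mathcal{R}_{\mathbf{m}}$ is finitely generated over the Noetherian ring $R$, so $I_{\mathbf{m}}$ and hence $M_{\mathbf{m}}$ are finitely generated $R$-modules. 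Nakayama's lemma over the local ring $R$ with maximal ideal $\pi R$ then forces $M_{\mathbf{m}}=0$ for every $\mathbf{m}$, hence $M=0$ and $I=J$ is finitely generated.

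The main obstacle is really the successive-approximation step in (i), where one needs simultaneously that (a) the residual stays inside $I$ at every stage so the graded argument iterates, and (b) the resulting series converges. Part (a) is automatic since we only subtract $\widehat{A}$-combinations of the $x_i\in I$, while (b) uses $\pi$-adic completeness of $\widehat{A}$ together with the degree information $x_i\in\pi^{n_i}\widehat{A}$, which forces each successive coefficient increment into a strictly higher power of $\pi$. Part (ii) is technically simpler once the finite generation of the $\mathcal{R}_{\mathbf{m}}$ is used to replace completeness by ordinary Nakayama over the local ring $R$.
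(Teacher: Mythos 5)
Your proof of part (i) is correct; it reproves the cited result \cite[Lemma 3.2.2]{Berthelot} from scratch, whereas the paper simply cites it. The reduction to $\gr_\pi\widehat{A}$ being a quotient of $(\widehat{A}/\pi\widehat{A})[t]$, followed by the successive-approximation argument using $\pi$-adic completeness of $\widehat{A}$, is the standard Zariskian-filtration proof and is sound.

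Part (ii), however, has a genuine gap. The step ``$I\subseteq J+\pi\mathcal{R}$, so $M:=I/J$ satisfies $M=\pi M$'' is not a valid inference. From $I\subseteq J+\pi\mathcal{R}$ you only obtain $I/J\subseteq\pi(\mathcal{R}/J)$; this is weaker than $I/J=\pi(I/J)$, because the coefficient $r\in\mathcal{R}$ in a decomposition $y=j+\pi r$ need not lie in $I$. Concretely, take $\mathcal{R}=R[t]$ with the standard $\Z$-grading (so each graded piece is a free $R$-module of rank one and $\mathcal{R}/\pi\mathcal{R}=k[t]$ is Noetherian), and take $I=(\pi,t)$. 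Then $\bar I=(\bar t)$ in $k[t]$, so your construction produces $J=(t)\subsetneq I$, and in degree zero $M_0=I_0/J_0=\pi R\neq 0$, while $\pi M_0=\pi^2R\subsetneq\pi R$. Thus $M\neq\pi M$, Nakayama does not apply, and your argument would incorrectly conclude $I=(t)$. The underlying issue is that reduction modulo $\pi$ loses the generators of $I$ that sit strictly inside $\pi\mathcal{R}$, and these cannot be recovered by a one-step lift; one must track the full $\pi$-adic filtration. That is precisely what the paper's proof does: it passes to the associated graded ring $\gr_\pi\mathcal{R}$ (a quotient of $(\mathcal{R}/\pi\mathcal{R})[t]$, hence Noetherian), uses Artin--Rees on the finitely generated $R$-module $\mathcal{R}_{\mathbf m}$ to compare the subspace filtration $J_{\mathbf m}\cap\pi^n\mathcal{R}_{\mathbf m}$ with the intrinsic $\pi$-adic filtration on $J_{\mathbf m}$, and only then invokes Nakayama. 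Artin--Rees (or some substitute providing comparability of the two filtrations) is exactly what your argument is missing.
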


\begin{proof}
(i) is just \cite[Lemma 3.2.2]{Berthelot}. For (ii) we use the same argument as in \cite[Proposition II.2.3]{LuntsRos}. Specifically, consider the $\pi$-adic filtration on $\mathcal{R}$. The associated graded ring is a quotient of the polynomial algebra $(\mathcal{R}/\pi \mathcal{R})[t]$ (where $t$ corresponds to the symbol of $\pi$), and so is Noetherian. We will consider several graded $R$-submodules of $\mathcal{R}$, equipped with the subspace filtration of the $\pi$-adic filtration.

Suppose we are given two graded ideals $I\subset J$ with $I\neq J$. Then we have $\gr I\subset \gr J$ and it will suffice to show that $\gr I\neq \gr J$. Pick $\mathbf{m}\in \Z^n$ such that $I_{\mathbf{m}}\neq J_{\mathbf{m}}$, and assume that $\gr I_{\mathbf{m}}=\gr J_{\mathbf{m}}$. Since $I_{\mathbf{m}}$ and $J_{\mathbf{m}}$ are finitely generated over $R$, we will get a contradiction by Nakayama if we show that $J_{\mathbf{m}}=I_{\mathbf{m}}+\pi J_{\mathbf{m}}$.

By the Artin-Rees Lemma (\cite[Theorem 10.11]{AtiyahMac}) applied to $J_{\mathbf{m}}$ viewed as a submodule of $\mathcal{R}_{\mathbf{m}}$, the subspace filtration of the $\pi$-adic filtration on $\mathcal{R}_{\mathbf{m}}$ and the $\pi$-adic filtration on $J_{\mathbf{m}}$ have finite difference. So there exists a $d\in\Z_{<0}$ such that for all $j\in J_{\mathbf{m}}$ with degree $d(j)<d$ in the subspace filtration, $j\in\pi J_{\mathbf{m}}$. Now let $j\in J_{\mathbf{m}}$ be arbitrary. We show by induction on $d(j)$ that $j\in I_{\mathbf{m}}+\pi J_{\mathbf{m}}$, the cases $d(j)<d$ being already dealt with. Since $\gr I_{\mathbf{m}}=\gr J_{\mathbf{m}}$, there exists $i\in I_{\mathbf{m}}$ such that $d(i-j)<d(j)$. But by induction hypothesis this implies $i-j\in I_{\mathbf{m}}+\pi J_{\mathbf{m}}$, and hence we get $j=i-(i-j)\in I_{\mathbf{m}}+\pi J_{\mathbf{m}}$ as required.
\end{proof}

\begin{cor} The ring $\widehat{\A}$ is Noetherian.
\end{cor}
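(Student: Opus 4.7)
The plan is to apply part (i) of the previous Proposition, which reduces the task to checking that $\A/\pi\A$ is Noetherian. Since $\A$ is free over $R$ (by \cite[Theorem 1.33]{Andersen}), the reduction modulo $\pi$ is well-behaved, and I would identify $\A/\pi\A$ with the coordinate algebra of a classical algebraic group as follows.

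By construction, $\A = H(R)$ consists of matrix coefficients of finite free $U^{\text{res}}$-modules whose weight space decompositions use type $\boldsymbol{1}$ characters. For any such character $\psi_\lambda$ of $(U^{\text{res}})^0$, the formulas \eqref{character} together with the hypothesis $q\equiv 1\pmod\pi$ give $\psi_\lambda(K_\mu)\equiv 1\pmod\pi$ for every $\mu\in P$. Thus, after passing to $U^{\text{res}}_k$, every type $\boldsymbol{1}$ representation factors through the quotient $U^{\text{res}}_k/(K_{\varpi_1}-1,\ldots,K_{\varpi_n}-1)$, which by \cite[8.15]{Lusztig1} is the hyperalgebra of the simply connected semisimple $k$-group $G_k$. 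Hence $\A/\pi\A$ embeds into the algebra of matrix coefficients of finite-dimensional rational $G_k$-representations, which is the coordinate algebra $\mathcal{O}(G_k)$, and a standard density/surjectivity argument (or a direct appeal to the relevant result in \cite{Andersen}) upgrades this embedding to an isomorphism $\A/\pi\A\cong\mathcal{O}(G_k)$.

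Once this identification is in place, the conclusion is immediate: $\mathcal{O}(G_k)$ is a finitely generated commutative $k$-algebra, hence Noetherian by the usual Hilbert basis theorem, so part (i) of the Proposition applies and gives that $\widehat{\A}$ is Noetherian.

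The main obstacle is justifying the isomorphism $\A/\pi\A\cong \mathcal{O}(G_k)$ cleanly; it is the content of the careful integral representation theory of \cite{Andersen}, but the two key inputs one needs — freeness of $\A$ over $R$ and the identification of $U^{\text{res}}_k$ modulo the central relations $K_{\varpi_i}-1$ with the hyperalgebra of $G_k$ — have already been recorded in the excerpt, so the argument should go through with a short appeal to those results.
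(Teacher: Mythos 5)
Your argument is correct and takes the same approach as the paper, which simply asserts that since $q\equiv 1\pmod{\pi}$ the ring $\A/\pi\A$ coincides with $\mathcal{O}(G_k)$ and then invokes part (i) of the preceding Proposition. Your elaboration of that identification is a reasonable fleshing-out of what the paper leaves implicit, though the cleanest way to nail down the isomorphism is via the base-change property of the induction functor $H$ (i.e.\ $H(R)\otimes_R k\cong H(k)$) from \cite{Andersen}, rather than an ad hoc density/surjectivity argument.
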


\begin{proof}
Since $q \equiv 1\pmod{\pi}$, the ring $\A/\pi \A$ coincides with the ring of regular functions on the group $G_k$ and hence is Noetherian. Therefore the result follows from part (i) of the Proposition.
\end{proof}

\section{The ad-finite part of $U_q$ and the Harish-Chandra isomorphism}

In this Section we investigate the finite part of the quantum group with respect to the adjoint action, its integral form and the reduction of it modulo $\pi$. We also investigate the Harish-Chandra isomorphism for $U$. Some of these results might be known, since reduction modulo $\pi$ amounts to specialising at $q=1$, but we couldn't find a suitable reference in the literature beyond \cite{adjoint} which studies the finite part. That paper is written using a different Hopf algebra presentation for $U_q$, and so we believe it to be worthwhile to repeat their results in some details using our presentation for the quantum group. Furthermore, they never work explicitly with integral forms and so we also explain how their results hold for them.

\subsection{The ad-finite part of $U_q$}\label{Joseph}

Recall that since it is a Hopf algebra, $U_q$ has a left adjoint action on itself given by $\ad(u)(v)=\sum u_1 v S(u_2)$. This action is not in general locally finite, so we define the \emph{finite part} of $U_q$ to be
$$
U_q^{\text{fin}}=\{u\in U_q : \dim_L\ad(U_q)(u)<\infty\}.
$$
$U_q^{\text{fin}}$ is then the largest integrable submodule of $U_q$ with respect to the adjoint action. It is a subalgebra of $U_q$ (see \cite[Corollary 2.3]{adjoint}). We now work towards seeing that it is in fact quite large.

The following computations and results are all due to Joseph and Letzter, see \cite[6.1-6.5]{adjoint}. We reproduce them using our presentation of $U_q$ for the convenience of the reader. For each $1\leq i\leq n$, let $U_i$ be the subalgebra of $U_q$ generated by $E_{\alpha_i}$, $F_{\alpha_i}$ and $K_{\alpha_i}$. Note that as a Hopf algebra, $U_i\cong U_{q_i}^{\text{ad}}(\mathfrak{sl}_2)$ is the adjoint form of the quantum group of $\mathfrak{sl}_2$.

\begin{remark}
The adjoint form $U_q^{\text{ad}}$ of $U_q$ is the algebra with the same generators and relations except we only take those $K_\lambda$ with $\lambda\in Q$ in the generators. So it is a very large Hopf subalgebra of $U_q$. Considering the adjoint action of $U_q^{\text{ad}}$ on $U_q$ rather than that of $U_q$ on itself changes nothing for our purposes. Indeed, the $K_\lambda$ with $\lambda\notin Q$ normalise $U_q^{\text{ad}}$ since they $q$-commute with all the generators. Hence, using the triangular decompositions for $U_q$ and $U_q^{\text{ad}}$, we see that $U_q$ is free of finite rank over $U_q^{\text{ad}}$, with basis given by the $K_\lambda$ where $\lambda$ ranges through a set of coset representatives for $Q$ in $P$. Thus we see that
$$
U_q^{\text{fin}}=\{u\in U_q : \dim_L\ad(U_q^{\text{ad}})(u)<\infty\}.
$$
and hence there is no harm in considering adjoint forms as we do above.
\end{remark}

Now, using the Hopf algebra structure of $U_q$, we see that the adjoint action is described as follows: for $u\in U_q$, we have
\begin{align*}
&\ad(K_\lambda)(u)=K_\lambda uK_\lambda^{-1}\\
&\ad(E_{\alpha_i})(u)=E_{\alpha_i}u-K_{\alpha_i}uK_{\alpha_i}^{-1}E_{\alpha_i}\\
&\ad(F_{\alpha_i})(u)=(F_{\alpha_i}u-uF_{\alpha_i})K_{\alpha_i}.
\end{align*}
Then we have:

\begin{lem}[{\cite[Lemma 6.1]{adjoint}}] Let $\lambda\in P$ and fix $1\leq i\leq n$. Then $\ad(E_{\alpha_i})$ (or $\ad(F_{\alpha_i})$) acts locally nilpotently on $K_\lambda$ if and only if $\langle \lambda, \alpha_i^\vee\rangle\leq 0$ and is a multiple of 2.
\end{lem}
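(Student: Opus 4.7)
The plan is to compute $\ad(E_{\alpha_i})^k(K_\lambda)$ in closed form and then read off precisely when it vanishes. Writing $n := \langle \lambda, \alpha_i^\vee\rangle$, so that $q^{\langle \lambda, \alpha_i\rangle} = q_i^n$, the torus relation gives $E_{\alpha_i} K_\lambda = q_i^{-n} K_\lambda E_{\alpha_i}$. Since all the $K_\mu$ commute, the formula $\ad(E_{\alpha_i})(u) = E_{\alpha_i} u - K_{\alpha_i} u K_{\alpha_i}^{-1} E_{\alpha_i}$ evaluated at $u = K_\lambda$ simplifies to
$$\ad(E_{\alpha_i})(K_\lambda) = E_{\alpha_i} K_\lambda - K_\lambda E_{\alpha_i} = (q_i^{-n} - 1)\, K_\lambda E_{\alpha_i}.$$

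I would then prove by induction on $k$ that
$$\ad(E_{\alpha_i})^k(K_\lambda) = \left(\prod_{j=0}^{k-1}\bigl(q_i^{-n} - q_i^{2j}\bigr)\right) K_\lambda E_{\alpha_i}^k.$$
The inductive step applies $\ad(E_{\alpha_i})$ to $K_\lambda E_{\alpha_i}^{k-1}$: commuting $E_{\alpha_i}$ past $K_\lambda$ in the first summand contributes a factor $q_i^{-n}$, while conjugating $E_{\alpha_i}^{k-1}$ by $K_{\alpha_i}$ in the second summand contributes $q_i^{2(k-1)}$, producing the new factor $q_i^{-n} - q_i^{2(k-1)}$. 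Since $K_\lambda E_{\alpha_i}^k$ is nonzero in $U_q$ by the PBW basis, and since $q_i = q^{d_i}$ is not a root of unity, the coefficient above vanishes for some $k \geq 1$ if and only if $q_i^{-n} = q_i^{2j}$ for some $j \geq 0$, equivalently $n \leq 0$ and $n$ is even, which is exactly the stated condition.

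For $\ad(F_{\alpha_i})$ the argument is the mirror image, using the relations $F_{\alpha_i} K_\lambda = q_i^n K_\lambda F_{\alpha_i}$ and $K_{\alpha_i} F_{\alpha_i} = q_i^{-2} F_{\alpha_i} K_{\alpha_i}$ together with $\ad(F_{\alpha_i})(u) = (F_{\alpha_i}u - uF_{\alpha_i})K_{\alpha_i}$. A direct calculation gives $\ad(F_{\alpha_i})(K_\lambda) = (q_i^n - 1)\, K_\lambda F_{\alpha_i} K_{\alpha_i}$, and induction on $k$ yields
$$\ad(F_{\alpha_i})^k(K_\lambda) = \left(\prod_{j=0}^{k-1}\bigl(q_i^n - q_i^{-2j}\bigr)\right) K_\lambda F_{\alpha_i}^k K_{\alpha_i}^k,$$
so the same divisibility condition drops out. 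There is no genuine obstacle to overcome here: the proof is pure computation, and the only mild care required is the bookkeeping of the trailing $K_{\alpha_i}$ factors in the $F$-case, which do not affect the vanishing condition since the $K$'s are units.
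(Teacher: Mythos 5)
Your proof is correct and takes essentially the same approach as the paper: both compute $\ad(E_{\alpha_i})^k(K_\lambda)$ iteratively by tracking the scalar that accumulates each time, and read off the vanishing condition from the PBW basis and the fact that $q_i$ is not a root of unity. The only differences are presentational — you keep the $K_\lambda$ on the left and package the iteration into an explicit closed-form product with factors $q_i^{-n} - q_i^{2j}$, whereas the paper places $K_\lambda$ on the right and states a single-step recursion with factors $1 - q_i^{2m+n}$; the two are related by absorbing powers of $q_i^n$.
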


\begin{proof} Using the above formulae, we see that for every $m\geq 0$, we have
$$
\ad(E_{\alpha_i})(E_{\alpha_i}^m K_\lambda)=(1-q_i^{2m+\langle \lambda, \alpha_i^\vee\rangle})E_{\alpha_i}^{m+1}K_\lambda,
$$
and
$$
\ad(F_{\alpha_i})(F_{\alpha_i}^m K_{\alpha_i}^m K_\lambda)=(1-q_i^{-2m-\langle \lambda, \alpha_i^\vee\rangle})F_{\alpha_i}^{m+1} K_{\alpha_i}^{m+1} K_\lambda.
$$
In both cases the right hand side vanishes if and only if $\langle \lambda, \alpha_i^\vee\rangle=-2m$.
\end{proof}

Local finiteness follows from this. Indeed, if $M$ is any $U_i$-module and $m\in M$ is a weight vector such that $E_{\alpha_i}$ and $F_{\alpha_i}$ act locally nilpotently on $m$, then $U_im$ is finite dimensional. This follows from the triangular decomposition $U_i\cong U_i^-\otimes_LU_i^0\otimes_LU_i^+$, where $U_i^-$, $U_i^0$ and $U_i^+$ are the $L$-algebras generated by $F_{\alpha_i}$, $K_{\alpha_i}$ and $E_{\alpha_i}$ respectively. Indeed, it suffices then to show that if $F_{\alpha_i}^nm'=0$ for all $n>>0$ and $m'\in M$ is a weight vector, then $F_{\alpha_i}^nE_{\alpha_i}m'=0$ for all $n>>0$ also. But this follows from the relation
$$
F_{\alpha_i}^nE_{\alpha_i}=E_{\alpha_i}F_{\alpha_i}^n-[n]_{q_i}F_{\alpha_i}^{n-1}\cdot \frac{K_{\alpha_i}q_i^{1-n}-K_{\alpha_i}^{-1}q_i^{n-1}}{q_i-q_i^{-1}}.
$$
Now if we let $U_q^{i,\text{fin}}=\{u\in U_q : \dim_L\ad(U_i)(u)<\infty\}$, then we have shown that $K_\lambda\in U_q^{i,\text{fin}}$ if and only if $\langle \lambda, \alpha_i^\vee\rangle=-2m$ for some $m\geq 0$. Next, we crucially have the following result:

\begin{prop}[{\cite[Proposition 6.5]{adjoint}}] $U_q^{\text{fin}}=\bigcap_{i=1}^n U_q^{i,\text{fin}}$.
\end{prop}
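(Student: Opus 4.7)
The containment $U_q^{\text{fin}}\subseteq\bigcap_{i=1}^n U_q^{i,\text{fin}}$ is immediate from the preceding remark: each $U_i$ is a Hopf subalgebra of $U_q^{\text{ad}}$, so if $\ad(U_q^{\text{ad}})(u)$ is finite-dimensional then so is $\ad(U_i)(u)$ for every $i$.

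For the reverse containment, fix $u\in\bigcap_{i=1}^n U_q^{i,\text{fin}}$. Each $U_q^{i,\text{fin}}$ is stable under $\ad(K_\lambda)$, and $\ad(K_\lambda)$ acts semisimply on $U_q$ via the weight grading, so the intersection is a sum of weight spaces and we may reduce to the case in which $u$ is a weight vector. For each $i$, the finite-dimensional cyclic module $\ad(U_i)(u)$ is a type $\boldsymbol{1}$ representation of $U_i\cong U_{q_i}^{\text{ad}}(\mathfrak{sl}_2)$, on which $\ad(E_{\alpha_i})$ and $\ad(F_{\alpha_i})$ must then be nilpotent. We obtain integers $N_i\geq 0$ such that $\ad(E_{\alpha_i})^{N_i}(u)=\ad(F_{\alpha_i})^{N_i}(u)=0$ for every $i$.

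Set $V:=\ad(U_q)(u)$. The strategy is to show that $V$ is an integrable type $\boldsymbol{1}$ $U_q$-module. Once established, the standard classification of such modules yields a decomposition $V\cong\bigoplus_\alpha V(\lambda_\alpha)$ into finite-dimensional simples; since $V$ is cyclic under $\ad$, only finitely many summands can occur, so $V$ is finite-dimensional and $u\in U_q^{\text{fin}}$.

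The heart of the argument is integrability, which amounts to showing that $\ad(E_{\alpha_i})$ and $\ad(F_{\alpha_i})$ act locally nilpotently on every element of $V$ (not merely on $u$). For this, consider the subspaces $V_i^+:=\{v\in V:\ad(E_{\alpha_i})^N(v)=0\text{ for some }N\}$ and the analogous $V_i^-$; they contain $u$, so it is enough to prove they are $\ad(U_q)$-stable. Stability under $\ad(K_\lambda)$ is immediate from the weight grading; stability under $\ad(E_{\alpha_i})$ and $\ad(F_{\alpha_i})$ themselves follows by a short induction using the commutation relation $[E_{\alpha_i},F_{\alpha_i}]=(K_{\alpha_i}-K_{-\alpha_i})/(q_i-q_i^{-1})$ on weight vectors; and for $j\neq i$, stability under $\ad(F_{\alpha_j})$ is immediate from $[E_{\alpha_i},F_{\alpha_j}]=0$. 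The principal obstacle, and the most delicate bookkeeping, is stability under $\ad(E_{\alpha_j})$ for $j\neq i$: here one iterates the quantum Serre relation $(\ad E_{\alpha_i})^{1-a_{ij}}(E_{\alpha_j})=0$, viewed as an operator identity on $U_q$, to rewrite $\ad(E_{\alpha_i})^{k(1-a_{ij})}\ad(E_{\alpha_j})$ as a finite linear combination of terms of the form $\ad(E_{\alpha_i})^a\ad(E_{\alpha_j})\ad(E_{\alpha_i})^b$ with $b\geq k$. Taking $k=N_i$ then forces the inner $\ad(E_{\alpha_i})^b(v)$ to vanish for every $v\in V_i^+$, so $\ad(E_{\alpha_j})(v)\in V_i^+$ as required. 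This establishes integrability and completes the proof.
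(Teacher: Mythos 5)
The paper does not give a proof of this proposition; it is cited directly from Joseph and Letzter, so there is no in-paper argument to compare against. Your argument is a correct, self-contained reconstruction of an integrability-based proof: reduce to a weight vector $u$ (valid since the intersection is $\ad(K_\lambda)$-stable and $U_q$ decomposes into $\ad(U_q^0)$-weight spaces, with distinct weights separated because $q$ is not a root of unity); use $\mathfrak{sl}_2$-theory for each $U_i$ to get nilpotence of $\ad(E_{\alpha_i})$ and $\ad(F_{\alpha_i})$ on $u$; and then show that the subspaces $V_i^{\pm}$ of $V=\ad(U_q)(u)$ are $\ad(U_q)$-stable, so that $V$ is an integrable type $\boldsymbol{1}$ weight module, hence locally finite, and hence finite-dimensional since it is cyclic. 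The commutation computations all hold: since $\ad$ is an algebra homomorphism, $[E_{\alpha_i},F_{\alpha_j}]=0$ and the quantum Serre relation immediately give the needed operator identities, and your iterated-Serre bookkeeping (starting from exponent $ks$ one always lands on terms with inner exponent $b\geq k$) is correct.

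Two small remarks. First, in the last step $k$ should be the $E_{\alpha_i}$-nilpotence order of the particular element $v\in V_i^+$ under consideration, not the fixed $N_i$ attached to the generator $u$; this is only a notational slip and changes nothing. Second, you invoke complete reducibility of integrable modules, which is slightly more than you need: integrability already gives local finiteness (this is the genuinely nontrivial input, requiring that local nilpotence of the simple root vectors propagates to all root vectors, via the braid group or an iterated Serre argument), and local finiteness together with cyclicity already yields $\dim_L V<\infty$. Both statements are standard for $q$ not a root of unity.
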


\begin{cor} Let $\lambda\in P$. Then $K_\lambda\in U_q^{\text{fin}}$ if and only if $\lambda\in -2P^+$. Moreover, for any $1\leq i\leq n$, $E_{\alpha_i}K_{-2\varpi_i}, F_{\alpha_i}K_{\alpha_i-2\varpi_i}\in U_q^{\text{fin}}$.
\end{cor}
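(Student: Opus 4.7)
The plan is to reduce both statements to the Proposition preceding the Corollary together with the Lemma computing when the generators act locally nilpotently on $K_\lambda$.

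For the first assertion, the Proposition states $U_q^{\text{fin}}=\bigcap_{i=1}^n U_q^{i,\text{fin}}$, so I need $K_\lambda\in U_q^{i,\text{fin}}$ for every $i$. Since $K_\lambda$ is a weight vector for $\ad(K_{\alpha_i})$, local finiteness of the $U_i$-action on $K_\lambda$ is equivalent to $\ad(E_{\alpha_i})$ and $\ad(F_{\alpha_i})$ both acting locally nilpotently on it (this is the elementary $\mathfrak{sl}_2$ argument already recorded in the paragraph after the Lemma, using the commutation relation between $F_{\alpha_i}^n$ and $E_{\alpha_i}$). By the Lemma each of these conditions is equivalent to $\langle\lambda,\alpha_i^\vee\rangle\leq 0$ and $\langle\lambda,\alpha_i^\vee\rangle\in 2\Z$. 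Writing $\lambda=\sum_i n_i\varpi_i$, the defining property $\langle \varpi_j,\alpha_i^\vee\rangle=\delta_{ij}$ turns the combined condition into $n_i\in -2\Z_{\geq 0}$ for all $i$, i.e.\ $\lambda\in -2P^+$.

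For the second assertion, the key idea is that $U_q^{\text{fin}}$ is stable under $\ad(U_q)$ (being the maximal integrable submodule of $U_q$ for the adjoint action), so applying $\ad(E_{\alpha_i})$ or $\ad(F_{\alpha_i})$ to $K_{-2\varpi_i}\in U_q^{\text{fin}}$ yields elements of $U_q^{\text{fin}}$. Using the formulas for $\ad$ together with the commutation $K_\mu E_{\alpha_i}=q^{\langle\mu,\alpha_i\rangle}E_{\alpha_i}K_\mu$, a short computation gives
\[
\ad(E_{\alpha_i})(K_{-2\varpi_i})=(1-q_i^{-2})\,E_{\alpha_i}K_{-2\varpi_i},\qquad \ad(F_{\alpha_i})(K_{-2\varpi_i})=(1-q_i^{2})\,F_{\alpha_i}K_{\alpha_i-2\varpi_i}.
\]
Since $q$ is not a root of unity, the scalars $1-q_i^{\pm 2}$ are units in $L$, so we may solve for $E_{\alpha_i}K_{-2\varpi_i}$ and $F_{\alpha_i}K_{\alpha_i-2\varpi_i}$ and conclude that both lie in $U_q^{\text{fin}}$.

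There is no real obstacle here; the only thing to be slightly careful about is the normalisation convention used in the excerpt for $\langle\,,\rangle$ (recall $\langle\varpi_j,\alpha_i\rangle=d_i\delta_{ij}$, not $\delta_{ij}$), so that the scalars arising in the $\ad$-computations come out as $q_i^{\pm 2}$ rather than $q^{\pm 2}$, and one must invoke the hypothesis that $q$ is not a root of unity to ensure these are nonzero.
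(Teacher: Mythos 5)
Your proof is correct and follows essentially the same route as the paper: the first claim is read off from the Proposition $U_q^{\text{fin}}=\bigcap_i U_q^{i,\text{fin}}$ together with the Lemma characterising local nilpotency of $\ad(E_{\alpha_i})$, $\ad(F_{\alpha_i})$ on $K_\lambda$, and the second claim comes from the stability of $U_q^{\text{fin}}$ under $\ad$ together with the computations $\ad(E_{\alpha_i})(K_{-2\varpi_i})=(1-q_i^{-2})E_{\alpha_i}K_{-2\varpi_i}$ and $\ad(F_{\alpha_i})(K_{-2\varpi_i})=(1-q_i^2)F_{\alpha_i}K_{\alpha_i-2\varpi_i}$, exactly as in the paper. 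Your added remarks about the normalisation of $\langle\,,\rangle$ and the use of $q$ not being a root of unity to invert the scalars are correct and make explicit details the paper leaves implicit.
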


\begin{proof}
That $K_\lambda\in U_q^{\text{fin}}$ if and only if $\lambda\in -2P^+$ follows immediately from the Proposition and the above discussion. But $U_q^{\text{fin}}$ is evidently stable under the adjoint action. So, since $\ad(E_{\alpha_i})(K_{-2\varpi_i})=(1-q_i^{-2})E_{\alpha_i}K_{-2\varpi_i}$ and $\ad(F_{\alpha_i})(K_{-2\varpi_i})=(1-q_i^2)F_{\alpha_i}K_{\alpha_i}K_{-2\varpi_i}$, the last part follows.
\end{proof}

Thus we see that $U_q$ is generated over $U_q^{\text{fin}}$ by all $K_\lambda$'s. However, more can be said. Let $S=\{K_{-2\lambda} : \lambda\in P^+\}$. By the above $S\subseteq U_q^{\text{fin}}$, and in fact it is an Ore set in it since it is normal: for $u\in U_q^{\text{fin}}$, we have
$$
K_{-2\lambda}u=K_{-2\lambda}uK_{-2\lambda}^{-1}K_{-2\lambda}=\ad(K_{-2\lambda})(u)K_{-2\lambda}\in U_q^{\text{fin}}K_{-2\lambda}
$$
since $U_q^{\text{fin}}$ is stable under the adjoint action (and similarly $uK_{-2\lambda}\in K_{-2\lambda}U_q^{\text{fin}}$). Thus we may form the localisation $S^{-1}U_q^{\text{fin}}$ which is of course contained in $U_q$, and which by the above contains $E_{\alpha_i}$ and $F_{\alpha_i}K_{\alpha_i}$ for every $1\leq i\leq n$. Finally, let $0=r_1,\ldots, r_m$ be a set of coset representatives for $P/2P$. Then we have:

\begin{thm}[{\cite[Theorem 6.4]{adjoint}}] The $L$-algebra $S^{-1}U_q^{\text{fin}}$ is generated by all $E_{\alpha_i}$, $F_{\alpha_i}K_{\alpha_i}$ and $K_{\pm 2\varpi_i}$ ($1\leq i\leq n$). Moreover $U_q$ is free over $S^{-1}U_q^{\text{fin}}$ with basis $1=K_{r_1},\ldots, K_{r_m}$.
\end{thm}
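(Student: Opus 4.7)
Let $A\subseteq U_q$ denote the $L$-subalgebra generated by the $E_{\alpha_i}$, $F_{\alpha_i}K_{\alpha_i}$ and $K_{\pm 2\varpi_i}$ for $1\leq i\leq n$. The inclusion $A\subseteq S^{-1}U_q^{\text{fin}}$ is immediate from the preceding corollary: each listed generator may be written as a product of an element of $U_q^{\text{fin}}$ and a power of some $K_{-2\varpi_i}\in S$, for instance $E_{\alpha_i}=(E_{\alpha_i}K_{-2\varpi_i})\cdot K_{-2\varpi_i}^{-1}$ and $F_{\alpha_i}K_{\alpha_i}=(F_{\alpha_i}K_{\alpha_i-2\varpi_i})\cdot K_{-2\varpi_i}^{-1}$.

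The strategy for the bulk of the proof is to exhibit $A$ as the degree zero component of a $P/2P$-grading on $U_q$. Define this grading by assigning $\deg E_{\alpha_i}=\bar 0$, $\deg F_{\alpha_i}=\overline{\alpha_i}$ and $\deg K_\lambda=\overline{\lambda}$. All defining relations of $U_q$ are then homogeneous; the only subtle check is the commutator $[E_{\alpha_i},F_{\alpha_i}]=(K_{\alpha_i}-K_{-\alpha_i})/(q_i-q_i^{-1})$, where both $K_{\alpha_i}$ and $K_{-\alpha_i}$ have the same degree $\overline{\alpha_i}$ in $P/2P$. Every generator of $A$ has degree $\bar 0$, so $A\subseteq (U_q)_{\bar 0}$. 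For the reverse inclusion, a PBW monomial $\boldsymbol{F^r}K_\lambda \boldsymbol{E^s}$ has degree $\overline{\lambda+\sum_j r_j\beta_j}$, which vanishes exactly when $\lambda+\sum_j r_j\beta_j\in 2P$. Using $F_{\alpha_i}=(F_{\alpha_i}K_{\alpha_i})\cdot K_{-\alpha_i}$, and the fact that each $F_{\beta_j}K_{\beta_j}$ lies in $A$ (after expressing $F_{\beta_j}$ as a polynomial in $F_{\alpha_i}$'s of total weight $-\beta_j$ and commuting $K$-factors past it), such a monomial rearranges to $a\cdot K_\nu\cdot\boldsymbol{E^s}$ with $a\in A$ and $\nu=\lambda-\sum_j r_j\beta_j\in 2P$, so that $K_\nu\in L[K_{\pm 2\varpi_i}]\subseteq A$, while $\boldsymbol{E^s}\in U_q^+\subseteq A$. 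This yields $A=(U_q)_{\bar 0}$, and since the $K_{r_j}$ are coset representatives for $P/2P$, left multiplication gives $A$-module isomorphisms $(U_q)_{\bar 0}\xrightarrow{\sim}(U_q)_{\overline{r_j}}$ and hence $U_q=\bigoplus_{j=1}^m A\cdot K_{r_j}$, as required for the freeness statement.

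It remains to prove $S^{-1}U_q^{\text{fin}}\subseteq A$. One first verifies directly from the explicit formulas for $\ad(E_{\alpha_i})$, $\ad(F_{\alpha_i})$ and $\ad(K_\lambda)$ that the adjoint action of $U_q$ on itself preserves the $P/2P$-grading. Combined with the quantum analogue of Kostant's theorem due to Joseph--Letzter \cite{adjoint}, which exhibits $U_q^{\text{fin}}$ as the direct sum of the ad-orbits $\ad(U_q)(K_{-2\lambda})$ for $\lambda\in P^+$, and the observation that each $K_{-2\lambda}$ has degree $\bar 0$, this yields $U_q^{\text{fin}}\subseteq(U_q)_{\bar 0}$. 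Since $S\subseteq(U_q)_{\bar 0}$ the inclusion persists under localisation, giving $S^{-1}U_q^{\text{fin}}\subseteq(U_q)_{\bar 0}=A$. The main obstacle is precisely this last step, which rests on the nontrivial quantum Kostant decomposition of $U_q^{\text{fin}}$; without it one would need to argue directly that $U_q^{\text{fin}}\cap (U_q)_{\overline{\mu}}=0$ for every $\overline{\mu}\neq\bar 0$, which does not appear to follow from the elementary material developed in this subsection alone.
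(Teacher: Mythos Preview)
The paper does not prove this theorem; it is quoted verbatim from Joseph--Letzter \cite[Theorem 6.4]{adjoint} and used as a black box (in particular, the proof of the integral analogue in the next subsection invokes it twice). So there is no ``paper's own proof'' to compare against here.

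Your argument is correct. The $P/2P$-grading is a clean device: checking homogeneity of the relations, identifying $A$ with the degree-zero component via the PBW rewriting (the fact that $F_{\beta}K_{\beta}$ lies in the subalgebra generated by the $F_{\alpha_i}K_{\alpha_i}$ is used later in the paper, in Section~3.3, with a citation to \cite{Nico2}, but your direct commutation argument is fine), and reading off freeness from the grading all go through as you describe. You have also correctly isolated where the real content lies: the inclusion $U_q^{\text{fin}}\subseteq (U_q)_{\bar 0}$ does not follow from the elementary computations of this subsection, and you import the Joseph--Letzter decomposition $U_q^{\text{fin}}=\bigoplus_{\lambda\in P^+}\ad(U_q)(K_{-2\lambda})$ to obtain it. That decomposition is itself a substantial result from \cite{adjoint}, so in effect you have reduced one Joseph--Letzter theorem to another of comparable depth from the same paper; this is perfectly legitimate, but worth being aware of. Your closing remark already acknowledges this honestly.
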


\subsection{Integral form of $U_q^{\text{fin}}$}\label{intfinite1} Now working with integral forms, the $R$-Hopf algebra $U^{\text{res}}$ acts on itself via the adjoint action and, moreover, this action preserves $U$ by \cite[Lemma 1.2]{Tanisaki2}. Hence we may define 
$$
U^{\text{fin}}=\{u\in U : \ad(U^{\text{res}})(u) \text{ is finitely generated over } R\}.
$$
This will be the integral form of $U_q^{\text{fin}}$. Note that since $R$ is Noetherian, $U^{\text{fin}}$ is stable under the adjoint action of $U^{\text{res}}$. Moreover, since the adjoint action makes $U$ into a $U^{\text{res}}$-module algebra, it follows that $U^{\text{fin}}$ is an $R$-subalgebra of $U$. This fact is also implied by the next result.

\begin{lem} We have
$$
U^{\text{fin}}=U_q^{\text{fin}}\cap U=\{u\in U : \ad(U)(u) \text{ is finitely generated over } R\}.
$$
In particular, for $\lambda\in P^+$ and $1\leq i\leq n$, $K_{-2\lambda}, E_{\alpha_i}K_{-2\varpi_i}, F_{\alpha_i}K_{\alpha_i-2\varpi_i}\in U^{\text{fin}}$.
\end{lem}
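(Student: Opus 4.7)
Write $A = U^{\text{fin}}$ (as defined via $\ad(U^{\text{res}})$), $B = U_q^{\text{fin}} \cap U$, and $C = \{u \in U : \ad(U)(u) \text{ is finitely generated over } R\}$. I would prove the three-way equality by a short cycle of inclusions $A \subseteq C \subseteq B \subseteq A$. The key structural facts I intend to use are that $U \subseteq U^{\text{res}} \subseteq U_q$, that $U \otimes_R L = U_q = U^{\text{res}} \otimes_R L$, that $U$ is free over $R$ with a PBW basis which is simultaneously an $L$-basis of $U_q$ (recalled in Section \ref{Lusztig}), that $R$ is Noetherian, and the already-cited fact that the adjoint $U^{\text{res}}$-action on itself preserves $U$.

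For the inclusions $A \subseteq C$ and $C \subseteq B$, I expect to be brief. Since $U \subseteq U^{\text{res}}$, we have $\ad(U)(u) \subseteq \ad(U^{\text{res}})(u)$; if the right-hand side is finitely generated over $R$ then so is the left, because $R$ is Noetherian. This gives $A \subseteq C$. If $u \in C$, then tensoring with $L$ converts the finitely generated $R$-module $\ad(U)(u)$ into the finite-dimensional $L$-space $\ad(U)(u) \otimes_R L$. But since $U \otimes_R L = U_q$, this tensor product is naturally identified with $\ad(U_q)(u)$, so $u \in U_q^{\text{fin}}$ and hence $u \in B$.

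The step I expect to be the main obstacle, though still elementary, is $B \subseteq A$: we need to upgrade $L$-finite-dimensionality of $\ad(U_q)(u)$ to $R$-finite-generation of $\ad(U^{\text{res}})(u)$. The plan is as follows. Fix $u \in B$. Because $\ad(U^{\text{res}})(u) \subseteq U$ (by \cite[Lemma 1.2]{Tanisaki2}) and $U^{\text{res}} \subseteq U_q$, we have
$$
\ad(U^{\text{res}})(u) \subseteq \ad(U_q)(u) \cap U.
$$
Now $\ad(U_q)(u)$ is finite-dimensional over $L$, so an $L$-basis of it can be written as an $L$-linear combination of only finitely many PBW monomials from Section \ref{Lusztig}; let $F$ be the finite-rank free $R$-submodule of $U$ spanned by those monomials. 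Unique expansion in the PBW basis shows that $F \otimes_R L \cap U = F$, so $\ad(U_q)(u) \cap U \subseteq F$. As $F$ is finitely generated over the Noetherian ring $R$, so is its submodule $\ad(U^{\text{res}})(u)$, giving $u \in A$.

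Finally, for the ``In particular'' assertion, the three elements $K_{-2\lambda}$, $E_{\alpha_i}K_{-2\varpi_i}$ and $F_{\alpha_i}K_{\alpha_i-2\varpi_i}$ visibly lie in $U$, and they lie in $U_q^{\text{fin}}$ by the Corollary immediately preceding the Lemma. Hence they lie in $U_q^{\text{fin}} \cap U = U^{\text{fin}}$, which is what we want.
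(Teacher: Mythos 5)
Your proof is correct and reaches the same conclusion by a route that is essentially the same in spirit but differs in two modest ways from the paper's. First, the paper proves the two equalities $U^{\text{fin}} = U_q^{\text{fin}} \cap U$ and $U_q^{\text{fin}} \cap U = C$ separately, noting the second is "completely analogous"; you instead close a cycle of three one-sided inclusions $A \subseteq C \subseteq B \subseteq A$, which avoids repeating the harder direction. Second, and more substantively, for the key finiteness step (your $B \subseteq A$) the paper invokes \cite[Proposition~2.7]{Wadsley1}: $\ad(U^{\text{res}})(u)$ is a $\pi$-adically separated lattice inside the finite-dimensional $L$-space $\ad(U_q)(u)$, hence is finitely generated. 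You instead give a direct argument: the finite-dimensional space $\ad(U_q)(u)$ is contained in the $L$-span of finitely many PBW monomials, and intersecting with $U$ (using unique PBW expansion) traps $\ad(U^{\text{res}})(u)$ inside a finite free $R$-module, from which finite generation follows by Noetherianity. Your version is more self-contained and makes the role of the PBW basis explicit, whereas the paper's is shorter by relying on an already-stated lattice lemma; both ultimately use the same structural facts ($\ad(U^{\text{res}})$ preserves $U$, and $U^{\text{res}}$, $U$ are free over $R$ with PBW bases). The "in particular" clause is handled identically in both.
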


\begin{proof} We first show that $U^{\text{fin}}=U_q^{\text{fin}}\cap U$. Clearly the right hand side contains $U^{\text{fin}}$. Conversely, if $u\in U_q^{\text{fin}}\cap U$ then $\ad(U^{\text{res}})(u)\subset U$ is a lattice inside $\ad(U_q)(u)$ which by definition is finite dimensional over $L$. Hence this lattice is finitely generated over $R$ by \cite[Proposition 2.7]{Wadsley1} (this requires for $\ad(U^{\text{res}})(u)$ to be $\pi$-adically separated, which holds because it is contained inside $U^{\text{res}}$ which is itself free over $R$ and hence $\pi$-adically separated). A completely analogous argument shows that
$$
U_q^{\text{fin}}\cap U=\{u\in U : \ad(U)(u) \text{ is finitely generated over } R\}.
$$
The last part follows immediately from our earlier calculations.
\end{proof}

Thus we see that the set $S$ is contained in $U^{\text{fin}}$ and is Ore in $U^{\text{fin}}$ by the same argument as for $U_q^{\text{fin}}$. Note that it follows from the Lemma that $S^{-1}U^{\text{fin}}=S^{-1}U_q^{\text{fin}}\cap U$. Indeed, the left hand side is clearly contained in the right hand side, and moreover if $u\in S^{-1}U_q^{\text{fin}}\cap U$, then there is $s\in S$ such that $su\in U_q^{\text{fin}}$. On the other hand, $s\in U$ so $su\in U_q^{\text{fin}}\cap U=U^{\text{fin}}$. Thus $u\in S^{-1}U^{\text{fin}}$.

%\begin{remark} Note that the intersection between the $L$-algebra generated by the $K_{\pm 2\varpi_i}$, i.e. the group algebra $L(2P)$, and $U^{\text{fin}}$ is bigger than just $R(2P)$. Indeed, $U$ contains the elements $\frac{K_{\alpha_i}-K_{\alpha_i}^{-1}}{q_i-q_i^{-1}}$ for $1\leq i\leq n$. Fix such an $i$, and let $\beta_i$ be a coset representative for $\alpha_i$ in $P/2P$, so that $\alpha_i-\beta_i\in 2P$. Note that we also have $\alpha_i+\beta_i\in 2P$. Then $K_{\alpha_i+\beta_i}, K_{\alpha_i-\beta_i}\in S^{-1}U^{\text{fin}}$, and so we see that
%$$
%T_{\alpha_i}:=K_{\beta_i}\cdot \frac{K_{\alpha_i}-K_{\alpha_i}^{-1}}{q_i-q_i^{-1}}=\frac{K_{\alpha_i+\beta_i}-K_{\alpha_i-\beta_i}^{-1}}{q_i-q_i^{-1}}\in S^{-1}U_q^{\text{fin}}\cap U=S^{-1}U^{\text{fin}}.
%$$
%On the other hand $T_{\alpha_i}$ is clearly not in $R(2P)$. Note however that we have the relation
%\begin{align*}
%T_{\alpha_i}&=K_{\beta_i-\alpha_i}K_{\alpha_i}(E_{\alpha_i}F_{\alpha_i}-F_{\alpha_i}E_{\alpha_i})\\
%&=K_{\beta_i-\alpha_i}(E_{\alpha_i}\cdot F_{\alpha_i}K_{\alpha_i}-q_i^{-2}F_{\alpha_i}K_{\alpha_i}\cdot E_{\alpha_i})
%\end{align*}
%and so for each $1\leq i\leq n$, $T_{\alpha_i}$ belongs to the $R$-algebra generated by all $E_{\alpha_j}$, $F_{\alpha_j}K_{\alpha_j}$ and $K_{\pm 2\varpi_j}$ since $\beta_i-\alpha_i\in 2P$.
%\end{remark}

\begin{prop} The $R$-algebra $S^{-1}U^{\text{fin}}$ is generated by all $E_{\alpha_i}$, $F_{\alpha_i}K_{\alpha_i}$ and $K_{\pm 2\varpi_i}$ ($1\leq i\leq n$). Moreover $U$ is free over $S^{-1}U^{\text{fin}}$ with basis $1=K_{r_1},\ldots, K_{r_m}$.
\end{prop}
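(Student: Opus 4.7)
Let $A_0$ denote the $R$-subalgebra of $S^{-1}U^{\text{fin}}$ generated by the listed elements; the inclusion $A_0\subseteq S^{-1}U^{\text{fin}}$ is immediate from the previous lemma. My plan is to first establish the decomposition $U=\bigoplus_{i=1}^{m} A_0\cdot K_{r_i}$, and then to deduce $A_0=S^{-1}U^{\text{fin}}$ by intersecting with the Joseph--Letzter statement over $L$.

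Directness of the sum will be inherited from the $L$-version: since $A_0\subseteq S^{-1}U_q^{\text{fin}}$ and $U_q=\bigoplus_{i}S^{-1}U_q^{\text{fin}}\cdot K_{r_i}$ by Joseph--Letzter, any $R$-linear relation among the $A_0K_{r_i}$ holds inside $U_q$ and must therefore be trivial. For spanning, I would first verify that $M:=\sum_{i}A_0K_{r_i}$ is an $R$-subalgebra of $U$. Each generator of $A_0$ is an eigenvector of $\ad(K_\mu)$ for every $\mu\in P$, with eigenvalue a power of $q^{1/d}$ and hence in $R^\times$; so $K_\mu$ normalises $A_0$, and a product of the shape $A_0K_r\cdot A_0K_{r'}$ collapses to $A_0K_{r+r'}$. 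Absorbing $K_{2\eta}\in A_0$ from the decomposition $r+r'=2\eta+r_k$ then shows $M$ is closed under multiplication.

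Next I would check that $M$ contains every generator of $U$. Writing $\lambda\in P$ as $2\nu+r_k$ gives $K_\lambda\in A_0K_{r_k}$; using that $\overline{-\alpha_i}=\bar{\alpha_i}$ in the $2$-torsion group $P/2P$, one similarly obtains $F_{\alpha_i}=(F_{\alpha_i}K_{\alpha_i})K_{-\alpha_i}\in A_0\cdot A_0K_{r_k}\subseteq A_0K_{r_k}$ for the coset representative $r_k$ of $\bar{\alpha_i}$. The subtle case is $[K_{\alpha_i};0]$: a direct polynomial expression in the generators would require inverting $q_i-q_i^{-1}$, which is not a unit in $R$. The idea is instead to use the commutator relation $[E_{\alpha_i},F_{\alpha_i}]=[K_{\alpha_i};0]$ from the presentation of $U$ together with the multiplicative closure of $M$ established above, which places $[K_{\alpha_i};0]$ in $A_0K_{r_k}$ for free. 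This gives $U\subseteq M$, and the reverse inclusion is obvious.

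Finally, for $u\in S^{-1}U^{\text{fin}}=S^{-1}U_q^{\text{fin}}\cap U$, expanding $u=\sum_i a_iK_{r_i}$ via the integral decomposition just proved and comparing with the Joseph--Letzter expansion of $u\in U_q$ (whose non-trivial coset components vanish) forces $a_i=0$ for $i\neq 1$, whence $u=a_1\in A_0$. I expect the main obstacle to be the $[K_{\alpha_i};0]$ step: the failure of $q_i-q_i^{-1}$ to be a unit in $R$ blocks any naive Cartan-only argument, and only the commutator identity in the presentation of $U$ coupled with careful bookkeeping of cosets in $P/2P$ gets us through.
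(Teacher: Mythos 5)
Your proof is correct and takes essentially the same approach as the paper: show $U = \bigoplus_j A_0 K_{r_j}$, with directness inherited from the Joseph--Letzter decomposition over $L$ and spanning via $q$-commutation and coset bookkeeping in $P/2P$, then deduce $A_0 = S^{-1}U^{\text{fin}}$ by comparison with the $L$-expansion. Your explicit treatment of $[K_{\alpha_i};0]$ via the commutator relation $[E_{\alpha_i},F_{\alpha_i}]=[K_{\alpha_i};0]$ (noting that $q_i-q_i^{-1}$ is not a unit in $R$) is a clean articulation of a step the paper leaves implicit under the phrase ``all the generators of $U$ may be obtained.''
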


\begin{proof}
Let $U'$ be the $R$-algebra generated by all $E_{\alpha_i}$, $F_{\alpha_i}K_{\alpha_i}$ and $K_{\pm 2\varpi_i}$ ($1\leq i\leq n$). Then $U'\subseteq S^{-1}U^{\text{fin}}$. We claim that $U$ is free over $U'$ with basis $K_{r_1},\ldots, K_{r_m}$. By Theorem \ref{Joseph} this set is linearly independent over $U'$, so we just need to show that it spans $U$. We first observe that $K_{r_1},\ldots, K_{r_m}$ generate $U$ as a ring over $U'$. Indeed, any $K_{\lambda}$ is in the $R(2P)$-span of $K_{r_1},\ldots, K_{r_m}$. In particular all $K_{-\alpha_i}$ lie in it, and so we see that all the generators of $U$ may be obtained from the generators of $U'$ and $K_{r_1},\ldots, K_{r_m}$ as required.

Now using the fact that $K_{r_1},\ldots, K_{r_m}$ $q$-commute with the generators of $U'$, we see that any element of $u$ is a sum of terms of the form $u\cdot K_{r_{j_1}+\ldots+r_{j_s}}$ where $u\in U'$ and $1\leq j_1,\ldots, j_s\leq m$. But now, if $r_j$ is a coset representative for $r_{j_1}+\ldots+r_{j_s}$, then the above equals
$$
(u\cdot K_{(r_{j_1}+\ldots+r_{j_s})-r_j}) K_{r_j}
$$
where now $u\cdot K_{(r_{j_1}+\ldots+r_{j_s})-r_j}\in U'$. Thus we get that $K_{r_1},\ldots, K_{r_m}$ spans $U$ over $U'$ as required.

We can now see that $S^{-1}U^{\text{fin}}\subseteq U'$. Indeed, if $u\in S^{-1}U^{\text{fin}}$ then we may write
$$
u=\sum_{j=1}^m u_j K_{r_j}
$$
with $u_1, \ldots, u_m\in U'$. But $u\in S^{-1}U_q^{\text{fin}}$ so by Theorem \ref{Joseph} we must have $u_2=\ldots=u_m=0$, i.e. $u\in U'$.
\end{proof}

\subsection{Reduction modulo $\pi$}\label{intfinite2} We now use the previous section to investigate $\Ufbar:=U^{\text{fin}}/\pi U^{\text{fin}}$. We first recall a few things about $\Ubar$. Because $q\equiv 1\pmod{\pi}$ we have, for any $1\leq i\leq n$,
$$
K_{\alpha_i}-K_{\alpha_i}^{-1}=(q_i-q_i^{-1})[E_{\alpha_i},F_{\alpha_i}]\equiv 0\pmod{\pi}
$$
and so $K_{\alpha_i}^2=1$ in $\Ubar$. This implies that $K_\mu=1$ in $\Ubar$ for all $\mu\in 2Q$.

Now if $d$ denotes the index of $Q$ inside $P$, then for each $\lambda\in -2P^+$ we have $d\lambda\in -2Q^+$. Hence $K_\lambda^d\equiv 1\pmod{\pi}$. It follows that the image of $S^{-1}U^{\text{fin}}$ in $\Ubar$ is the same as the image of $U^{\text{fin}}$. Note that by Lemma \ref{intfinite1} the image of $U^{\text{fin}}$ in $\Ubar$ is isomorphic to $\Ufbar$. From now on we will always view $\Ufbar$ as a $k$-subalgebra of $\Ubar$. By Proposition \ref{intfinite1}, we now immediately deduce the following:

\begin{lem} $\Ufbar$ is generated as a $k$-algebra by all $E_{\alpha_i}$, $F_{\alpha_i}K_{\alpha_i}$ and $K_{2\varpi_i}$ ($1\leq i\leq n$).
\end{lem}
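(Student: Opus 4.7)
The plan is short because most of the work is already done by Proposition \ref{intfinite1} and the preceding paragraph. I would proceed in three steps.

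First, I would observe that the image of $S^{-1}U^{\text{fin}}$ in $\Ubar$ coincides with the image of $U^{\text{fin}}$. Indeed, as noted just before the lemma, for every $\lambda \in -2P^+$ we have $d\lambda \in -2Q^+$, and since $K_{\alpha_i}^2 = 1$ in $\Ubar$ we obtain $K_\lambda^d \equiv 1 \pmod{\pi}$. Consequently each generator $K_{-2\lambda}$ of $S$ is already a unit in the image of $U^{\text{fin}}$ inside $\Ubar$, with inverse $K_{-2\lambda}^{d-1}$, so inverting $S$ adds nothing modulo $\pi$. Combining this with the fact (also recorded before the lemma) that the natural map $U^{\text{fin}}/\pi U^{\text{fin}} \to \Ubar$ is an inclusion, this identifies the image in question with $\Ufbar$.

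Second, I would apply Proposition \ref{intfinite1}, which says that $S^{-1}U^{\text{fin}}$ is generated as an $R$-algebra by $E_{\alpha_i}$, $F_{\alpha_i}K_{\alpha_i}$ and $K_{\pm 2\varpi_i}$ for $1 \leq i \leq n$. Reducing modulo $\pi$ and using the identification from the previous step, the images of these elements generate $\Ufbar$ as a $k$-algebra.

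Third, I would remove the redundancy: since $K_{2\varpi_i}^d \equiv 1 \pmod{\pi}$, we have $K_{-2\varpi_i} = K_{2\varpi_i}^{d-1}$ in $\Ufbar$, so $K_{-2\varpi_i}$ can be dropped from the list of generators. This yields the asserted generating set $\{E_{\alpha_i},\, F_{\alpha_i}K_{\alpha_i},\, K_{2\varpi_i} : 1 \leq i \leq n\}$.

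There is no real obstacle; the only point that deserves care is the first step, namely to justify that inverting $S$ does not enlarge the image in $\Ubar$. Once this is in hand, the statement is an immediate corollary of Proposition \ref{intfinite1}.
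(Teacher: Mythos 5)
Your proof is correct and follows exactly the route the paper takes: reduce $K_\lambda^d\equiv 1\pmod{\pi}$ for $\lambda\in-2P^+$ to conclude that the images of $S^{-1}U^{\text{fin}}$ and $U^{\text{fin}}$ in $\Ubar$ agree, identify this image with $\Ufbar$ via Lemma \ref{intfinite1}, and then reduce Proposition \ref{intfinite1} modulo $\pi$. The observation that $K_{-2\varpi_i}=K_{2\varpi_i}^{d-1}$ is redundant as a generator is the same tidy-up the paper leaves implicit.
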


Recall that $U(\g_k)$ has the following presentation: it is the $k$-algebra with generators $e_i$, $f_i$ and $h_i$ ($1\leq i\leq n$) and relations:
\begin{align}
&[e_i, f_j]=\delta_{ij}h_i,\quad [h_i, h_j]=0\label{rel1}\\
&[h_i, e_j]=a_{ij}e_j, \quad [h_i, f_j]=-a_{ij}f_j\label{rel2}\\
& \ad(e_i)^{1-a_{ij}}(e_j)=\ad(f_i)^{1-a_{ij}}(f_j)=0\quad (i\neq j)\label{rel3}
\end{align}
where $a_{ij}:=\langle \alpha_j, \alpha_i^\vee\rangle$.

To simplify notation, let $H_i:=\frac{K_{\alpha_i}-K_{\alpha_i}^{-1}}{q_i-q_i^{-1}}$ ($1\leq i\leq n$). By Proposition \ref{Lusztig}(i), there is a surjective algebra homomorphism $\theta:\Ubar\to U(\g_k)$ given by $E_{\alpha_i}\mapsto e_i$, $F_{\alpha_i}\mapsto f_i$, $H_i\mapsto h_i$ and $K_\lambda\mapsto 1$. We now show that this map has a splitting.

\begin{prop} There is an injective algebra homomorphism $\phi: U(\g_k)\to \Ufbar$ given by $e_i\mapsto E_{\alpha_i}$, $f_i\mapsto F_{\alpha_i}K_{\alpha_i}$ and $h_i\mapsto H_iK_{\alpha_i}$.
\end{prop}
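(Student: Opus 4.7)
The plan is to verify that the proposed images lie in $\Ufbar$ and satisfy the defining relations of $U(\g_k)$, then to obtain injectivity by exhibiting a retraction. The images $E_{\alpha_i}$ and $F_{\alpha_i}K_{\alpha_i}$ are among the generators of $\Ufbar$ identified in the previous lemma. For $H_iK_{\alpha_i}$, the computation
\[
[E_{\alpha_i}, F_{\alpha_i}K_{\alpha_i}] = \bigl(E_{\alpha_i}F_{\alpha_i} - q_i^2 F_{\alpha_i}E_{\alpha_i}\bigr)K_{\alpha_i} = \bigl((1-q_i^2)F_{\alpha_i}E_{\alpha_i} + H_i\bigr)K_{\alpha_i}
\]
in $U$, obtained by moving $K_{\alpha_i}$ past $E_{\alpha_i}$ and using $[E_{\alpha_i}, F_{\alpha_i}] = H_i$, shows after reducing modulo $\pi$ (where $q_i^2 \equiv 1$) that $H_iK_{\alpha_i}$ equals a commutator of elements of $\Ufbar$, and so lies in $\Ufbar$.

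The central observation for checking the relations is that in $\Ubar$ every $K_\lambda$ is central --- the relation $K_\lambda E_{\alpha_i}K_{-\lambda} = q^{\langle \lambda, \alpha_i\rangle}E_{\alpha_i}$ collapses to commutativity since $q \equiv 1 \pmod{\pi}$, and similarly for the $F$'s --- and $K_{\alpha_i}^2 = 1$ in $\Ubar$, since the identity $(q_i - q_i^{-1})H_i = K_{\alpha_i} - K_{\alpha_i}^{-1}$ has vanishing left-hand side modulo $\pi$. Relations (\ref{rel1}) then follow: the $i = j$ commutator case comes from the calculation above, the $i \neq j$ case is clear from $[E_{\alpha_i}, F_{\alpha_j}] = 0$ in $U_q$ and the centrality of $K_{\alpha_j}$, and $[H_iK_{\alpha_i}, H_jK_{\alpha_j}] = 0$ because the $H$'s and $K$'s all sit in the commutative Cartan subalgebra. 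For (\ref{rel2}) I would unfold $[H_i, E_{\alpha_j}]$ using $K_{\alpha_i}E_{\alpha_j} = q_i^{a_{ij}}E_{\alpha_j}K_{\alpha_i}$ and $K_{\alpha_i} \equiv K_{\alpha_i}^{-1} \pmod{\pi}$ to obtain $E_{\alpha_j}K_{\alpha_i}\,[a_{ij}]_{q_i}$; since $[a_{ij}]_{q_i} \equiv a_{ij} \pmod{\pi}$, multiplying by $K_{\alpha_i}$ and using $K_{\alpha_i}^2 = 1$ yields $[\phi(h_i), \phi(e_j)] = a_{ij}\phi(e_j)$, and the $F$-analogue is symmetric.

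For the Serre relations (\ref{rel3}), the $E$-side is just the mod $\pi$ reduction of the quantum Serre relation, noting that ${1-a_{ij}\brack l}_{q_i} \equiv \binom{1-a_{ij}}{l} \pmod{\pi}$. For the $F$-side the centrality of the $K$'s modulo $\pi$ lets all the $K$-factors be collected at the end: by induction $\ad(\phi(f_i))^n(\phi(f_j)) = K_{n\alpha_i + \alpha_j}\,\ad(F_{\alpha_i})^n(F_{\alpha_j})$ in $\Ubar$, and at $n = 1 - a_{ij}$ the last factor vanishes by the reduced quantum $F$-Serre relation. Finally, injectivity is immediate from the retraction provided by the map $\theta: \Ubar \to U(\g_k)$ of Proposition \ref{Lusztig}: since $\theta(K_\lambda) = 1$, we have $\theta\circ\phi = \id$ on the generators $e_i, f_i, h_i$. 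The only non-trivial bookkeeping throughout is tracking the $K$-factors, especially in the Serre-relation computation, but their centrality modulo $\pi$ makes all manipulations routine.
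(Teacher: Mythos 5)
Your proposal is correct and follows essentially the same structure as the paper's proof: verify the three families of relations using the centrality of the $K$'s and the identity $K_{\alpha_i}^2=1$ in $\Ubar$, then deduce injectivity from the retraction $\theta\circ\phi=\id$. The one modest difference is in the Serre relations: the paper invokes Jantzen's Lemma 4.18 for the identity $\ad(F_{\alpha_i})^{1-a_{ij}}(F_{\alpha_j}K_{\alpha_j})=0$ in $U$ and then reduces the Hopf adjoint to the commutator $\ad'$ modulo $\pi$, whereas you reduce the presentation's quantum Serre relations directly (using ${1-a_{ij}\brack l}_{q_i}\equiv\binom{1-a_{ij}}{l}$) and track the $K$-factors by induction --- both rest on the same underlying facts and are equally valid.
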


\begin{proof}
We first check that the map is well-defined, i.e. that the relations (\ref{rel1}), (\ref{rel2}) and (\ref{rel3}) hold in $\Ufbar$ after substituting $E_{\alpha_i}$, $F_{\alpha_i}K_{\alpha_i}$ and $H_iK_{\alpha_i}$ for $e_i$, $f_i$ and $h_i$ respectively.

The relations (\ref{rel1}) hold because $[E_{\alpha_i},F_{\alpha_j}]=\delta_{ij}H_i$ in $U$ and $K_{\alpha_i}$ is central modulo $\pi$, so that
$$
[E_{\alpha_i},F_{\alpha_j}K_{\alpha_j}]\equiv [E_{\alpha_i},F_{\alpha_j}] K_{\alpha_j}=\delta_{ij}H_iK_{\alpha_i} \pmod{\pi}.
$$
Also the fact that $H_i$ and $H_j$ commute is clear.

Next, in $U$ we have $K_{\alpha_i}E_{\alpha_j}=q_i^{a_{ij}}E_{\alpha_j}K_{\alpha_i}$ and $K_{\alpha_i}F_{\alpha_j}=q_i^{-a_{ij}}F_{\alpha_j}K_{\alpha_i}$. Hence we get
\begin{align*}
H_iE_{\alpha_j}-E_{\alpha_j}H_i&=H_iE_{\alpha_j}-\frac{K_{\alpha_i}q_i^{-a_{ij}}-K_{\alpha_i}^{-1}q_i^{a_{ij}}}{q_i-q_i^{-1}}E_{\alpha_j}\\
&=H_iE_{\alpha_j}-\left(q_i^{-a_{ij}}\frac{K_{\alpha_i}-K_{\alpha_i}^{-1}}{q_i-q_i^{-1}}-\frac{q_i^{a_{ij}}-q_i^{-a_{ij}}}{q_i-q_i^{-1}} K_{\alpha_i}^{-1} \right)\cdot E_{\alpha_j}\\
&=H_iE_{\alpha_j}-q_i^{-a_{ij}}H_iE_{\alpha_j}+[a_{ij}]_{q_i}K_{\alpha_i}^{-1}E_{\alpha_j}\\
&=(1-q_i^{-a_{ij}})H_iE_{\alpha_j}+[a_{ij}]_{q_i}K_{\alpha_i}^{-1}E_{\alpha_j}.
\end{align*}
A completely analogous calculation gives
$$
H_iF_{\alpha_j}-F_{\alpha_j}H_i=(1-q_i^{a_{ij}})H_iF_{\alpha_j}-[a_{ij}]_{q_i}K_{\alpha_i}^{-1}F_{\alpha_j}.
$$
Multiplying these two identities by $K_{\alpha_i}$ (and the second one by $K_{\alpha_j}$) and reducing modulo $\pi$, we obtain
$$
[H_iK_{\alpha_i}, E_{\alpha_j}]\equiv a_{ij} E_{\alpha_j}\quad\text{and}\quad [H_iK_{\alpha_i}, F_{\alpha_j}K_{\alpha_j}]\equiv -a_{ij} F_{\alpha_j}K_{\alpha_j}\pmod{\pi}
$$
by using the fact that $q\equiv 1\pmod{\pi}$ and that $K_{\alpha_i}$ is central in $\Ubar$. Hence the relations (\ref{rel2}) hold as required.

Finally, by \cite[Lemma 4.18]{Jantzen}, we have
\begin{equation}\label{rel4}
\ad(E_{\alpha_i})^{1-a_{ij}}(E_{\alpha_j})=0=\ad(F_{\alpha_i})^{1-a_{ij}}(F_{\alpha_j}K_{\alpha_j})
\end{equation}
in $U$. Now, for any $u\in U$, we have
$$
\ad(E_{\alpha_i})(u)=E_{\alpha_i}u-K_{\alpha_i}uK_{\alpha_i}^{-1}E_{\alpha_i}\equiv E_{\alpha_i}u-uE_{\alpha_i}\pmod{\pi}
$$
and
$$
\ad(F_{\alpha_i})(u)=(F_{\alpha_i}u-uF_{\alpha_i})K_{\alpha_i}\equiv (F_{\alpha_i}K_{\alpha_i})u-u(F_{\alpha_i}K_{\alpha_i})\pmod{\pi}
$$
since $K_{\alpha_i}$ is central modulo $\pi$. Reducing (\ref{rel4}) modulo $\pi$, we therefore obtain
$$
\ad'(E_{\alpha_i})^{1-a_{ij}}(E_{\alpha_j})\equiv 0\equiv \ad'(F_{\alpha_i}K_{\alpha_i})^{1-a_{ij}}(F_{\alpha_j}K_{\alpha_j})\pmod{\pi}
$$
where $\ad'(a)(b):=ab-ba$. Thus the relations (\ref{rel3}) hold as well.

Therefore the map $\phi$ is well-defined. But by considering the action on the generators, we see that the composite $\theta\circ\phi:U(\g_k)\to U(\g_k)$ is the identity. Hence $\phi$ is injective.
\end{proof}

Thus $U(\g_k)$ is a subalgebra of $\Ufbar$ and $\Ubar$, which are generated over it by the $K_\lambda$ with $\lambda$ in $2P$ and $P$ respectively. To finish establishing the structure of $\Ufbar$ and $\Ubar$, we need to investigate a corresponding algebra over $R$.

To this end, it will be useful to consider an alternative to $U_q^-$ in the quantum group. Let $U_q'^-$ be the $L$-subalgebra of $U_q$ generated by all $F_{\alpha_i}K_{\alpha_i}$ ($1\leq i\leq n$). By \cite[Corollary 4.5]{Nico2}, we in fact get that $F_\beta K_\beta\in U_q'^-$ for any positive root $\beta$. So from the PBW theorem for $U_q$ we see that $U_q'^-$ has an $L$-basis given by the monomials
\begin{equation}\label{eqn}
(F_{\beta_1}K_{\beta_1})^{a_1}\cdots (F_{\beta_N}K_{\beta_N})^{a_N}.
\end{equation}
Indeed, these are linearly independent, and the fact that they span follows from the fact that their span contains 1 and is preserved under left multiplication by any $F_{\alpha_i}K_{\alpha_i}$. That itself holds because if $u=(F_{\beta_1}K_{\beta_1})^{a_1}\cdots (F_{\beta_N}K_{\beta_N})^{a_N}$, then up to multiplying by a power of $q$, $u$ can be written as $F_{\beta_1}^{a_1}\cdots F_{\beta_N}^{a_N}K_\gamma$ where $\gamma=\sum a_i\beta_i$, and so we have
$$
F_{\alpha_i}K_{\alpha_i}u=q^b vK_{\gamma+\alpha_i}
$$
for some $v\in U_q^-$ of height $\gamma+\alpha_i$ and some $b\in\Z$. Hence this may be rearranged, using the PBW basis for $U_q^-$, into an element in the span of elements of the form (\ref{eqn}).

Thus we obtain $U_q\cong U_q'^-\otimes_L U_q^0\otimes_L U_q^+$ since the basis monomials
$$
F_{\beta_1}^{a_1}\cdots F_{\beta_N}^{a_N}K_\lambda E_{\beta_1}^{b_1}\cdots E_{\beta_N}^{b_N}
$$
can be rewritten as
$$
(F_{\beta_1}K_{\beta_1})^{a_1}\cdots (F_{\beta_N}K_{\beta_N})^{a_N}K_{\lambda-\gamma} E_{\beta_1}^{b_1}\cdots E_{\beta_N}^{b_N}
$$
with $\gamma$ as above.

We may now prove:

\begin{cor} The multiplication in $\Ubar$ gives rise to isomorphisms $U(\g_k)\otimes_k k(P/2Q)\cong \Ubar$ and $U(\g_k)\otimes_k k(2P/2Q)\cong \Ufbar$ respectively. In particular, $\Ufbar$ is Noetherian.
\end{cor}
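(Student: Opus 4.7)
The plan is to verify the multiplication map is a well-defined algebra homomorphism, check surjectivity on generators, and then establish injectivity via a comparison of PBW bases using a filtration argument. Noetherianity will then follow formally.

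First, since $q \equiv 1 \pmod{\pi}$, the commutation relations $K_\lambda E_{\alpha_i} = q^{\langle \lambda, \alpha_i\rangle} E_{\alpha_i} K_\lambda$ and its counterpart for $F_{\alpha_i}$ become trivial modulo $\pi$, so $k(P/2Q)$ is central in $\Ubar$. Thus the multiplication map $m: U(\g_k) \otimes_k k(P/2Q) \to \Ubar$, $u \otimes K_\mu \mapsto \phi(u) K_\mu$, is a well-defined algebra homomorphism. Using $K_{\alpha_i}^2 = 1$ in $\Ubar$, we have $F_{\alpha_i} = \phi(f_i) K_{\alpha_i}$ and $[K_{\alpha_i};0] = \phi(h_i) K_{\alpha_i}$, together with $E_{\alpha_i} = \phi(e_i)$, so all $k$-algebra generators of $\Ubar$ lie in the image of $m$, giving surjectivity.

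For injectivity, I would first determine $U^0/\pi U^0$. Using the relation $(q_i - q_i^{-1})[K_{\alpha_i};0] = K_{\alpha_i} - K_{\alpha_i}^{-1}$ and $q_i - q_i^{-1} \in \pi R$, one obtains a $k$-algebra isomorphism $U^0/\pi U^0 \cong k[H_1, \ldots, H_n] \otimes_k k(P/2Q)$, with explicit basis $\{H_1^{c_1}\cdots H_n^{c_n} K_\mu\}$ as $\mu$ runs over coset representatives of $P/2Q$. Combined with the triangular decomposition $U \cong U^- \otimes_R U^0 \otimes_R U^+$ and the $R$-PBW bases of $U^\pm$ from Section 2.2, this yields a $k$-basis of $\Ubar$ consisting of the PBW-type monomials $F_{\beta_1}^{a_1}\cdots F_{\beta_N}^{a_N} H_1^{c_1}\cdots H_n^{c_n} K_\mu E_{\beta_1}^{b_1}\cdots E_{\beta_N}^{b_N}$.

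I would then use a suitable multi-filtration on $\Ubar$ (derived from the $\Z^{2N}$-multifiltration of Section 2.1), combined with the analogous PBW multi-filtration on $U(\g_k)$ and with $k(P/2Q)$ placed in degree zero, so that $m$ is filtration-preserving. Since the multi-graded algebra of $U_q$ is $q$-commutative, modulo $\pi$ it becomes commutative, and the basis above gives $\gr(\Ubar) \cong k[\bar{E}_\beta, \bar{F}_\beta] \otimes_k (U^0/\pi U^0) \cong S(\g_k) \otimes_k k(P/2Q) = \gr(U(\g_k) \otimes_k k(P/2Q))$. The associated graded of $m$ is then a $k(P/2Q)$-linear map of polynomial extensions sending one generating set to another up to units in $k(P/2Q)$ (coming from the $K_{\alpha_i}$'s in $\phi(f_i)$ and $\phi(h_i)$), hence an isomorphism, so $m$ is too.

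The case of $\Ufbar$ follows at once: via the inclusion $k(2P/2Q) \hookrightarrow k(P/2Q)$, the restriction $U(\g_k) \otimes_k k(2P/2Q) \to \Ubar$ is injective with image inside $\Ufbar$, and surjectivity onto $\Ufbar$ follows from the generating set $\phi(e_i), \phi(f_i), K_{2\varpi_i}$ of the preceding Lemma. Noetherianity of $\Ufbar$ is then formal: $|2P/2Q| = |P/Q|$ is finite, so $k(2P/2Q)$ has finite $k$-dimension and $\Ufbar$ is a finitely generated module over the Noetherian ring $U(\g_k)$. The main obstacle is the structure of $U^0/\pi U^0$: while the relation $(q_i - q_i^{-1}) H_i = K_{\alpha_i} - K_{\alpha_i}^{-1}$ degenerates transparently, ruling out extra $R$-relations in $U^0$ requires a careful argument inside $L[K_\lambda : \lambda \in P]$ to produce the claimed explicit $R$-basis.
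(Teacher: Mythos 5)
Your route is genuinely different from the paper's, but it has a real gap which you yourself flag at the end: the claim that $U^0/\pi U^0\cong k[H_1,\ldots,H_n]\otimes_k k(P/2Q)$ with the stated monomial $k$-basis is asserted, not proved, and the entire injectivity argument (the PBW-type basis of $\Ubar$, the identification of $\gr(\Ubar)$ with $S(\g_k)\otimes_k k(P/2Q)$, the associated-graded comparison) rests on it. The concern is serious: $U^0$ is an $R$-subalgebra of $L(P)$ rather than an algebra defined by generators and relations, so one must rule out hidden congruences of the form $\pi\cdot(\text{new element of }U^0)=(\text{polynomial in the }H_i\text{ and }K_\mu)$ that would produce unexpected extra relations modulo $\pi$. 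Acknowledging the difficulty is not the same as resolving it, and with it unresolved the argument does not close.

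The paper avoids the structure theory of $U^0$ entirely. It works with the $R$-subalgebra $A\subseteq U$ generated by the $E_{\alpha_i}$, the \emph{twisted} elements $F_{\alpha_i}K_{\alpha_i}$, and $R(2Q)$; this $A$ also contains the $H_iK_{\alpha_i}$ by the commutation relations written out in the proof, and $H_iK_{\alpha_i}=\frac{K_{2\alpha_i}-1}{q_i-q_i^{-1}}\in L(2Q)$. Combined with the alternative triangular decomposition $U_q\cong U_q'^-\otimes_L U_q^0\otimes_L U_q^+$ established just before the corollary (with $U_q'^-$ generated by the $F_\beta K_\beta$), this gives $A\subseteq U_q'^-\otimes_L L(2Q)\otimes_L U_q^+$. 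Since $\{K_{r_1},\ldots,K_{r_m}\}$ is an $L(2Q)$-basis of $L(P)$, the $K_{r_i}$ are automatically $A$-linearly independent; they also generate $U$ over $A$ as a ring and satisfy $AK_{r_i}=K_{r_i}A$, so $U=\bigoplus_i A\cdot K_{r_i}$ and reduction modulo $\pi$ gives the result. This replaces the delicate explicit basis computation you need by a soft linear-algebra observation over $L$. (The structure of $U^0_k$ you want is in fact established in the paper only later, in Lemma \ref{Weyl}, and its proof reuses the same device rather than a leading-term or filtration argument.)
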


\begin{proof}
By the previous Proposition and Lemma, $\Ubar$ and $\Ufbar$ are generated over $U(\g_k)$ by the $K's$ in $P$ and $2P$ respectively. Moreover, $K_\mu\equiv 1\pmod{\pi}$ for any $\mu\in 2Q$. Hence the maps are well defined and surjective. In particular the Noetherianity claim follows immediately by Hilbert's basis theorem.

So we just need to check injectivity. Let $0=r_1,\ldots, r_m$ be left coset representatives for $2Q$ in $P$. Let $A$ be the $R$-subalgebra of $U$ generated by $E_{\alpha_i}$, $F_{\alpha_i}K_{\alpha_i}$ ($1\leq i\leq n$) and $R(2Q)$. Note that the image of $A$ in $\Ubar$ is $U(\g_k)$ by the previous Proposition. Moreover, using the relations
\begin{align*}
E_{\alpha_j}(F_{\alpha_i}K_{\alpha_i})&=q_j^{-2}(F_{\alpha_i}K_{\alpha_i})E_{\alpha_j}+\delta_{ij} H_iK_{\alpha_i}\\
E_{\alpha_j}(H_iK_{\alpha_i})&=q_i^{-2a_{ij}}(H_iK_{\alpha_i})E_{\alpha_j}-q_i^{a_{ij}}[a_{ij}]_{q_i}E_{\alpha_j}\\
(H_iK_{\alpha_i})(F_{\alpha_j}K_{\alpha_j})&=q_i^{-2a_{ij}}(F_{\alpha_j}K_{\alpha_j})(H_iK_{\alpha_i})-[a_{ij}]_{q_i}(F_{\alpha_j}K_{\alpha_j})
\end{align*}
obtained from the proof of the previous Proposition, one easily obtains that $A$ is contained in $U_q'^-\otimes_L L(2Q)\otimes_L U_q^+$ since $H_iK_{\alpha_i}\in L(2Q)$. So by the above triangular decomposition $U_q\cong U_q'^-\otimes_L U_q^0\otimes_L U_q^+$, one gets that $K_{r_1},\ldots, K_{r_m}$ are linearly independent over $A$. But $U$ is generated over $A$ by $K_{r_1},\ldots, K_{r_m}$ as a ring, and for each $i, j$ there is an $l$ such that $K_{r_i}K_{r_j}\in A\cdot K_{r_l}$. Hence $U$ is a free $A$-module with basis $K_{r_1},\ldots, K_{r_m}$ since $AK_{r_i}=K_{r_i}A$ for all $i$. We then get that $U(\g_k)\otimes_k k(P/2Q)\cong \Ubar$ after reducing modulo $\pi$.

Now the result for $\Ufbar$ follows as well because the diagram
\[
\begin{tikzcd}
U(\g_k)\otimes_k k(2P/2Q)\arrow{r}\arrow{d} & \Ufbar\arrow{d}\\
U(\g_k)\otimes_k k(P/2Q)\arrow{r}{\cong} &\Ubar
\end{tikzcd}
\]
commutes and the vertical arrows are injections.
\end{proof}

\begin{remark} Of course we also obtain that $\Ubar$ is Noetherian, but we knew this to be true much more directly because $U$ is Noetherian. On the other hand, we do not know whether $U^{\text{fin}}$ is Noetherian. It is known that $U_q^{\text{fin}}$ is Noetherian (see \cite[Proposition 6.5]{joseph}), however it's unclear whether the techniques used to prove it can be adapted to integral forms.
\end{remark}

\subsection{The $G_k$-module structure on $\Ufbar$}\label{Gkmod} By \cite[Lemma 2.4]{adjoint}, the centre of any Hopf algebra is given as the algebra of invariants under the adjoint action, so that $Z(U)=\{u\in U : \ad(v)(u)=\varepsilon(v)u \text{ for all } v\in U\}$. Since $U$ and $U^{\text{res}}$ are lattices inside $U_q$, it follows from $Z(U)=Z(U_q)\cap U$ that in fact
$$
Z(U)=\{u\in U : \ad(v)(u)=\varepsilon(v)u \text{ for all } v\in U^{\text{res}}\}.
$$
Hence $Z(U)\subset U^{\text{fin}}$ and in fact $Z(U)=(U^{\text{fin}})^{\A}$ is the algebra of coinvariants for the $\A$-comodule structure on $U^{\text{fin}}$ coming from the adjoint action of $U^{\text{res}}$.

It will be useful to study the reduction modulo $\pi$ of the centre. To that end, we first look at the adjoint action of $U^{\text{res}}$ on $\Ufbar$. This gives rise to an $\A$-comodule structure, and so an $\A/\pi\A\cong \Of(G_k)$-comodule structure, i.e. a $G_k$-module structure. On the other hand, $G_k$ acts naturally on $\Ufbar\cong U(\g_k)\otimes_k k(2P/2Q)$ by tensoring the usual adjoint action on $U(\g_k)$ with the trivial action on $k(2P/2Q)$.

\begin{lem} These two $G_k$-actions on $\Ufbar$ coincide.
\end{lem}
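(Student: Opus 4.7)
The plan is to recognize both $G_k$-actions on $\Ufbar$ as $\Of(G_k)=\A/\pi\A$-comodule algebra structures $\rho_q,\rho_n:\Ufbar\to\Ufbar\otimes_k\Of(G_k)$ and then to verify that they coincide on a set of $k$-algebra generators of $\Ufbar$. Both are in fact algebra maps: on the quantum side because the Hopf adjoint action of $U^{\text{res}}$ on $U^{\text{fin}}$ is through algebra maps (the adjoint action of a Hopf algebra on itself is a Hopf action), so $\rho_q$ is an algebra map after reducing modulo $\pi$; on the natural side because $G_k$ acts on $U(\g_k)$ by algebra automorphisms and trivially on $k(2P/2Q)$. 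By the previous Corollary, it thus suffices to show $\rho_q=\rho_n$ on the generators $E_{\alpha_i}$, $F_{\alpha_i}K_{\alpha_i}$ and $K_{2\varpi_i}$.

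For $K_{2\varpi_i}$ I would compute the adjoint $U^{\text{res}}$-action modulo $\pi$ directly. Iterating the identity $\ad(E_{\alpha_j})(E_{\alpha_j}^mK_\lambda)=(1-q_j^{2m+\langle\lambda,\alpha_j^\vee\rangle})E_{\alpha_j}^{m+1}K_\lambda$ from the Lemma of section \ref{Joseph} and its $F$-analogue, one obtains
$$
\ad(E_{\alpha_j}^{(r)})(K_{2\varpi_i})=\Bigl(\prod_{m=0}^{r-1}(1-q_j^{2m+2\delta_{ij}})\Bigr)E_{\alpha_j}^{(r)}K_{2\varpi_i};
$$
each factor in the product either vanishes identically (in the boundary case $j\neq i$, $m=0$) or lies in $\pi R$ (using $q\equiv 1\pmod\pi$), and a similar analysis applies to $\ad(F_{\alpha_j}^{(r)})(K_{2\varpi_i})$. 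Combined with $\ad(K_\mu)(K_{2\varpi_i})=K_{2\varpi_i}$, this gives $\rho_q(K_{2\varpi_i})=K_{2\varpi_i}\otimes 1$, which matches $\rho_n(K_{2\varpi_i})=K_{2\varpi_i}\otimes 1$ from the triviality of the natural action on $k(2P/2Q)$.

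For $E_{\alpha_i}=\phi(e_i)$ and $F_{\alpha_i}K_{\alpha_i}=\phi(f_i)$, where $\phi:U(\g_k)\to\Ufbar$ is the embedding from the previous Proposition, I would show that $\phi$ is $G_k$-equivariant when $U(\g_k)$ carries the natural adjoint action and $\Ufbar$ carries $\rho_q$. Since both sides are module algebras and $\phi$ is an algebra map, the module algebra axiom $u\cdot(xy)=\sum(u_{(1)}\cdot x)(u_{(2)}\cdot y)$ lets equivariance propagate from the algebra generators $e_i,f_i,h_i$ of $U(\g_k)$ to all of $U(\g_k)$. On these generators, the Lie-level computations carried out in the proof of the previous Proposition, in particular $\ad(E_{\alpha_j})(E_{\alpha_i})\equiv[E_{\alpha_j},E_{\alpha_i}]\pmod\pi$, $\ad(F_{\alpha_j})(E_{\alpha_i})\equiv-\delta_{ij}H_iK_{\alpha_i}\pmod\pi$ and $\ad(E_{\alpha_j})(H_iK_{\alpha_i})\equiv-a_{ij}E_{\alpha_j}\pmod\pi$, match the natural brackets $[e_j,e_i]$, $-\delta_{ij}h_i$, $-a_{ij}e_j$ respectively, transported via $\phi$.

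The main obstacle is that in characteristic $p$ the full $G_k$-action is not pinned down by the $\g_k$-action alone, so one must also compare the actions of the divided powers $E_{\alpha_j}^{(r)}, F_{\alpha_j}^{(r)}$ for $r\geq 2$, which are needed to generate the hyperalgebra $\mathrm{Dist}(G_k)=U^{\text{res}}_k/(K_{\varpi_1}-1,\ldots,K_{\varpi_n}-1)$ over $k$. The cleanest route will be to first use the above Lie-level identities together with the module algebra axiom to show inductively that $\phi(U(\g_k))$ is $\rho_q$-stable, and then to verify the divided-power compatibility by induction on $r$, exploiting that both coactions come from the same integral formulas in $U^{\text{res}}$, which specialize at $q=1$ to the classical Chevalley divided-power relations defining the adjoint representation of $\mathrm{Dist}(G_k)$ on $U(\g_k)$.
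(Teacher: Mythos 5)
Your framing of both $G_k$-actions as $\Of(G_k)$-comodule algebra structures $\rho_q,\rho_n$ is a clean way to articulate why it suffices to compare them on a set of algebra generators of $\Ufbar$, and it makes explicit what the paper uses implicitly when it reduces the check to the adjoint action on the generators $K_\lambda$, $E_{\alpha_i}$, $E_{\alpha_j}$, $F_{\alpha_i}K_{\alpha_i}$, $F_{\alpha_j}K_{\alpha_j}$. However, the rest of the proposal does not constitute a proof, for two reasons.

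First, a local issue in the $K_{2\varpi_i}$ step. You argue that since each factor $1-q_j^{2m+2\delta_{ij}}$ lies in $\pi R$ (for $j=i$), the product vanishes modulo $\pi$. But the element you are reducing is
$$
\Bigl(\prod_{m=0}^{r-1}(1-q_i^{2m+2})\Bigr)E_{\alpha_i}^{(r)}K_{2\varpi_i},
$$
and $E_{\alpha_i}^{(r)}\notin U$ for $r\geq p$, so membership of the scalar in $\pi R$ does not by itself place the element in $\pi U^{\text{fin}}$. You need to observe that the product equals $(\text{unit})\cdot(q_i-q_i^{-1})^r[r]_{q_i}!$, so that the $[r]_{q_i}!$ cancels the divided power and the element is $(\text{unit})(q_i-q_i^{-1})^r E_{\alpha_i}^r K_{2\varpi_i}\in\pi^r U$. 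The paper's proof is careful about exactly this rewriting (this is the point of the line ``which is in $R$ as required'').

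Second, and more seriously, the argument for the generators $E_{\alpha_i}$ and $F_{\alpha_i}K_{\alpha_i}$ is incomplete in precisely the place you flagged. The Lie-level identities $\ad(E_{\alpha_j})(E_{\alpha_i})\equiv[E_{\alpha_j},E_{\alpha_i}]$ etc.\ only determine the infinitesimal $\g_k$-action, and, as you say, this does not pin down the $G_k$-action in characteristic $p$. The module-algebra axiom does not rescue you here, because $\Delta(E_{\alpha_j}^{(r)})$ is not $\sum_{a+b=r}E^{(a)}\otimes E^{(b)}$ (there are $K$-factors), so knowing $\ad(E_{\alpha_j})$ on generators does not propagate to $\ad(E_{\alpha_j}^{(r)})$. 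You are thus left needing to compute $\ad(E_{\alpha_j}^{(r)})$ and $\ad(F_{\alpha_j}^{(r)})$ on $E_{\alpha_i}$, $E_{\alpha_{i'}}$ ($i'\neq i$), $F_{\alpha_i}K_{\alpha_i}$, $F_{\alpha_{i'}}K_{\alpha_{i'}}$ and reduce modulo $\pi$. This is exactly what the paper does, via the explicit closed formulas from \cite{Tanisaki2}, and it is the real content of the lemma. Your proposed fallback---``exploiting that both coactions come from the same integral formulas in $U^{\text{res}}$, which specialize at $q=1$ to the classical Chevalley divided-power relations''---is circular: that the quantum divided-power adjoint formulas specialize modulo $\pi$ to the classical ones \emph{is} the statement to be proved. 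So the proposal correctly identifies the shape of the argument but omits the calculations that constitute it.
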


\begin{proof}
By Proposition \ref{Lusztig}(ii), we just need to check that the corresponding $U_k^{\text{res}}$-actions coincide. Since in both cases the $K$'s act trivially, it suffices to check that the actions of $E_{\alpha_i}^{(m)}$ and $F_{\alpha_i}^{(m)}$ are the same in both cases for any $1\leq i\leq n$ and any $m\geq 0$.

Let $\lambda\in 2P$. By the proof of \cite[Lemma 1.2]{Tanisaki2}, we have:
\begin{align*}
\ad(E_{\alpha_i}^{(m)})(K_\lambda)&=\frac{(-1)^m q_i^{m(m-1)}}{[m]_{q_i}!}\left(\prod_{j=0}^{m-1} \left(q_i^{\langle\lambda, \alpha_i^\vee\rangle}-q_i^{-2j}\right)\right)E_{\alpha_i}^m K_\lambda\\
\ad(E_{\alpha_i}^{(m)})(E_{\alpha_i})&=q_i^{-m(m+1)/2}(q_i-q_i^{-1})^m E_{\alpha_i}^{m+1}\\
\ad(E_{\alpha_i}^{(m)})(E_{\alpha_j})&=
\begin{cases}
\sum_{r=0}^m (-1)^r q_i^{r(n-1+a_{ij})}E_{\alpha_i}^{(m-r)}E_{\alpha_j}E_{\alpha_i}^{(r)} & \text{if $n<1-a_{ij}$}\\
0 & \text{otherwise}
\end{cases}\\
\ad(E_{\alpha_i}^{(m)})(F_{\alpha_i}K_{\alpha_i})&=
\begin{cases}
H_iK_{\alpha_i} & \text{if $m=1$}\\
(-1)^{m-1}q_i^{(m-1)(m+2)/2}(q_i-q_i)^{m-2}E_{\alpha_i}^{m-1}K_{\alpha_i}^2 & \text{if $m\geq 2$}
\end{cases}\\
\ad(E_{\alpha_i}^{(m)})(F_{\alpha_j}K_{\alpha_j})&=0
\end{align*}
where $i \neq j$. Recall that $E_{\alpha_i}^{(r)}\in U$ whenever $r\leq -a_{ij}$ by our assumptions on the residue characteristic $p$. So the right hand side of these are all clearly in $U$, possibly with the exception of the first equation. For this one, one just need to verify that
$$
\frac{1}{[m]_{q_i}!}\left(\prod_{j=0}^{m-1} \left(q_i^{\langle\lambda, \alpha_i^\vee\rangle}-q_i^{-2j}\right)\right)\in R.
$$
Say $\langle \lambda, \alpha_i^\vee\rangle=2l$ for some $l\in \Z$. Then the above becomes
$$
q_i^{ml-m(m-1)/2}(q_i-q_i^{-1})^m\frac{[l]_{q_i}\cdot [l+1]_{q_i}\cdot \ldots \cdot [l+m-1]_{q_i}}{[m]_{q_i}!}
$$
which is in $R$ as required. Reducing all the above formulae modulo $\pi$, this then gives in $\Ufbar$:
\begin{align*}
\ad(E_{\alpha_i}^{(m)})(K_\lambda)&=0\\
\ad(E_{\alpha_i}^{(m)})(E_{\alpha_i})&=0\\
\ad(E_{\alpha_i}^{(m)})(E_{\alpha_j})&=
\begin{cases}
\sum_{r=0}^m (-1)^r E_{\alpha_i}^{(m-r)}E_{\alpha_j}E_{\alpha_i}^{(r)} & \text{if $n<1-a_{ij}$}\\
0 & \text{otherwise}
\end{cases}\\
\ad(E_{\alpha_i}^{(m)})(F_{\alpha_i}K_{\alpha_i})&=
\begin{cases}
H_iK_{\alpha_i} & \text{if $m=1$}\\
-E_{\alpha_i} & \text{if $m= 2$}\\
0 & \text{if $m\geq 3$}
\end{cases}\\
\ad(E_{\alpha_i}^{(m)})(F_{\alpha_j}K_{\alpha_j})&=0
\end{align*}
which agrees with the usual action of the hyperalgebra of $G_k$ as required. The calculations for the $F_{\alpha_i}^{(m)}$'s are similar and omitted.
\end{proof}

\begin{remark} It is also true that the $G_k$-actions agree on the whole of $U_k$ with essentially the same calculations, but we won't need it.
\end{remark}

\begin{cor} There is a canonical isomorphism $(\Ufbar)^{G_k}\cong U(\g_k)^{G_k}\otimes_k k(2P/2Q)$, where $U(\g_k)^{G_k}$ is the Harish-Chandra centre of $U(\g_k)$.
\end{cor}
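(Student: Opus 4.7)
The plan is to combine the two preceding results directly. By the Corollary in Section~\ref{intfinite2}, the multiplication map gives an isomorphism of $k$-algebras
\[
\Ufbar \;\cong\; U(\g_k)\otimes_k k(2P/2Q),
\]
and by the preceding Lemma the $G_k$-action on $\Ufbar$ coming from the adjoint coaction of $\A$ coincides, under this isomorphism, with the tensor product of the ordinary adjoint $G_k$-action on $U(\g_k)$ and the trivial action on the group algebra $k(2P/2Q)$.

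Next I would take $G_k$-invariants on both sides. Since $k(2P/2Q)$ is a finite-dimensional $k$-vector space on which $G_k$ acts trivially, taking invariants commutes with tensoring by it: for any rational $G_k$-module $V$ one has $(V\otimes_k k(2P/2Q))^{G_k}=V^{G_k}\otimes_k k(2P/2Q)$. Applying this with $V=U(\g_k)$ (viewed as the union of its finite-dimensional $\ad(G_k)$-stable subspaces, so that the formula is legitimate) yields
\[
(\Ufbar)^{G_k}\;\cong\; U(\g_k)^{G_k}\otimes_k k(2P/2Q).
\]

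Finally, one needs to identify $U(\g_k)^{G_k}$ with the Harish-Chandra centre of $U(\g_k)$, but since $G_k$ is connected and acts by the adjoint action on $U(\g_k)$, its invariants are precisely the $\g_k$-invariants, which is exactly $Z(U(\g_k))$. The canonicity of the isomorphism then follows from the canonicity of both ingredients: the triangular decomposition–style isomorphism of the previous Corollary and the identification of the two $G_k$-actions in the Lemma. There is no real obstacle here — the whole content of the statement has already been absorbed into the Corollary and Lemma of Section~\ref{Gkmod}, and the present Corollary is merely the functorial consequence of taking $G_k$-invariants.
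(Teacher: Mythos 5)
Your core argument is exactly what the paper intends (the paper gives no explicit proof, treating the corollary as immediate from the preceding Lemma): the isomorphism $\Ufbar \cong U(\g_k)\otimes_k k(2P/2Q)$ of $G_k$-modules, together with triviality of the $G_k$-action on $k(2P/2Q)$, lets you pull $G_k$-invariants through the tensor product. That part is correct.

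However, your closing remark contains a genuine error that you should be aware of, even though it does not damage the conclusion. You claim that because $G_k$ is connected, the $G_k$-invariants of $U(\g_k)$ under the adjoint action equal the $\g_k$-invariants, i.e.\ the full centre $Z(U(\g_k))$. This is true in characteristic zero, but \emph{false} in characteristic $p$: the $p$-centre $Z_p$, generated by elements of the form $x^p - x^{[p]}$ for $x\in\g_k$, lies in $Z(U(\g_k))$ (so these elements are $\g_k$-invariant) but is not $G_k$-stable pointwise. This is precisely the reason one distinguishes the Harish-Chandra centre $U(\g_k)^{G_k}$ from the full centre $Z(U(\g_k)) = Z_{\mathrm{HC}}\otimes_{Z_{\mathrm{HC}}\cap Z_p}Z_p$ in the positive-characteristic theory (cf.\ \cite{BMR}, \cite{Jantzen3}). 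In the statement of the corollary, the notation $U(\g_k)^{G_k}$ is \emph{defined} to be the Harish-Chandra centre, so no further identification is needed; your steps (1)--(2) already complete the proof. But you should not invoke the passage from $G_k$-invariants to $\g_k$-invariants here, as it is not available over $k$.
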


Note that this implies that the image of $Z(U)$ in $U(\g_k)$ is contained in the Harish-Chandra centre $U(\g_k)^{G_k}$.

\subsection{Weyl group invariants}\label{Weyl} Recall that the Weyl group acts on $U_q^0=LP$ by $w\cdot K_\lambda=K_{w\lambda}$ for any $w\in W$ and $\lambda\in P$, where the action of $W$ on $P$ is the usual one (not the dot action). This action can be checked to preserve $U^0$ (see the main calculation in the proof of the Proposition below). Because the centre $Z(U_q)$ is isomorphic to $L(2P)^W$ by the Harish-Chandra homomorphism (see the next section for details), it will be useful to consider the action of $W$ on $U^0\cap L(2P)$ as well.

As in the previous section, we will need to consider what happens when we reduce modulo $\pi$. From Proposition \ref{intfinite2}, $S(\h_k)$ can be identified with the $k$-subalgebra of $U_k$ generated by $H_iK_{\alpha_i}$ ($1\leq i\leq n$).

\begin{lem} The multiplication in $U_k^0$ gives rise to isomorphisms
$$
U^0_k\cong S(\h_k)\otimes_k k(P/2Q) \quad \text{and}\quad (U^0\cap L(2P))_k\cong S(\h_k)\otimes_k k(2P/2Q).
$$
\end{lem}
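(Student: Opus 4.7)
The plan is to deduce both isomorphisms from the isomorphism $\Ubar \cong U(\g_k)\otimes_k k(P/2Q)$ of Corollary \ref{intfinite2}. For the first isomorphism $U^0_k \cong S(\h_k)\otimes_k k(P/2Q)$, the proposed multiplication map $h\otimes K_\lambda \mapsto \phi(h) K_\lambda$ is well-defined since $\phi(S(\h_k))$, being generated by the $H_iK_{\alpha_i}$'s, commutes with the $K_\lambda$'s inside $U^0_k$. It is precisely the restriction to the Cartan-like factors of the isomorphism from Corollary \ref{intfinite2} and is therefore injective. For surjectivity, the key observation is that $K_{\alpha_i}^2 = 1$ in $\Ubar$, so that $H_i = (H_iK_{\alpha_i})\cdot K_{\alpha_i}$ lies in the image; combined with the $K_\lambda$'s, this yields all generators of $U^0_k$.

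For the second isomorphism, the main task is to describe $U^0 \cap L(2P)$ explicitly. I would exploit the natural $P/2P$-grading on $L(P)$ whose summand of degree $\bar{\lambda} \in P/2P$ is $\bigoplus_{\mu \in \bar{\lambda}} L K_\mu$. Each generator of $U^0 \subset L(P)$ is homogeneous with respect to this grading: $K_\lambda$ has degree $\bar{\lambda}$, and since $\alpha_i \equiv -\alpha_i \pmod{2P}$, the element $H_i = (K_{\alpha_i}-K_{-\alpha_i})/(q_i-q_i^{-1})$ is homogeneous of degree $\bar{\alpha}_i$. Hence $U^0$ is a $P/2P$-graded $R$-subring of $L(P)$, and since $L(2P)$ coincides with the degree-$0$ component of $L(P)$, we conclude that $U^0 \cap L(2P) = (U^0)_0$ is the degree-$0$ part of $U^0$.

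Next I would show that $(U^0)_0$ is generated as an $R$-algebra by $\{K_\mu : \mu \in 2P\}\cup \{H_iK_{\alpha_i} : 1\le i\le n\}$. By commutativity of $U^0$, any monomial in its generators can be rearranged as $K_\lambda H_{i_1}\cdots H_{i_s}$; being of degree $0$ in $P/2P$ amounts to $\lambda + \sum_b \alpha_{i_b} \in 2P$. The identity
\[
K_\lambda H_{i_1}\cdots H_{i_s} = (H_{i_1}K_{\alpha_{i_1}})\cdots (H_{i_s}K_{\alpha_{i_s}}) \cdot K_{\lambda - \sum_b \alpha_{i_b}},
\]
combined with $\lambda - \sum_b \alpha_{i_b} = (\lambda + \sum_b \alpha_{i_b}) - 2\sum_b \alpha_{i_b} \in 2P$, then places the monomial in the claimed subring. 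With this description in hand, the multiplication map $S(\h_k)\otimes_k k(2P/2Q) \to (U^0\cap L(2P))_k$ is well-defined by commutativity, surjective (its image contains all the generators identified above), and injective as the restriction of the first isomorphism under the inclusion $S(\h_k)\otimes_k k(2P/2Q) \hookrightarrow S(\h_k)\otimes_k k(P/2Q)$. I expect the explicit generation statement for $(U^0)_0$ to be the main obstacle, since the remaining steps are essentially formal consequences.
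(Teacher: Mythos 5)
Your proof is correct, but it takes a genuinely different route from the paper's. The paper never invokes Corollary 3.3.4 directly: it reruns a freeness argument, introducing the subalgebra $B$ generated by $R(2Q)$ and the $H_iK_{\alpha_i}$, showing $U^0$ is free over $B$ with basis $K_{r_1},\ldots,K_{r_m}$ (coset representatives for $2Q$ in $P$), and reducing modulo $\pi$. For the second isomorphism the paper constructs $C=BK_{r_1}\oplus\cdots\oplus BK_{r_{m'}}$ by hand, shows $U^0$ is free over $C$ with basis coset representatives for $2P$ in $P$, and then infers $U^0\cap L(2P)=C$ from a linear independence comparison. Your two innovations are (a) obtaining the first isomorphism by restriction from the established isomorphism $U_k\cong U(\g_k)\otimes_k k(P/2Q)$, at the small cost of having to verify surjectivity separately (which you do correctly via $H_i=(H_iK_{\alpha_i})K_{\alpha_i}$ and $K_{\alpha_i}^2=1$); and (b) replacing the freeness argument by an intrinsic $P/2P$-grading on $L(P)$ under which $U^0$ is a graded subring (since $H_i$ is homogeneous of degree $\bar\alpha_i$ and $K_\lambda$ of degree $\bar\lambda$), so that $U^0\cap L(2P)$ is exactly the degree-zero component, and your monomial rearrangement pins down its generators. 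The grading approach is cleaner and gives the generating set for $U^0\cap L(2P)$ directly rather than indirectly through $C$. Two small points you leave implicit but are fine: the restriction argument for injectivity relies on $U^0_k\hookrightarrow U_k$ being injective, which holds because $U/U^0$ is $R$-free by the triangular decomposition; and for the second isomorphism's injectivity you need $(U^0\cap L(2P))_k\to U^0_k$ to be injective, which holds because $U^0/(U^0\cap L(2P))$ is $\pi$-torsion-free (indeed it is a direct summand, being the sum of the nonzero graded pieces). One could even streamline further: the first isomorphism is itself graded (with $S(\h_k)$ in degree zero, $h_i\mapsto H_iK_{\alpha_i}$ of degree $2\bar\alpha_i=0$), so the second isomorphism is simply its degree-zero part.
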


\begin{proof}
Let $0=r_1,\ldots, r_m$ be left coset representatives for $2Q$ in $P$ such that, for some $m'<m$, $r_1,\ldots, r_{m'}$ are left coset representatives for $2Q$ in $2P$, and let $B$ be the $R$-subalgebra of $U^0$ generated by $R(2Q)$ and the $H_iK_{\alpha_i}$ ($1\leq i\leq n$). Note that $B$ is contained in $U^0\cap L(2P)$ and that the image of $B$ in $U^0_k$ is $S(\h_k)$. Moreover, $K_{r_1}, \ldots, K_{r_m}$ generate $U^0$ as a $B$-algebra and are linearly independent over $B$. As in the proof of Corollary \ref{intfinite2}, we have that $U^0$ is in fact free over $B$ with basis $K_{r_1}, \ldots, K_{r_m}$. Reducing modulo $\pi$ gives the first isomorphism.

Let $C=BK_{r_1}\oplus\ldots\oplus BK_{r_{m'}}\subseteq U^0\cap L(2P)$. By definition of $B$, $C$ is actually an $R$-subalgebra of $U^0$. Furthermore, by the above basis for $U^0$ over $B$, we see that $U^0$ is free over $C$ with basis given by a set of coset representatives for $2P$ in $P$. But these are linearly independent over $U^0\cap L(2P)$, hence it follows that $U^0$ is also free over $U^0\cap L(2P)$ with the same basis and thus that $U^0\cap L(2P)=C$. Reducing modulo $\pi$ now gives the second isomorphism.
\end{proof}

As in the previous section, there are two natural actions of the Weyl group on $(U^0\cap L(2P))_k\cong S(\h_k)\otimes_k k(2P/2Q)$. Namely one can tensor the usual Weyl group action on $S(\h_k)$ with the trivial action on $k(2P/2Q)$, or consider the above Weyl group action on $U^0\cap L(2P)$ and reduce modulo $\pi$.

\begin{prop} These two actions of $W$ coincide.
\end{prop}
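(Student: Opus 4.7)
The plan is to reduce to verifying the two actions on a set of $k$-algebra generators of $S(\h_k) \otimes_k k(2P/2Q)$. Since both actions are $k$-algebra automorphisms, it suffices to check agreement on the generators $h_i = (H_i K_{\alpha_i})_k$ of $S(\h_k)$ together with the classes of $K_{r_l}$ in $k(2P/2Q)$, where $r_1, \ldots, r_{m'}$ are coset representatives for $2Q$ in $2P$. Because $W$ is generated by the simple reflections, it is enough to treat $w = s_j$.

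The action on $K_{r_l}$ is immediate: writing $r_l = 2\mu_l$ with $\mu_l \in P$, one has $s_j r_l - r_l = 2(s_j\mu_l - \mu_l) \in 2Q$, since $s_j \mu_l - \mu_l \in \Z\alpha_j \subset Q$. Thus $s_j \cdot K_{r_l} = K_{s_j r_l} = K_{r_l}\cdot K_{s_j r_l - r_l}$, and the second factor reduces to $1$ modulo $\pi$. This matches the trivial action on the $k(2P/2Q)$ factor.

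The main work is the action on $h_i$. Using $H_i K_{\alpha_i} = (K_{\alpha_i}^2 - 1)/(q_i - q_i^{-1})$ and $s_j \alpha_i = \alpha_i - a_{ji}\alpha_j$, I would decompose
$$K_{2s_j\alpha_i} - 1 = K_{\alpha_i}^2\bigl(K_{\alpha_j}^{-2a_{ji}} - 1\bigr) + (K_{\alpha_i}^2 - 1),$$
apply the telescoping identity $K_{\alpha_j}^{2m} - 1 = (K_{\alpha_j}^2 - 1)\sigma_m$, where $\sigma_m$ is a Laurent polynomial in $K_{\alpha_j}^2$ reducing to $m$ modulo $\pi$, and factor out $K_{\alpha_j}^2 - 1 = (q_j - q_j^{-1}) H_j K_{\alpha_j}$ from the first term. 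Dividing through by $q_i - q_i^{-1}$ then yields
$$s_j \cdot (H_i K_{\alpha_i}) = K_{\alpha_i}^2 \cdot \frac{q_j - q_j^{-1}}{q_i - q_i^{-1}} \cdot H_j K_{\alpha_j} \cdot \sigma_{-a_{ji}} + H_i K_{\alpha_i}.$$

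The main obstacle lies in justifying the ratio $(q_j - q_j^{-1})/(q_i - q_i^{-1})$ inside $R$ and tracking it modulo $\pi$. Each $q_l - q_l^{-1}$ is divisible in $R$ by $q - q^{-1}$, with quotient $q^{d_l - 1} + q^{d_l - 3} + \cdots + q^{-(d_l-1)}$ reducing to $d_l$ modulo $\pi$; the standing assumption $p > 2$ (and $p > 3$ in type $G_2$) ensures each $d_l \in \{1,2,3\}$ is invertible in $R$, so the ratio makes sense and reduces to $d_j/d_i$. Combined with $\sigma_{-a_{ji}} \equiv -a_{ji}$ and $K_{\alpha_i}^2 \equiv 1$ modulo $\pi$, this gives $s_j \cdot h_i = h_i - (d_j a_{ji}/d_i) h_j$ in $S(\h_k)$. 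Symmetry of the bilinear form supplies $d_j a_{ji} = \langle \alpha_i, \alpha_j\rangle = \langle \alpha_j, \alpha_i\rangle = d_i a_{ij}$, so $s_j \cdot h_i = h_i - a_{ij} h_j$. Under the identification $h_i \leftrightarrow \alpha_i^\vee$ supplied by the embedding $\phi: U(\g_k) \hookrightarrow \Ufbar$ from subsection \ref{intfinite2}, this coincides with the usual reflection $s_j(\alpha_i^\vee) = \alpha_i^\vee - \langle \alpha_j, \alpha_i^\vee\rangle \alpha_j^\vee$ on coroots in $\h_k$, completing the verification.
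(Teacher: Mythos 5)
Your proof is correct and takes essentially the same route as the paper: reduce to simple reflections, check triviality on $k(2P/2Q)$ via $s_j\mu-\mu\in 2Q$, and compute $s_j\cdot(H_iK_{\alpha_i})$ modulo $\pi$ by telescoping the power of $K_{\alpha_j}^2$ and reducing the ratio of $q$-differences to a Cartan-matrix entry. A small improvement on the paper's write-up is that your $\sigma_m$ is defined for every integer $m$, so the cases $i=j$ (where $-a_{ii}=-2<0$) and $a_{ij}=0$ are covered uniformly rather than implicitly excluded, and you justify $(q_j-q_j^{-1})/(q_i-q_i^{-1})\equiv d_j/d_i$ directly from $q_l=q^{d_l}$ instead of via the identity $q_i=q_j^{a_{ji}/a_{ij}}$.
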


\begin{proof}
We first show that the Weyl group action on $R(2P/2Q)$ is trivial. Let $\mu\in 2P$ and $w\in W$. We show by induction on $\ell(w)$ that there exists $\lambda\in 2Q$ such that $K_{w\mu}=K_\mu K_\lambda$. This is trivial if $\ell(w)=0$ and the induction step reduces to $w=s_\alpha$ for some simple root $\alpha$. Then we have $\langle \mu, \alpha^\vee\rangle=2m$ for some $m\in\Z$ and so
$$
K_{s_\alpha(\mu)}=K_{\mu-2m\alpha}=K_\mu K_{-2m\alpha}
$$
as required.

Now since $(U^0\cap L(2P))_k\cong S(\h_k)\otimes_k k(2P/2Q)$ by the Lemma, since the image of $S(\h_k)$ in $U^0_k$ is generated by the reduction modulo $\pi$ of the $H_iK_{\alpha_i}$, and since $W$ is generated by simple reflections, it suffices to check that the reduction modulo $\pi$ of the action of $s_{\alpha_i}$ on $H_jK_{\alpha_j}$ agrees with the classical action on $h_j\in S(\h_k)$, for any $1\leq i,j\leq n$. Classically, we have
$$
s_{\alpha_i}(h_j)=h_j-\langle \alpha_i, \alpha_j^\vee\rangle h_i=h_j-a_{ji}h_i.
$$
Meanwhile, in $U^0$, $s_{\alpha_i}$ sends $H_jK_{\alpha_j}=\frac{K_{2\alpha_j}-1}{q_j-q_j^{-1}}$ to
\begin{align*}
\frac{K_{2(\alpha_j-a_{ij}\alpha_i)}}{q_j-q_j^{-1}}&=\frac{K_{2\alpha_j}-1}{q_j-q_j^{-1}}K_{-2a_{ij}\alpha_i}+\frac{K_{-2a_{ij}\alpha_i}-1}{q_j-q_j^{-1}}\\
&=H_jK_{\alpha_j}K_{2\alpha_i}^{-a_{ij}}+\frac{K_{2\alpha_i}^{-a_{ij}}-1}{q_j-q_j^{-1}}
\end{align*}
From now on assume $a_{ij}\neq 0$ else the result is now clear. Let $l=-a_{ij}>0$. Then the above equals
\begin{align*}
&H_jK_{\alpha_j}K_{2\alpha_i}^{l}+\frac{K_{2\alpha_i}-1}{q_j-q_j^{-1}}(1+K_{2\alpha_i}+\ldots+K_{2\alpha_i}^{l-1})\\
=&H_jK_{\alpha_j}K_{2\alpha_i}^{l}+\frac{q_i-q_i^{-1}}{q_j-q_j^{-1}}H_iK_{\alpha_i}(1+K_{2\alpha_i}+\ldots+K_{2\alpha_i}^{l-1}).
\end{align*}
Note that $q_i=q_j^{\frac{\langle \alpha_i, \alpha_j^\vee\rangle}{\langle \alpha_j, \alpha_i^\vee\rangle}}=q_j^{\frac{a_{ji}}{a_{ij}}}$, so that
$$
\frac{q_i-q_i^{-1}}{q_j-q_j^{-1}}=\frac{q_j^{\frac{a_{ji}}{a_{ij}}}-q_j^{-\frac{a_{ji}}{a_{ij}}}}{q_j-q_j^{-1}}\in R
$$
and reduces to $\frac{a_{ji}}{a_{ij}}$ modulo $\pi$ (recalling that $p$ doesn't divide any non-zero entries of the Cartan matrix). Using the fact that $K_{2\alpha_i}\equiv 1\pmod{\pi}$, we get that the above thus reduces to
$$
H_jK_{\alpha_j}-a_{ji}H_iK_{\alpha_i}
$$
as required.
\end{proof}

From the Lemma and the Proposition, we immediately get:

\begin{cor} There is a canonical isomorphism $(U^0\cap L(2P))_k)^W\cong S(\h_k)^W\otimes_k k(2P/2Q)$.
\end{cor}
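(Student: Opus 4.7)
The plan is to combine the preceding Lemma and Proposition in a straightforward manner; almost all of the work has already been done. By the Lemma, multiplication in $U_k^0$ induces an isomorphism of $k$-algebras
$$
(U^0\cap L(2P))_k \cong S(\h_k)\otimes_k k(2P/2Q).
$$
By the Proposition, the $W$-action on the left-hand side (obtained by reducing modulo $\pi$ the natural $W$-action on $U^0\cap L(2P)$) corresponds, under this isomorphism, to the action on the right-hand side which is the tensor product of the standard $W$-action on $S(\h_k)$ with the trivial $W$-action on $k(2P/2Q)$.

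It then remains to identify the invariants of such a tensor product. Since $W$ acts trivially on the second factor and $k(2P/2Q)$ is free, hence flat, over $k$, the functor $-\otimes_k k(2P/2Q)$ preserves the equalizer in $\Mod{k}$ of the family of maps $\{w\cdot(-)\}_{w\in W}$ with the identity. Consequently
$$
\bigl(S(\h_k)\otimes_k k(2P/2Q)\bigr)^W \cong S(\h_k)^W\otimes_k k(2P/2Q),
$$
and composing with the first isomorphism yields the desired canonical identification.

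Since both main ingredients have already been established, I do not anticipate any substantial obstacle. The only minor subtlety is the commutation of $W$-invariants with the tensor product; importantly, this follows from flatness of $k(2P/2Q)$ and does \emph{not} require $\vert W\vert$ to be invertible in $k$, which is convenient given that later sections of the paper sometimes allow $p$ to divide $\vert W\vert$.
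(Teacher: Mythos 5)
Your proof is correct and follows the same route the paper intends: the paper states only ``From the Lemma and the Proposition, we immediately get:'' and leaves the remaining step implicit, which you make explicit by observing that $-\otimes_k k(2P/2Q)$ commutes with taking $W$-invariants because $k(2P/2Q)$ is free (hence flat) over $k$ and $W$ is finite, so tensoring preserves the relevant kernel. Your remark that this step does not require $\lvert W\rvert$ to be invertible in $k$ is accurate and a useful clarification, since the restriction on $p$ only enters later in Theorem \ref{HC}.
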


\subsection{The Harish-Chandra isomorphism}\label{HC} Recall from \cite[Theorem 6.25 \& 6.26]{Jantzen} that there is an isomorphism $\varphi:Z(U_q)\to L(2P)^W$ which is defined as follows. First, from the triangular decomposition, there is a projection map $p:U_q\to U_q^0$ whose restriction to $Z(U_q)$ is an algebra homomorphism. There is also a an algebra automorphism $\gamma_{-\rho}$ of $U_q^0$ given by $\gamma_{-\rho}(K_\lambda)=q^{-\langle \lambda, \rho\rangle}K_\lambda$. Then the composite $\varphi=\gamma_{-\rho}\circ p$ gives an isomorphism between $Z(U_q)$ and $L(2P)^W$.

By the triangular decomposition for $U$ the restriction of $p$ to $U$ gives a projection $U\to U^0$. Moreover, the automorphism $\gamma_{-\rho}$ preserves $U^0$ since we have
\begin{equation}\label{neweqn2}
\gamma_{-\rho}(H_i)=\frac{q_i^{-1}K_{\alpha_i}-q_iK_{\alpha_i}^{-1}}{q_i-q_i^{-1}}=q_i^{-1}H_i-K_{\alpha_i}^{-1}
\end{equation}
for any $1\leq i\leq n$. Hence $\varphi$ gives an injective $R$-algebra homomorphism $\varphi:Z(U)\to (U^0\cap L(2P))^W$.

The projection $p:U\to U^0$ induces modulo $\pi$ the map
$$
U_k\cong U(\g_k)\otimes_k k(P/2Q)\to S(\h_k)\otimes_k k(P/2Q)\cong U^0_k
$$
obtained by tensoring the usual projection $U(\g_k)\to S(\h_k)$ with the identity map. Moreover, by definition, $\gamma_{-\rho}$ reduces to the identity map on $k(P/2Q)$ and from (\ref{neweqn2}) we have
$$
\gamma_{-\rho}(H_iK_{\alpha_i})\equiv H_iK_{\alpha_i}-1\pmod{\pi}.
$$
Therefore $\gamma_{-\rho}$ restricts to the usual shift by $-\rho$ on $S(\h_k)$.

\begin{thm} There is a commutative square
\[
\begin{tikzcd}
Z(U)\arrow{r}{\varphi} \arrow[swap]{d}{q_1} & (U^0\cap L(2P))^W \arrow{d}{q_2}\\
U(\g_k)^{G_k}\otimes_k k(2P/2Q)\arrow{r}{\psi\otimes\text{id}} & S(\h_k)^W\otimes_k k(2P/2Q)
\end{tikzcd}
\]
where the vertical arrows are given by reduction modulo $\pi$, $\varphi$ is the Harish-Chandra homomorphism for $U$, and $\psi$ is the Harish-Chandra homomorphism for $U(\g_k)$. Furthermore, if $p=\text{char}(k)$ does not divide $|W|$, then $q_1$ and $q_2$ are both surjective, and $\varphi$ is an isomorphism.
\end{thm}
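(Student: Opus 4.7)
The plan is to first verify commutativity of the square by tracking both legs through the reductions already established, then to handle the surjectivity claims under the assumption $p\nmid|W|$ via Reynolds averaging. For commutativity, the projection $p:U\to U^0$ reduces modulo $\pi$, via the isomorphisms $U_k\cong U(\g_k)\otimes_k k(P/2Q)$ and $U^0_k\cong S(\h_k)\otimes_k k(P/2Q)$ from Sections \ref{intfinite2} and \ref{Weyl}, to the tensor product of the classical projection $U(\g_k)\twoheadrightarrow S(\h_k)$ with $\text{id}_{k(P/2Q)}$. Restricted to $Z(U)\hookrightarrow U^{\text{fin}}$, and using the corollary at the end of Section \ref{Gkmod}, this lands in $U(\g_k)^{G_k}\otimes_k k(2P/2Q)$. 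Equation (\ref{neweqn2}) together with the Proposition in Section \ref{Weyl} shows that $\gamma_{-\rho}$ reduces to the classical shift by $-\rho$ on $S(\h_k)$ tensored with the identity on $k(2P/2Q)$; composing with the projection gives exactly $\psi\otimes\text{id}$.

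Now assume $p\nmid|W|$, so $|W|\in R^\times$. Set $A:=U^0\cap L(2P)$. Surjectivity of $q_2$ follows from Reynolds averaging: the operator $\pi_W(x):=|W|^{-1}\sum_{w\in W}w\cdot x$ is an $R$-linear retraction $A\twoheadrightarrow A^W$, and it descends modulo $\pi$ to a retraction $A/\pi A\twoheadrightarrow (A/\pi A)^W=S(\h_k)^W\otimes_k k(2P/2Q)$ (the equality by the corollary in Section \ref{Weyl}). Given a $W$-invariant in $A/\pi A$, lift arbitrarily and average to obtain a preimage in $A^W$.

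Injectivity of $\varphi$ is immediate from the classical Harish-Chandra isomorphism over $L$. For surjectivity, the strategy is to use quantum Casimir elements: for each $\lambda\in P^+$, the Lusztig $R$-lattice $V(\lambda)_R$ in the irreducible $U_q$-module of highest weight $\lambda$ yields a central element $c_\lambda\in Z(U)$ given by the quantum trace of $V(\lambda)$ twisted by the ribbon-type element $K_{2\rho}$. A standard computation shows that $\varphi(c_\lambda)$, expanded in the natural $R$-basis of $A^W$ indexed by $W$-orbit sums of $\{K_{2\mu}:\mu\in 2P^+\}$, has leading term (with respect to the dominance order) equal to a unit multiple of the orbit sum $\sum_{w\in W/\mathrm{Stab}(\lambda)}K_{2w\lambda}$. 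Under $p\nmid|W|$ every stabilizer order is a unit, so orbit sums and Reynolds averages differ by units, and a triangular induction on $\lambda$ then shows $\varphi(Z(U))=A^W$. Once $\varphi$ is known to be an isomorphism, the classical Harish-Chandra theorem in positive characteristic---which is an isomorphism under $p\nmid|W|$---gives $\psi\otimes\text{id}$ bijective, and the commutativity of the square yields $q_1=(\psi\otimes\text{id})^{-1}\circ q_2\circ\varphi$, a composition of surjective maps.

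The main obstacle is the leading-term analysis of $\varphi(c_\lambda)$: one must verify that the quantum Casimir for $V(\lambda)$ has Harish-Chandra image with unit leading coefficient in $R$ (not merely in $L$), which requires tracking the $q$-dimensions and the $K_{2\rho}$-twist through the trace formula, and then arranging the triangular induction so that all coefficients appearing at each step remain integral.
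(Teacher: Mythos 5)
Your handling of the commutativity of the square, the Reynolds-averaging argument for the surjectivity of $q_2$, and the final composition $q_1=(\psi\otimes\text{id})^{-1}\circ q_2\circ\varphi$ all match the paper's proof in substance. The divergence, and the gap, is concentrated entirely in your argument that $\varphi:Z(U)\to (U^0\cap L(2P))^W$ is \emph{surjective}.

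Your proposed route — constructing quantum Casimir elements $c_\lambda\in Z(U)$ for each $\lambda\in P^+$ via a $K_{2\rho}$-twisted quantum trace, computing the Harish-Chandra image, and running a triangular induction on the dominance order — is a genuinely different approach, and you correctly identify the leading-term/integrality analysis as the sticking point. The difficulty is real and you have not resolved it. Specifically: (a) it is not automatic that the trace construction produces an element of the De Concini–Kac form $U$ (as opposed to $U_q$ or $U^{\text{res}}$), and the natural way to establish this — checking that $\varphi(c_\lambda)$ is an $R$-linear combination of the $K_{2\mu}$ — is exactly the integrality claim you have flagged as unverified; (b) for the triangular induction to go through over $R$ you need not merely a unit leading coefficient but integrality of every coefficient in the expansion of $\varphi(c_\lambda)$ in the orbit-sum basis, and you need this at every inductive step after subtracting lower-order terms. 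As written, the surjectivity of $\varphi$ is therefore not established.

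The paper proves surjectivity of $\varphi$ without any explicit Casimir computation, by a short indirect argument that bootstraps off the commutative square itself. Since $\varphi$ extends to the isomorphism $Z(U_q)\cong L(2P)^W$, it is injective and its cokernel is $\pi$-torsion. Given $u\in(U^0\cap L(2P))^W$, take $m\geq 0$ minimal with $\pi^m u=\varphi(z)$ for some $z\in Z(U)$. If $m>0$, then $q_2(\pi^m u)=0$; commutativity gives $(\psi\otimes\text{id})(q_1(z))=0$, and since $\psi\otimes\text{id}$ is an isomorphism (this is the point where $p\nmid|W|$ is used, via Jantzen's theorem), one gets $q_1(z)=0$, i.e.\ $z\in\pi Z(U)$. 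Writing $z=\pi z'$ and cancelling $\pi$ (legitimate since $U^0\cap L(2P)$ is $\pi$-torsion free) contradicts minimality of $m$. I recommend you adopt this argument: it uses exactly the ingredients you have already assembled (the commutative square, the generic isomorphism, the classical positive-characteristic Harish-Chandra isomorphism) and avoids the unverified quantum-trace computations entirely.
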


\begin{proof}
By Proposition \ref{Weyl} and Corollary \ref{Weyl}, the map $U^0\cap L(2P)\to (U^0\cap L(2P))_k$ is $W$-equivariant and thus induces a map $q_2:(U^0\cap L(2P))^W\to S(\h_k)^W\otimes_k k(2P/2Q)$. Moreover, we already saw that $Z(U)= (U^{\text{fin}})^{\A}$ gets sent in $U_k$ to $(\Ufbar)^{G_k}\cong U(\g_k)^{G_k}\otimes_k k(2P/2Q)$ by Corollary \ref{Gkmod}. From the above discussion and by definition of $\psi$, we therefore see that the diagram does commute.

Now under the assumption on $p$, it's immediate from Lemma \ref{Weyl} that $q_2$ is surjective. Indeed, if $v\in S(\h_k)^W\otimes_k k(2P/2Q)$ and $u\in U^0\cap L(2P)$ such that $u\equiv v\pmod{\pi}$, then
$$
v=q_2\left(\frac{1}{|W\cdot u|}\sum_{x\in W\cdot u} x\right)
$$
as required.

Now $\varphi$ extends to an isomorphism $Z(U_q)\to L(2P)^W$, so it is injective and its cokernel is $\pi$-torsion. Pick $u\in U^0\cap L(2P)$ and let $m\geq 0$ be minimal such that $\pi^m u\in\im(\varphi)$. Assume for contradiction that $m>0$, and write $\pi^m u=\varphi(z)$ where $z\in Z(U)$. By minimality of $m$, note that $z\notin \pi Z(U)$ since $U^0\cap L(2P)$ has no $\pi$-torsion. Since $q_2(\pi^m u)=0$, we get that $\psi\otimes\text{id}(q_1(z))=0$ by commutativity of the diagram. But since $\psi\otimes\text{id}$ is an isomorphism under our assumption on $p$ by \cite[Theorem 9.3]{Jantzen3}, it follows that $q_1(z)=0$, i.e. that $z\in \pi Z(U)$, a contradiction.

Hence we have shown that $\varphi$ is an isomorphism, and moreover it now follows that $q_1=(\psi\otimes\text{id})^{-1}\circ q_2\circ \varphi$ is surjective since it is a composite of surjections.
\end{proof}

\begin{remark} Our argument was inspired by \cite[section 9.6]{Jantzen3}.
\end{remark}

\subsection{The condition on $p$}\label{condonp} The condition on the prime $p$ we imposed in Theorem \ref{HC} will appear again later in the paper, and is a fairly reasonable condition. It is in particular implied if $p>h$ where $h$ is the Coxeter number of the root system. Indeed, for an irreducible root system, the order of the Weyl group and the Coxeter number are given in \cite[2.11, Table 2 \& 3.18, Table 2]{Humphreys2}, which we display below along with the condition that $p$ must satisfy in order to not divide $|W|$:

\begin{table}[h]
%\centering
\hskip-1.5cm
\begin{tabular}{c | c | c | c | c | c | c | c | c}
Type & $A_n$ & $B_n/C_n$ & $D_n$ & $G_2$ & $F_4$ & $E_6$ & $E_7$ & $E_8$\\
\hline
$|W|$ & $(n+1)!$ & $2^n n!$ & $2^{n-1} n!$ & 12 & $2^7\cdot 3^2$ & $2^7\cdot 3^4\cdot 5$ & $2^{10}\cdot 3^4\cdot 5\cdot 7$ & $2^{14}\cdot 3^5\cdot 5^2\cdot 7$\\
$p>$ & $n+1$ & $n$ & $n$ & 3 & 3 & 5 & 7 & 7\\
$h$ & $n+1$ & $2n$ & $2(n-1)$ & 6 & 12 & 12 & 18 & 30
\end{tabular}
\end{table}

Thus the condition that $p$ does not divide $|W|$ is indeed implied by $p>h$, and moreover it itself implies that $p$ is a very good prime (see section \ref{cohomology}). Since we assumed our group to be simply-connected, this implies that the Harish-Chandra homomorphism $\psi: U(\g_k)^{G_k}\to S(\h_k)^W$ is an isomorphism for such a $p$ by \cite[section 6.4 \& Theorem 9.3]{Jantzen3}.

\section{The quantum flag variety and its integral form}

In this Section we review definitions and results from \cite{QDmod1} and then adapt them to integral forms. 

\subsection{The category $\flag$}\label{recap1} We first begin by recalling the definition of the quantum flag variety.

\begin{definition} (\cite[Definition 3.1]{QDmod1}) A \emph{$B_q$-equivariant sheaf on $G_q$} is a triple $(F, \alpha, \beta)$ where $F$ is an $L$-vector space, $\alpha:\Oq\otimes F\to F$ is a left $\Oq$-module action and $\beta: F\to F\otimes \OqB$ is a right $\OqB$-comodule action, such that $\alpha$ is an $\OqB$-comodule homomorphism where $\Of_q\otimes F$ is given the tensor $\OqB$-comodule structure. We denote by $\flag$ the category of $B_q$-equivariant sheaves on $G_q$.
\end{definition}

\begin{remark} In the classical case $q=1$, this category is equivalent to the category of $B$-equivariant sheaves of $\Of_G$-modules, which in turn is equivalent to the category of quasi-coherent sheaves of $\Of_{G/B}$-modules. So the category $\flag$ can be thought of as the quantum analogue of the flag variety.
\end{remark}

Obviously $\Oq$ is an object of this category. More generally we have a notion of line bundles. Any element $\lambda\in T_P$ may be thought of as a character of the group algebra $LP\cong L[K_\mu : \mu\in P]$, and we may extend it to a character of $U_q^{\geq 0}$ by setting it to kill the $E$'s. This defines a one dimensional $U_q^{\geq 0}$-module $L_\lambda$. The ones among these which are integrable, and so $\OqB$-comodules, correspond to $\lambda\in P$, and the coaction is $1\mapsto 1\otimes \lambda$.

\begin{definition}(\cite[Definition 3.3]{QDmod1}) 
We define a line bundle in $\flag$ to be an object of the form $\Oq(\lambda):=\Oq\otimes_L L_{-\lambda}$ for $\lambda\in P$, where the $\Oq$-action is on the left factor and the $\OqB$-coaction is the tensor one (or in the modules language this means we give it the tensor product $U_q^{\geq 0}$-module structure). More generally for a finite dimensional $\OqB$-comodule $V$ we get that $\Oq\otimes_L V$, with an analogous structure as above, is an element of $\flag$ and we may think of it as a vector bundle.
\end{definition}

Now that we have a flag variety, we turn to the notion of taking global sections.

\begin{definition} (\cite[Definition 3.4]{QDmod1}) The \emph{global section functor} $\Gamma:\flag\to L\text{-mod}$ is defined to be
$$
\Gamma(M):=\Hom_{\flag}(\Oq, M)=\{ m\in M : \beta(m)=m\otimes 1\}=: M^{B_q},
$$
which we call the \emph{$B_q$-invariants} of $M$.
\end{definition}

By \cite[Lemma 3.8]{QDmod1}, the category $\flag$ has enough injectives, and so we can right derive the global section functor. It was shown in \cite[Section 3]{QDmod1} that the category $\flag$ is equivalent to a Proj category in the sense of Artin-Zhang \cite{ArtinZhang}. That includes \cite[Proposition 3.5]{QDmod1} which states that the line bundles are very ample in the sense that for any coherent module $M$, the twist $M(\lambda)$ is $\Gamma$-acyclic and generated by its global sections for $\lambda >>0$.

\subsection{Quantum $D$-modules}\label{recap2} Let $H$ be a Hopf algebra over a commutative ring $S$, and let $A$ be an $S$-algebra equipped with a left $H$-module structure. We say that $A$ is an \emph{$H$-module algebra} if for all $u\in H$ and all $a, b\in A$, $u(ab)=\sum u_1(a)u_2(b)$. In that case, we may form the \emph{smash product algebra} $A\# H$. As an $S$-module, this is just $A\otimes_S H$, but with multiplication given by
$$ 
(a\otimes u)\cdot(b\otimes v)=\sum au_1(b)\otimes u_2v.
$$
From now on we drop the tensor signs, and write $au$ for $a\otimes u$. Note that the action $u(a)$ of $u\in H$ on $a\in A$ coincides with the adjoint action $\sum u_1aS(u_2)$ in $A\# H$.

Now, recall that there is a left $U_q$-module algebra structure on $\Oq$ given by
$$
u(a)=\sum a_2(u)\cdot a_1,
$$
for $u\in U_q$ and $a\in\Oq$. By viewing $\Oq\subseteq U_q^*$, this action amounts to the action $u(a)(x)=a(xu)$ for $a\in\Oq$ and $u,x\in U_q$. Following \cite[Definition 4.1]{QDmod1}, we define the ring of quantum differential operators on $G_q$ to be the smash product algebra $\D_q=\Oq\# U_q$.  We will need the following result which was not proved in \cite{QDmod1}:

\begin{prop} The ring $\D_q$ is Noetherian.
\end{prop}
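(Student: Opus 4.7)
The plan is to equip $\D_q$ with a positive exhaustive filtration whose associated graded, after a further refining multifiltration, is $q$-commutative over a Noetherian base; the result will then follow from the $q$-commutative Hilbert basis Lemma of Section~\ref{PrelimonUq} together with the standard fact that a ring with a positive exhaustive filtration whose associated graded is Noetherian is itself Noetherian.

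First, extend the height filtration on $U_q$ to $\D_q$ by $F_n\D_q := \Oq\cdot F_nU_q$. The coproduct on $U_q$ preserves the height filtration, as is immediate by inspection of $\Delta(K_\lambda)$, $\Delta(E_{\alpha_i})$ and $\Delta(F_{\alpha_i})$, so the smash product rule $u\cdot a = \sum u_1(a)u_2$ gives $F_n\D_q\cdot F_m\D_q\subseteq F_{n+m}\D_q$, and as an $\Oq$-module $F_n\D_q = \Oq\otimes F_nU_q$. The degree-zero piece $F_0\D_q = \Oq\#L[K_\lambda : \lambda\in P]$ is an iterated skew Laurent polynomial extension of $\Oq$, since $U_q^0\cong LP$ has a free abelian generating set $\{K_{\varpi_i}\}$ each element of which acts on $\Oq$ by a Hopf algebra automorphism; hence $F_0\D_q$ is Noetherian.

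Next, using the identities $E_{\alpha_i}\cdot a = E_{\alpha_i}(a) + K_{\alpha_i}(a)E_{\alpha_i}$ and $F_{\alpha_i}\cdot a = F_{\alpha_i}(a)K_{-\alpha_i} + a F_{\alpha_i}$ in $\D_q$, the symbols $\bar E_{\alpha_i}$ and $\bar F_{\alpha_i}$ in $\gr_1\D_q$ normalise $F_0\D_q$; combined with the PBW theorem and the fact from Section~\ref{PrelimonUq} that $\gr U_q$ has the same presentation as $U_q$ except that the $E$'s commute with the $F$'s, this gives a description of $\gr\D_q$ as an algebra in which all the $\bar E_\beta, \bar F_\beta$ normalise $F_0\D_q$ and satisfy among themselves the relations of $\gr U_q$. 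Refining by the multifiltration used in Section~\ref{PrelimonUq} to build $U^{(2N+1)}$ from $\gr U_q$, with $F_0\D_q$ placed in multidegree zero, the second associated graded becomes $q$-commutative over $F_0\D_q$.

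By the Lemma of Section~\ref{PrelimonUq}, this twice-graded algebra is Noetherian, and Noetherianity lifts back through both filtrations to $\D_q$. I expect the main subtlety to be verifying that the refining multifiltration extends compatibly to $\gr\D_q$: concretely, that commuting a root vector past an element of $\Oq$ via the formulas above does not shift the multidegree. This reduces to the observation that $U_q^0$ sits in multidegree zero of $\gr U_q$ and acts on $\Oq$ by Hopf algebra automorphisms.
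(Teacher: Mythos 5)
Your proposal follows the same strategy as the paper's proof: the height filtration $F_n\D_q = \Oq\cdot F_nU_q$, Noetherianity of $F_0\D_q$ via the $K_\mu$-action on $\Oq$, normalisation of $F_0\D_q$ by the root vector symbols, and a final refinement by the $\Z^{2N}_{\geq 0}$ multifiltration to reach a $q$-commutative algebra. The only cosmetic difference is that you phrase the Noetherianity of $F_0\D_q$ via iterated skew Laurent extensions rather than the $q$-commutative Hilbert basis lemma (the paper notes that $K_\mu$ actually scales the generators $x_i$ of $\Oq$, so $F_0\D_q$ is $q$-commutative over $\Oq$), and the ``subtlety'' you flag at the end is resolved exactly as you suggest.
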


\begin{proof} Since $\D_q=\Oq\otimes_L U_q$ as a vector space, and since $x\cdot(yu)=(xy)u$ for all $x, y\in \Oq$ and all $u\in U_q$, it follows that $\D_q$ is generated as an $\Oq$-module by $U_q$. Recall our PBW filtration on $U_q$. We now define an analogous filtration on $\D_q$ given by
$$
F_i\D_q=\Oq\cdot F_iU_q.
$$
We claim this defines an algebra filtration. Indeed, suppose that for some $i, j\geq 0$, we are given $u\in F_iU_q$ and $v\in F_jU_q$, and take $x,y\in \Oq$. By definition of the Hopf algebra structure on $U_q$, we have that $\Delta(u)\in F_i(U_q\otimes_L U_q)\subset F_iU_q\otimes_L F_iU_q$ where we give $U_q\otimes_L U_q$ the tensor filtration. Therefore, it follows that
$$
(xu)(yv)=\sum \big(xu_1(y)\big)\big(u_2v\big)\in F_{i+j}\D_q
$$
since the filtration on $U_q$ is an algebra filtration. Hence $\D_q$ is a positively filtered $L$-algebra.

It will therefore be enough to show that $\gr \D_q$ is Noetherian. First, observe that $F_0\D_q$ is generated over $\Oq$ by the $K_\mu$ for $\mu\in P$, which all commute. Moreover, for each generator $x_i$ of $\Oq$, we have that
$$
K_\mu x_i K_{-\mu}=K_\mu(x_i)\in q^{\frac{1}{d}\Z}x_i
$$
by definition of the $U_q$-action on $\Oq$ and since the $x_i$'s are matrix coefficients with respect to weight bases. Thus we see that the generators of $F_0U_q$ normalise $\Oq$. Hence it follows from Lemma \ref{PrelimonUq} that $F_0\D_q$ is Noetherian since $\Oq$ is Noetherian.

Next, we claim that the symbols $\overline{E_{\alpha_i}}$ and $\overline{F_{\alpha_j}}$ normalise $F_0 \D_q$ in $\gr\D_q$ for all $i,j$. Indeed, we have that they $q$-commute with the $K$'s and for $x\in\Oq$, we have
$$
E_{\alpha_i}x-(K_{\alpha_i}xK_{-\alpha_i})E_{\alpha_i}=E_{\alpha_i}(x)\in \Oq\subseteq F_0 \D_q,
$$
where $K_{\alpha_i}xK_{-\alpha_i}\in \Oq$ by the above. Thus in $\gr\D_q$ we have
$$
\overline{E_{\alpha_i}}x=(K_{\alpha_i}xK_{-\alpha_i})\overline{E_{\alpha_i}}\in F_0 \D_q\cdot \overline{E_{\alpha_i}}
$$
Similarly for the $F$'s.

Finally we give to $\gr \D_q$ an analogue of the $\Z^{2N}_{\geq 0}$-filtration on $\gr U_q$ from section \ref{PrelimonUq}. More precisely, we make $\gr\D_q$ into a $\Z^{2N}_{\geq 0}$-filtered $F_0\D_q$-algebra. First we impose the reverse lexicographic total ordering on $\Z^{2N}_{\geq 0}$, and give a $\Z^{2N}_{\geq 0}$-filtration on $\gr \D_q$ by stating that a monomial
$$
F_{\beta_1}^{r_1}\cdots F_{\beta_N}^{r_N}K_\lambda E_{\beta_1}^{s_1}\cdots E_{\beta_N}^{s_N}
$$
has degree $(r_1,\ldots, r_N,s_1,\ldots,s_N)$. Then it follows that the corresponding associated multigraded algebra is a $q$-commutative $F_0\D$-algebra. Hence the associated graded algebra of $\gr\D_q$ is Noetherian by Lemma \ref{PrelimonUq}, and so it must be that $\gr\D_q$ is Noetherian.
\end{proof}

Note that $\D_q$ is a $U_q$-module algebra via the adjoint action in $\D_q$, or alternatively by tensoring the above action on $\Oq$ with the adjoint action on $U_q$. Explicitly,
\begin{equation}\label{actiononDq}
u\cdot(a\otimes v)=\sum u_1.a\otimes u_2 v S(u_3).
\end{equation}
We now are ready to define $D$-modules on the quantum flag variety:

\begin{definition} (\cite[Definition 4.2]{QDmod1}) Let $\lambda\in T_P$. A \emph{$(B_q, \lambda)$-equivariant $\D_q$-module} is a triple $(M, \alpha, \beta)$ where $M$ is an $L$-vector space, $\alpha:\D_q\otimes M\to M$ is a left $\D_q$-module action and $\beta: M\to M\otimes \OqB$ is a right $\OqB$-comodule action. The map $\beta$ induces a left $U_q^{\geq 0}$-action on $M$ which we also denote by $\beta$. These actions must satisfy:
\begin{itemize}
\item[(i)] The $U_q^{\geq 0}$-actions on $M\otimes L_\lambda$ given by $\beta\otimes \lambda$ and $\alpha\vert_{U_q^{\geq 0}}\otimes 1$ are equal.
\item[(ii)] The map $\alpha$ is $U_q^{\geq 0}$-linear with respect to the $\beta$-action on $M$ and the action (\ref{actiononDq}) on $\D_q$.
\end{itemize}
In other words $M$ is an object of $\flag$ equipped with a $U_q^{\geq 0}$-equivariant $\D_q$-action with in addition the condition (i).
\end{definition}

We denote by $\Dflag$ the category of such $\D_q$-modules. We have a forgetful functor $\Dflag\to\flag$, which allows us to define a global section functor on $\Dflag$ given by $\Gamma\circ\text{forget}$. We also denote this functor by $\Gamma$.

Note that condition (i) above can be rephrased into saying that for $M\in\Dflag$ and $m\in M$, we have $E_\alpha m=\beta(E_\alpha)m$ and $K_\mu m=\lambda(\mu) \beta(K_\mu)m$ for all simple roots $\alpha$ and $\mu\in P$. In particular if $m$ is a global section then by $B_q$-invariance we must have $E_\alpha m=0$ and $K_\mu m=\lambda(\mu)m$. In other words global sections consist of the highest weight vectors of weight $\lambda$. So we see that the $\D_q$-module homomorphisms $\D_q\to M$ corresponding to global sections factor through the ideal $\D_qI$ where $I=\{E_{\alpha_i}, K_\mu-\lambda(K_\mu) : 1\leq i\leq n, \mu\in P\}$.

Based on the above, we define $\D_q^\lambda$ to be the quotient
$$
\D_q^\lambda=\D_q/\D_qI
$$
where $I$ is as above. We can see that $\D_q^\lambda=\Oq\otimes_L M_\lambda$ where $M_\lambda$ is the Verma module of highest weight $\lambda$. We saw that there is a surjection $U_q^{\text{fin}}\to M_\lambda$. Using this, we can view $M_\lambda$ as an $\OqB$-comodule, or an integrable $U_q^{\geq 0}$-module, via the quotient of the adjoint action. This action is just the usual action twisted by $-\lambda$ and so with this $U_q^{\geq 0}$-module structure it is isomorphic to $M_\lambda\otimes L_{-\lambda}$ and has trivial highest weight. Then, as an object of $\flag$, $\D_q^\lambda=\Oq\otimes_L M_\lambda$ with the tensor $\OqB$-coaction and with the action of $\Oq$ on the left factor, where we view $M_\lambda$ is an $\OqB$-comodule just as now. It's moreover in $\Dflag$: (i) follows from our discussion above of the fact that $M_\lambda$ has trivial highest weight as an $\OqB$-comodule, and (ii) simply follows from the fact that $\D_q$ is a $U_q^{\geq 0}$-module algebra.

Then $\D_q^\lambda$ represents the global section functor, i.e. $\Gamma(M)=\Hom_{\Dflag}(\D_q^\lambda, M)$ by the above. In particular, $\Gamma(\D_q^\lambda)$ is a ring. Also one can easily check that $\D_q^\lambda$ is the maximal quotient of $\D_q$ that lies in $\Dflag$, where we take the quotient $\D_q$-action and the quotient of the $U_q^{\geq 0}$-action (\ref{actiononDq}) on $\D_q$.

\begin{definition} Let $M_\lambda$ be the Verma module with highest weight $\lambda$. Let $J_\lambda=\text{Ann}_{U_q^{\text{fin}}}(M_\lambda)$. We write $U_q^\lambda=U_q^{\text{fin}}/J_\lambda$.
\end{definition}

We finally recall the notion of regular and dominant weights in this context. By \cite[Lemma 6.3]{Jantzen} the centre $Z$ of $U_q$ acts on any Verma module $M_\lambda$ by a character $\chi_\lambda$. Following \cite[2.1]{QDmod1} we say that $\lambda\in T_P$ is \emph{dominant} if $\chi_\lambda\neq\chi_{\lambda+\mu}$ for any $0\neq\mu\in Q^+$, and that $\lambda$ is regular dominant if for all $\mu\in P^+$ and all weight $\gamma\neq\mu$ of $V(\mu)$, where $V(\mu)$ denotes the simple $U_q$-module of highest weight $\mu$, then we have $\chi_{\lambda+\mu}\neq\chi_{\lambda+\gamma}$. When $\lambda\in P$ this is equivalent to saying that it's dominant, respectively regular dominant in the classical sense.

\begin{thm}[{\cite[Theorem 4.12]{QDmod1}}]\label{BB} Suppose that $\lambda\in T_P$ is regular and dominant. Then there is an equivalence of categories
$$
\Gamma: \D_{B_q}^\lambda (G_q)\to \Gamma(\D_q^\lambda)\text{-mod}.
$$
whose quasi-inverse is given by the localisation functor \emph{$\loc$}$(M)=\D_q^\lambda\otimes_{\Gamma(\D_q^\lambda)}M$.
\end{thm}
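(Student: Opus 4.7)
The plan is to prove the theorem following the classical Beilinson--Bernstein strategy. First, the functor $\loc$ is by construction left adjoint to $\Gamma$: since $\D_q^\lambda$ represents global sections on $\Dflag$, for $M\in\Dflag$ and $N$ a $\Gamma(\D_q^\lambda)$-module there is a natural isomorphism
$$\Hom_{\Dflag}(\loc(N),M)\;\cong\;\Hom_{\Gamma(\D_q^\lambda)}(N,\Gamma(M)).$$
Thus the theorem amounts to showing the unit and counit of this adjunction are isomorphisms, and by a standard argument this reduces to establishing:
\begin{enumerate}
\item[(a)] every $M\in\Dflag$ is generated by its global sections, and
\item[(b)] the functor $\Gamma:\Dflag\to\Gamma(\D_q^\lambda)\text{-mod}$ is exact.
\end{enumerate}
Indeed, (a) and (b) together imply that the counit is an isomorphism, and the unit is an isomorphism by the standard fact that the counit being an isomorphism forces $\loc$ to be fully faithful.

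The main technical input for both (a) and (b) is a quantum analogue of the Beilinson--Bernstein key lemma. Given $M\in\Dflag$ and $\mu\in P^+$, tensoring with the simple integrable $U_q$-module $V(\mu)$ produces again an object of $\Dflag$: the $\D_q$-action on $M\otimes V(\mu)$ is built from the comultiplication, and the tensor $\OqB$-coaction continues to satisfy condition (i) with the same $\lambda$, since $V(\mu)$ is integrable (so its coaction and its $U_q^{\geq 0}$-action match). The $U_q^{\geq 0}$-weight filtration of $V(\mu)$ then induces a filtration of $M\otimes V(\mu)$ in $\flag$ whose graded pieces are copies of the twists $M\otimes L_\gamma$, for $\gamma$ ranging over the weights of $V(\mu)$ with appropriate multiplicities. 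Crucially, the Harish-Chandra centre $Z(U_q)\subset\D_q$ acts on the $\gamma$-weight piece by the central character $\chi_{\lambda+\gamma}$, and regular dominance of $\lambda$ is precisely the hypothesis needed to ensure these characters are pairwise distinct. Consequently the filtration splits as a direct sum of generalised central character eigenspaces.

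To deduce (a), I would use that, for $M$ coherent, the twists $M\otimes L_\nu$ are generated by their global sections and $\Gamma$-acyclic for $\nu\gg 0$ in $P^+$, by the Proj description of $\flag$ recalled in section \ref{recap1} (\cite[Proposition 3.5]{QDmod1}). Applying the key lemma with a sufficiently positive $\mu$, one identifies $M$ (up to a twist) with the $\chi_\lambda$-eigenspace summand of $M\otimes V(\mu)$; the remaining summands, being highest-weight translates, are globally generated, and the splitting then transfers global generation back to $M$ itself. The general case reduces to the coherent case by writing $M$ as a filtered colimit of coherent subobjects. For (b), the same central character splitting, applied to a short exact sequence in $\Dflag$ and combined with the $\Gamma$-acyclicity of positive twists, forces the higher cohomology of $\Gamma$ to vanish on $\Dflag$, giving exactness.

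The main obstacle is setting up the key lemma with care: one must verify that the graded pieces of the weight filtration on $M\otimes V(\mu)$ inside $\Dflag$ (not merely inside $\flag$) carry the expected Harish-Chandra central character, and one must make precise the passage from the filtration in $\flag$ to the central character decomposition in $\Dflag$ under regular dominance. Once this central character computation is in place, the reductions to the Proj statement and the deductions of (a) and (b) follow the classical Beilinson--Bernstein template essentially verbatim.
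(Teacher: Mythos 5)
Your proposal is essentially correct and matches the approach of the cited \cite[Theorem 4.12]{QDmod1}; the paper itself does not reprove this statement but only records it, pointing to the quantum key lemma \cite[Lemma 4.14]{QDmod1} and to the existence of the map $U_q^\lambda\to\Gamma(\D_q^\lambda)$ as the main ingredients, both of which your outline reproduces in spirit.

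Two small remarks. The definition of regular dominant in section \ref{recap2} only guarantees $\chi_{\lambda+\mu}\neq\chi_{\lambda+\gamma}$ for $\gamma\neq\mu$ a weight of $V(\mu)$; the characters $\chi_{\lambda+\gamma}$ need not be \emph{pairwise} distinct, so your phrase ``splits as a direct sum of generalised central character eigenspaces'' slightly overstates what is available. Fortunately your subsequent argument uses only the single separation of the $\chi_{\lambda+\mu}$-piece from the rest, which is exactly what regular dominance gives. Second, in \cite{QDmod1} the relevant splitting is produced after applying $\Gamma$, using the homomorphism $U_q^\lambda\to\Gamma(\D_q^\lambda)$ to view global sections of $D$-modules as $U_q^{\text{fin}}$-modules; your reconstruction works directly inside $\Dflag$ and then passes to global sections. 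The two points of view are equivalent here, but the former makes transparent why only the \emph{existence} of this map (and not any computation of $\Gamma(\D_q^\lambda)$) is needed, which is precisely the point the paper stresses in view of the possible gap in \cite[Proposition 4.8]{QDmod1}.
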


The proof of this Theorem uses an analogue of the Beilinson-Bernstein `key lemma' \cite[Lemma 4.14]{QDmod1} and the fact that there is a natural map $U_q^\lambda\to \Gamma(\D_q^\lambda)$ (see the proof of \cite[Proposition 4.8]{QDmod1}).

\subsection{An $R$-form of $\flag$}\label{Rflag} We now return to our integral forms $\A$ and $\B$ and make completely analogous definitions to the previous section. Many of our constructions are similar to those of \cite[Section 3]{QDmod1}

\begin{definition} The integral quantum flag variety is the category $\Ca$ whose objects consist of $\A$-modules $M$ which are equipped with a right $\B$-comodule action $M\to M\otimes_R\B$ such that the $\A$-action map $\A\otimes_R M\to M$ is a comodule homomorphism where we give $\A\otimes_R M$ the tensor comodule structure. The morphisms are just the $\A$-linear maps which are also comodule homomorphisms.
\end{definition}

There is an obvious functor
\begin{align*}
\Ca&\longrightarrow \flag\\
M&\longmapsto M_L:=M\otimes_R L
\end{align*}
to our quantum flag variety. Given $M\in\Ca$, we will write $\rho_M$ (respectively $\rho_{M_L}$) to denote the comodule map on $M$ (respectively $M_L$).

Next, there are several adjunctions we need to describe. Namely we have
\[
\begin{tikzcd}
\A\text{-mod} \arrow{r}{q}\arrow{d}{\theta} & R\text{-mod} \arrow{d}{\phi}\\
\Ca\arrow{r}{p}& \B\text{-comod}
\end{tikzcd}
\]
where each arrow denotes a pair of functors. We write $(\theta^*, \theta_*)$, $(p^*, p_*)$, $(q^*, q_*)$ and $(\phi^*, \phi_*)$ where each time the `lower star' functors are the right adjoints and go in the direction of the arrows. The functor $\theta_*:\A\text{-mod}\to\Ca$ is given by $N\mapsto N\otimes_R \B$ where $\A$ acts on $\theta_*(N)$ via the tensor action and the $\B$-coaction comes from the second factor, while $\theta^*:\Ca\to\A\text{-mod}$ is just the forgetful functor. The bijection making this an adjunction is as follows: let $M\in \Ca$ and $N\in\A\text{-mod}$, and let $\rho: M\to M\otimes_R\B$ and $\varepsilon:\B\to R$ be the comodule map and the counit of $\B$ respectively; given a module homomorphism $f:M\to N$, we construct a morphism $g:M\to N\otimes_R \B$ in $\Ca$ by taking the composite $(f\otimes\id)\circ \rho$. Conversely, given a morphism $g:M\to N\otimes_R\B$ in $\Ca$, we construct a module homomorphism $f:M\to N$ by taking the composite $(\id\otimes\varepsilon)\circ g$.

Moreover the adjunction between $\Ca$ and $\B\text{-comod}$ is given by $p_*=\text{forgetful}$ one way and the functor $p^*:M\mapsto \A\otimes_R M$ the other way, where $\A$ acts on the first factor and the $\B$-coaction is the tensor coaction. The bijection is as follows: given a map $f: \A\otimes_R M\to N$ in $\Ca$ we get a comodule map $M\to N$ by taking $m\mapsto f(1\otimes m)$, and conversely given a comodule map $g:M\to N$ we get a map $\A\otimes_R M\to N$ by post-composing $1\otimes g:\A\otimes_R M\to \A\otimes_R N$ with the action map $\A\otimes_R N\to N$.

Similarly $q_*=\text{forgetful}$, $q^*:M\mapsto \A\otimes_R M$, $\phi^*=\text{forgetful}$ and $\phi_*:M\to M\otimes_R\B$ where the coaction is on the second factor, all with similar bijections as in the above.

In particular, the maps $M\to \theta_*\theta^*(M)$ and $M\to\phi_*\phi^*(M)$ are both just the comodule map and so are injective, since the comodule map has left inverse $1\otimes\varepsilon$. Also note that since $\A$ and $\B$ are torsion-free and so flat over $R$, all the functors are exact and so $\theta_*, p_*, q_*$ and $\phi_*$ all map injective objects to injective objects.

\begin{lem}
The categories $\Ca$ and $\B$-comod have enough injectives.
\end{lem}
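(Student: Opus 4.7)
The plan is to use the adjunctions $(\theta^*,\theta_*)$ and $(\phi^*,\phi_*)$ just constructed, together with the fact that $\A$-mod and $R$-mod have enough injectives, to transport injectives into $\Ca$ and $\B$-comod. The general principle I will apply is: if $G \dashv F$ with $G$ exact, then $F$ preserves injectives; so if additionally $G$ has a monomorphism as its unit, then $F$ can be used to embed any object into an injective.

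First I would verify that the forgetful functors $\theta^*:\Ca\to\A\text{-mod}$ and $\phi^*:\B\text{-comod}\to R\text{-mod}$ are exact. The key input here is that $\B$ is free (hence flat) over $R$, so that $-\otimes_R\B$ preserves kernels. Given a morphism $f:M\to N$ in $\Ca$, the $\A$-module kernel $K=\ker f$ is automatically stable under the comodule map of $M$, because $\rho_M(K)$ lies in $\ker(f\otimes 1)=K\otimes_R\B$ by flatness, and dually for cokernels; so the forgetful functor is exact. The same argument handles $\phi^*$.

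Consequently the right adjoints $\theta_*$ and $\phi_*$ preserve injectives: if $I$ is an injective $\A$-module (resp.\ $R$-module), then $\theta_*(I)=I\otimes_R\B$ (resp.\ $\phi_*(I)=I\otimes_R\B$) is injective in $\Ca$ (resp.\ $\B$-comod). Now given $M\in\Ca$, the unit of the adjunction is the comodule map $\rho_M:M\to\theta_*\theta^*(M)=M\otimes_R\B$, which is injective since it is split by $1\otimes\varepsilon$ as an $R$-linear map. Choose an injection $\iota:\theta^*(M)\hookrightarrow I$ into an injective $\A$-module $I$, which exists because $\A$-mod has enough injectives. Then the composite
\[
M\xrightarrow{\rho_M} M\otimes_R\B\xrightarrow{\iota\otimes 1} I\otimes_R\B
\]
is a monomorphism in $\Ca$ (the second map is injective by flatness of $\B$ over $R$), and the target is injective by the previous paragraph. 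The argument for $\B$-comod is formally identical, replacing $\theta,\A$-mod by $\phi,R$-mod, and using that $R$-mod has enough injectives.

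No serious obstacle is expected: the only point requiring care is checking that $\theta^*$ is exact, and this reduces cleanly to the flatness of $\B$ over $R$ recorded after Section~\ref{comodcat}.
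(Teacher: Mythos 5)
Your proof is correct and follows the same strategy as the paper: observe that the forgetful left adjoint is exact (so the right adjoint $\theta_*$, resp. $\phi_*$, preserves injectives), then compose the comodule map $M\to\theta_*\theta^*(M)$ with $\theta_*$ applied to an embedding of $\theta^*(M)$ into an injective $\A$-module. The only difference is that you spell out explicitly why exactness of the forgetful functor reduces to flatness of $\B$ over $R$, a point the paper leaves implicit.
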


\begin{proof}
Let $M\in\Ca$ and let $I$ be an injective $\A$-module such that there is an $\A$-linear injection $M\to I$. By the above, the adjunction map $M\to\theta_*\theta^*(M)$ is injective, and so there is an injection
$$
M\to \theta_*\theta^*(M)\to \theta_*(I).
$$
But since $\theta_*$ is the right adjoint of an exact functor we see that $\theta_*(I)=I\otimes_R\B$ is injective and we're done for $\Ca$. The proof for $\B\text{-comod}$ is entirely analogous working with $\phi$ instead of $\theta$.
\end{proof}

Now we can define the global sections functor $\Gamma:\Ca\to R\text{-mod}$ to be
$$
\Gamma(M):=\Hom_{\Ca}(\A,M)=\{m\in M : \rho(m)=m\otimes 1\}:=M^{\B}.
$$
So in particular the above lemma shows that we can right derive this functor.

\subsection{Proj categories}\label{proj} Our first main aim is to show that our category $\Ca$ is a noncommutative projective scheme in the sense of Artin-Zhang \cite[2.3-2.4]{ArtinZhang}. We quickly recall the definitions. Given a $\Z^n$-graded ring $\mathcal{R}=\oplus_{\mathbf{m}\in \Z^n} \mathcal{R}_{\mathbf{m}}$, we say that a graded (left or right) $\mathcal{R}$-module $M$ is \emph{torsion} if, for every $m\in M$, there exists some $k$ such that $m$ is killed by $\mathcal{R}_{\geq k}:=\oplus_{m_1,\ldots,m_n\geq k} \mathcal{R}_{\mathbf{m}}$. Write $\mathcal{R}$-mod to denote the category of \emph{graded} (left or right) $\mathcal{R}$-modules. The full subcategory $\mathcal{T}(\mathcal{R})$ of torsion modules is a Serre subcategory of $\mathcal{R}$-mod, and we let Proj$(\mathcal{R}):=\mathcal{R}\text{-mod}/\mathcal{T}(\mathcal{R})$ denote the quotient category. Similarly we denote by proj$(\mathcal{R})$ the quotient category of the category of finitely generated graded modules by the full subcategory of finitely generated torsion modules.

Now suppose that we are equipped with a tuple $(\C, \Of, s_1, \ldots, s_n)$ where $\C$ is an abelian category, $\Of$ is an object of $\C$ and $s_1, \ldots, s_n$ are pairwise commuting autoequivalences of $\C$. For $\mathbf{m}\in\Z^n$ and an object $M$ of $\C$, we define twisting functors on $\C$ by
$$
M(\mathbf{m})=s_1^{m_1}\cdots s_n^{m_n}(M).
$$
We let $\Gamma$ denote the functor $\Hom_{\C}(\Of,-)$ and we set $\underline{\Gamma}(M)=\oplus_{\mathbf{m}\in \N^n} \Gamma(M(\mathbf{m}))$. Note that $\underline{\Gamma}(\Of)$ is a graded ring where the multiplication is defined as follows: for $a\in \Gamma(\Of(\mathbf{m}))$ and $b\in \Gamma(\Of(\mathbf{m'}))$, we set
$$
a\cdot b:=s_1^{m'_1}\cdots s_n^{m'_n}(a)\circ b.
$$
Similarly for each $M$ in $\C$, $\underline{\Gamma}(M)$ is a graded right $\underline{\Gamma}(\Of)$-module. Finally, let $\C^0$ denote the full subcategory of Noetherian objects in $\C$. Then we have the following multigraded version of a result of Artin and Zhang (see also \cite[Proposition 2.1]{QDmod1}):

\begin{prop} \emph{(\cite[Theorem 4.5]{ArtinZhang}, \cite[Remark 2.2]{QDmod1})}  Let $(\C, \Of, s_1, \ldots, s_n)$ be a tuple as above, such that the following hold:
\begin{enumerate}
\item $\Of$ belongs to $\C^0$;
\item $\Gamma(\Of)$ is a right Noetherian ring and $\Gamma(M)$ is a finitely generated $\Gamma(\Of)$-module for each object $M$ of $\C^0$;
\item for each $M\in\C^0$ there is an epimorphism $\oplus_{i=1}^l \Of(\mathbf{-m_i})\to M$ for some $l\geq 1$ and $\mathbf{m_1},\ldots, \mathbf{m_l}\in \N^n$; and
\item given $M,N\in\C^0$ and an epimorphism $M\to N$ in $\C$, the associated map $\Gamma(M(\mathbf{m}))\to \Gamma(N(\mathbf{m}))$ is surjective for $\mathbf{m}>>0$.
\end{enumerate}
Then $\underline{\Gamma}(\Of)$ is right Noetherian and $\C^0$ is equivalent to \emph{proj}$(\underline{\Gamma}(\Of))$ (working with graded right modules). If, moreover, we assume that every object of $\C$ is a direct limit of objects in $\C^0$, then $\C$ is equivalent to \emph{Proj}$(\underline{\Gamma}(\Of))$.
\end{prop}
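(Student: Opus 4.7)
The plan is to adapt the Artin--Zhang proof of their Theorem 4.5 to the multigraded setting, since the statement here differs from the cited result only by replacing $\mathbb{Z}$-gradings with $\mathbb{Z}^n$-gradings. The spine of the argument is to construct a pair of adjoint functors between $\mathcal{C}$ and the category of graded right $\underline{\Gamma}(\mathcal{O})$-modules, and to show that after quotienting by the torsion subcategory these become quasi-inverse equivalences. The right adjoint is the section functor $\underline{\Gamma}$ already defined. For the left adjoint, I would first set $\widetilde{\underline{\Gamma}(\mathcal{O})(-\mathbf{m})} := \mathcal{O}(-\mathbf{m})$, extend to finitely generated free graded modules by direct sum, and then to arbitrary graded modules by taking cokernels of presentations by such shifts of free modules, using condition (iii) and the direct-limit hypothesis to handle the case of objects of $\mathcal{C}$ beyond $\mathcal{C}^0$. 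The adjointness $\mathrm{Hom}_{\mathcal{C}}(\tilde{N}, M) \cong \mathrm{Hom}_{\mathrm{gr}}(N, \underline{\Gamma}(M))$ is then formal once one has checked it on the generators $\underline{\Gamma}(\mathcal{O})(-\mathbf{m})$, where it reduces to the definition of $\underline{\Gamma}$ together with the fact that the $s_i$ are autoequivalences.

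Next I would establish Noetherianity of $\underline{\Gamma}(\mathcal{O})$. Given an ascending chain of graded right ideals $I_1 \subseteq I_2 \subseteq \cdots$, apply $\tilde{(-)}$ to get an ascending chain of subobjects of $\mathcal{O}$ in $\mathcal{C}$; by (i), $\mathcal{O}$ is Noetherian, so the chain of subobjects stabilizes. Then by (iv) applied to the surjection $\mathcal{O} \twoheadrightarrow \mathcal{O}/\tilde{I_k}$, the maps $\Gamma(\mathcal{O}(\mathbf{m})) \twoheadrightarrow \Gamma((\mathcal{O}/\tilde{I_k})(\mathbf{m}))$ are eventually surjective in $\mathbf{m}$; combined with the fact that $\Gamma(\tilde{I_k}(\mathbf{m}))$ and $(I_k)_{\mathbf{m}}$ agree up to torsion, this forces $I_k = I_{k+1} = \cdots$ in sufficiently large multidegrees. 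Finally, Noetherianity in the low multidegrees follows from condition (ii), because each graded piece $\Gamma(\mathcal{O}(\mathbf{m}))$ is a finitely generated $\Gamma(\mathcal{O})$-module. Condition (ii) also gives that $\underline{\Gamma}$ sends $\mathcal{C}^0$ into finitely generated graded modules.

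For the equivalence itself, the key statement is that both the unit $N \to \underline{\Gamma}(\tilde{N})$ and counit $\widetilde{\underline{\Gamma}(M)} \to M$ have torsion kernel and cokernel. For the counit this uses (iii): pick a surjection $\oplus_{i} \mathcal{O}(-\mathbf{m}_i) \twoheadrightarrow M$, apply $\underline{\Gamma}$ and then $\tilde{(-)}$, and compare to the original resolution using (iv) to control how much $\Gamma$ fails to be right exact in high multidegrees. For the unit one proceeds analogously, presenting $N$ by shifts of $\underline{\Gamma}(\mathcal{O})$ and using that the unit is an isomorphism on each such generator by construction. Once both unit and counit are isomorphisms after localizing at $\mathcal{T}(\underline{\Gamma}(\mathcal{O}))$, this gives equivalences $\mathcal{C}^0 \simeq \mathrm{proj}(\underline{\Gamma}(\mathcal{O}))$ and, under the direct-limit hypothesis, $\mathcal{C} \simeq \mathrm{Proj}(\underline{\Gamma}(\mathcal{O}))$.

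The main obstacle, and the place where the multigraded generalization requires genuine care rather than notation-shuffling, is the use of ``$\mathbf{m} \gg 0$'' statements: the partial order on $\mathbb{N}^n$ is not total, so ``eventually surjective in $\mathbf{m}$'' in (iv) has to be interpreted as holding whenever every coordinate of $\mathbf{m}$ is sufficiently large, and one must check that all the finiteness arguments (generators chosen in condition (iii), stabilization of chains, reduction of torsion arguments to cofinal subsets of $\mathbb{N}^n$) remain valid for this partial order. I expect that one can simply systematically replace the integer degree bounds in Artin--Zhang by componentwise bounds in $\mathbb{N}^n$, exploiting that $\mathbb{N}^n$ is a filtered poset under the coordinatewise order, so that all the cofinality arguments go through unchanged.
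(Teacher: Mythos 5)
The paper does not prove this Proposition at all: it is quoted verbatim from the literature, with the $n=1$ case being Artin--Zhang's Theorem 4.5 and the extension to $\mathbb{Z}^n$-gradings being the observation made in Backelin--Kremnizer's Remark 2.2. So there is no ``paper's own proof'' to compare against, and your proposal to redo Artin--Zhang in the multigraded setting is already more than what the paper commits to. Your overall plan -- adjunction between $\mathcal{C}$ and graded $\underline{\Gamma}(\mathcal{O})$-modules, Noetherianity of $\underline{\Gamma}(\mathcal{O})$, then unit/counit being isomorphisms up to torsion, with the partial order on $\mathbb{N}^n$ handled via cofinality -- is indeed the correct template, and you are right that the only place needing genuine care beyond relabelling is the interpretation of $\mathbf{m}\gg 0$ and the directedness of $\mathbb{N}^n$.

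That said, the Noetherianity step as you have sketched it has a real gap. The left adjoint $\widetilde{(-)}$ is right exact but not left exact, so from a chain of graded right ideals $I_1\subseteq I_2\subseteq\cdots$ in $\underline{\Gamma}(\mathcal{O})$ you do not get a chain of subobjects $\widetilde{I_k}\subseteq\mathcal{O}$; you only get maps $\widetilde{I_k}\to\mathcal{O}$, and you must pass to their images before invoking the Noetherianity of $\mathcal{O}$. Worse, you then appeal to ``the fact that $\Gamma(\widetilde{I_k}(\mathbf{m}))$ and $(I_k)_{\mathbf{m}}$ agree up to torsion'' -- but this is precisely the statement that the unit of adjunction is an isomorphism after localizing at torsion, which is something you set out to prove \emph{after} the Noetherianity step. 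To avoid this circularity you would need to follow Artin--Zhang's actual ordering, which establishes the relevant comparison between $I$ and $\underline{\Gamma}(\widetilde{I})$ directly from conditions (iii) and (iv) for the specific chains involved, rather than invoking the full unit-isomorphism statement. The rest of your outline (using (ii) for the low multidegrees, and (iii)/(iv) to control the counit) is sound, but the Noetherianity argument needs to be restructured to remove the forward reference.
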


Note that in general the assignement $M\mapsto \underline{\Gamma}(M)$ defines a left exact functor from $\C$ to the category of graded $\underline{\Gamma}(\Of)$-modules. Now we return to the setting of the quantum $R$-flag variety.

\begin{definition}
We define the \emph{representation ring} to be $R_q:=\oplus_{\lambda\in P^+}\Gamma(\A(\lambda))$ with the induced ring structure from the multiplication in $\A$.
\end{definition}

\begin{remark}
We will apply the above setup to the category $\Ca$. Specifically we will set the autoequivalences to be $s_i(M):=M(\varpi_i)$. The above mentioned ring structure for $\underline{\Gamma}(\A)$ is then just the ring structure of $R_q^{\text{op}}$. So we will apply the above results, working with $R_q$, by replacing every instance of the word `right' by `left'.
\end{remark}

\begin{thm} The category $\Ca$ is equivalent to \emph{Proj}$(R_q)$ (this time working with left modules).
\end{thm}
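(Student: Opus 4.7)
The plan is to apply the preceding Artin--Zhang criterion to the tuple $(\Ca, \A, s_1,\ldots, s_n)$ with $s_i(M) := M(\varpi_i)$, verifying its four hypotheses together with the condition that every object of $\Ca$ is a filtered colimit of Noetherian ones. The argument runs parallel to \cite[Section 3]{QDmod1}, but adapted to the integral setting using the induction/coinvariants machinery of Andersen--Polo--Wen \cite{Andersen} and the Noetherianity of $R_q$ established in the previous section.

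First I would verify conditions (i) and (ii). By the forthcoming identification of Noetherian objects in $\Ca$ with finitely generated $\A$-modules (the coherence theorem announced in the introduction), checking that $\A$ is Noetherian in $\Ca$ amounts to showing the ascending chain condition on $\A$-submodule/$\B$-subcomodules of $\A$; these correspond to graded ideals of $R_q$, so this reduces directly to the Noetherianity of $R_q$. Then $\Gamma(\A) = \A^\B = R$ is trivially Noetherian, and for a Noetherian $M\in\Ca$ the finite generation of $\Gamma(M) = M^\B$ over $R$ is deduced from finite generation of $M$ over $\A$ via the coinvariants formalism and tensor identity of \cite{Andersen}.

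For condition (iii), local finiteness of the $\B$-coaction combined with finite generation of a Noetherian $M$ over $\A$ produces a finite-dimensional $\B$-subcomodule $V \subseteq M$ that generates $M$ as an $\A$-module. Extending scalars to $L$, $V\otimes_R L$ decomposes as a direct sum of irreducible $\B_L$-subcomodules, each of which is a line $L_{-\lambda_i}$ with $\lambda_i \in P^+$; pulling back to an appropriate $R$-lattice in $V$ yields a surjection $\bigoplus_i \A(-\mathbf{m}_i) \twoheadrightarrow M$ in $\Ca$. For condition (iv), given $f: M \twoheadrightarrow N$ with kernel $K \in \Ca^0$, the long exact sequence of the left exact functor $\Gamma$ reduces the surjectivity of $\Gamma(M(\mathbf{m})) \to \Gamma(N(\mathbf{m}))$ to the vanishing $H^1(K(\mathbf{m})) = 0$ for $\mathbf{m} \gg 0$. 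This is a Serre-vanishing/ampleness statement for the line bundles $\A(\varpi_i)$, which I would extract from the vanishing theorems in \cite{Andersen} for cohomology of induction from $U^{\text{res}}(\bo)$ to $U^{\text{res}}$ of sufficiently dominant weights.

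The direct limit condition is then straightforward: every $M \in \Ca$ is the filtered union of its $\A$-submodule/$\B$-subcomodules generated by a finite-dimensional subcomodule, each of which is Noetherian by the coherence theorem. The main obstacle I expect is condition (iv), the integral analogue of Serre vanishing: the Andersen--Polo--Wen vanishing results provide the essential input, but one must keep careful track of torsion-freeness of cohomology groups over $R$ so that the reduction mod $\pi$ arguments do not lose information. Condition (i) also rests crucially on the non-trivial Noetherianity of $R_q$, since $\A$ itself is not known to be Noetherian in general.
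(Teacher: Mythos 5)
Your overall strategy matches the paper exactly: apply the Artin--Zhang criterion to $(\Ca, \A, s_1,\ldots,s_n)$ with $s_i(M)=M(\varpi_i)$, using the coherence theorem (Noetherian objects $=$ finitely generated $\A$-modules) and the Noetherianity of $R_q$ to handle conditions (i) and (ii), ampleness of line bundles for (iii), a Serre-vanishing argument for (iv), and the exhaustion of arbitrary objects by coherent subobjects for the direct limit condition.

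However, your sketch of condition (iii) contains a genuine gap. You assert that after extending scalars, the finite-dimensional $\B$-subcomodule $V\otimes_R L$ \emph{decomposes as a direct sum} of one-dimensional comodules $L_{-\lambda_i}$ with $\lambda_i\in P^+$. Neither half of this is true. The category of $\OqB$-comodules is equivalent to integrable $U_q^{\geq 0}$-modules; these are ``Borel representations'' and the category is far from semisimple (e.g.\ the restriction of a two-dimensional simple $U_q$-module to $U_q^{\geq 0}$ is a non-split extension of two characters). All $V\otimes_R L$ admits is a \emph{filtration} with one-dimensional subquotients, and the weights occurring need not lie in $-P^+$. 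From a filtration alone you cannot read off a surjection $\bigoplus_i\A(-\mathbf{m}_i)\twoheadrightarrow M$ in $\Ca$, since $p^*$ of an extension is not a direct sum. The paper's actual route (Proposition \ref{gen_by_global_sections}) first twists $V$ so that every weight is of the form $-\mu$ with $\mu\in P^+$, uses Frobenius reciprocity and the acyclicity results of \cite{Andersen} to produce a finite $\A$-comodule $N$ surjecting onto the twist of $V$, and then invokes the untwisting isomorphism $p^*(N)\cong p^*(V_{\mathrm{triv}})$ (Lemma \ref{gen_by_global_sections}(i)) to realize $p^*(N)$ as a free $\A$-module; this yields a surjection from copies of a single line bundle $\A(-\lambda)$. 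That intermediate passage through $\A$-comodules is precisely what sidesteps the non-semisimplicity, and it is the step your proposal is missing. The rest of your verification (conditions (i), (ii), (iv) and the colimit condition) is in line with the paper's argument, modulo the fact that condition (iv) relies on the acyclicity of high twists of coherent modules, which in turn already presupposes condition (iii).
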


We now start preparing for the proof of this theorem.

\subsection{Line bundles}\label{line} We begin by proving results in $\Ca$ analogous to standard facts about line bundles on the flag variety. We will mostly just adapt arguments from \cite[Section 3.4]{QDmod1}. They apply essentially identically but we repeat them nevertheless.

Note that we have a functor of taking $\B$-invariants in $\B$-comod, which we denote by $\tilde{\Gamma}$. This functor is also left exact. Let $\ind$ be the functor $\Gamma\circ p^*: M\mapsto (\A\otimes_R M)^{\B}$. Since $\B$-comod is isomorphic to the category of integrable $U^{\text{res}}(\bo)$-modules of type $\boldsymbol{1}$ by Theorem \ref{comodcat}, $\ind$ is just the induction functor $M\mapsto (\A\otimes_RM)^{U^{\text{res}}(\bo)}$ studied in \cite{Andersen}. This will be useful in the next result.

\begin{prop} \begin{enumerate}
\item If $I\in\B$-comod is injective, then $p^*(I)$ is $\Gamma$-acyclic.
\item For any $M\in\B$-comod and any $i\geq 0$, $R^i\text{\emph{Ind}}(M)=R^i\Gamma(p^*(M))$.
\item For any $M\in \Ca$ and any $i\geq 0$, $R^i\tilde{\Gamma}(p_*(M))\cong R^i\Gamma(M)$.
\item The functor $\Gamma$ has cohomological dimension at most $N=\dim G/B$.
\end{enumerate}
\end{prop}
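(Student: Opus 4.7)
The overall strategy is to exploit the Grothendieck composite-functor spectral sequence repeatedly, using that $\A$ and $\B$ are free over $R$: the four left adjoints $\theta^*, p^*, q^*, \phi^*$ are then all exact, so the corresponding right adjoints preserve injectives; moreover $\theta_*, p_*, \phi_*$ are themselves exact. This reduces (i)--(iii) to identifying particular objects with images of right adjoints and checking a single acyclicity.

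For (i), I would first show that any injective $I$ in $\B\text{-comod}$ is a direct summand of $\phi_*(N)$ for some injective $R$-module $N$: embed $\phi^*(I)$ into an injective $R$-module $N$, compose with the (injective) unit $I \hookrightarrow \phi_*\phi^*(I)$, and split using the injectivity of $I$. It then suffices to prove $p^*\phi_*(N) = \A \otimes_R N \otimes_R \B$ is $\Gamma$-acyclic. The main step is a standard Hopf-module isomorphism $p^*\phi_*(N) \cong \theta_*(\A \otimes_R N)$ in $\Ca$, given by $a \otimes n \otimes b \mapsto \sum a_1 \otimes n \otimes \bar{a}_2 b$ with $\bar{a}_2$ the image of $a_2$ in $\B$. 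Once this identification is in place, the conclusion follows because $\Gamma \circ \theta_*$ is the (exact) forgetful functor $\A\text{-mod} \to R\text{-mod}$ and $\theta_*$ preserves injectives, so $\theta_*(P)$ is $\Gamma$-acyclic for every $\A$-module $P$. Part (ii) is then immediate: $\ind = \Gamma \circ p^*$ with $p^*$ exact and sending injectives to $\Gamma$-acyclics by (i), so the Grothendieck spectral sequence collapses to $R^i \ind = R^i\Gamma \circ p^*$.

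The argument for (iii) is parallel: injectives in $\Ca$ are direct summands of $\theta_*(I)$ for $I$ an injective $\A$-module, and $p_*\theta_*(I) = \phi_*(I)$ as a $\B$-comodule once the $\A$-structure is forgotten. One checks that $\phi_*(J)$ is $\tilde\Gamma$-acyclic for every $R$-module $J$, since $\tilde\Gamma \circ \phi_*$ is the identity on $R\text{-mod}$ and $\phi_*$ is exact and preserves injectives. The Grothendieck spectral sequence for $\Gamma = \tilde\Gamma \circ p_*$ then yields (iii).

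For (iv), by (iii) it suffices to show that $\tilde\Gamma$ has cohomological dimension at most $N$ on $\B\text{-comod}$. Under the identification of Section \ref{comodcat}, this amounts to bounding $\mathrm{Ext}^i_{U^{\text{res}}(\bo)}(R,-)$ on integrable modules, and I would invoke the quantum Kempf-type vanishing of Andersen--Polo--Wen \cite{Andersen}, which gives $R^i \ind = 0$ for $i > N$ and equivalently bounds the cohomological dimension of the quantum Borel coalgebra $\B$ on integrable comodules by $N = \dim G/B$. The real obstacle is this last part: (i)--(iii) are essentially formal adjunction calculus once the Hopf-module isomorphism is in hand, whereas (iv) rests essentially on the non-trivial quantum Kempf vanishing from \cite{Andersen}.
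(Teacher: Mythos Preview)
Your approach to (i)--(iii) is essentially the same as the paper's. The only cosmetic difference in (i) is that you first embed $\phi^*(I)$ into an injective $R$-module $N$, whereas the paper simply uses $N=\phi^*(I)$ itself (the injectivity of $N$ is never used, since $\theta_*(P)$ is $\Gamma$-acyclic for \emph{every} $\A$-module $P$ by the adjunction argument you give). For (iii) the paper is even more direct: since $p_*$ is exact and preserves injectives, any injective resolution in $\Ca$ is already an injective resolution in $\B$-comod, so $R^i\Gamma(M)=R^i\tilde\Gamma(p_*(M))$ immediately.

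For (iv) there is a small but genuine gap. You write that Andersen--Polo--Wen's bound $R^i\ind=0$ for $i>N$ ``equivalently bounds the cohomological dimension of the quantum Borel coalgebra $\B$'', but this equivalence is not automatic: $\ind=\tilde\Gamma\circ p^*$, so APW only directly bounds $R^i\tilde\Gamma$ on objects of the form $\A\otimes_R V$, not on arbitrary $\B$-comodules. The paper bridges this with a concrete splitting: for $M\in\Ca$, the $\B$-comodule map $M\to \A\otimes_R M$, $m\mapsto 1\otimes m$, is split by the $\A$-action map (which is a comodule homomorphism by the axioms of $\Ca$). Hence $R^i\tilde\Gamma(p_*(M))$ is a direct summand of $R^i\tilde\Gamma(p^*(p_*M))=R^i\ind(p_*M)$, and the latter vanishes for $i>N$ by APW. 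You should insert this splitting argument; without it the passage from $\ind$ to $\tilde\Gamma$ is unjustified.
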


\begin{proof} (i) The adjunction map $I\to\phi_*\phi^*(I)=I\otimes_R\B$ is injective. So as $I$ is injective, this embedding splits. Therefore, as $p^*$ is additive and since the derived functors $R^i\Gamma$ commute with finite direct sums, it suffices to show that $p^*(I\otimes_R\B)$ is acyclic. To simplify notation a bit, we write $J=\phi^*(I)$. We claim that we have an isomorphism $p^*(\phi_*(J)) \overset{\cong}{\to}\theta_*(q^*(J))$. Indeed, as $R$-modules they both equal $\A\otimes_R I\otimes_R \B$ and the isomorphism is given by $a\otimes i\otimes b\mapsto \sum a_1\otimes i\otimes a_2b$, with inverse $a\otimes i\otimes b\mapsto \sum a_1\otimes i\otimes S(a_2)b$. These maps are easily checked to be both module and comodule homomorphisms. Hence we have that 
\begin{align*}
R^i\Gamma(p^*(I\otimes_R\B))&\cong R^i\Gamma(\theta_*(q^*(J)))\\
&=\text{Ext}^i_{\Ca}(\A, \theta_*(q^*(J)))\\
&\cong \text{Ext}^i_{\A}(\theta^*(\A), q^*(J))\\
&=\text{Ext}^i_{\A}(\A, q^*(J))=0
\end{align*}
for $i>0$, as $\A$ is projective as an $\A$-module. Here we used the fact that $\theta_*$ is exact and preserves injectives in the second isomorphism.

(ii) Pick an injective resolution $M\to I^\bullet$. Then, by (i), $p^*(M)\to p^*(I^\bullet)$ is a $\Gamma$-acyclic resolution of $p^*(M)$, hence it computes the cohomology of $\Gamma$. The result now follows.

(iii) Pick an injective resolution $M\to I^\bullet$ in $\Ca$. Since $p_*$ preserves injectives, it follows that $M\to I^\bullet$ is also an injective resolution in $\B$-comod. The result follows.

(iv) Let $M\in\Ca$. Since $p_*$ maps injectives to injectives, any injective resolution of $M$ in $\Ca$ is also an injective resolution of $M$ in the category of $\B$-comodules. Thus we see that $R^i\Gamma(M)\cong R^i\tilde{\Gamma}(p_*(M))$ for all $i\geq 0$ and it suffices to show that the right hand side vanishes for $i\geq N$. To simplify notation we will drop the $p_*$ when referring to an element of $\Ca$ viewed only as a comodule.

Now, note that there is a $\B$-comodule map $M\to p^*(M)=\A\otimes_R M$ given by $m\mapsto 1\otimes m$. This map has a splitting given by the $\A$-action map, which is a comodule homomorphism by definition of $\Ca$. So, as $\B$-comodules, $M$ is a direct summand of $\A\otimes_R M$. This in turn implies that $R^i\tilde{\Gamma}(M)$ is a direct summand of $R^i\tilde{\Gamma}(p^*(M))$. By (ii) the latter equals $R^i\text{Ind}(M)$. But it was proved in \cite[Theorem 5.8]{Andersen} that this induction functor has cohomological dimension at most $N$. So the result follows.
\end{proof}

\begin{definition} We let $T_P^R=\{\lambda\in T_P : \lambda((U^{\text{res}})^0)\subseteq R^\times\}$, which is a subgroup of $T_P$. Note that for $\lambda\in P$, the associated element of $T_P$ belongs in $T_P^R$. For each $\lambda\in T_P^R$ we have a rank 1 $U^{\text{res}}(\bo)$-module $R_\lambda$.

When $\lambda\in P$ we may view it as a comodule with coaction $1\mapsto 1\otimes\lambda$. In that case, we let $\A(\lambda):=p^*(R_{-\lambda})$, which we call a line bundle. More generally, for $M\in\Ca$, we will write $M(\lambda)$ for $M\otimes_R R_{-\lambda}$. By letting $\A$ act on the left factor and giving it the tensor $\B$-coaction, this is also an element of $\Ca$.
\end{definition}

\begin{cor} For all $\lambda\in P$ and all $i\geq 0$, $R^i\Gamma(\A(\lambda))$ is finitely generated as an $R$-module. Moreover if $\lambda\in P^+$ then $R^i\Gamma(\A(\lambda))=0$ for all $i>0$.
\end{cor}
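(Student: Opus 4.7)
My plan is to reduce the statement to known properties of the quantum induction functor studied by Andersen--Polo--Wen, via the identification from the previous Proposition. Concretely, by part (ii) of that Proposition we have, for every $\lambda \in P$ and every $i \ge 0$,
\[
R^i\Gamma(\A(\lambda)) \;=\; R^i\Gamma\bigl(p^*(R_{-\lambda})\bigr) \;\cong\; R^i\mathrm{Ind}(R_{-\lambda}),
\]
where $\mathrm{Ind}$ is the induction functor from integrable $U^{\text{res}}(\bo)$-modules to integrable $U^{\text{res}}$-modules. Thus both claims become statements about $R^i\mathrm{Ind}(R_{-\lambda})$, and the proof will simply assemble the relevant results from \cite{Andersen}.

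For the vanishing assertion, if $\lambda \in P^+$, then I would invoke the quantum Kempf vanishing theorem, proved in \cite{Andersen} (following their Propositions on $H^0$ and the derived induction), which gives $R^i\mathrm{Ind}(R_{-\lambda}) = 0$ for all $i>0$. Together with the isomorphism above, this yields the desired vanishing.

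For the finite generation assertion, the degree-zero case is easy: in the dominant range $\mathrm{Ind}(R_{-\lambda})$ is the dual Weyl module, which is a finite free $R$-module of rank $\dim_L V(\lambda)$ by \cite{Andersen}. For $i>0$ (and arbitrary $\lambda \in P$), I would argue that $R^i\mathrm{Ind}(R_{-\lambda})$ is finitely generated over $R$ by combining three ingredients already in place: the cohomological-dimension bound $R^i\mathrm{Ind} = 0$ for $i>N$ from part (iv), Kempf vanishing in the dominant range, and a downward induction on $i$ using the long exact sequences in induction cohomology coming from the short exact sequences $0 \to R_{-\lambda-\mu} \to R_{-\lambda} \otimes_R V(\mu)^\ast \to Q \to 0$ for suitable dominant $\mu$. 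Here one picks $\mu \in P^+$ large enough so that the intermediate term is a tensor product of a Weyl module with a line bundle whose cohomology is controlled, and the quotient $Q$ has a filtration by line bundles $R_{-\nu}$ with $\nu$ closer to the dominant chamber; since $R^{N+1}\mathrm{Ind}$ vanishes, induction on $N-i$ reduces every case to the dominant one.

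The main obstacle is the last step: finite generation of the higher cohomology for non-dominant $\lambda$ is not formally a consequence of Kempf vanishing alone, and one must either run this inductive/filtration argument or appeal to the quantum analogue of Serre duality, which identifies $R^i\mathrm{Ind}(R_{-\lambda})$ with the $R$-dual of $R^{N-i}\mathrm{Ind}(R_{w_0\lambda - 2\rho})$ up to a twist, again reducing to the dominant range. Both routes are available in \cite{Andersen}, so the corollary follows by assembling these inputs.
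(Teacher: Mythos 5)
Your reduction to the induction functor via Proposition (ii) and your treatment of the Kempf vanishing assertion match the paper exactly. For the finite generation claim, however, you overcomplicate things: you correctly observe that finite generation is not a formal consequence of Kempf vanishing and then propose either a filtration/downward-induction argument or an appeal to quantum Serre duality. Neither is needed — the paper simply cites \cite[Theorem 5.8]{Andersen}, which asserts directly that $R^i\mathrm{Ind}$ sends finitely generated $R$-modules to finitely generated $R$-modules, so applying it to $R_{-\lambda}$ gives the result in one line. Your proposed routes would likely also succeed (and your instinct that such arguments appear in \cite{Andersen} is right — that is how Andersen, Polo and Wen establish the result), but re-deriving the finiteness statement from scratch is unnecessary work when it is a quotable theorem, and the Serre duality route in particular introduces machinery the paper avoids at this point. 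So: correct in spirit, same skeleton, but your finite-generation step should be replaced by a direct citation rather than a re-proof.
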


\begin{proof}
By Proposition \ref{line}(ii), we have that $R^i\Gamma(\A(\lambda))=R^i\ind(R_{-\lambda})$. But it was proved in \cite[Theorem 5.8]{Andersen} that $R^i\ind$ sends finitely generated $R$-modules to finitely generated $R$-modules, and in \cite[Corollary 5.7]{Andersen} that $R^i\ind(R_{-\lambda})=0$ when $\lambda\in P^+$ and $i>0$.
\end{proof}

\subsection{Generators for $\Ca$}\label{gen_by_global_sections} We now show results analogous to \cite[Lemmas 3.13 \& 3.16, Proposition 3.5]{QDmod1}. The proofs are essentially identical with the exception of part (ii) of the Lemma below where a few small adjustments are necessary to deal with torsion.

Suppose that $M$ is a $\B$-comodule or in other words an integrable $U^{\text{res}}(\bo)$-module. We will write $V$ to denote the underlying $R$-module of $M$ equipped with the trivial $\B$-coaction.

\begin{lem} Let $M$ be as above.
\begin{enumerate}
\item If $M$ is in fact a $\A$-comodule, viewed as a $\B$-comodule via restriction, then $p^*(M)\cong p^*(V)$ in $\Ca$.
\item Suppose now that $M$ is finitely generated over $R$, and moreover suppose that all the weight spaces of $M$ have weight of the form $-\lambda$ where $\lambda\in P^+$. Then
\begin{enumerate}
\item $M$ is acyclic with respect to the induction functor;
\item there is an $\A$-comodule which surjects onto $M$ as a $\B$-comodule.
\end{enumerate}
\end{enumerate}
\end{lem}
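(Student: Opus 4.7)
For (i), the plan is to write down the isomorphism explicitly, using the full $\A$-coaction on $M$ to absorb the $\B$-coaction twist on $p^*(M)$. With $\rho_M(m)=\sum m_{(0)}\otimes m_{(1)}$ denoting the $\A$-coaction, I would define $\phi:\A\otimes_R M\to\A\otimes_R V$ by $\phi(a\otimes m)=\sum am_{(1)}\otimes m_{(0)}$ and check by a direct Sweedler calculation (using coassociativity of the $\A$-coaction, i.e. $\rho_M^2=(1\otimes\Delta)\rho_M$, together with the multiplicativity of $\Delta$) that it is $\A$-linear and a $\B$-comodule map. The inverse $\psi(a\otimes v)=\sum aS(m_{(1)})\otimes m_{(0)}$, built from the antipode of $\A$, satisfies $\psi\phi=\phi\psi=\id$ by the antipode identities $\sum x_{(1)}S(x_{(2)})=\varepsilon(x)=\sum S(x_{(1)})x_{(2)}$ applied to the second factor of $\rho_M^2(m)$.

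For (ii), both parts will go by the same induction on the number of distinct weights occurring in $M$ (finite because $M$ is finitely generated over $R$). I would pick $-\lambda$ a maximal weight of $M$ in dominance order, so that $\lambda\in P^+$ is minimal among the $\lambda_i$. Since the divided powers $E_{\alpha_i}^{(r)}$ with $r\geq 1$ raise weight and no weight $-\lambda+r\alpha_i$ occurs in $M$, these operators must annihilate $M_{-\lambda}$, whence $M_{-\lambda}\subseteq M$ is a $\B$-subcomodule isomorphic to $R_{-\lambda}^{\oplus s}$ with $s$ the $R$-rank of $M_{-\lambda}$. For (a), $M_{-\lambda}$ is $\ind$-acyclic by \cite[Corollary 5.7]{Andersen}, and $M/M_{-\lambda}$ is acyclic by induction (still has all weights in $-P^+$, but one fewer distinct weight); the long exact sequence of $\ind$ applied to $0\to M_{-\lambda}\to M\to M/M_{-\lambda}\to 0$ then forces $M$ to be acyclic.

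For (b), I would take $N=\ind(M)$, which is an $\A$-comodule and is finitely generated over $R$ by \cite[Theorem 5.8]{Andersen}. The counit $\varepsilon:\res(\ind(M))\to M$ of the Frobenius adjunction $\res\dashv\ind$ is a canonical $\B$-comodule map, and the task reduces to showing $\varepsilon$ is surjective. In the base case $M=R_{-\lambda}^{\oplus s}$ (single weight), $\ind(R_{-\lambda})=\Gamma(\A(\lambda))$ is an integral form of the dual Weyl module $V(\lambda)^*$; its $-\lambda$ weight space is $R_{-\lambda}$, and the essentially unique $\B$-map to $R_{-\lambda}$ (projection onto this lowest weight space) agrees with $\varepsilon$ and is surjective. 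For the inductive step, part (a) ensures that applying $\ind$ to $0\to M_{-\lambda}\to M\to M/M_{-\lambda}\to 0$ preserves exactness on the right, and restricting to $\B$ yields a short exact sequence fitting into a commutative diagram of counit maps; the 5-lemma combined with surjectivity at the endpoints (base case and induction hypothesis) then gives surjectivity of $\varepsilon$ at $M$.

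The main obstacle will be the base case of (b): identifying $\ind(R_{-\lambda})$ integrally as the dual Weyl module and checking that the counit is the projection onto its lowest weight space requires the explicit description of $R^0\ind$ arising from Kempf vanishing for $\A(\lambda)$ together with careful bookkeeping of the $\B$-structure on this rank-one weight space. Once that is in place, the rest is routine weight-filtration bookkeeping and a standard diagram chase.
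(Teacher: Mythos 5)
Your proof of (i) is the paper's own argument: the same map $a\otimes m\mapsto \sum a m_{(1)}\otimes m_{(0)}$ (the paper's $\sum a m_2\otimes m_1$) with the same inverse built from the antipode. For (ii)(a) you also follow the paper exactly: isolate the maximal weight space $N:=M_{-\lambda}$ as a $U^{\text{res}}(\bo)$-submodule, apply \cite[Corollary 5.7]{Andersen}, and induct via the long exact sequence for $0\to N\to M\to M/N\to 0$.

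However, in (ii)(b) there is a genuine gap. You reduce the base case to $M=R_{-\lambda}^{\oplus s}$, implicitly assuming that the maximal weight space $M_{-\lambda}$ is \emph{free} over $R$. But the hypothesis is only that $M$ is finitely generated over $R$, so $M_{-\lambda}$ can have $\pi$-torsion; a single-weight piece decomposes as a direct sum of copies of $R_{-\lambda}$ \emph{and} copies of $R_{-\lambda}/\pi^n R_{-\lambda}$. Your dual-Weyl-module/lowest-weight argument establishes surjectivity of the Frobenius counit for $R_{-\lambda}$, but you never address the torsion summands, so the base case of your induction is incomplete. The fix is short, and is what the paper does: surjectivity for $R_{-\lambda}/\pi^n R_{-\lambda}$ follows from the case of $R_{-\lambda}$ by naturality of the counit applied to the surjection $R_{-\lambda}\twoheadrightarrow R_{-\lambda}/\pi^n R_{-\lambda}$, i.e.\ the commutative square with $\res\ind(R_{-\lambda})\twoheadrightarrow R_{-\lambda}$ across the top forces $\res\ind(R_{-\lambda}/\pi^n R_{-\lambda})\to R_{-\lambda}/\pi^n R_{-\lambda}$ to be onto. (Relatedly, you identify the ``main obstacle'' as proving the rank-one free case from scratch; the paper simply cites \cite[Proposition 3.3]{Andersen} for this, so the real missing content in your write-up is the torsion case, not the free one.) The rest of your inductive step --- part (a) giving exactness of $\ind$ applied to the weight filtration, followed by the five lemma --- is correct and matches the paper.
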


\begin{proof}
(i) We have $p^*(M)=\A\otimes_R M$ and $p^*(V)=\A\otimes_R V$, which are the same as $R$-modules. The isomorphism is given by the map $a\otimes m\mapsto \sum am_2\otimes m_1$ where $m\mapsto \sum m_1\otimes m_2$ denotes the $\A$-coaction. It quite evidently is an $\A$-module map, and it is straightforward to check that it is also a $\B$-comodule map. Thus this is a morphism in $\Ca$. Quite similarly we have a map going the other way given by $a\otimes m\mapsto \sum a S(m_2)\otimes m_1$, which is also a morphism in $\Ca$ by the Hopf algebra axioms. It also follows from the Hopf algebra axioms that these two maps are inverse to each other, and so we have an isomorphism.

(ii) Write $M=\oplus_\lambda M_{-\lambda}$ for the weight space decomposition of $M$, where $\lambda\in P^+$ ranges through the weights of $M$. Since $M$ is finitely generated there are only finitely many weights, and we may list them as $-\lambda_1,-\lambda_2,\ldots,-\lambda_r$ so that $-\lambda_r$ is maximal among them. Hence $N:=M_{-\lambda_r}$ is a $U^{\text{res}}(\bo)$-submodule. We prove (1) by induction on $r$. Simply note that $N$ is acyclic by \cite[Corollary 5.7(ii)]{Andersen}, and by taking the long exact sequence associated to the short exact sequence
$$
0\to N\to M\to M/N\to 0
$$
we see that $M$ is also acyclic by induction hypothesis.

For (2), note that $\ind(M)$ is finitely generated over $R$ by \cite[Proposition 3.2]{Andersen}. Hence the result will follow if we show that the map $\res\ind(M)\to M$ coming from Frobenius reciprocity (see \cite[Proposition 2.12]{Andersen}) is surjective. We prove this by induction on $r$. Suppose that $r=1$ so that $M$ is isomorphic to a finite direct sum of modules all of the form $R_{-\lambda}$ or $R_{-\lambda}/\pi^n R_{-\lambda}$ for some $n\geq 1$. Then, it suffices to prove the claim for these summands. But it is true for $R_{-\lambda}$ by \cite[Proposition 3.3]{Andersen} and so it follows that it also true for any $R_{-\lambda}/\pi^n R_{-\lambda}$ since we have a commutative diagram
\[
\begin{tikzcd}
R_{-\lambda} \arrow[two heads]{r} & R_{-\lambda}/\pi^n R_{-\lambda}\\
\res\ind(R_{-\lambda})\arrow{r}\arrow[two heads]{u}& \res\ind(R_{-\lambda}/\pi^n R_{-\lambda})\arrow{u}
\end{tikzcd}
\]
Now for $r> 1$ we consider the commutative diagram
\[
\begin{tikzcd}
0\arrow{r} & N \arrow{r} & M\arrow{r} & M/N \arrow{r} & 0\\
0\arrow{r} & \res\ind(N) \arrow{u}\arrow{r} & \res\ind(M)\arrow{u}\arrow{r} & \res\ind(M/N)\arrow{u}\arrow{r} & 0
\end{tikzcd}
\]
in which both rows are exact by (1), and we conclude that $\res\ind(M)\to M$ is surjective by the induction hypothesis and the Five Lemma.
\end{proof}

Let coh$(\Ca)$ denote the full subcategory of $\Ca$ consisting of objects $M$ which are finitely generated as $\A$-modules. We call elements of coh$(\Ca)$ \emph{coherent modules}.

\begin{prop} Let $M\in\text{\emph{coh}}(\Ca)$. Then there exists $\lambda\in P^+$ such that for all $\mu\in \lambda+P^+$, $M(\mu)$ is generated by finitely many global sections. In particular there is finite direct sum of $\A(-\lambda)$ surjecting onto $M$ in $\Ca$.
\end{prop}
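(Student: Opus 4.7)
The plan is to adapt the classical argument that a coherent sheaf on the flag variety becomes globally generated after a sufficiently positive twist, using parts (i) and (ii) of the previous Lemma as the quantum replacement for Serre's theorem. The core idea is to reduce from the $\A$-module $M$ to a $\B$-subcomodule of $M$ that is finitely generated over $R$ and whose $\A$-saturation is all of $M$.

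First I would fix a finite set $m_1,\ldots,m_k$ of $\A$-module generators of $M$ and let $W\subseteq p_*(M)$ be the $\B$-subcomodule they generate. Since $\B$ is free over $R$, local finiteness of $\B$-comodules (equivalently, integrability of the corresponding $U^{\text{res}}(\bo)$-module) forces $W$ to be finitely generated over $R$. By the adjunction $(p^*,p_*)$, the inclusion $W\hookrightarrow p_*(M)$ corresponds to a morphism $p^*(W)=\A\otimes_R W\to M$ in $\Ca$, whose image is the $\A$-submodule of $M$ generated by $W$ and therefore equals $M$. Thus it suffices to produce $\lambda\in P^+$ such that $p^*(W)(\mu)$ is generated by finitely many global sections for every $\mu\in\lambda+P^+$.

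Since $W$ has only finitely many weights $\gamma_1,\ldots,\gamma_r$, I would choose $\lambda\in P^+$ with $\lambda-\gamma_i\in P^+$ for every $i$; then for any $\mu\in\lambda+P^+$ the weights of $W(\mu)=W\otimes_R R_{-\mu}$ all lie in $-P^+$. Applying part (ii)(2) of the Lemma to $W(\mu)$ produces an $\A$-comodule $V$ (concretely $V=\ind(W(\mu))$, which is $R$-finitely generated by \cite[Proposition 3.2]{Andersen}) together with a $\B$-comodule surjection $\res(V)\twoheadrightarrow W(\mu)$. Applying the exact functor $p^*$ yields a surjection $p^*(V)\twoheadrightarrow p^*(W(\mu))=p^*(W)(\mu)$ in $\Ca$, and composing with $p^*(W)(\mu)\twoheadrightarrow M(\mu)$ reduces the problem to globally generating $p^*(V)$.

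Finally, part (i) of the Lemma identifies $p^*(V)$ with $p^*(V_0)$, where $V_0$ denotes the underlying $R$-module of $V$ equipped with the trivial $\B$-coaction. Choosing an $R$-linear surjection $R^n\twoheadrightarrow V_0$, which exists because $V_0$ is finitely generated over $R$, and applying the exact functor $p^*$ gives $\A^n=p^*(R^n)\twoheadrightarrow p^*(V_0)\cong p^*(V)$. Composing everything produces a surjection $\A^n\twoheadrightarrow M(\mu)$ in $\Ca$; its $n$ components are elements of $\Hom_{\Ca}(\A,M(\mu))=\Gamma(M(\mu))$, so $M(\mu)$ is generated by $n$ global sections. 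Specialising $\mu=\lambda$ and applying the autoequivalence $N\mapsto N(-\lambda)$ converts this into the promised surjection $\A(-\lambda)^n\twoheadrightarrow M$. The main technical input is the previous Lemma, particularly the surjectivity in (ii)(2), which rests on the induction-theoretic results of \cite{Andersen}; the only delicate point beyond that is the extraction of the $R$-finite $\B$-subcomodule $W$, which relies on the freeness of $\B$ over $R$ and local finiteness of $\B$-comodules.
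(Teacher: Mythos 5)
Your proof is correct and follows essentially the same route as the paper: extract an $R$-finite $\B$-subcomodule $W$ containing the $\A$-module generators, twist so its weights lie in $-P^+$, invoke the previous Lemma parts (ii)(2) and (i) to dominate $p^*(W(\mu))$ by a trivialised $p^*(V_0)$, and pull back a surjection $\A^n\twoheadrightarrow M(\mu)$. One small inaccuracy: the local finiteness of $W$ inside a $\B$-comodule comes from the comodule axioms (equivalently, integrability as a $U^{\text{res}}(\bo)$-module, as in Proposition~\ref{comod} of the appendix), not from the freeness of $\B$ over $R$, so that parenthetical remark is a red herring though it does no harm to the argument.
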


\begin{proof}
Suppose $m_1, \ldots, m_n$ generate $M$ over $\A$. Since $M$ is a $\B$-comodule i.e an integrable $U^{\text{res}}(\bo)$-module it is in particular locally finite. So if we let $W$ denote the $U^{\text{res}}(\bo)$-submodule they generate, then we have that $W$ is finitely generated over $R$. Moreover we have a surjection $p^*(W)\to M$ in $\Ca$. We may pick $\lambda\in P$ such that $W(\lambda)=W\otimes R_{-\lambda}$ satisfies the conditions of Lemma \ref{gen_by_global_sections}(ii) and let $N$ be an $R$-finite $\A$-comodule surjecting onto $W(\lambda)$. Then $p^*(N)$ surjects onto $p^*(V(\lambda))$ and hence onto $M(\lambda)$. By Lemma \ref{gen_by_global_sections}(i) and since $N$ is finite over $R$, we have that $p^*(N)$ is generated as an $\A$-module by finitely many global sections, and these define a surjection $\A^r\to p^*(N)$. Thus we have a surjection $\A^r\to M(\lambda)$ and twisting by $-\lambda$ we get a surjection $\oplus_{i=1}^r \A(-\lambda)\to M$ as claimed. Of course the same argument shows that $M(\mu)$ is generated by its global sections for any $\mu\in \lambda+P^+$.
\end{proof}

In the next section we repeatedly use a general construction, which we record here:

\subsection{Lemma}\label{useful_lemma}\emph{Let $M\in \Ca$ and let $m_1,\ldots, m_i\in M$ for some $i\geq 1$. Then there is a unique minimal coherent submodule $P$ of $M$ such that $m_1,\ldots, m_i\in P$.}

\begin{proof}
Let $N$ be the $U^{\text{res}}(\bo)$-submodule of $M$ generated by $m_1,\ldots, m_i$. Then $N$ is $R$-finite and we let $P$ be the $\A$-submodule of $M$ generated by $N$. Since the $\A$-action on $M$ is a comodule homomorphism it follows that $P$ is a subcomodule of $M$ and it is in $\coh(\Ca)$ as $N$ is finite over $R$. Moreover, any coherent submodule of $M$ which contains $m_1, \ldots, m_i$ must also contain $N$, and so must contain $P$.
\end{proof}

\subsection{Coherent modules}\label{coherent} Since we do not know whether $\A$ is Noetherian or not, it is not clear yet that coh$(\Ca)$ is a well-behaved category. This is what we turn to next. We first need to establish:

\begin{lem} The ring $R_q$ is graded Noetherian.
\end{lem}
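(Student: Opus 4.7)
The strategy is to apply part (ii) of the Proposition in Section \ref{Noetherian}. The ring $R_q$ carries a natural $P^+$-grading which, via the basis $\varpi_1,\ldots,\varpi_n$ of $P^+$, becomes a $\Z_{\geq 0}^n$-grading. By Corollary \ref{line}, each homogeneous piece $\Gamma(\A(\lambda))$ is finitely generated over $R$, so the hypothesis on the graded pieces is automatic. It therefore suffices to show that the reduction $R_q/\pi R_q$ is Noetherian.

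To compute this reduction, the plan is to exploit the acyclicity statement in Corollary \ref{line}. Since $\A$ (and hence each $\A(\lambda)$) is $R$-torsion-free, the short exact sequence
\[
0\to \A(\lambda)\xrightarrow{\pi}\A(\lambda)\to \A(\lambda)/\pi\A(\lambda)\to 0
\]
in $\Ca$ gives a long exact sequence in cohomology, and for $\lambda\in P^+$ the vanishing $R^1\Gamma(\A(\lambda))=0$ yields an isomorphism
\[
\Gamma(\A(\lambda))/\pi\Gamma(\A(\lambda))\;\cong\;\Gamma\bigl(\A(\lambda)/\pi\A(\lambda)\bigr).
\]
Summing over $\lambda\in P^+$, this will identify $R_q/\pi R_q$ with a graded $k$-algebra whose multiplication comes from the multiplication in $\A/\pi\A$.

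Next I would identify this reduction with a classical object. Because $q\equiv 1\pmod{\pi}$, we have $\A/\pi\A\cong \Of(G_k)$ as Hopf algebras, and similarly $\B/\pi\B$ identifies with $\Of(B_k)$. The category $\Ca\otimes_R k$ thereby becomes the category of $B_k$-equivariant quasi-coherent $\Of(G_k)$-modules, which is equivalent to the category of quasi-coherent sheaves on $G_k/B_k$. Under this equivalence $\A(\lambda)/\pi\A(\lambda)$ is sent to the line bundle $\mathcal{L}(\lambda)$, so $\Gamma\bigl(\A(\lambda)/\pi\A(\lambda)\bigr)\cong H^0(G_k/B_k,\mathcal{L}(\lambda))$ compatibly with multiplication. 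Consequently
\[
R_q/\pi R_q\;\cong\;\bigoplus_{\lambda\in P^+} H^0(G_k/B_k,\mathcal{L}(\lambda)),
\]
which is the multi-homogeneous coordinate ring of the flag variety $G_k/B_k$ (equivalently, the coordinate ring of the base affine space $G_k/N_k$, with its natural $P^+$-grading). Since $G_k$ is simply-connected this ring is a finitely generated, normal, Noetherian $k$-algebra by classical results, and applying part (ii) of the Proposition in Section \ref{Noetherian} then gives that $R_q$ is graded Noetherian.

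I expect the main subtlety to be the clean articulation of the last identification: one must check that the isomorphism $\Gamma(\A(\lambda))/\pi\Gamma(\A(\lambda))\cong \Gamma(\A(\lambda)/\pi\A(\lambda))$ is compatible with the multiplication on $R_q$, and that under the equivalence between $\Ca\otimes_R k$ and quasi-coherent sheaves on $G_k/B_k$ the twisting functors $(-)(\varpi_i)$ correspond to tensoring with $\mathcal{L}(\varpi_i)$. Both compatibilities follow formally from $\A/\pi\A\cong\Of(G_k)$ and the definition of $\A(\lambda)=\A\otimes_R R_{-\lambda}$, but they need to be spelled out.
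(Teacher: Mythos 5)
Your proposal is correct and takes essentially the same route as the paper: both reduce to showing $R_q/\pi R_q$ is Noetherian, identify it with the multi-homogeneous coordinate ring of $G_k/B_k$ (equivalently $\Of(G_k/N_k)$), and then apply Proposition \ref{Noetherian}(ii) together with the finite generation of graded pieces from Corollary \ref{line}. The only difference is cosmetic: you derive the identification $\Gamma(\A(\lambda))\otimes_R k\cong H^0(G_k/B_k,\mathcal{L}(\lambda))$ by hand via the long exact sequence and quantum Kempf vanishing, where the paper simply cites \cite[3.11]{Andersen} together with classical Kempf vanishing for the same fact.
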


\begin{proof} Since $q^{\frac{1}{d}}\equiv 1\pmod{\pi}$, the $U^{\text{res}}\otimes_R k$-representation $\Gamma(\A(\lambda))\otimes_R k$ is just the global sections of the usual line bundle $\mathcal{L}_\lambda$ on the flag variety $G_k/B_k$ over $k$ for any $\lambda\in P^+$ by \cite[3.11]{Andersen}, noting that $\mathcal{L}_\lambda$ has no higher cohomology by the classical Kempf vanishing theorem (see e.g. \cite[Proposition II.4.5]{Jantzen2}). Hence we see that the ring $R_q/\pi R_q$ is isomorphic to the ring of regular functions on the basic affine space $G_k/N_k$, and so is Noetherian. Moreover the graded pieces $\Gamma(\A(\lambda))$ are all finitely generated over $R$ by Corollary \ref{line}. Thus the result follows from Proposition \ref{Noetherian}(ii).
\end{proof}

\begin{thm} The modules in $\Ca$ which are finitely generated as $\A$-modules coincide exactly with the Noetherian objects.
\end{thm}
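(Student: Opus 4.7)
The statement contains two implications. For the easier one—that every Noetherian object of $\Ca$ is finitely generated over $\A$—the plan is to exploit Lemma \ref{useful_lemma}. Let $M$ be Noetherian. For any two coherent subobjects $P_1, P_2 \subseteq M$, choose finite $\A$-generators of each and apply Lemma \ref{useful_lemma} to their union to produce a coherent subobject of $M$ containing both. Hence the poset of coherent subobjects of $M$ is directed; by the Noetherian hypothesis it admits a maximal element $P$. If some $m \in M$ were outside $P$, Lemma \ref{useful_lemma} applied to $m$ together with a finite set of $\A$-generators of $P$ would yield a coherent subobject strictly larger than $P$, contradicting maximality. Therefore $M = P$ is coherent.

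The converse—that every coherent object is Noetherian—is more substantial. The first step is to reduce to showing that $\A$ itself is Noetherian in $\Ca$. Indeed, Proposition \ref{gen_by_global_sections} furnishes, for any $M \in \coh(\Ca)$, a surjection $\A(-\lambda)^r \twoheadrightarrow M$ in $\Ca$ for some $\lambda \in P^+$. Since Noetherianity is preserved under quotients, finite direct sums, and the autoequivalence of twisting by any element of $P$, it suffices to show that $\A$ is a Noetherian object.

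For this, given an ascending chain $(J_i)_{i\geq 1}$ of $\B$-equivariant left ideals of $\A$ in $\Ca$, the plan is to transport it via $\underline{\Gamma}(N) := \bigoplus_{\mu \in P^+} \Gamma(N(\mu))$ into an ascending chain of graded $R_q$-submodules of $R_q = \underline{\Gamma}(\A)$. Since $R_q$ is graded Noetherian by the preceding Lemma, this chain stabilizes at some index $N$. The remaining task is to deduce stabilization of the original chain from stabilization of $(\underline{\Gamma}(J_i))$. Suppose for contradiction that $x \in J_i \setminus J_N$ for some $i > N$. By Lemma \ref{useful_lemma} there is a coherent subobject $K \subseteq J_i$ containing $x$, so $K \not\subseteq J_N$. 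By Proposition \ref{gen_by_global_sections} applied to $K$, for $\mu$ large enough the twist $K(\mu)$ is generated as an $\A$-module by $\Gamma(K(\mu))$. If for all such $\mu$ we had $\Gamma(K(\mu)) \subseteq \Gamma(J_N(\mu))$, then $K(\mu) = \A \cdot \Gamma(K(\mu)) \subseteq J_N(\mu)$ inside $\A(\mu)$; reversing the twist gives $K \subseteq J_N$, a contradiction. Hence $\Gamma(K(\mu)) \not\subseteq \Gamma(J_N(\mu))$ for some $\mu \in P^+$, forcing $\underline{\Gamma}(J_i)_\mu \supsetneq \underline{\Gamma}(J_N)_\mu$ and contradicting stabilization.

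The main conceptual obstacle is that $\A$ is not known to be Noetherian as a ring, so one cannot appeal directly to Hilbert's basis theorem or any analogous non-graded result. The circumvention is to work in $\Ca$ with the extra $\B$-equivariance, which allows Proposition \ref{gen_by_global_sections}—every coherent object is reconstructible from its global sections after a large twist—to translate the chain condition on $\B$-equivariant $\A$-submodules of $\A$ into the graded chain condition on $R_q$-submodules of $R_q$ that is supplied by the preceding Lemma.
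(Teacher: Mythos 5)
Your proof is correct and uses the same essential ingredients as the paper's argument (the graded Noetherianity of $R_q$, the functor $\underline{\Gamma}$, Lemma \ref{useful_lemma}, and Proposition \ref{gen_by_global_sections}); the only real difference is packaging — where the paper first establishes ACC for chains of \emph{coherent} subobjects of $\A$ and then lifts this to arbitrary chains via Lemma \ref{useful_lemma}, you handle arbitrary chains directly by inserting a coherent $K$ at the point where Proposition \ref{gen_by_global_sections} is needed. This is a minor rearrangement of the same argument rather than a genuinely different route.
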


\begin{proof}
We prove this result in several steps. First we claim that $\A$ satisfies ACC in the category coh$(\Ca)$. Indeed, assume we have a chain
$$
M_1\subseteq M_2\subseteq M_3\subseteq \cdots
$$
of coherent submodules of $\A$. Recall the functor $\underline{\Gamma}$ from \ref{proj}. By Noetherianity of $R_q=\underline{\Gamma}(\A)$ and by left exactness of $\underline{\Gamma}$, we get that there is some $m\geq 1$ such that for all $n\geq m$, $\underline{\Gamma}(M_n)=\underline{\Gamma}(M_m)$. In particular we get that $\Gamma(M_n(\lambda))=\Gamma(M_m(\lambda))$ for all $\lambda\in P^+$. Fix any $n\geq m$. Then by Proposition \ref{gen_by_global_sections}, we may pick $\lambda>>0$ such that both $M_n(\lambda)$ and $M_m(\lambda)$ are generated by their global sections. But then the above equality of global sections implies that $M_n(\lambda)=M_m(\lambda)$ and hence after untwisting that $M_n=M_m$.

Next, we claim that $\A$ satisfies ACC in $\Ca$. Indeed, suppose we have a chain
$$
M_1\subset M_2\subset M_3\subset \cdots
$$
of subobjects of $\A$ with $M_i\neq M_{i+1}$ for every $i\geq 1$. Then we may pick $m_1\in M_1$ and $m_i\in M_i\setminus M_{i-1}$ for every $i\geq 2$. By Lemma \ref{useful_lemma}, for each $i\geq 1$ we may consider the smallest coherent submodule $P_i$ of $M_i$ which contains $m_1,\ldots, m_i$. Note that $P_i\subset P_{i+1}$ by the proof of Lemma \ref{useful_lemma}. But $m_i\in P_i$ for every $i$, so that we get a strict ascending chain
$$
P_1\subset P_2\subset P_3\subset \cdots
$$
of coherent submodules of $\A$, which is a contradiction by our first step.

Thus we have proved that $\A$ is a Noetherian object. It is then immediate that every line bundle $\A(\lambda)$ is also a Noetherian object. But by Proposition \ref{gen_by_global_sections}, this implies that every coherent module is a Noetherian object. Finally, for the converse, the above argument that $\A$ satisfies ACC in $\Ca$ also shows that Noetherian objects are finitely generated over $\A$. Indeed, if $M$ is not finitely generated, pick $m_1\in M$ and let $P_1$ be the smallest coherent submodule of $M$ containing $m_1$, given by Lemma \ref{useful_lemma}. Since $M$ is not coherent we have that $M\neq P_1$. So we can pick $m_2\in M$ such that $m_2\notin P_1$. Then we may apply Lemma \ref{useful_lemma} again and set $P_2$ to be the smallest coherent submodule of $M$ containing $m_1,m_2$. By construction, $P_1\subset P_2$ is a strict inclusion. As $M$ is not coherent, we may pick $m_3\in M\setminus P_2$. Carrying on, we get a strict ascending chain
$$
P_1\subset P_2\subset P_3\subset \cdots
$$
so that $M$ is not a Noetherian object.
\end{proof}

This in particular shows that $\coh(\Ca)$ is an abelian category. This has a few consequences.

\begin{prop} Let $M\in\emph{\text{coh}}(\Ca)$. Then:
\begin{enumerate}
\item there exists $\lambda\in P^+$ such that for all $\mu\in \lambda+P^+$, $M(\mu)$ is acyclic; and
\item \emph{(Serre finiteness)} for all $i\geq 0$, $R^i\Gamma(M)$ is finitely generated as an $R$-module.
\end{enumerate}
\end{prop}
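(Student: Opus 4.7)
The plan is to prove both statements together by descending induction on the cohomological degree, using the exact sequences supplied by Proposition \ref{gen_by_global_sections}. The three main inputs are: (a) Proposition \ref{gen_by_global_sections} itself, which furnishes a $\lambda_0 \in P^+$ and a short exact sequence
$$0 \to K \to \bigoplus_{i=1}^r \A(-\lambda_0) \to M \to 0$$
in $\Ca$, where $K$ is coherent because $\coh(\Ca)$ is an abelian subcategory of $\Ca$ by Theorem \ref{coherent}; (b) Corollary \ref{line}, which gives both the acyclicity of $\A(\nu)$ for $\nu\in P^+$ and the finite generation over $R$ of each $R^i\Gamma(\A(\lambda))$ for arbitrary $\lambda\in P$; and (c) the bound on cohomological dimension $R^j\Gamma=0$ for $j>N$, from Proposition \ref{line}(iv).

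For (i), I will prove by descending induction on an integer $d\geq 1$ that for every coherent $M$ there exists $\lambda\in P^+$ with $R^j\Gamma(M(\mu))=0$ whenever $j\geq d$ and $\mu-\lambda\in P^+$. The base case $d=N+1$ is immediate from (c). For the induction step $d\rightsquigarrow d-1$, apply the hypothesis to the coherent kernel $K$ to obtain $\lambda_1\in P^+$ such that $R^{j+1}\Gamma(K(\mu))=0$ whenever $j\geq d-1$ and $\mu-\lambda_1\in P^+$. Set $\lambda:=\lambda_0+\lambda_1\in P^+$. For any $\mu$ with $\mu-\lambda\in P^+$, the sequence above twisted by $\mu$ reads $0\to K(\mu)\to\bigoplus \A(\mu-\lambda_0)\to M(\mu)\to 0$ with $\mu-\lambda_0\in P^+$, so the middle term is acyclic by (b). The long exact sequence then forces $R^j\Gamma(M(\mu))\cong R^{j+1}\Gamma(K(\mu))=0$ for $j\geq\max(1,d-1)$. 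Iterating the induction down to $d=1$ yields (i).

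For (ii) an entirely analogous descending induction on $j$ works: the base $j>N$ is trivial by (c), and for the inductive step, the portion
$$\bigoplus_{i=1}^r R^j\Gamma(\A(-\lambda_0))\to R^j\Gamma(M)\to R^{j+1}\Gamma(K)$$
of the long exact sequence associated to the same resolution exhibits $R^j\Gamma(M)$ as an extension of a submodule of the finitely generated $R$-module $R^{j+1}\Gamma(K)$ (finitely generated by the induction hypothesis, applied to the coherent module $K$) by a quotient of the finitely generated $R$-module $\bigoplus_{i=1}^r R^j\Gamma(\A(-\lambda_0))$ (finitely generated by (b)). Since $R$ is Noetherian, $R^j\Gamma(M)$ is then finitely generated.

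The only genuine subtlety here is that the induction would collapse if $K$ failed to be coherent; the nontrivial input ensuring that this does not happen is Theorem \ref{coherent}, which upgrades the \emph{a priori} weaker ACC of $\A$ on coherent submodules into the full statement that kernels of maps between coherent objects are coherent. Everything else is a routine bookkeeping exercise with long exact sequences.
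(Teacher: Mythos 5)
Your proof is correct and relies on exactly the same inputs the paper uses: Proposition \ref{gen_by_global_sections} to produce a surjection from a finite sum of line bundles with a coherent kernel (coherence of the kernel coming, as you flag, from Theorem \ref{coherent}), Corollary \ref{line} for both acyclicity of dominant-twisted line bundles and finite generation of $R^i\Gamma(\A(\lambda))$, and the cohomological dimension bound in Proposition \ref{line}(iv). The only cosmetic difference is in part (i): the paper constructs a length-$N$ resolution by sums of line bundles up front and then twists once by a $\lambda$ large enough to make every line bundle in the resolution dominant, whereas you phrase the same dimension-shifting argument as a descending induction that peels off one syzygy at a time and accumulates the twist; these are logically interchangeable.
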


\begin{proof}
(i) By Proposition \ref{gen_by_global_sections} and the above Theorem, we may find a resolution of $M$ of the form
$$
F_{\bullet}: F_N\overset{f_N}{\rightarrow}\cdots\overset{f_2}{\rightarrow}F_1\overset{f_1}{\rightarrow} M\to 0
$$
where the $F_i$ are finite direct sums of line bundles. Pick $\lambda\in P$ sufficiently large such that all the line bundles in $F_{\bullet}(\lambda)$ are of the form $\A(\mu)$ for $\mu\in P^+$. Then by Corollary \ref{line}, all the $F_i(\lambda)$ are $\Gamma$-acyclic. Let $K_0=M(\lambda)$ and $K_j=\ker{f_j(\lambda)}$ for $1\leq j\leq N$. Then we have a short exact sequence
$$
0\to K_j\to F_j(\lambda)\overset{f_j(\lambda)}{\to} K_{j-1}\to 0
$$
for every $1\leq j\leq N$, and the long exact sequence yields isomorphisms $R^i\Gamma(K_{j-1})\cong R^{i+1}\Gamma(K_j)$ for all $i\geq 1$. Thus, by using Proposition \ref{line}(iv), we obtain
$$
R^i\Gamma(M(\lambda))\cong R^{i+1}\Gamma(K_1)\cong\cdots \cong R^{i+N}\Gamma(K_N)=0
$$
for all $i\geq 1$ as required. Again the same argument works by replacing $\lambda$ by any $\mu\in \lambda+P^+$.

(ii) The proof we give is completely analogous to the proof in \cite[Theorem III.5.2]{Hartshorne}. First note that by Proposition \ref{line}(iv) we have $R^i\Gamma(M)=0$ for all $i >N$ and so we may assume that $i\leq N$. We will prove the result by downwards induction on $i$, the cases $i>N$ being already covered.

By Proposition \ref{gen_by_global_sections} there is a surjection $f:\bigoplus_{j=1}^n \A(-\lambda_j)\to M$ in $\Ca$, where each $\lambda_j\in P^+$. This gives a short exact sequence
$$
0\to K\to \bigoplus_{j=1}^n \A(-\lambda_j)\to M\to 0
$$
Applying the long exact sequence, we obtain
$$
\cdots \to \bigoplus_{j=1}^n R^i\Gamma(\A(-\lambda_j))\to R^i\Gamma(M)\to R^{i+1}\Gamma(K)\to\cdots
$$
By the induction hypothesis applied to $K$ (which we may apply by the above Theorem), we get that $R^{i+1}\Gamma(K)$ is finitely generated. Now by Corollary \ref{line} and since $R$ is Noetherian, we see that $R^i\Gamma(M)$ is finitely generated over $R$ as well.
\end{proof}

One of our main aims will be to establish a $D$-modules version of part (i) of the Proposition. Before we get to that, we can now finally fulfill our promise:

\begin{proof}[Proof of Theorem \ref{proj}]
Note that every object of $\Ca$ is a direct limit of object of $\coh(\Ca)$. Indeed, it suffices to show that every element of any $M\in\Ca$ is contained in a coherent submodule. But this is given by Lemma \ref{useful_lemma}.

So we just have to check all conditions (i)-(iv) from Proposition \ref{proj}. Condition (i) is just Theorem \ref{coherent}, (ii) follows from the fact that $\Gamma(\A)=R$ and from Proposition \ref{coherent}(ii), and (iii) follows from Proposition \ref{gen_by_global_sections}. Finally, condition (iv) is easily deduced from Theorem \ref{coherent} and Proposition \ref{coherent}(i). Indeed, suppose $M\to N$ is a surjection between coherent modules in $\Ca$ and let $K$ denote its kernel. For $\lambda>>0$, we know that $K(\lambda)$ is $\Gamma$-acyclic, and so the corresponding map $\Gamma(M(\lambda))\to \Gamma(N(\lambda))$ is surjective.
\end{proof}

\subsection{Weyl group translates of the big cell}\label{ore} We now introduce certain localisations of $\A$ from Joseph (see \cite[3.1-3.3]{joseph} and \cite[9.1.10]{josephbook}). For each fundamental weight $\varpi_i$, consider the highest weight representation $V(\varpi_i)$ of $U_q$. It contains a free $R$-lattice $M:=\ind (R_{\varpi_i})^*$ that is a $U^{\text{res}}$-module. In fact $M$ is a cyclic module generated by a highest weight vector $v\in V(\varpi_i)$ (see \cite[Proposition 3.3]{Andersen}). Let $f\in M^*$ be the corresponding dual vector. Let $c_{\varpi_i}:=c^M_{f,v}\in \A$ be the corresponding matrix coefficient. Joseph showed in \emph{loc. cit.} that these commute and we may define for any $\mu=\sum_i n_i \varpi_i\in P^+$ the element $c_\mu=\prod_i c_{\varpi_i}^{n_i}\in \A$. Moreover, for any $\mu\in P^+$, $c_\mu=c^{V(\mu)}_{f_\mu,v_\mu}$ is the matrix coefficient of the highest weight representation $V(\mu)$ of $U_q$. In fact it is the matrix coefficient of a $U^{\text{res}}$-lattice inside $V(\mu)$, namely $\ind (R_{-\mu})^*$.

Recall that $\A$ is a $U^{\text{res}}$-module algebra via the action $u\cdot f=\sum f_2(u)f_1$. If we identify $\A$ with a submodule of $\Hom_R(U^{\text{res}},R)$, this action is given by
$$
(u\cdot f)(x)=f(xu)
$$
for all $u, x\in U^{\text{res}}$ and all $f\in \A$. Therefore, identifying $c_\mu$ with the matrix coefficient corresponding to a highest weight vector as above, we see that $u\cdot c_\mu=\mu(u)c_\mu$ for any $u\in(U^{\text{res}})^0$ and $E_{\alpha_i}^{(r)}\cdot c_\mu=0$ for any $i$ and any $r\geq 1$. Thus in the $\B$-comodule language, we have $\Delta(c_\mu)=c_\mu\otimes \mu\in \A\otimes \B$. So we see that $c_\mu\in \Gamma(\A(\mu))$.

Recall now that $\Gamma(\A(\mu))=\ind{R_{-\mu}}$ is an integrable $U^{\text{res}}$-module. The elements of it can all be identified as certain functions in $\Hom_R(U^{\text{res}},R)$, and the module structure is given by 
$$
(u\cdot f)(x)=f(S(u)x)
$$
for all $u, x\in U^{\text{res}}$ and all $f\in \Gamma(\A(\mu))$. With respect to this action, the element $c_\mu$ has weight $-\mu$ and so is a lowest weight vector, since the module $\Gamma(\A(\mu))$ is a free $R$-lattice inside $V(-w_0\mu)$ and satisfies the Weyl character formula by \cite[Corollary 3.3]{Andersen}. In particular we see that $\Gamma(\A(\mu))$ has a unique (up to scalars) extreme $w$-weight vector $c_{w\mu}$ of weight $-w\mu$ for any Weyl group element $w\in W$, which we may choose to equal
$$
c_{w\mu}=E_{\alpha_{i_1}}^{(r_1)}\cdots E_{\alpha_{i_s}}^{(r_s)}\cdot c_\mu
$$
where $w=s_{i_1}\cdots s_{i_s}$ and where the exponents $r_j$ are defined by $r_s=\langle \mu, \alpha_{i_s}^\vee \rangle$ and $r_j=\langle s_{i_{j+1}}\cdots s_{i_s}\mu, \alpha_{i_j}^\vee \rangle$ for $j\leq s-1$. Then Joseph \cite[9.1.10]{josephbook} showed that $c_{w\lambda}c_{w\mu}=c_{w(\lambda+\mu)}$ for every $w\in W$ and every $\lambda, \mu\in P^+$. Therefore, for every $w\in W$, the set
$$
S_w:=\{c_{w\mu} : \mu \in P^+\}
$$
is multiplicatively closed in $\A$. Moreover we still have $c_{w\mu}\in \Gamma(\A(\mu))$, so that we may view $S_w$ as a multiplicatively closed subset of $R_q$. Joseph showed in \emph{loc. cit.} that $S_w$ is an Ore set in both $\Oq$ and its representation ring, but in fact his proof works equally well with $\A$ and in $R_q$ (see also \cite[III.2]{LuntsRos}). Hence we have:

\begin{lem} For every $w\in W$, $S_w$ is an Ore set in $\A$ and in $R_q$.
\end{lem}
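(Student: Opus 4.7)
The plan is to deduce the Ore property for $S_w$ in $\A$ (and then in $R_q$) from Joseph's statement for $\Oq$ via a non-zero-divisor argument, rather than to re-prove Joseph's identity from scratch.

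First I would invoke \cite[9.1.10]{josephbook}: $S_w$ is an Ore set in $\Oq$, so for every $a\in\Oq$ and every $s=c_{w\mu}\in S_w$ there exist $t=c_{w\mu'}\in S_w$ and $b\in\Oq$ with $at=sb$, together with the symmetric left statement. The task is to show that when $a\in\A$, the element $b$ actually lies in $\A$ rather than merely in $\Oq$. For this I would establish that every $c_{w\lambda}$ is a non-zero-divisor in $\A$, and moreover that every $b\in\Oq$ satisfying $c_{w\lambda}b\in\A$ lies in $\A$. Since $q\equiv 1\pmod\pi$, the reduction $\A/\pi\A$ is canonically $\Of(G_k)$, which is an integral domain; the image of $c_{w\lambda}$ is a nonzero matrix coefficient on $G_k$, hence a non-zero-divisor there, and by $\pi$-adic separatedness of $\A$ (inherited from $\A\subseteq(U^{\text{res}})^*$ and the freeness of $U^{\text{res}}$ over $R$) this upgrades to a non-zero-divisor in $\A$. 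For the second claim, any $b\in\Oq=\A[\tfrac{1}{\pi}]$ can be written as $\pi^{-n}b_0$ with $b_0\in\A$; if $c_{w\lambda}b\in\A$ then $c_{w\lambda}b_0\in\pi^n\A$, and induction on $n$ using non-zero-divisibility modulo $\pi$ gives $b_0\in\pi^n\A$, hence $b\in\A$.

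Combining these, given $a\in\A$, $s=c_{w\mu}$ and Joseph's relation $at=sb$ with $b\in\Oq$, the product $sb=at$ lies in $\A$, whence $b\in\A$ by the preceding step; this yields the right Ore condition in $\A$, and the left condition is entirely symmetric. For $R_q$, essentially the same argument works: the elements $c_{w\mu}$ are homogeneous of degree $\mu\in P^+$ (as $c_{w\mu}\in\Gamma(\A(\mu))$), and one may enlarge $\mu'$ freely by replacing $t=c_{w\mu'}$ with $tc_{w\nu}=c_{w(\mu'+\nu)}$ and $b$ with the corresponding element, so that we may assume $\mu'-\mu\in P^+$; the resulting $b$ is then homogeneous of degree lying in $P^+$ by degree counting, and belongs to $R_q$. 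The main obstacle is the non-zero-divisor claim, but it reduces cleanly to the classical statement that nonzero matrix coefficients in the integral domain $\Of(G_k)$ are non-zero-divisors.
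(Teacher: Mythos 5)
Your argument is correct in substance for $\A$ but takes a genuinely different route from the paper, which simply observes that Joseph's original computation (establishing the Ore property for $S_w$ in $\Oq$ by direct manipulation of matrix coefficients, cf.\ also Lunts--Rosenberg) goes through verbatim after replacing $L$ by $R$ and $\Oq$ by $\A$. You instead black-box Joseph's result over $L$ and descend to the integral form via a non-zero-divisor argument: $\A/\pi\A\cong\Of(G_k)$ is a domain (since $G_k$ is connected), the extreme weight vector $c_{w\mu}$ reduces to a nonzero element of the rank-one weight space of $H^0(\mu)$ and hence to a non-zero-divisor, and $\pi$-adic separatedness of the $R$-free module $\A$ upgrades this to $c_{w\mu}$ being a non-zero-divisor in $\A$ with the property that $c_{w\mu}\Oq\cap\A=c_{w\mu}\A$. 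This is a clean reduction and arguably more self-contained than ``re-run Joseph's proof''. However, the final step for $R_q$ is not quite complete as written: from the Ore relation $at=sb$ in $\A$ one knows $b\in\A$, but ``degree counting'' alone does not show $b$ lies in the $\B$-semi-invariant subring $R_q\subsetneq\A$, let alone that it is homogeneous. To get this one should either argue via the comodule equation (from $\rho(sb)=\rho(at)$ and the fact that $c_{w\mu}\otimes\mu$ is a non-zero-divisor in $\A\otimes_R\B$, deduce $\rho(b)=b\otimes(\lambda+\mu'-\mu)$), or --- more in the spirit of your $\A$ argument --- apply the same reduction directly to Joseph's Ore result for the representation ring of $\Oq$, using that $R_q/\pi R_q\cong\Of(G_k/N_k)$ is again a domain and $R_q$ is $R$-free. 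With either fix the proof is complete.
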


So we may define localisations $\Aw:=S_w^{-1}\A$ for each Weyl group element. By viewing $\A\otimes \B$ as a left $\A$-module via the comultiplication $\Delta$, the comodule map $\A\to \A\otimes \B$, which by abuse of notation we also denote by $\Delta$, is an $\A$-module map, and its localisation gives a map
$$
\Delta_w:\Aw\to \Aw\otimes_R \B
$$
which defines a $\B$-comodule structure: for $f\in\A$ and $s\in S_w$ such that $\Delta(s)=s\otimes \lambda$, $\Delta_w$ sends $s^{-1}f$ to $(s^{-1}\otimes -\lambda)\cdot \Delta(f)$. Moreover the $\Aw$-module structure on $\Aw\otimes_R \B$ is defined by $\Delta_w$.

More generally, if $M\in\Ca$ with comodule map $\rho:M\to M\otimes_R\B$ then, by the axioms for $\Ca$, $\rho$ is an $\A$-module map where we view $M\otimes_R\B$ as an $\A$-module via $\Delta$, and its localisation gives rise to a map
$$
\rho_w:S_w^{-1}M\to S_w^{-1}M\otimes_R \B
$$
which will be $\Aw$-linear where $\Aw$ acts on $S_w^{-1}M\otimes_R \B$ via the map $\Delta_w$.

\begin{definition} We define $\Ca^w$ to be the category of $B$-equivariant $\Aw$-modules. Specifically, the objects consist of $\Aw$-modules $M$ which are equipped with a right $\B$-comodule action $M\to M\otimes_R\B$ such that the $\Aw$-action map $\Aw\otimes_R M\to M$ is a comodule homomorphism where we give $\Aw\otimes_R M$ the tensor comodule structure. The morphisms are just the $\Aw$-linear maps which are also comodule homomorphisms.
\end{definition}

The above discussion shows that there is a localisation functor $f_w^*:\Ca\to\Ca^w$ which sends a module $M$ to its localisation $S_w^{-1}M$ as an $\A$-module, and it has a right adjoint $(f_w)_*$ given by the forgetful functor.  Both of these are exact and they make $\Ca^w$ into a localisation of $\Ca$ in the sense of Gabriel i.e a quotient of $\Ca$ by a localising subcategory (see \cite[Chapter III.2]{gabriel}).

\subsection{\v{C}ech complexes}\label{Cech} We saw that $\Ca$ is equivalent to Proj$(R_q)$ and that we may equally localise any graded $R_q$-module at the set $S_w$ for any $w\in W$. Since the set $S_w$ contains elements of arbitrarily large degree in $R_q$, we see that the localisation functor $R_q\text{-mod}\to S_w^{-1}R_q\text{-mod}$ factors through Proj$(R_q)$ and makes $S_w^{-1}R_q\text{-mod}$ into a localisation of Proj$(R_q)$.

We have a global section functor on $\Ca^w$ which corresponds to taking $\B$-invariants. This is of course the same as the composite $\Gamma\circ(f_w)_*$. Now via the proj construction we see that global sections on $\Ca$ correspond to projection onto the degree 0 in Proj$(R_q)$. So we see that the global section functor on $S_w^{-1}R_q\text{-mod}$ is the functor of taking the degree 0 part of the graded module, which is exact! We then get: 

\begin{lem} The categories $S_w^{-1}R_q\text{-mod}$ and $\Ca^w$ have enough injectives, and they are naturally equivalent to each other as localisations of $\Ca$. Hence the global section functor on $\Ca^w$ is exact and objects of $\Ca^w$ are acyclic when viewed in $\Ca$.
\end{lem}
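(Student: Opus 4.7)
The plan is to realise both $\Ca^w$ and $S_w^{-1}R_q\text{-mod}$ as Gabriel quotients of $\Ca$ by the same localising subcategory; equivalence, enough injectives, exactness of $\Gamma$ and acyclicity will then follow formally. The category $\Ca^w$ is already described as such a quotient in the text: $f_w^*$ is exact with fully faithful right adjoint $(f_w)_*$, giving $\Ca^w \simeq \Ca/\mathcal{S}_1$ where $\mathcal{S}_1 = \{M \in \Ca : S_w^{-1}M = 0\}$. For the other side, since $S_w$ contains $c_{w\mu}$ for every $\mu \in P^+$, any graded module which is torsion in the Proj sense (every element killed by $R_q^{\geq k}$ for some $k$) is automatically $S_w$-torsion; hence the localisation $R_q\text{-mod} \to S_w^{-1}R_q\text{-mod}$ factors through $\text{Proj}(R_q)$, and composing with the equivalence $\Ca \simeq \text{Proj}(R_q)$ from Theorem \ref{proj} expresses $S_w^{-1}R_q\text{-mod}$ as $\Ca/\mathcal{S}_2$ for a localising subcategory $\mathcal{S}_2$.

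The central task is to verify $\mathcal{S}_1 = \mathcal{S}_2$. Both are closed under filtered colimits, and by the construction of Lemma \ref{useful_lemma} every object of $\Ca$ is a filtered colimit of its coherent submodules, so it suffices to prove equality on $\coh(\Ca)$. For coherent $M$ the condition $M \in \mathcal{S}_1$ is equivalent to the existence of a single $c_{w\nu} \in S_w$ annihilating all of $M$: given generators $m_1,\ldots,m_k$ of $M$ each killed by some $c_{w\nu_i}$, Joseph's identity $c_{w\lambda}c_{w\mu} = c_{w(\lambda+\mu)}$ implies that elements of $S_w$ commute amongst themselves, and combined with the Ore property of $S_w$ in $\A$ (Lemma \ref{ore}) one finds a common annihilator. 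On the other hand $M \in \mathcal{S}_2$ iff every element of $\underline{\Gamma}(M) = \bigoplus_\mu \Gamma(M(\mu))$ is killed by some element of $S_w$; by Proposition \ref{gen_by_global_sections}, for $\lambda \in P^+$ large enough $M(\lambda)$ is generated as an $\A$-module by finitely many global sections, so annihilation of these sections by a common $c_{w\nu}$ propagates to all of $M(\lambda)$ (and hence $M$) by the same Ore manipulation. The reverse inclusion $\mathcal{S}_1 \subseteq \mathcal{S}_2$ is immediate, since the components of $\underline{\Gamma}(M)$ sit inside $M$.

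Once $\mathcal{S}_1 = \mathcal{S}_2$ is established, the universal property of Gabriel quotients yields a canonical equivalence $\Ca^w \simeq S_w^{-1}R_q\text{-mod}$ compatible with the localisations from $\Ca$. Enough injectives in both categories follow at once: $(f_w)_*$ is exact (being restriction of scalars along $\A \hookrightarrow \Aw$), so $f_w^*$ preserves injectives; since $\Ca$ has enough injectives by the Lemma in Section \ref{Rflag}, given any $M \in \Ca^w$ we embed $(f_w)_*M$ in an injective $I$ of $\Ca$ and apply $f_w^*$ to obtain an embedding of $M$ into $f_w^*I$, which is injective in $\Ca^w$. Via the equivalence, the global section functor on $\Ca^w$ corresponds to $\Hom(S_w^{-1}R_q, -)$, i.e.\ to extracting the degree-zero component of a $P$-graded module over $S_w^{-1}R_q$, and this is visibly exact. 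Finally, for acyclicity in $\Ca$: given $M \in \Ca^w$, pick an injective resolution $M \to I^\bullet$ in $\Ca^w$; since $(f_w)_*$ preserves injectives, $(f_w)_*I^\bullet$ is an injective resolution of $(f_w)_*M$ in $\Ca$. Applying $\Gamma_{\Ca}$ computes both $R^\bullet\Gamma_{\Ca}((f_w)_*M)$ and, via the identification $\Gamma_{\Ca}\circ(f_w)_* = \Gamma_{\Ca^w}$, the cohomology of the exact functor $\Gamma_{\Ca^w}$ on $M$, so the higher cohomology vanishes.

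The main obstacle is the inclusion $\mathcal{S}_2 \subseteq \mathcal{S}_1$: it requires using generation-by-global-sections from Proposition \ref{gen_by_global_sections} to reduce annihilation of $M$ to annihilation of finitely many global sections, followed by a careful use of the Ore condition and Joseph's identity to extract a common annihilator in $S_w$. Everything else is a formal consequence of Gabriel localisation theory.
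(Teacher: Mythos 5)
Your overall framework---identify both $\Ca^w$ and $S_w^{-1}R_q\text{-mod}$ as Gabriel quotients of $\Ca$ and show the localising subcategories coincide---is exactly the paper's framing, but you execute the key step differently. The paper shows directly that $S_w^{-1}\underline{\Gamma}(M)=0$ forces $\underline{\Gamma}(S_w^{-1}M)=0$ by an element chase (if $s^{-1}m\in\Gamma(S_w^{-1}M(\lambda))$ and $s$ has weight $\mu$, then $m\in\Gamma(M(\lambda+\mu))$, hence killed by some $t\in S_w$ by hypothesis, hence $s^{-1}m=0$); no finiteness reduction and no common-annihilator bookkeeping are needed. Your route instead reduces to $\coh(\Ca)$ via Lemma \ref{useful_lemma} and then uses generation by global sections together with the Ore condition. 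Both work, but the paper's element chase is shorter and avoids the issues below.

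There is a genuine gap in your ``enough injectives'' argument. You write ``$(f_w)_*$ is exact \ldots, so $f_w^*$ preserves injectives,'' but this is the adjoint-functor lemma backwards: in the adjunction $f_w^*\dashv(f_w)_*$, exactness of the \emph{left} adjoint $f_w^*$ is what forces the \emph{right} adjoint $(f_w)_*$ to preserve injectives; exactness of $(f_w)_*$ gives nothing about $f_w^*$ without a further left adjoint to $f_w^*$. The conclusion that $f_w^*$ preserves injectives is in fact true for a Gabriel quotient by a localising (hereditary torsion) subcategory, but that requires a separate argument (e.g.\ that the torsion part of an injective splits off), and the paper avoids the whole issue by citing \cite[Corollary III.3.2]{gabriel} directly.

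A smaller imprecision: your claim that ``$M\in\mathcal{S}_1$ is equivalent to a single $c_{w\nu}$ annihilating all of $M$'' does not follow from the Ore property plus commutativity of $S_w$ among themselves alone---a single $s$ killing the generators $m_i$ need not kill $am_i$ for $a\in\A$ unless $s$ normalises $\A$ (which the $c_{w\mu}$ do, as they $q$-commute with $\A$, but you should say so). Fortunately what you actually need for $\mathcal{S}_2\subseteq\mathcal{S}_1$ is only that every element of $M(\lambda)$ is $S_w$-torsion once the generating global sections are; that \emph{does} follow from the left Ore condition alone (Ore turns an annihilator of $g_i$ into an annihilator of $a_ig_i$, and the standard common-left-multiple trick combines finitely many annihilators). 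Rephrasing in terms of ``every element is $S_w$-torsion'' rather than ``a single common annihilator'' would make this step airtight without appealing to normality.
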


\begin{proof} By \cite[Corollary III.3.2]{gabriel} the first part follows from Lemma \ref{Rflag} and the fact that both categories are localisations of $\Ca$. By the above discussion, if the two categories are equivalent then global sections is exact. To prove that $S_w^{-1}R_q\text{-mod}$ and $\Ca^w$ are equivalent, we just need to show that $M\in \Ca$ has localisation zero if and only if $\underline{\Gamma}(M)$ has localisation zero.

Clearly if $M\in\Ca$ has localisation zero, then so does $\underline{\Gamma}(M)$. Conversely if $\underline{\Gamma}(M)$ has localisation zero, we show that $\underline{\Gamma}(S_w^{-1}M)=0$, which implies that $S_w^{-1}M=0$. Indeed suppose $s\in S_w$, $m\in M$ such that $\Delta(s)=s\otimes\mu$ and $s^{-1}m\in \Gamma(S_w^{-1}M(\lambda))$ for some $\lambda$. Then
$$
\rho(m)=\rho(s(s^{-1}m))=(s\otimes \mu)\rho(s^{-1}m)=(s\otimes \mu)(s^{-1}m\otimes \lambda)=m\otimes (\lambda +\mu)
$$
so that $m\in \Gamma(M(\lambda+\mu))$. By assumption there exists $t\in S_w$ such that $tm=0$. But then that means that the image of $m$ in $S_w^{-1}M$ is zero and so $s^{-1}m=0$. Thus we see that $\underline{\Gamma}(S_w^{-1}M)=0$ as required.

Finally, let $M\in\Ca^w$ and $M\to I^{\bullet}$ be an injective resolution of $M$ in $\Ca^w$. Note that since $(f_w)_*$ preserves injectives as it is the right adjoint to an exact functor, we have that $(f_w)_*(I^{\bullet})$ is an injective resolution of $(f_w)_*(M)$ in $\Ca$, and applying global sections and taking cohomology we obtain $R^i\Gamma((f_w)_*(M))=0$ for all $i>0$ since $\Gamma\circ(f_w)_*$ is exact.
\end{proof}

We think of $\Ca^w$ as being an analogue of the $w$-translate of the big cell on the flag variety, and the above lemma tells us that it is in some sense affine. Now to such a situation Rosenberg \cite[Sections 1 \& 2]{Rosenberg} (see also \cite[section III.3]{LuntsRos}) explained how to write down an analogue of the \v{C}ech complex which allows us to compute the cohomology of the functor $\Gamma$. Write $W=\{w_1,\ldots, w_m\}$, let $J=\{1,\ldots, m\}$ and for each $i\in J$ let $\sigma_i:=(f_{w_i})_*\circ f_{w_i}^*$. Moreover for any $\mathbf{i}=(i_1,\ldots, i_n)\in J^n$, let $\sigma_{\mathbf{i}}=\sigma_{i_1}\circ\cdots\circ \sigma_{i_n}$. Then by \cite[1.2 \& 1.3]{Rosenberg} we may write down a complex
$$
C^{\text{aug}}: \id_{\Ca}\to \bigoplus_{i\in J} \sigma_i\to \bigoplus_{\mathbf{i}\in J^2} \sigma_{\mathbf{i}}\to \bigoplus_{\mathbf{i}\in J^3} \sigma_{\mathbf{i}}\to\cdots
$$
where the maps are given as follows. Denote the adjunction morphism $\id_{\Ca}\to \sigma_i$ by $\eta_i$. Then for any $\mathbf{i}\in J^n$ and any $1\leq j\leq n$, there is a natural transformation
$$
\xi_n^j:\sigma_{i_1}\circ\cdots\circ \sigma_{i_n}\to \oplus_{i\in J}\sigma_{i_1}\circ\cdots\circ \sigma_{i_{j-1}}\circ\sigma_i \circ \sigma_{i_j}\circ\cdots\circ\sigma_{i_n}
$$
given by $\xi_n^j=\oplus_{i\in J} \sigma_{i_1}\cdots\sigma_{i_{j-1}}\eta_i \sigma_{i_j}\cdots\sigma_{i_n}$. The differential in the complex is then given by taking the alternating sum (over all $j$) of these $\xi_n^j$.

We may post-compose $C^{\text{aug}}$ with the functor of taking global sections to obtain a complex $\check{C}^{\text{aug}}$ called the \emph{augmented standard complex} of $\Gamma$. We may also consider the complex
$$
C: \bigoplus_{i\in J} \sigma_i\to \bigoplus_{\mathbf{i}\in J^2} \sigma_{\mathbf{i}}\to \bigoplus_{\mathbf{i}\in J^3} \sigma_{\mathbf{i}}\to\cdots
$$
and $\check{C}=\Gamma\circ C$, which we call the \emph{standard complex}. We then have:

\begin{prop} For any $M\in \Ca$, the complex $C^{\text{aug}}(M)$ is exact. Moreover, for $i\geq 0$, the $i$-th cohomology of the complex $\check{C}(M)$ is isomorphic to $R^i\Gamma (M)$.
\end{prop}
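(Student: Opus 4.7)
The plan is to verify the two hypotheses which, in Rosenberg's framework, make the standard complex compute $R\Gamma$: first that the family $\{f_w^*\}_{w\in W}$ is jointly conservative on $\Ca$, which forces $C^{\text{aug}}(M)$ to be exact for every $M$; and second that each composite $\sigma_{\mathbf{i}}(M)$ appearing in $C(M)$ is $\Gamma$-acyclic, so that $M \to C(M)$ is a $\Gamma$-acyclic resolution computing $R\Gamma(M)$.

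For conservativity I would show that if $M\in\Ca$ satisfies $S_w^{-1}M=0$ for all $w\in W$, then $M=0$. Using the Proj equivalence from Theorem \ref{coherent}, this reduces to showing that the ideal of $R_q$ generated by the extremal weight vectors $\{c_{w\mu} : w\in W,\ \mu\in P^+\}$ contains $R_q[\nu]$ for all sufficiently large $\nu$. Modulo $\pi$, this is the classical statement that the Weyl translates of the big cell cover the flag variety $G_k/B_k$ (note $R_q/\pi R_q$ identifies with the coordinate ring of the basic affine space $G_k/N_k$, as used in Lemma \ref{coherent}). To lift this from $k$ to $R$ I would apply Nakayama's lemma on the graded pieces $R_q[\nu]$, which are finitely generated over the local ring $R$ by Corollary \ref{line}. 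Once conservativity is in hand, the standard Rosenberg argument yields exactness of $C^{\text{aug}}(M)$: applying any single $\sigma_j$ produces a contractible complex, the contracting homotopy being built from the adjunction unit $\eta_j$ together with the natural isomorphism $f_{w_j}^*(f_{w_j})_*\cong\id$ which makes $\sigma_j$ idempotent up to isomorphism; conservativity then promotes this to exactness of $C^{\text{aug}}(M)$ itself.

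To handle $\Gamma$-acyclicity of the terms $\sigma_{\mathbf{i}}(M)$, I would observe that each such composite lands in a category of $\A$-modules localized at the multiplicative set generated by $S_{w_{i_1}}\cup\cdots\cup S_{w_{i_n}}$ (which is again Ore by the same Joseph-style argument used in Lemma \ref{ore}). This multi-localization is equivalent, through the Proj description, to graded modules over a corresponding localization of $R_q$, and its global section functor is the exact functor of taking the degree-zero component. The argument of Lemma \ref{Cech} then applies verbatim to show that every object in this further localization is $\Gamma$-acyclic when viewed in $\Ca$. Combining the three steps, $C(M)$ is a $\Gamma$-acyclic resolution of $M$, and applying $\Gamma$ yields $\check{C}(M)$ with $H^i(\check{C}(M))\cong R^i\Gamma(M)$ as required.

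The main obstacle will be the conservativity step: lifting the classical Zariski cover of the flag variety to the integral level requires carefully controlling $\pi$-torsion, and is complicated by the fact that $\A$ itself is not known to be Noetherian. The route through the Noetherian graded ring $R_q$ and Nakayama on its $R$-finite graded pieces should make this manageable, but it is the most delicate input and relies essentially on the Proj description developed in Section \ref{coherent}.
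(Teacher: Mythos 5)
Your approach is essentially the paper's: apply Rosenberg's criterion, which reduces the Proposition to (a) the localization functors $\{f_w^*\}_{w\in W}$ jointly detecting zero objects and (b) the terms $\sigma_{\mathbf{i}}(M)$ being $\Gamma$-acyclic. Your argument for (a) — pass to $\text{Proj}(R_q)$, observe that the ideal generated by the extremal vectors $c_{w\mu}$ must contain all high-degree graded pieces, and lift the classical cover of $G_k/B_k$ by translates of the big cell via Nakayama using the $R$-finiteness of the $\Gamma(\A(\lambda))$ — is precisely how the paper proves it, via the Lemma on $\sum_{w} \Gamma(\A(\lambda))c_{w\mu}=\Gamma(\A(\lambda+\mu))$.

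One point in (b), though, deserves flagging because as written it is both more complicated than necessary and contains a claim you would need to justify more carefully. You want to view $\sigma_{\mathbf{i}}(M)$ as a module over a single Ore localization of $\A$ at the multiplicative set generated by $S_{w_{i_1}}\cup\cdots\cup S_{w_{i_n}}$. Two issues: first, in a noncommutative ring the iterated localization $S_{w_{i_1}}^{-1}\cdots S_{w_{i_n}}^{-1}M$ need not coincide with localization at a single multiplicatively closed set, and second, it is not clear that the multiplicative set generated by the union satisfies the Ore condition — the Joseph argument is specific to a single $S_w$, relying on the normality of $c_{w\mu}$ in $\A$, and does not immediately extend to products of $c_{w\mu}$ and $c_{w'\mu'}$ for distinct $w, w'$. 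Neither of these complications is actually needed: since $\sigma_{\mathbf{i}} = \sigma_{i_1}\circ(\sigma_{i_2}\cdots\sigma_{i_n})$, the object $\sigma_{\mathbf{i}}(M)$ is by construction $(f_{w_{i_1}})_*$ of an object of $\Ca^{w_{i_1}}$, and the earlier Lemma already shows that every object of $\Ca^{w}$, viewed in $\Ca$, is $\Gamma$-acyclic. So acyclicity of the \v{C}ech terms follows from a single localization, with no need to combine the Ore sets at all; this is what the paper does.
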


\begin{proof} By \cite[Proposition 1.4 \& Theorem 2.2]{Rosenberg} and by the Lemma, it will follow if we prove that the categories $\Ca^{w_i}$ cover the category $\Ca$, meaning that a morphism $g$ in $\Ca$ is an isomorphism if and only if $f_{w_i}^*(g)$ is an isomorphism for all $i\in J$. This is equivalent to saying that $M\in\Ca$ is zero if and only if all its localisations are zero. Working with proj categories instead, suppose $M$ is a graded $R_q$-module such that $S_w^{-1}M=0$ for all $w$. Pick $m\in M$. Then for all $i\in J$, there exists $\mu_i\in P^+$ such that $c_{w_i\mu_i}m=0$. Let $\mu=\sum_i \mu_i$. Then for all $w\in W$, $c_{w\mu}m=0$. But then it follows from the Lemma below that $\Gamma(\A(\lambda+\mu))m=0$ for all $\lambda >>0$. Since $m$ was arbitrary this implies that $M$ is torsion and so zero in Proj$(R_q)$.
\end{proof}

\subsection{Lemma} \emph{Let $\mu\in P^+$. Then for $\lambda>>0$ we have
$$
\sum_{w\in W} \Gamma(\A(\lambda)) c_{w\mu}=\Gamma(\A(\lambda+\mu)).
$$}

\begin{proof} This is proved in \cite[Lemma III.3.3]{LuntsRos} but we reproduce it here. Clearly the left hand side is included in the right hand side, and both sides are finitely generated as $R$-modules, so by Nakayama it's enough to show that the equality holds modulo $\pi$, i.e. that
$$
\sum_{w\in W} H^0(\lambda) \overline{c_{w\mu}}=H^0(\lambda+\mu)
$$
where $H^0(\lambda)$ denotes the global sections of the line bundle $\mathcal{L}(\lambda)$ on the flag variety $G_k/B_k$. We are then in a classical situation, and the equality will follow from the classical fact that the Weyl group translates of the big cell cover the flag variety of $G_k$. The equality was proved over $\mathbb{C}$ in \cite[Lemma 11]{joseph2}. The argument is the same here in positive characteristic, but for completeness we sketch it.

Firstly, since both sides are finite dimensional over $k$, to show equality is to show that the dimensions are equal, and so it will suffice to prove that the equality holds after passing to the algebraic closure of $k$. So without loss of generality, we may assume that $k=\bar{k}$. Moreover, for any $\lambda'$ and $\mu'$, the natural map $H^0(\lambda')\otimes H^0(\mu')\to H^0(\lambda' +\mu')$ is surjective (see \cite[Proposition 14.20]{Jantzen2}). Thus we may assume that $\lambda=n\mu$ for $n>>0$.

Now, consider the Weyl module $V=V(-w_0\mu)=H^0(\mu)^*$ and let $v\in V$ have weight $-w_0\mu$. Then the flag variety $G_k/B_k$ maps onto the $G_k$-orbit of the line $kv$ in the projective space $\mathbb{P}(V)$. If we take the homogeneous cone above this, its algebra of regular functions is a quotient of $S(V^*)$, and in fact is the commutative graded ring $A=\oplus_{n\geq 0} H^0(n\mu)$ (see \cite[Proposition 14.22]{Jantzen2}). The fact that the Weyl group translates of the big cell cover the flag variety now implies that the radical of the ideal of $A$ generated by the elements $\overline{c_{w\mu}}$ is in fact the irrelevant ideal $A_{> 0}$. This says that the ideal they generate contains all $H^0(m\mu)$ for $m$ large enough, as required.
\end{proof}

\subsection{Base change}\label{base_change} As an immediate application of the \v{C}ech complex, we show how the cohomology of $\Gamma$ behaves under base change to the field $L$.

\begin{prop} For any $M\in\Ca$ and any $i\geq 0$, we have $R^i\Gamma(M_L)=R^i\Gamma(M)\otimes_R L$.
\end{prop}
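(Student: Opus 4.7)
The plan is to exploit the standard complex $\check{C}(M)$ constructed in Section~\ref{Cech}, whose cohomology computes $R^i\Gamma(M)$ by the preceding Proposition. Since $L$ is flat over $R$, cohomology commutes with $-\otimes_R L$, so it suffices to identify $\check{C}(M)\otimes_R L$ with a complex computing $R^i\Gamma(M_L)$ in $\flag$.

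First I would verify two basic compatibilities. (a) For any $N\in\Ca$, we have $\Gamma(N)\otimes_R L \cong \Gamma(N_L)$: this follows from flatness of $L$ over $R$, since $\Gamma(N)$ is the kernel of the $R$-linear map $N \to N\otimes_R\B$ sending $n \mapsto \rho_N(n)-n\otimes 1$, and $N_L\otimes_L \OqB = (N\otimes_R\B)\otimes_R L$. (b) Localisation commutes with base change: $(S_w^{-1}M)\otimes_R L \cong S_w^{-1}(M_L)$, where on the right $S_w$ is viewed as an Ore set in $\Oq$ via the inclusion $\A\hookrightarrow\Oq$ (and it is Ore there by Joseph's original argument). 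Combining these, $\check{C}(M)\otimes_R L$ is canonically isomorphic, term by term, to the analogous standard complex built in $\flag$ from the Weyl group translates of the big cell.

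The last step is to observe that this $L$-linear complex is a $\Gamma$-acyclic resolution of $M_L$ in $\flag$. Exactness of the augmentation is immediate: apply the exact functor $-\otimes_R L$ to the exact augmented complex $C^{\text{aug}}(M)$ in $\Ca$ supplied by Proposition~\ref{Cech}. Each term is of the form $\sigma_{\mathbf{i}}(M)\otimes_R L$, which lies in the $L$-analogue of $\Ca^w$ (i.e.\ is an $S_w^{-1}\Oq$-module in $\flag$); by exactly the argument of Lemma~\ref{Cech}, applied now to $\Oq$ and $R_q\otimes_R L$, every such object is $\Gamma$-acyclic in $\flag$. Consequently the cohomology of the complex is $R^i\Gamma(M_L)$, yielding
\[
R^i\Gamma(M_L) \;\cong\; H^i\bigl(\check{C}(M)\otimes_R L\bigr) \;\cong\; H^i\bigl(\check{C}(M)\bigr)\otimes_R L \;\cong\; R^i\Gamma(M)\otimes_R L.
\]
No step is genuinely hard; the only point requiring mild care is the verification that the Weyl-group localisation machinery and the Lemma establishing $\Gamma$-acyclicity carry over verbatim from $\A$ to $\Oq$, which is immediate as the Ore sets and their properties are inherited through the flat inclusion $\A\hookrightarrow\Oq$.
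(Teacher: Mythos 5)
Your proof is correct, and it follows the same overall strategy as the paper: establish the $i=0$ case, then deduce the higher cohomology via the \v{C}ech complex of Proposition~\ref{Cech} and flatness of $L$ over $R$. The most notable difference is the $i=0$ argument. You observe that $\Gamma(N)$ is literally the kernel of the $R$-linear map $N\to N\otimes_R\B$, $n\mapsto\rho_N(n)-n\otimes 1$, and conclude $\Gamma(N)\otimes_R L\cong\Gamma(N_L)$ directly from flatness. The paper instead runs a more hands-on torsion argument: given $m\in(M_L)^{B_q}$, it clears denominators to land in $\im(g)$, shows that $\rho_M(m')-m'\otimes 1$ is $\pi$-power torsion, and then clears that torsion too. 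Your flatness argument is cleaner and strictly more economical; the paper's version is more explicit but gives nothing extra here.

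For $i>0$ you also do a bit more work than the paper. After identifying $\check{C}(M)\otimes_R L$ term-by-term with the analogous complex for $M_L$ (using that Ore localisation commutes with flat base change, which is fine), you argue directly that this $L$-complex is a $\Gamma$-acyclic resolution of $M_L$ in $\flag$ by re-running the Proj/Rosenberg machinery over $L$, relying on the Backelin--Kremnizer equivalence $\flag\simeq\operatorname{Proj}(\Oq(G/N))$. The paper short-circuits this by citing \cite[Proposition~4.5]{QDmod1}, which is precisely the statement that this complex computes $R^i\Gamma$ on $\flag$. Both routes are valid: yours is more self-contained, while the paper's is more concise because that computation was already done in the reference.
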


\begin{proof} Let $M\in\Ca$ and first assume that $i=0$. By the universal property of tensor products, we have a commutative diagram
\[
\begin{tikzcd}
M \arrow{r}{g} & M\otimes_R L \\
M^{\B}\arrow{r}\arrow{u}& M^{\B}\otimes_R L\arrow{u}{f}
\end{tikzcd}
\]
of $R$-modules with injective vertical arrows, and we have to show that $\im(f)=(M_L)^{B_q}$. It will be enough show that $(M_L)^{B_q}\subseteq \im(f)$, the other inclusion being clear.

Pick $m\in (M_L)^{B_q}$. Then there is some $a\geq 0$ such that $\pi^am\in\im(g)$, i.e $\pi^am=m'\otimes 1$ for some $m'\in M$. Now, given that $\rho_{M_L}(m)=m\otimes 1$, and since $\rho_{M_L}=\rho_M\otimes_R L$, we see that $\rho_M(m')-m'\otimes 1\in M\otimes_R\B$ is $\pi$-torsion. Hence, there is some $b\geq 0$ such that $\rho_M(\pi^bm')=\pi^bm'\otimes 1$, and thus we get that $\pi^{a+b}m\in\im(f)$. The result now follows since $\im(f)$ is an $L$-vector space.

For $i>0$, using Proposition \ref{Cech}, the case $i=0$ and the fact that $-\otimes_R L$ is exact, we see that $R^i\Gamma(M)\otimes_R L$ is the $i$-th \v{C}ech cohomology group of $M\otimes_R L$. By \cite[Proposition 4.5]{QDmod1} this group is equal to $R^i\Gamma(M\otimes_R L)$.
\end{proof}

\subsection{The ring $\D$}\label{defn_of_Dq} Recall the notation and the definitions from \ref{recap2}. We now define an $R$-form of the ring of quantum differential operators. For $u\in U$, $a\in \A$, $i\geq 0$, we have $u(a)=\sum a_2(u)\cdot a_1\in \A$ since $U\subset U^{\text{res}}$. From this, we can immediately see that $\A$ is a left $U$-module algebra. Hence we may form the smash product $\D=\A\# U$. Note that $\D$ is $\pi$-torsion free as it is equal to $\A\otimes_R U$ as an $R$-module, thus it follows that it is a lattice in $\D_q$.

\begin{prop} The algebra $\D/\pi \D$ is Noetherian. Hence so is $\widehat{\D}$.
\end{prop}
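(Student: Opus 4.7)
Since Proposition \ref{Noetherian}(i) gives Noetherianity of $\widehat{\D}$ as soon as $\D/\pi\D$ is Noetherian, I would focus on $\D/\pi\D$ and adapt the filtration argument of Proposition \ref{recap2}. Put the PBW height filtration on $U$ and define $F_i\D = \A\cdot F_iU$. The same Hopf-theoretic computation as in Proposition \ref{recap2} (using $\Delta(F_iU)\subseteq F_iU\otimes F_iU$) shows that $F_\bullet\D$ is an algebra filtration on $\D$, which reduces mod $\pi$ to an algebra filtration on $\D/\pi\D$ with $F_i(\D/\pi\D)=(\A/\pi\A)\cdot F_i\Ubar$.

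The main obstacle is to show that $F_0(\D/\pi\D)=(\A/\pi\A)\cdot U^0_k$ is Noetherian, where $U^0_k:=U^0/\pi U^0$. This is where the argument genuinely departs from Proposition \ref{recap2}: in the $L$-algebra case, $F_0U_q=U_q^0$ is just the Laurent polynomial ring $LP$, which $q$-normalises $\Oq$ by scalars, so Lemma \ref{PrelimonUq} applies directly. Integrally, however, $U^0$ also contains the elements $[K_{\alpha_i};0]$, whose images $H_i$ in $\Ubar$ satisfy $[H_i,a]\equiv H_i(a)K_{\alpha_i}\pmod{\pi}$ for $a\in\A$, a nonzero commutator (the $K_{\alpha_i}^{-1}(a)H_i$ term produced by $\Delta(H_i)=H_i\otimes K_{\alpha_i}+K_{\alpha_i}^{-1}\otimes H_i$ cancels modulo $\pi$). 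To handle this, I would impose a secondary filtration on $F_0(\D/\pi\D)$ by total $H$-degree, using the decomposition $U^0_k\cong S(\h_k)\otimes_k k(P/2Q)$ of Lemma \ref{Weyl}; this is an algebra filtration because $U^0\subseteq U$ is a sub-Hopf-algebra and $\Delta$ preserves $H$-degree. In the associated graded, the $\bar{H}_i$'s commute with $\A/\pi\A$ since the commutator lies in strictly lower $H$-degree, the $K_\mu$'s commute with $\A/\pi\A$ modulo $\pi$ (as $q\equiv 1$), and the $\bar{H}_i$'s and $K_\mu$'s commute among themselves. So this associated graded is a finitely generated commutative $k$-algebra (generated over $\A/\pi\A$ by $\bar{H}_1,\ldots,\bar{H}_n$ and a finite set of coset representatives for $P/2Q$), hence Noetherian, whence $F_0(\D/\pi\D)$ is Noetherian.

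From here the argument follows Proposition \ref{recap2} verbatim: the symbols $\bar{E}_{\alpha_i},\bar{F}_{\alpha_i}$ normalise $F_0(\D/\pi\D)$ in $\gr(\D/\pi\D)$ by the same computation as before (using that $K_{\alpha_i}aK_{\alpha_i}^{-1}\in\A$), and applying the $\Z^{2N}_{\geq 0}$-refinement from Proposition \ref{recap2} produces a multigraded algebra that is $q$-commutative over $F_0(\D/\pi\D)$. Lemma \ref{PrelimonUq} then gives that this multigraded algebra is Noetherian, and hence so are $\gr(\D/\pi\D)$ and $\D/\pi\D$. Finally, Proposition \ref{Noetherian}(i) gives the Noetherianity of $\widehat{\D}$.
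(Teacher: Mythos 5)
Your proof is correct, but it takes a genuinely different route from the one in the paper. The paper's proof is structural: it identifies $\D/\pi\D$ with the smash product $\Of(G_k)\# U_k$, invokes Corollary \ref{intfinite2} to get $U_k\cong U(\g_k)\otimes_k k(P/2Q)$, and concludes that $\D/\pi\D$ is a finite module over $\Of(G_k)\#U(\g_k)\cong\D(G_k)$, the ring of crystalline differential operators on the affine variety $G_k$, whose Noetherianity is classical. Your argument instead transports the two-stage filtration proof of Proposition \ref{recap2} to the mod-$\pi$ situation, and you correctly identify the genuine new wrinkle there: over $L$, the degree-zero piece $F_0U_q=LP$ $q$-normalises $\Oq$, but integrally $U^0$ also contains the divided elements $[K_{\alpha_i};0]$, whose images $H_i$ satisfy $[H_i,a]\equiv H_i(a)K_{\alpha_i}$ mod $\pi$ and so do not normalise $\A/\pi\A$ in the sense needed for Lemma \ref{PrelimonUq}. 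Your fix — a secondary filtration on $F_0(\D/\pi\D)$ by $H$-degree, made into an algebra filtration via the Hopf-subalgebra structure of $U^0$ and the explicit coproduct $\Delta(H_i)=H_i\otimes K_{\alpha_i}+K_{\alpha_i}^{-1}\otimes H_i$, whose associated graded becomes commutative and finitely generated — is sound, and indeed amounts to showing $F_0(\D/\pi\D)$ is an iterated Ore extension of $(\A/\pi\A)\otimes_k k(P/2Q)$ by derivations. The normalisation step for $\bar{E}_{\alpha_i},\bar{F}_{\alpha_i}$ is also a little more involved than ``the same computation as before'' suggests, since you now need the commutators with the $H_j$'s too; these are worked out in the paper's Corollary \ref{intfinite2} proof (e.g. $E_{\alpha_j}H_i = q_i^{-a_{ij}}H_iE_{\alpha_j} - [a_{ij}]_{q_i}K_{\alpha_i}^{-1}E_{\alpha_j}$), whose right-hand side has the required form $F_0\cdot\bar E_{\alpha_j}$ in the associated graded, so the claim does go through. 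The trade-off: the paper's argument is shorter and leans on a known classical result, while yours is longer but self-contained, avoiding any appeal to the crystalline-differential-operators identification and staying entirely within the filtration machinery already developed for $\D_q$.
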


\begin{proof} By the above remarks we see that $\D_k:=\D/\pi\D$ is the smash product algebra of $\A/\pi\A\cong \Of(G_k)$ and $U_k$. By Corollary \ref{intfinite2}, $U_k\cong U(\g_k)\otimes_k k(P/2Q)$, and so $\D_k$ is a finite module over the smash product $\Of(G_k)\# U(\g_k)$. The latter is isomorphic to the ring $\D(G_k)$ of crystalline differential operators on the affine variety $G_k$ and hence is Noetherian. Thus $\D_k$ is Noetherian as required. The last part follows from Proposition \ref{Noetherian}.
\end{proof}

\subsection{$\D$-modules} \label{Verma} We now turn to an $R$-version of the category $\Dflag$. We first introduce the following notation: we let $U^{\geq 0}=U\cap U_q^{\geq 0}$. It is the $R$-subalgebra of $U$ generated by all $E_{\alpha_i}$, all $K_\mu$ ($\mu\in P$) and all $[K_{\alpha_i};0]_{q_i}$. Note that  $U^{\geq 0}$ is a subalgebra of $U^{\text{res}}(\bo)$. Moreover, note that the action (\ref{actiononDq}) restricts to an action of $U^{\text{res}}$ on $\D$ making it into a $U^{\text{res}}$-module algebra. This is because the adjoint action of $U^{\text{res}}$ preserves $U$.

\begin{definition}
Let $\lambda\in T_P^R$. We let $\Da^\lambda$ be the category whose objects are triples $(M, \alpha, \beta)$ where $M$ is an $R$-module, $\alpha:\D\otimes_R M\to M$ is a left $\D$-module action and $\beta: M\to M\otimes_R \B$ is a right $\B$-comodule action. The map $\beta$ induces a left $U^{\text{res}}(\bo)$-action on $M$ which we also denote by $\beta$. These actions must satisfy:
\begin{itemize}
\item[(i)] The $U^{\geq 0}$-actions on $M\otimes_R R_\lambda$ given by $\beta\otimes \lambda$ and $\alpha\vert_{U^{\geq 0}}\otimes 1$ are equal.
\item[(ii)] The map $\alpha$ is $U^{\text{res}}(\bo)$-linear with respect to the $\beta$-action on $M$ and the action (\ref{actiononDq}) on $\D$.
\end{itemize}
We will write $\coh(\Da^\lambda)$ to denote the full subcategory of $\Da^\lambda$ consisting of finitely generated $\D$-modules. 
\end{definition}

There is of course a forgetful functor $\text{forget}:\Da^\lambda\to \Ca$, and given an object $M\in\Da^\lambda$ we let its global sections equal $\Gamma(M)$ where we view $M$ as an object of $\Ca$. By abuse of notation we also denote this global section functor by $\Gamma$. Also the functor $M\mapsto M_L$ described earlier restricts to a functor $\Da^\lambda\to\Dflag$.

Note again that condition (i) above can be rephrased into saying that for $M\in\Da^\lambda$ and $m\in M$, we have $E_\alpha m=\beta(E_\alpha)m$, $K_\mu m=\lambda(K_\mu) \beta(K_\mu)m$, and
$$
[K_{\alpha};0]m=(\lambda([K_{\alpha};0])\beta(K_\alpha)+\lambda(K_\alpha^{-1})\beta([K_{\alpha};0]))m
$$
for all simple roots $\alpha$ and $\mu\in P$. In particular if $m$ is a global section then by $B_q$-invariance we must have $E_\alpha m=0$, $[K_{\alpha};0]m=\lambda([K_{\alpha};0])m$ and $K_\mu m=\lambda(K_\mu)m$. In other words global sections consist of the highest weight vectors of weight $\lambda$. So we see that the $\D$-module homomorphisms $\D\to M$ corresponding to global sections factor through the quotient $\D^\lambda=\D/I$ where $I$ is the left ideal generated by
$$
\{E_{\alpha_i}, K_\mu-\lambda(K_\mu), [K_{\alpha_i};0]-\lambda([K_{\alpha_i};0]): 1\leq i\leq n, \mu\in P\}.
$$
Our aim now is to show that $\D^\lambda\in\Da^\lambda$.

Recall the notation from \ref{Lusztig}. Note that we may define a Verma module $\M_\lambda$ for $U$, namely it is the cyclic $U$-module with generator $v_\lambda$ and relations $E_{\alpha_i}v_\lambda=0$, $K_\mu v_\lambda=\lambda(\mu)v_\lambda$ and $[K_{\alpha_i};0] v_\lambda=\lambda([K_{\alpha_i};0])v_\lambda$. By the triangular decomposition for $U$ and the PBW basis for $U^-$ (see \cite[Sections 4.5-4.6]{Nico2}), we see that $\M_\lambda$ is a free $R$-module with basis given by the monomials
$$
F_{\beta_1}^{r_1}\cdots F_{\beta_N}^{r_N}v_\lambda
$$
and so we also see that it is a lattice in the Verma module $M_\lambda$ for $U_q$. In fact it is the image of $U$ under the canonical surjection $U_q\to M_\lambda$. Recall that the quotient of the adjoint action of $U_q^{\geq 0}$ gave rise to an integrable module structure on $M_\lambda$. Since the adjoint action of $U^{\text{res}}(\bo)$ preserves $U$ (see \cite[Lemma 1.2]{Tanisaki2}), we immediately get:

\begin{lem} The above adjoint $U^{\text{res}}(\bo)$-action on $M_\lambda$ preserves $\M_\lambda$, making it into a $\B$-comodule.
\end{lem}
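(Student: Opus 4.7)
The plan is to exploit the explicit description of $\M_\lambda$ as the image of $U$ inside $M_\lambda$ under the canonical surjection $\pi : U_q \to M_\lambda$, $u \mapsto u v_\lambda$. By construction, the integrable $U_q^{\geq 0}$-module structure on $M_\lambda$ recalled in the paragraph immediately preceding the Lemma is induced from the adjoint action on $U_q$ via $\pi$; in other words, $\pi$ intertwines the adjoint action on its source with the quotient action on its target, and this equivariance restricts to $U^{\text{res}}(\bo)\subseteq U_q^{\geq 0}$.

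With this setup in place, the argument reduces to a direct consequence of Tanisaki's Lemma \cite[Lemma 1.2]{Tanisaki2}, which asserts that the adjoint action of $U^{\text{res}}$ on $U_q$ preserves the De Concini--Kac form $U$. Restricting to the Borel subalgebra $U^{\text{res}}(\bo)\subseteq U^{\text{res}}$ and applying $\pi$, the equivariance then yields
$$\ad(U^{\text{res}}(\bo))(\M_\lambda) \;=\; \pi\bigl(\ad(U^{\text{res}}(\bo))(U)\bigr) \;\subseteq\; \pi(U) \;=\; \M_\lambda,$$
which is the desired stability statement.

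To then promote this to the claim that $\M_\lambda$ is a $\B$-comodule, I would invoke Theorem \ref{comodcat}, which identifies $\B$-comodules with integrable $U^{\text{res}}(\bo)$-modules. Since $M_\lambda$ is already integrable as a $U_q^{\geq 0}$-module (over $L$), and since $\M_\lambda\subseteq M_\lambda$ is a $U^{\text{res}}(\bo)$-stable $R$-lattice admitting an $R$-basis of weight vectors $F_{\beta_1}^{r_1}\cdots F_{\beta_N}^{r_N} v_\lambda$, the two integrability conditions, namely decomposition into weight spaces for type-$\mathbf{1}$ characters of $(U^{\text{res}})^0$ and local annihilation by divided powers $E_{\alpha_i}^{(r)}$, both pass cleanly from $M_\lambda$ to $\M_\lambda$, using that integrable modules are stable under submodules \cite[Note added in proof p.59]{Andersen}.

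I do not foresee any serious obstacle, since essentially all the work is concentrated in Tanisaki's lemma. The only subtlety, which is already implicit in the preceding discussion, is the well-definedness of the quotient action on $M_\lambda$ — that is, that the defining left ideal of $M_\lambda$ is $\ad$-stable under $U_q^{\geq 0}$ — but this is part of the assertion being invoked rather than something I would need to re-establish here.
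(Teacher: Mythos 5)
Your proof is correct and follows essentially the same route the paper takes: the paper's entire justification is the one-line remark preceding the lemma, namely that $\M_\lambda$ is the image of $U$ under the quotient map and the adjoint $U^{\text{res}}(\bo)$-action preserves $U$ by \cite[Lemma 1.2]{Tanisaki2}. You additionally spell out the integrability step (via stability of integrable modules under submodules and Theorem \ref{comodcat}), which the paper leaves implicit, but this is a helpful elaboration rather than a different argument.
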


Now since $\D^\lambda=\A\otimes_R \M_\lambda$ as an $R$-module, we identify it with $p^*(\M_\lambda)\in\Ca$. Just as for $\D_q^\lambda$, we then have that $\D^\lambda$ is in fact an object of $\Da^\lambda$, and our previous discussion shows that it represents the global section functor on $\Da^\lambda$, i.e
$$
\Gamma(M)=\Hom_{\Da^\lambda}(\D^\lambda,M)
$$
for all $M\in\Da^\lambda$.

\subsection{Cohomology of the induction functor mod $\pi$}\label{cohomology} We will need to investigate the cohomology of $M_k:=M/\pi M$ for $M\in\B\text{-comod}$ for the induction functor $\ind:\B\text{-comod}\to\A\text{-comod}$ defined in \cite{Andersen}. Note that $M_k$ is in fact a $\B/\pi\B\cong \Of(B_k)$-comodule, and the global section functor applied to $p^*(M_k)$ coincides with the functor of taking $\Of(B_k)$-coinvariants in $\Of(G_k)\otimes_k M_k$, i.e. with the classical induction functor $\ind_{B_k}^{G_k}M_k$ (c.f. \cite[Proposition 3.7]{Andersen}). We will compare the cohomology groups $R^i\ind(M_k)$ and $R^i\ind_{B_k}^{G_k}(M_k)$.

By \cite[2.17-2.19]{Andersen}, if $M$ is a $\B$-comodule that is free as an $R$-module then it has a resolution
$$
0\to M\to Q_0\to Q_1\to\cdots
$$
in the category of $\B$-comodules, which is $R$-split and such that each $Q_i$ is $R$-free and acyclic. This is called the \emph{standard resolution} of $M$. This construction is completely canonical, so that $M_k$ also has similarly such a resolution in the category of $\B/\pi\B$-comodules, which we also call the standard resolution of $M_k$.

\begin{lem} Suppose $M\in\Ca$ is free as an $R$-module. Then there is a canonical isomorphism $R^i\text{\emph{Ind}}(M_k)\cong R^i\text{\emph{Ind}}_{B_k}^{G_k}(M_k)$ for all $i\geq 0$.
\end{lem}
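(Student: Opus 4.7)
The plan is to compute both sides via a single common \v{C}ech-like complex obtained by reducing the quantum side modulo $\pi$.

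First, by Proposition \ref{line}(ii) one has a canonical identification $R^i\ind(M_k) \cong R^i\Gamma(p^*(M_k))$ computed in the category $\Ca$, and by Proposition \ref{Cech} this is the $i$-th cohomology of the standard complex $\check{C}(p^*(M_k))$ assembled from the Weyl group localisation functors $\sigma_w = (f_w)_* \circ f_w^*$ of section \ref{ore}.

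Second, I would exploit the fact that $M_k$ is annihilated by $\pi$ to descend every ingredient of this complex to the classical $k$-linear setting. For each $w \in W$ there is a canonical identification $\sigma_w(p^*(M_k)) = \Aw \otimes_R M_k \cong S_w^{-1}\Of(G_k) \otimes_k M_k$, and since the $\B$-coaction kills $\pi$ it factors through $\B/\pi\B \cong \Of(B_k)$. Consequently the tensor products, Ore localisations and $\B$-coinvariant functors appearing in $\check{C}(p^*(M_k))$ all descend canonically to the analogous constructions inside the classical $k$-linear category of $\Of(B_k)$-equivariant $\Of(G_k)$-modules.

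Third, the images of the sets $S_w$ in $\Of(G_k)$ are precisely the classical extreme weight functions of weight $-w\mu$ (this is the content of the second paragraph of section \ref{ore}), whose localisations produce the Weyl group translates of the big cell inside $G_k$; their images in $G_k/B_k$ form an affine open cover of the classical flag variety. Hence the reduced complex is canonically identified with the usual \v{C}ech complex of the quasi-coherent sheaf $\widetilde{M_k}$ on $G_k/B_k$ attached to $M_k$ relative to this cover. Because the cover is affine and the sheaf is quasi-coherent, this \v{C}ech complex computes $H^i(G_k/B_k, \widetilde{M_k}) = R^i\ind_{B_k}^{G_k}(M_k)$, yielding the desired canonical isomorphism.

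The main technical obstacle is verifying cleanly that Rosenberg's categorically-defined standard complex in the $k$-linear category of $\Of(B_k)$-equivariant $\Of(G_k)$-modules coincides with the ordinary \v{C}ech complex of the associated quasi-coherent sheaf on $G_k/B_k$. This should follow from the Proj-description of $\Ca$ (Theorem \ref{proj}) applied verbatim in the classical setting, using the fact that $G_k/B_k$ is recovered as Proj of the classical representation ring (the mod-$\pi$ reduction of $R_q$, which was already used in the proof of Lemma \ref{coherent}). All the identifications made along the way are manifestly functorial in $M$, so that the resulting isomorphism is canonical as claimed.
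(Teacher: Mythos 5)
Your proposal takes a genuinely different route from the paper. The paper's own proof is much shorter and more elementary: it takes the Andersen--Polo--Wen standard resolution $0\to M\to Q_0\to Q_1\to\cdots$ of $M$ in $\B$-comodules (which exists precisely because $M$ is $R$-free), observes that each $Q_i$ is $R$-free and acyclic, hence $Q_i\otimes_R k$ is acyclic by the long exact sequence attached to $0\to Q_i\overset{\cdot\pi}{\to}Q_i\to Q_i\otimes_R k\to 0$; since the resolution is $R$-split, its reduction mod $\pi$ is still a resolution of $M_k$ by acyclics, so it computes $R\text{Ind}(M_k)$; and finally this reduced complex \emph{is} the classical standard resolution of $M_k$ (by the cited page in \cite{Andersen}), which computes $R\text{Ind}_{B_k}^{G_k}(M_k)$. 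The same complex computes both sides, so the isomorphism is immediate and canonical.

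Your route instead brings in the full Proj/\v{C}ech apparatus (Proposition \ref{Cech}, the Weyl-group Ore localizations $\sigma_w$), descends the quantum \v{C}ech complex mod $\pi$ (the descent of $\Gamma$ and $\sigma_w$ on $\pi$-torsion modules is indeed fine), and then needs to identify Rosenberg's categorically-defined complex in the classical category with the ordinary \v{C}ech complex of the quasi-coherent sheaf $\widetilde{M_k}$ on $G_k/B_k$. You correctly flag this last identification as the main technical obstacle, but you do not close it: one would have to verify that the classical category of $\Of(B_k)$-equivariant $\Of(G_k)$-modules, viewed as Proj of the representation ring, is identified with $\text{QCoh}(G_k/B_k)$ compatibly with the localizations at the $S_w$, that the $S_w$-localizations give exactly the Weyl translates of the big cell as affine opens, and that Rosenberg's non-alternating repeated-index complex computes the same cohomology as the usual \v{C}ech complex. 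All of this is true but needs an argument, and saying it ``should follow from the Proj description applied verbatim'' leaves a real gap. The paper's resolution-based argument sidesteps all of this machinery. One further remark: your argument makes no use of the freeness hypothesis on $M$ (only $M_k$ ever appears), whereas the paper needs freeness to invoke the $R$-split standard resolution; so if you were to fully justify your route, it would in fact yield a slightly more general statement, which is mildly suspicious and worth double-checking against where freeness is actually invoked downstream.
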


\begin{proof} Since each $Q_i$ is free, we have a short exact sequence
$$
0\longrightarrow Q_i\overset{\cdot\pi}{\longrightarrow} Q_i\longrightarrow Q_i\otimes_R k\longrightarrow 0.
$$
Applying the long exact sequence and using the fact that $Q_i$ is acyclic, we immediately obtain that $Q_i\otimes_R k$ is also acyclic. Now since the standard resolution is split exact, it follows from the above that
$$
0\to M_k\to Q_0\otimes_R k\to Q_1\otimes_R k\to\cdots
$$
is an acyclic resolution of $M_k$ whose cohomology therefore computes $R\ind(M_k)$. On the other hand this resolution coincides with the standard resolution of $M_k$ by \cite[page 24, after equation (6)]{Andersen}, so computes $R\ind_{B_k}^{G_k}(M_k)$ by \cite[Proposition 3.7]{Andersen}.
\end{proof}

This will be useful because $R\text{Ind}(M_k)\cong R\Gamma (p^*(M_k))$ by Proposition \ref{line}(ii). We now apply the above to Verma modules, viewed as $\B$-comodules via the adjoint action of $U^{\text{res}}(\bo)$. First we recall some well-known generalities.

Recall from \cite[I.5.8 \& Proposition I.5.12]{Jantzen} that if $N$ is a representation of $B_k$, then there is a corresponding $G_k$-equivariant sheaf $\mathcal{L}(N)$ on the flag variety $X_k:=G_k/B_k$ such that $R^i\ind_{B_k}^{G_k}N$ is canonically isomorphic to the sheaf cohomology $H^i(X_k, \mathcal{L}(N))$ for each $i\geq 0$. Moreover, the sheaf $\D_{X_k}^{\lambda}$ of crystalline $\lambda$-twisted differential operators on $X_k$, for $\lambda\in \mathfrak{h}_k^*$, is the $G_k$-equivariant sheaf corresponding to the Verma module $\M(\lambda)=U(\g_k)/(x-\lambda(x) : x\in\bo_k)$ viewed as a $B_k$-representation via the adjoint action (c.f. \cite[pages 12\& 20]{Milicic} in characteristic 0, see also \cite[3.1.3]{BMR} in positive characteristic). Thus we see that the sheaf cohomology of $\D_{X_k}^{\lambda}$ coincides with $R\ind_{B_k}^{G_k}(\M(\lambda))$.

\begin{definition} We set
$$
T_P^k:=\{\gamma\in T_P^R : \gamma(K_\mu)\equiv 1\pmod{\pi} \text{ for all } \mu\in P\}.
$$
This is a subgroup of $T_P^R$ containing $P$. Note that any $\lambda\in T_P^k$ induces an element of $\mathfrak{h}_k^*$ which we also denote by $\lambda$. Hence the corresponding Verma module $\M_\lambda$ satisfies $(\M_\lambda)_k\cong \M(\lambda)$.
\end{definition}

Finally recall that the object $\D^\lambda\in\Ca$ is given by $\D^\lambda=p^*(\M_\lambda)$ and so similarly $\D^\lambda_k=p^*((\M_\lambda)_k)$. Thus we have $R\Gamma (\D^\lambda_k)\cong R\ind((\M_\lambda)_k)$. Putting everything together, we get by the Lemma:

\begin{prop} Let $\lambda\in T_P^k$. Then the cohomology of $\D^\lambda_k$ with respect to $\Gamma$ coincides with the classical sheaf cohomology of twisted differential operators on the flag variety, i.e. there is a canonical isomorphism $R^i\Gamma (\D^\lambda_k)\cong H^i(X_k, \D_{X_k}^{\lambda})$ for every $i\geq 0$.
\end{prop}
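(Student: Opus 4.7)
The plan is simply to assemble the chain of identifications listed in the paragraphs immediately preceding the proposition. Starting from the definition, $\D^\lambda = p^*(\M_\lambda)$, and since $p^* = \A \otimes_R (-)$ commutes with reduction modulo $\pi$, we have $\D^\lambda_k \cong p^*((\M_\lambda)_k)$ as objects of $\Ca/\pi\Ca$ (equivalently, viewed in $\Ca$ after reduction). Because $\M_\lambda$ is free over $R$ by the triangular decomposition and PBW basis for $U^-$ recalled in section \ref{Verma}, the Lemma of section \ref{cohomology} applies and yields a canonical isomorphism $R^i\ind((\M_\lambda)_k) \cong R^i\ind_{B_k}^{G_k}((\M_\lambda)_k)$. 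Combining this with Proposition \ref{line}(ii), which gives $R^i\Gamma(p^*(M)) \cong R^i\ind(M)$ for any $\B$-comodule $M$, produces the chain
\[
R^i\Gamma(\D^\lambda_k) \cong R^i\ind((\M_\lambda)_k) \cong R^i\ind_{B_k}^{G_k}((\M_\lambda)_k).
\]

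The next step is to identify the $B_k$-module $(\M_\lambda)_k$ with the classical Verma module $\M(\lambda)$. By definition of $T_P^k$, for $\lambda \in T_P^k$ we have $\lambda(K_\mu) \equiv 1 \pmod{\pi}$ for all $\mu \in P$, and $\lambda$ induces a well-defined element of $\mathfrak{h}_k^*$ via the composition $[K_{\alpha_i};0] \mapsto \lambda([K_{\alpha_i};0]) \bmod \pi$. Hence, reducing the defining relations $E_{\alpha_i} v_\lambda = 0$, $K_\mu v_\lambda = \lambda(K_\mu) v_\lambda$, $[K_{\alpha_i};0] v_\lambda = \lambda([K_{\alpha_i};0]) v_\lambda$ modulo $\pi$ and using the identification $U_k \supset U(\g_k)$ from Proposition \ref{intfinite2}, the module $(\M_\lambda)_k$ becomes the quotient $U(\g_k)/(x - \lambda(x) : x \in \bo_k) = \M(\lambda)$. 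One should check that the $U^{\text{res}}(\bo)$-action on $\M_\lambda$ (coming from the adjoint action of section \ref{Verma}) reduces modulo $\pi$ to the adjoint $B_k$-action on $\M(\lambda)$; this is the same sort of compatibility already verified in Lemma \ref{Gkmod}, applied now to the adjoint action on a Verma module rather than on $U^{\text{fin}}$.

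Finally, to close the chain, one invokes the classical identification recalled in the paragraph before the proposition: the localisation of $\M(\lambda)$ (viewed as a $B_k$-representation via the adjoint action) is precisely the sheaf $\D_{X_k}^\lambda$ of $\lambda$-twisted crystalline differential operators on $X_k$, and Kempf's identification $R^i\ind_{B_k}^{G_k}(N) \cong H^i(X_k, \mathcal{L}(N))$ then gives $R^i\ind_{B_k}^{G_k}(\M(\lambda)) \cong H^i(X_k, \D_{X_k}^\lambda)$. Stringing everything together yields the claimed canonical isomorphism.

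The only genuinely non-formal step is the second paragraph, the identification of $(\M_\lambda)_k$ with $\M(\lambda)$ as a $B_k$-module. The module structure is transparent on generators, but one needs the condition $\lambda \in T_P^k$ precisely to ensure that the scalars $\lambda(K_\mu)$ reduce to $1$ and that $\lambda([K_{\alpha_i};0])$ reduces to $\lambda(h_i)$ under the embedding $S(\h_k) \hookrightarrow U_k$ supplied by Proposition \ref{intfinite2}. Once this is in place, compatibility with the adjoint $B_k$-action is a direct consequence of the integral forms already set up in sections \ref{intfinite1}--\ref{Gkmod}, and everything else is a formal chain of canonical isomorphisms.
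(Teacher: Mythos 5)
Your argument reconstructs exactly the chain of identifications that constitutes the paper's (implicit) proof: $\D^\lambda_k = p^*((\M_\lambda)_k)$, then $R\Gamma(p^*(-)) \cong R\ind(-)$ from Proposition \ref{line}(ii), then $R\ind((\M_\lambda)_k) \cong R\ind_{B_k}^{G_k}((\M_\lambda)_k)$ from the Lemma of section \ref{cohomology}, and finally $(\M_\lambda)_k \cong \M(\lambda)$ and $\mathcal{L}(\M(\lambda)) \cong \D_{X_k}^\lambda$ together with the standard identification of $R^i\ind_{B_k}^{G_k}$ with sheaf cohomology. Your second paragraph usefully spells out the reduction-mod-$\pi$ identification of $(\M_\lambda)_k$ with $\M(\lambda)$ and the compatibility of adjoint actions, which the paper merely asserts in the Definition preceding the proposition; the only quibble is that the identification $R^i\ind_{B_k}^{G_k}(N)\cong H^i(X_k,\mathcal{L}(N))$ is the standard equivariant-sheaf correspondence from Jantzen's book rather than a theorem of Kempf.
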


The above will allow us to compute the global sections of $\D^\lambda_k$. But first we need to mention some restrictions on the prime $p$.

\begin{definition} Recall that the prime $p$ is said to be \emph{bad} for an irreducible root system $\Phi$ if
\begin{itemize}
\item $p=2$ when $\Phi= B_l, C_l$ or $D_l$;\\
\item $p=2$ or $3$ when $\Phi= E_6, E_7, F_4$ or $G_2$; and\\
\item $p=2, 3$ or $5$ when $\Phi= E_8$.
\end{itemize}
We say that $p$ is \emph{bad} for $U_q$ if it is bad for some irreducible component of the associated root system, and we say that $p$ is \emph{good} if it is not bad. Finally, we say that $p$ is \emph{very good} for $U_q$ if it is a good and no irredicuible component of the root system is of type $A_{mp-1}$ for some integer $m\geq 1$.
\end{definition}

\begin{cor} Let $\lambda\in T_P^k$ and assume that $p$ is a very good prime. Then $\D^\lambda_k$ is $\Gamma$-acyclic and $\Gamma(\D^\lambda_k)\cong U(\g_k)_{\chi_\lambda}$, where $\chi_\lambda$ is the corresponding character of the Harish-Chandra centre of $U(\g_k)$.
\end{cor}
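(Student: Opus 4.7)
The proof I have in mind is essentially a direct citation combining the Proposition just established with the classical computation of global sections of sheaves of twisted crystalline differential operators on the flag variety in positive characteristic due to Bezrukavnikov--Mirkovi\'c--Rumynin. Since the previous Proposition identifies
\[
R^i\Gamma(\D^\lambda_k)\cong H^i(X_k,\D^\lambda_{X_k})
\]
for every $i\geq 0$, the entire content of the corollary reduces to computing these sheaf cohomology groups.

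My plan is to invoke \cite[Proposition 3.4.1]{BMR}, which asserts precisely that when $p$ is a very good prime, the sheaf $\D^\lambda_{X_k}$ of twisted crystalline differential operators on the flag variety $X_k$ has vanishing higher cohomology, and that its global sections are canonically isomorphic to the central reduction $U(\g_k)_{\chi_\lambda}$, where $\chi_\lambda$ denotes the central character of $U(\g_k)$ attached to $\lambda\in\h_k^*$ via the Harish-Chandra homomorphism. The hypothesis $\lambda\in T_P^k$ of the corollary guarantees, as noted in the definition of $T_P^k$ just before the Proposition, that $\lambda$ determines a well-defined element of $\h_k^*$ and that $(\M_\lambda)_k\cong \M(\lambda)$, so that the identification of $\D^\lambda_k$ with the $p^*$-image of the mod $\pi$ Verma module matches the object denoted $\D^\lambda_{X_k}$ on the classical side. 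Combining this with the Proposition yields both the acyclicity and the identification of global sections claimed.

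The only point I would want to be careful about is verifying that the very-good-prime hypothesis matches the one in \cite{BMR} (they work under the same assumption, since we have chosen $G$ to be simply connected and have excluded the bad primes from the outset in the conventions of the introduction), and that the central character $\chi_\lambda$ obtained from the Harish-Chandra description is the one produced by the BMR theorem. This is automatic because in both contexts $\chi_\lambda$ is defined by the action of $Z(U(\g_k))$ on the Verma module $\M(\lambda)$. So essentially no real work is needed beyond an application of the two results in sequence; the only potential obstacle is the bookkeeping of conventions to ensure that the characters and twists on the two sides of the Proposition align, which I would spell out in one short paragraph.
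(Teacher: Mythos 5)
Your proposal is correct and matches the paper's proof exactly: the paper's own argument is a one-line citation combining the preceding Proposition (identifying $R^i\Gamma(\D^\lambda_k)$ with $H^i(X_k,\D^\lambda_{X_k})$) with \cite[Proposition 3.4.1]{BMR}. The extra bookkeeping of conventions you sketch is sound but is left implicit in the paper.
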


\begin{proof}
This follows from the Proposition and \cite[Proposition 3.4.1]{BMR} (this is where the restrictions on $p$ are required).
\end{proof}

\subsection{Twists of coherent $\D$-modules}\label{twists} Observe that for $\mu\in T_P^R$ and $M\in\Da^\lambda$, the left $\D$-action on $M(\mu)$ makes $M(\mu)$ into an element of $\Da^{\lambda+\mu}$. We investigate those twists.

\begin{prop} Let $\mu\in P^+$ and $\lambda\in T_P^k$. Assume that $p$ is a good prime. Then
$$
R^i\Gamma(\D^\lambda(\mu)_k)=0
$$
for all $i>0$.
\end{prop}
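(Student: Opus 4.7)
My approach is to reduce the vanishing to a classical sheaf-cohomology statement on the flag variety $X_k = G_k/B_k$, and then apply the order filtration on twisted differential operators together with a known vanishing theorem on the cotangent bundle.

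First I extend Proposition \ref{cohomology} to the twisted case. Since $\D^\lambda(\mu) = p^*(\M_\lambda\otimes R_{-\mu})$ and $\M_\lambda\otimes R_{-\mu}$ is $R$-free (being an $R$-rank-one twist of the $R$-free module $\M_\lambda$), the Lemma in section \ref{cohomology}, combined with Proposition \ref{line}(ii) and Jantzen's identification of $\ind^{G_k}_{B_k}$ with sheaf cohomology via the $\mathcal{L}$ functor, gives
$$R^i\Gamma(\D^\lambda(\mu)_k) \;\cong\; R^i\ind^{G_k}_{B_k}\bigl(\M(\lambda)\otimes k_{-\mu}\bigr) \;\cong\; H^i\bigl(X_k,\, \D^\lambda_{X_k}\otimes_{\mathcal{O}_{X_k}} \mathcal{L}_{-\mu}\bigr),$$
where I use $\mathcal{L}(\M(\lambda)\otimes k_{-\mu}) = \mathcal{L}(\M(\lambda)) \otimes \mathcal{L}(k_{-\mu}) = \D^\lambda_{X_k}\otimes \mathcal{L}_{-\mu}$ from \cite[3.1.3]{BMR}.

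Next I exploit the order filtration on $\D^\lambda_{X_k}$, which under $\mathcal{L}$ corresponds to the PBW filtration on $\M(\lambda)$. This is a $B_k$-stable filtration whose associated graded is $S(\g_k/\bo_k)$ as a $B_k$-module, giving short exact sequences
$$0\longrightarrow \D^\lambda_{X_k,\leq n-1}\otimes\mathcal{L}_{-\mu}\longrightarrow \D^\lambda_{X_k,\leq n}\otimes\mathcal{L}_{-\mu}\longrightarrow S^n(\mathcal{T}_{X_k})\otimes\mathcal{L}_{-\mu}\longrightarrow 0,$$
where $\mathcal{T}_{X_k} = \mathcal{L}(\g_k/\bo_k)$ is the tangent sheaf. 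A standard long-exact-sequence induction, together with the fact that sheaf cohomology on the Noetherian scheme $X_k$ commutes with filtered colimits of quasi-coherent sheaves, reduces the problem to showing that each $S^n(\mathcal{T}_{X_k})\otimes\mathcal{L}_{-\mu}$ is $\Gamma$-acyclic for $\mu\in P^+$. For the heart of the argument I use the projection $q:T^*X_k\to X_k$, for which $q_*\mathcal{O}_{T^*X_k}=\bigoplus_n S^n(\mathcal{T}_{X_k})$. Since $q$ is affine, the projection formula yields
$$\bigoplus_{n\geq 0} H^i\bigl(X_k,\, S^n(\mathcal{T}_{X_k})\otimes \mathcal{L}_{-\mu}\bigr) \;\cong\; H^i\bigl(T^*X_k,\, q^*\mathcal{L}_{-\mu}\bigr),$$
and this right-hand side vanishes in positive degrees for $\mu\in P^+$ by the cotangent-bundle Kempf vanishing of Kumar-Lauritzen-Thomsen (and, for classical groups, Broer), valid precisely when $p$ is a good prime via Frobenius splitting of the Springer resolution.

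The principal obstacle I anticipate is bookkeeping with sign conventions, specifically confirming that the quantum twist by $R_{-\mu}$ matches the classical twist by the ample line bundle $\mathcal{L}_{-\mu}$ that appears in the positivity hypothesis of the vanishing theorem, and checking that the $B_k$-equivariant structure on $\D^\lambda_{X_k}$ behaves as expected so that the displayed short exact sequences really do result from tensoring the order filtration with $\mathcal{L}_{-\mu}$. Once those compatibilities are in hand, the argument is entirely formal.
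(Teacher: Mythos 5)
Your proof follows exactly the same route as the paper's: reduce via Proposition \ref{cohomology} to sheaf cohomology of $\D^\lambda_{X_k}\otimes\mathcal{L}(\mu)$ on $X_k$, exploit the order filtration so that the associated graded is $\tau_*\tau^*\mathcal{L}(\mu)$, pass to $T^*X_k$ by affineness of $\tau$, and invoke the Kumar--Lauritzen--Thomsen vanishing (which requires $p$ good). Your degree-by-degree short-exact-sequence induction is an equivalent repackaging of the paper's direct-limit argument, and your concern about the sign convention for the twist is resolved exactly as you suspect: $R_{-\mu}$ over the positive quantum Borel corresponds to Jantzen's ample $\mathcal{L}(\mu)$.
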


\begin{proof}
We have $\D^\lambda(\mu)_k=p^*((\M_\lambda)_k\otimes_k k_{-\mu})$. Using \cite[II.4.1.(2)]{Jantzen2}, we see that the corresponding $G_k$-equivariant sheaf on the flag variety $X_k$ is $\D_{X_k}^{\lambda}\otimes_{\Of_{X_k}}\mathcal{L}(\mu)$. Thus we are reduced to showing that
\begin{equation}\label{neweqn}
H^i\left(X_k, \D_{X_k}^{\lambda}\otimes_{\Of_{X_k}}\mathcal{L}(\mu)\right)=0
\end{equation}
for all $i>0$.

Now consider the filtration on $\D_{X_k}^{\lambda}$ by degree of differential operators, which naturally induces a filtration on $\D_{X_k}^{\lambda}\otimes_{\Of_{X_k}}\mathcal{L}(\mu)$ by
$$
F_i  \left(\D_{X_k}^{\lambda}\otimes_{\Of_{X_k}}\mathcal{L}(\mu)\right)=F_i (\D_{X_k}^{\lambda})\otimes_{\Of_{X_k}}\mathcal{L}(\mu).
$$
Let $\tau:T^*X_k\to X_k$ be the cotangent bundle. Since $\mathcal{L}(\mu)$ is locally free, the corresponding associated graded is
$$
\gr \left(\D_{X_k}^{\lambda}\otimes_{\Of_{X_k}}\mathcal{L}(\mu)\right)\cong \gr (\D_{X_k}^{\lambda})\otimes_{\Of_{X_k}}\mathcal{L}(\mu)\cong \tau_*\Of_{T^*X_k}\otimes_{\Of_{X_k}}\mathcal{L}(\mu).
$$
Next, by \cite[0.5.4.10]{EGAI},
$$
\tau_*\Of_{T^*X_k}\otimes_{\Of_{X_k}}\mathcal{L}(\mu)\cong \tau_*\left(\Of_{T^*X_k}\otimes_{\Of_{T^*X_k}}\tau^*\mathcal{L}(\mu)\right)\cong \tau_*\tau^*\mathcal{L}(\mu).
$$
Moreover, because $\tau$ is an affine morphism, we get from \cite[Cor. I.3.3]{EGAIII.1} that
$$
H^i\left(X_k, \tau_*\tau^*\mathcal{L}(\mu)\right)\cong H^i\left(T^*X_k, \tau^*\mathcal{L}(\mu)\right)
$$
for all $i\geq 0$. Finally, under the assumption that $p$ is a good prime, it was shown in \cite[Theorem 2]{KLT} that $H^i\left(T^*X_k, \tau^*\mathcal{L}(\mu)\right)=0$ for all $i>0$.

Putting everything together, we have obtained that $\gr \left(\D_{X_k}^{\lambda}\otimes_{\Of_{X_k}}\mathcal{L}(\mu)\right)$ is $\Gamma$-acyclic. Now, since $X_k$ is Noetherian, cohomology commutes with direct limits by \cite[Proposition III.2.9]{Hartshorne}, and so each homogeneous component $\gr_i \left(\D_{X_k}^{\lambda}\otimes_{\Of_{X_k}}\mathcal{L}(\mu)\right)$ is $\Gamma$-acyclic, and hence each filtered piece $F_i  \left(\D_{X_k}^{\lambda}\otimes_{\Of_{X_k}}\mathcal{L}(\mu)\right)$ is $\Gamma$-acyclic as well. Therefore (\ref{neweqn}) holds as required since $\D_{X_k}^{\lambda}\otimes_{\Of_{X_k}}\mathcal{L}(\mu)$ is the direct limit of the $F_i  \left(\D_{X_k}^{\lambda}\otimes_{\Of_{X_k}}\mathcal{L}(\mu)\right)$. 
\end{proof}

\begin{cor} Assume $p$ is a good prime. Then for $\mu\in P^+$ and for any $n\geq 1$, we have that $\D^\lambda(\mu)/\pi^n \D^\lambda(\mu)$ is $\Gamma$-acyclic.
\end{cor}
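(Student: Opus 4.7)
The plan is a straightforward induction on $n$, with the base case $n=1$ being precisely the preceding Proposition (applied to $\lambda \in T_P^k$). The key input needed to bootstrap the induction is that $\D^\lambda(\mu)$ is $\pi$-torsion free as an $R$-module, which holds because $\D^\lambda = p^*(\M_\lambda) = \A \otimes_R \M_\lambda$, and both $\A$ (by \cite[Theorem 1.33]{Andersen}) and $\M_\lambda$ (by the PBW basis described in Section \ref{Verma}) are free over $R$.

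For the induction step, assume the result holds for $n-1 \geq 1$ and consider the short exact sequence in $\Ca$
$$
0 \longrightarrow \pi^{n-1}\D^\lambda(\mu)/\pi^n \D^\lambda(\mu) \longrightarrow \D^\lambda(\mu)/\pi^n \D^\lambda(\mu) \longrightarrow \D^\lambda(\mu)/\pi^{n-1} \D^\lambda(\mu) \longrightarrow 0.
$$
Because $\D^\lambda(\mu)$ is $\pi$-torsion free, multiplication by $\pi^{n-1}$ defines an isomorphism in $\Ca$
$$
\D^\lambda(\mu)/\pi \D^\lambda(\mu) \xrightarrow{\;\cong\;} \pi^{n-1}\D^\lambda(\mu)/\pi^n \D^\lambda(\mu),
$$
so the left-hand term of the short exact sequence is isomorphic to $\D^\lambda(\mu)_k$, which is $\Gamma$-acyclic by the preceding Proposition. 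The right-hand term is $\Gamma$-acyclic by the induction hypothesis. Taking the long exact sequence of $R\Gamma$ and using that the outer terms vanish in positive degrees then forces $R^i\Gamma(\D^\lambda(\mu)/\pi^n \D^\lambda(\mu))=0$ for all $i>0$.

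No genuine obstacle is anticipated; the only subtleties are checking that the structure maps (the $\A$-action and the $\B$-coaction) are $R$-linear so that the multiplication-by-$\pi^{n-1}$ map is a morphism in $\Ca$, and that the submodule $\pi^{n-1}\D^\lambda(\mu)$ is indeed a subobject in $\Ca$ — both of which are immediate from the construction of $\D^\lambda(\mu)$.
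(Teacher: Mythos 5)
Your proof is correct and takes essentially the same approach as the paper: induction on $n$, using the short exact sequence $0\to \D^\lambda(\mu)/\pi \D^\lambda(\mu)\to \D^\lambda(\mu)/\pi^{n+1} \D^\lambda(\mu)\to \D^\lambda(\mu)/\pi^n \D^\lambda(\mu)\to 0$ and the long exact sequence in cohomology. You spell out the $\pi$-torsion-freeness of $\D^\lambda(\mu)$ that justifies identifying the leftmost term, which the paper leaves implicit.
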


\begin{proof} We proceed by induction on $n$. The case $n=1$ is just the previous Proposition. Now for $n\geq 1$, we have a short exact sequence
$$
0\to \D^\lambda(\mu)/\pi \D^\lambda(\mu)\to \D^\lambda(\mu)/\pi^{n+1} \D^\lambda(\mu)\to \D^\lambda(\mu)/\pi^n \D^\lambda(\mu)\to 0
$$
where by the Proposition and by induction hypothesis, the two side terms are acyclic. Hence by the long exact sequence the middle term is acyclic.
\end{proof}

As a consequence of this we can obtain a $D$-modules version of Proposition \ref{gen_by_global_sections} which will be useful to us later. We first need a lemma:

\begin{lem} Let $M\in\text{\emph{coh}}(\Da^\lambda)$. Then there is an $\A$-submodule $N$ of $M$ such that $N\in\text{\emph{coh}}(\Ca)$ and $N$ generates $M$ as a $\D$-module.
\end{lem}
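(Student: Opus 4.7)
The plan is to reduce this directly to Lemma \ref{useful_lemma}. Since $M$ lies in $\coh(\Da^\lambda)$, it is finitely generated as a $\D$-module, so I would start by choosing finitely many elements $m_1,\ldots,m_r\in M$ which generate $M$ over $\D$. The objects of $\Da^\lambda$ are defined as objects of $\Ca$ equipped with extra compatible structure, so via the forgetful functor $\Da^\lambda\to\Ca$ we may regard $M$ as lying in $\Ca$ (though of course not typically in $\coh(\Ca)$).

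Now I would invoke Lemma \ref{useful_lemma} with this finite list of elements: it produces a (minimal) coherent submodule $N\subseteq M$ in $\Ca$ which contains $m_1,\ldots,m_r$. By construction $N$ is a $\B$-subcomodule, is finitely generated over $\A$, and so lies in $\coh(\Ca)$ by Theorem \ref{coherent}.

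It then remains to check that $N$ generates $M$ as a $\D$-module. But this is immediate: since $m_1,\ldots,m_r\in N$ and $\D\cdot\{m_1,\ldots,m_r\}=M$ by choice of the $m_i$, we have $\D N\supseteq \D\cdot\{m_1,\ldots,m_r\}=M$, whence $\D N=M$. There is no real obstacle here; the content of the statement is entirely absorbed by Lemma \ref{useful_lemma}, whose proof in turn uses the fact that $M$, viewed as a $\B$-comodule, is locally finite (so that the $U^{\text{res}}(\bo)$-submodule generated by finitely many elements is $R$-finite), together with the compatibility axiom that the $\A$-action on $M$ is a comodule map (so that the $\A$-submodule generated by a subcomodule is again a subcomodule).
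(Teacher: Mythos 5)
Your proposal is correct and is precisely the paper's own argument: choose $\D$-generators $m_1,\ldots,m_r$, view $M$ in $\Ca$, and apply Lemma \ref{useful_lemma} to obtain the minimal coherent submodule containing them. The only addition is that you spell out the (trivial) verification that $N$ generates $M$ over $\D$, which the paper leaves implicit.
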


\begin{proof}
Let $m_1,\ldots, m_n$ be a generating set for $M$ as a $\D$-module. Viewing $M$ as an object of $\Ca$, we simply let $N$ be the smallest coherent submodule of $M$ containing $m_1, \ldots, m_n$, as given by Lemma \ref{useful_lemma}.
\end{proof}

\begin{thm} Let $M\in\text{\emph{coh}}(\Da^\lambda)$. Then $M(\mu)$ is generated by finitely many global sections for $\mu>>0$. Moreover, if $\pi M=0$ and if $p$ is a good prime, then $M(\mu)$ is also $\Gamma$-acyclic for $\mu>>0$.
\end{thm}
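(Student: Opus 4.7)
For the first assertion, my plan is to reduce to the $\A$-module case and apply Proposition \ref{gen_by_global_sections}. First I would invoke Lemma \ref{twists} to extract a coherent $\A$-submodule $N\subseteq M$ in $\Ca$ that generates $M$ as a $\D$-module. Applying Proposition \ref{gen_by_global_sections} to $N$ provides, for all $\mu$ in some translate $\mu_0+P^+$, finitely many global sections of $N(\mu)$ generating it over $\A$. Since $N(\mu)\hookrightarrow M(\mu)$ is a morphism in $\Ca$ and the underlying $\D$-action on $M(\mu)$ coincides with that on $M$ (only the comodule structure being twisted), those elements, now viewed as global sections of $M(\mu)$, generate $N(\mu)$ over $\A$, hence over $\D$, and therefore generate $M(\mu)$ over $\D$ because $N$ generates $M$ over $\D$.

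For the second assertion, my plan is to build a sufficiently long finite resolution of $M$ by finite direct sums of twists of $\D^\lambda/\pi\D^\lambda$, reduce acyclicity to Corollary \ref{twists}, and conclude via the cohomological dimension bound from Proposition \ref{line}(iv). Applying the first part to $M$, I obtain for suitable $\nu_0^{(j)}\in P^+$ a surjection
$$P_0:=\bigoplus_{j=1}^{r_0}\D^\lambda(-\nu_0^{(j)})\twoheadrightarrow M$$
in $\Da^\lambda$; as $\pi M=0$ this factors through $P_0/\pi P_0$, with kernel $K_0$. Because $\D/\pi\D$ is Noetherian (Proposition \ref{defn_of_Dq}), $K_0$ is again coherent in $\Da^\lambda$ and is killed by $\pi$, so the first part applies to $K_0$ as well. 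Iterating this procedure $N+1$ times yields a resolution
$$0\to K_N\to P_N/\pi P_N\to\cdots\to P_0/\pi P_0\to M\to 0$$
in $\Da^\lambda$, in which each $P_i$ is a finite direct sum of twists $\D^\lambda(-\nu)$ with $\nu\in P^+$.

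Only finitely many dominant weights $\nu_i^{(j)}$ appear in this resolution, so I can choose a single $\mu$ such that $\mu-\nu_i^{(j)}\in P^+$ for all $i,j$; Corollary \ref{twists} then makes every $(P_i/\pi P_i)(\mu)$ a $\Gamma$-acyclic object. Splicing the twisted resolution into short exact sequences and chasing the resulting long exact cohomology sequences produces isomorphisms $R^j\Gamma(M(\mu))\cong R^{j+N+1}\Gamma(K_N(\mu))$ for all $j\geq 1$, and the right-hand side vanishes by the bound on the cohomological dimension of $\Gamma$ in Proposition \ref{line}(iv). The key technical subtlety is that $\D$ itself is not known to be Noetherian, so the iterative construction of the resolution genuinely requires the hypothesis $\pi M=0$ in order to operate inside the Noetherian ring $\D/\pi\D$; this is precisely the reason the acyclicity statement is restricted to this setting.
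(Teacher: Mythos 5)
Your proof follows the paper's approach: extract a coherent $\A$-submodule $N$ generating $M$ over $\D$ via Lemma \ref{twists}, use Proposition \ref{gen_by_global_sections} on $N$ for the first part, and for the second build a finite resolution through Noetherian induction in $\D/\pi\D$ before concluding via Corollary \ref{twists} and the cohomological dimension bound of Proposition \ref{line}(iv). The one slip is notational: since twisting by $-\nu$ sends $\Da^{\lambda+\nu}$ to $\Da^\lambda$, the untwist of the canonical surjection $(\D^{\lambda+\nu})^a\twoheadrightarrow M(\nu)$ lands at $(\D^{\lambda+\nu}(-\nu))^a\twoheadrightarrow M$, so the summands of your $P_i$ should read $\D^{\lambda+\nu_i^{(j)}}(-\nu_i^{(j)})$ rather than $\D^\lambda(-\nu_i^{(j)})$; the acyclicity of the twisted slices still follows from Corollary \ref{twists} once $\mu-\nu_i^{(j)}\in P^+$, so the argument is otherwise unaffected.
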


\begin{proof}
Let $N\in \coh(\Ca)$ be as in the previous lemma. Note that $M(\mu)\in\coh(\Da_n^{\lambda+\mu})$ for any $\mu$. By Proposition \ref{gen_by_global_sections} we see that $N(\mu)$ is generated by finitely many global sections for $\mu>>0$. Since $M$ is generated by $N$ as a $\D$-module, the first claim follows.

Now assume $\pi M=0$. Fix any $\mu_1$ such that $M(\mu_1)$ is generated by its global sections. Then we have a surjection $(\D^{\lambda+\mu_1})^a\to M(\mu_1)$ which in fact factors through a surjection $f_1:(\D^{\lambda+\mu_1}_k)^a\to M(\mu_1)$. Let $K=\ker{f_1}$. Note that $K\in\coh(\Da^{\lambda+\mu_1})$ by Proposition \ref{defn_of_Dq} and that $\pi K=0$. So by the above argument applied to $K$, we can find $\mu_2>>0$ and a surjection $f_2:(\D^{\lambda+\mu_1+\mu_2}_k)^b\to K(\mu_2)$. Carrying on we obtain $\mu_1, \ldots, \mu_N\in P^+$ and a resolution in $\coh(\Da^{\lambda+\mu})$
$$
F_N\overset{f_N}{\rightarrow}\cdots\overset{f_2}{\rightarrow}F_1\overset{f_1}{\rightarrow} M(\mu)\to 0
$$
where $\mu=\sum_{j=1}^N \mu_i$ and for $1\leq i\leq N$, $F_i$ is a direct sum of finitely many copies of modules of the form $\D^{\lambda+\mu_1+\ldots+\mu_i}_k(\mu_{i+1}+\ldots+\mu_{N})$. Note that all the $F_i$ are $\Gamma$-acyclic by Proposition \ref{twists}. Write $K_0=M(\mu)$ and $K_i=\ker{f_i}$ for $1\leq i\leq N$. Then for each $1\leq i\leq N$ we have a short exact sequence
$$
0\to K_i\to F_i\to K_{i-1}\to 0
$$
Since $F_i$ is acyclic the long exact sequence implies that $R^j\Gamma(K_{i-1})\cong R^{j+1}\Gamma(K_i)$ for all $j\geq 1$. Thus we obtain
$$
R^j\Gamma(M(\mu))\cong R^{j+1}\Gamma(K_1)\cong R^{j+2}\Gamma(K_2)\cong\ldots\cong R^{j+N}\Gamma(K_N)=0
$$
for any $j\geq 1$ as required.
\end{proof}

\begin{remark} We expect the above result to hold for all modules, not just for those killed by $\pi$.
\end{remark}

\section{Banach $\OqBhat$-comodules}

In this Section, we define various categories of comodules over certain $\pi$-adically complete or Banach coalgebras. In doing so, we will often use techniques to do with topologies on tensor products, and so we begin by establishing the necessary facts on this topic.

\subsection{Completed tensor products and Banach Hopf algebras}\label{funal}

Recall from \cite[Section 17B]{NFA} that given two seminorms $p$ and $p'$ on the vector spaces $V$ and $W$ respectively, the \emph{tensor product seminorm} $p\otimes p'$ on $V\otimes_L W$ is defined in the following way: for $x\in V\otimes_L W$, we have
$$
p\otimes p'(x):=\inf\Big\{\max_{1\leq i\leq r}p(v_i)\cdot p'(w_i) : x=\sum_{i=1}^r v_i\otimes w_i, v_i\in V, w_i\in W\Big\}.
$$
If $V$ and $W$ are locally convex spaces, then we will always only consider the projective tensor topology on $V\otimes_LW$, i.e the topology obtained via these tensor product seminorms. One can then construct the Hausdorff completion $\ten{V}{W}$ of this space, which we call the completed tensor product of $V$ and $W$. Note that this construction is functorial, so that two continuous linear maps $f: V\to W$ and $g:X\to Y$ induce a continuous linear map $f\widehat{\otimes}g:\ten{V}{X}\to\ten{W}{Y}$. In general, if $V$ and $W$ are Hausdorff, so is $V\otimes_L W$. When $V$ and $W$ are Banach spaces, so is $\ten{V}{W}$, and $\widehat{\otimes}_L$ is a monoidal structure on the category of $L$-Banach spaces.

Given an $L$-vector space $V$ and an $R$-lattice $V^\circ\subset V$, we may define a norm on $V$ called the \emph{gauge norm}, given by
$$
\norm{v}_{\text{gauge}}=\inf_{\substack{a\in L\\ v\in a V^\circ}}\abs{a}.
$$
This infimum simply equals $\abs{\pi^n}$ where $n\in\Z$ is the largest integer such that $v\in \pi^nV^\circ$, hence the topology induced by the gauge norm is the topology induced by the $\pi$-adic filtration on $V^\circ$. Recall then that if $V$ is a normed $L$-vector space, then its norm is equivalent to the gauge norm associated to the unit ball $V^\circ$, see \cite[Lemma 2.2]{NFA}. Hence, without loss of generality, we will always assume that our normed vector spaces are equipped with the $\pi$-adic norm induced from their unit balls. Moreover, recall that given two normed $L$-vector spaces $V$ and $W$ with unit balls $V^\circ$ and $W^\circ$, the unit ball of $V\otimes_L W$ equipped with the tensor product norm as above is $V^\circ\otimes_R W^\circ$, see \cite[Lemma 2.2]{Andreas1}. This is a fact we will often use without further mention.

Recall that a bounded linear map $f:X\to Y$ between two $L$-locally convex spaces is called \emph{strict} if it induces a topological isomorphism
$$
\hat{f}:X/\ker{f}\to\im{f}.
$$
The following result says that strict maps behave well under tensor products:

\begin{lem} Suppose that $V$ is a vector subspace of a locally convex space $W$ equipped with the subspace topology and let $U$ be any other locally convex space. Then
\begin{enumerate}
\item the canonical maps $V\otimes_L U\to W\otimes_L U$ and $U\otimes_L V\to U\otimes_L W$ are strict embeddings where we give the left hand side the tensor product topology;
\item the canonical maps $\ten{V}{U}\to\ten{W}{U}$ and $\ten{U}{V}\to\ten{U}{W}$ are strict embeddings;
\item the functor of taking tensor product with $U$, in the category of locally convex spaces, preserves strict surjections.
\end{enumerate}
\end{lem}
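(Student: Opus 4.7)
The plan is to argue directly from the definition of the tensor product seminorm $p\otimes q$ recalled at the start of the section.

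For (1), the injectivity of $V\otimes_L U\to W\otimes_L U$ is just the fact that tensor products of vector spaces over the field $L$ are exact. For the topology, it suffices to show that for each continuous seminorm $\tilde p$ on $W$ and $q$ on $U$, the tensor product seminorm $(\tilde p|_V)\otimes q$ on $V\otimes_L U$ agrees with the restriction of $\tilde p\otimes q$ from $W\otimes_L U$. The inequality $(\tilde p\otimes q)|_{V\otimes_L U}\leq (\tilde p|_V)\otimes q$ is immediate from the definition, since any decomposition of $x\in V\otimes_L U$ inside $V$ is also one inside $W$. For the reverse inequality, given $x\in V\otimes_L U$ and a representation $x=\sum_{i=1}^n w_i\otimes u_i$ in $W\otimes_L U$, I would set $W_1=\mathrm{span}(w_1,\ldots,w_n)$ and $W_0=W_1\cap V$, and pick for each $\epsilon>0$ an $\epsilon$-orthogonal complement $W_2$ of $W_0$ in $W_1$ with respect to $\tilde p$ (which exists since $W_1$ is finite dimensional over a complete nonarchimedean field). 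Writing $w_i=w_i^{(0)}+w_i^{(2)}$ accordingly, one has $\tilde p(w_i^{(0)})\leq (1+\epsilon)\tilde p(w_i)$, and since $V\cap W_2=0$ forces $(V\otimes_L U)\cap(W_2\otimes_L U)=0$ inside $W\otimes_L U$ by exactness, the summand $\sum w_i^{(2)}\otimes u_i$ must vanish. Hence $x=\sum w_i^{(0)}\otimes u_i$ is a decomposition in $V\otimes_L U$ with $\max\tilde p(w_i^{(0)})q(u_i)\leq (1+\epsilon)\max\tilde p(w_i)q(u_i)$; letting $\epsilon\to 0$ gives the claim.

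For (2), having shown in (1) that $V\otimes_L U\to W\otimes_L U$ is a strict embedding, I would identify $\ten{V}{U}$ with the closure of the image of $V\otimes_L U$ in the Hausdorff space $\ten{W}{U}$. A closed subspace of a Hausdorff locally convex space always carries the subspace topology, so the induced map $\ten{V}{U}\to\ten{W}{U}$ is a strict embedding; the same argument handles the variant $\ten{U}{V}\to\ten{U}{W}$.

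For (3), let $f:V_1\twoheadrightarrow V_2$ be a strict surjection. Continuity and surjectivity of $f\otimes\mathrm{id}_U$ are immediate. For strictness I would show that the projective seminorm $p_2\otimes q$ on $V_2\otimes_L U$ equals the quotient of the seminorm $p_1\otimes q$ on $V_1\otimes_L U$ by $\ker(f\otimes\mathrm{id}_U)$, where $p_2$ is the quotient seminorm on $V_2$ coming from strictness of $f$. Given any near-optimal decomposition $x=\sum v_{2,i}\otimes u_i$ of $x\in V_2\otimes_L U$ with respect to $p_2\otimes q$, one lifts each $v_{2,i}$ to $v_{1,i}\in V_1$ with $p_1(v_{1,i})\leq p_2(v_{2,i})+\epsilon'$ (possible because $p_2$ is the quotient seminorm), producing a preimage $y=\sum v_{1,i}\otimes u_i$ with $(p_1\otimes q)(y)\leq\max p_1(v_{1,i})q(u_i)$. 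Letting $\epsilon'\to 0$ yields the required equality of seminorms.

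The principal obstacle is the decomposition step in (1): passing from a representation $x=\sum w_i\otimes u_i$ with $w_i\in W$ to one with coefficients in $V$ without losing control of the seminorm requires the existence of $\epsilon$-orthogonal complements in finite-dimensional nonarchimedean normed spaces, which is where the completeness of $L$ and the ultrametric geometry enter in an essential way.
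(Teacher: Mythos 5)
Your proof is correct, and it is self-contained where the paper is not: the paper handles the three parts by citation, namely \cite[Proposition 17.4.iii]{NFA} (that the map in (i) is isometric), \cite[1.1.9 Cor 6]{BGR} for (ii), and the proof of \cite[Appendix A, Lemma A.34]{KreBB} for (iii). Your argument for (1) via a near-orthogonal complement of $W_0=W_1\cap V$ inside the finite-dimensional span $W_1$ is in effect the standard proof behind the NFA citation, and it has the virtue of exposing exactly where completeness of $L$ and the ultrametric triangle inequality enter. Two small points deserve a sentence each in a polished write-up: in (1), $\tilde p$ restricted to $W_1$ is in general only a seminorm, so the near-orthogonal complement should be produced in the normed quotient $W_1/(\ker\tilde p\cap W_1)$ and then lifted back to $W_1$ (routine, but worth flagging since you only control $\tilde p(w_i^{(0)})$, which is all you need); and in (2), the statement that ``a closed subspace of a Hausdorff locally convex space carries the subspace topology'' is tautological --- the fact you are actually using is that a closed subspace of a complete Hausdorff locally convex space is complete, whence (granting Hausdorffness of $W$ and $U$, which holds in the paper's Banach setting) the closure of $V\otimes_L U$ inside $\ten{W}{U}$ is a completion of $V\otimes_L U$ with the topology identified in (1). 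For (3) you prove the nontrivial inequality (the quotient seminorm of $p_1\otimes q$ is bounded above by $p_2\otimes q$, by lifting a near-optimal decomposition term by term); the reverse inequality is just continuity of $f\otimes\id$ and should be stated explicitly. Overall your direct argument trades the paper's brevity for transparency, which is a fair exchange.
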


\begin{proof}
(i) The map $V\otimes_L U\to W\otimes_L U$ is clearly injective and by \cite[Proposition 17.4.iii]{NFA} we see that it is isometric, hence an isomorphism onto its image.

(ii) This follows from (i) and \cite[1.1.9 Cor 6]{BGR}.

(iii) This follows immediately from the proof of \cite[Appendix A, Lemma A.34]{KreBB}.
\end{proof}

\begin{remark} From now on, given any $U,V,W$ as in the Lemma, we shall not distinguish between $\ten{V}{U}$ and the subspace of $\ten{W}{U}$ isomorphic to it. 
\end{remark}

Next we turn to Banach coalgebras and Hopf algebras.

\begin{definition} An \emph{$L$-Banach coalgebra} is a coalgebra object in the monoidal category of $L$-Banach spaces. In other words it is a Banach space $C$ equipped with continuous linear maps $\Delta: C\to \ten{C}{C}$ and $\varepsilon: C\to L$ which satisfy the usual axioms:
$$
(\Delta\widehat{\otimes}\id_C)\circ\Delta=(\id_C\widehat{\otimes}\Delta)\circ\Delta, \quad (\id_C\widehat{\otimes}\varepsilon)\circ\Delta=(\varepsilon\widehat{\otimes}\id_C)\circ\Delta=\id_C.
$$
A morphism of coalgebras $f:C\to D$ is a continuous linear map such that $\varepsilon_D\circ f=\varepsilon_C$ and $(f\widehat{\otimes} f)\circ \Delta_C=\Delta_D\circ f$.

Given a Banach coalgebra $C$ as above, a \emph{Banach $C$-comodule} is a Banach space $\M$ equipped with a continuous linear map $\rho_{\M}:M\to \ten{\M}{C}$, which satisfies:
$$
(\id_{\M}\widehat{\otimes}\Delta)\circ \rho_{\M}=(\rho_{\M}\widehat{\otimes}\id_C)\circ\rho_{\M}, \quad (\id_{\M}\widehat{\otimes}\varepsilon)\circ\rho_{\M}=\id_{\M}.
$$
A morphism of comodules $f:\M\to \Nn$ is then a continous linear map such that $\rho_{\Nn}\circ f=\rho_{\M}\circ(f\widehat{\otimes}\id_C)$. We denote by $\Comod{C}$ the category of Banach $C$-comodules.

An \emph{$L$-Banach Hopf algebra} is an $L$-Banach algebra $H$ which is also a coalgebra such that $\Delta$ and $\varepsilon$ are algebra homomorphisms, and furthemore $H$ is equipped with a continuous linear map $S:H\to H$, which satisfies
$$
m\circ (S\widehat{\otimes}\id_H)\circ\Delta=\iota\circ\varepsilon=m\circ (\id_H\widehat{\otimes}S)\circ\Delta
$$
where $m:\ten{H}{H}\to H$ and $\iota:L\to H$ denote the multiplication map and the unit in $H$ respectively. A morphism of Hopf algebras $f:H\to S$ is then a continuous algebra homomorphism which is also a morphism of coalgebras, such that $S_D\circ f=f\circ S_H$.
\end{definition}

We showed in \cite[Proposition 3.3]{Nico2} that for any two $R$-modules $M$ and $N$, there is a canonical isomorphism of Banach spaces $\ten{\widehat{M}_L}{\widehat{N}_L}\cong (\widehat{M\otimes_R N})_L$. In particular this implies that if $H$ is an $R$-Hopf algebra, then $\widehat{H}_L$ is a Banach Hopf algebra. Indeed, the maps $\varepsilon$ and $S$ extend to the completion, and the comultiplication gives rise to a map $\widehat{\Delta}:\widehat{H}_L\to(\widehat{H\otimes_R H})_L\cong\ten{\widehat{H}_L}{\widehat{H}_L}$. These satisfy the Hopf algebra axioms since they do on the dense subset $H_L$. So in particular we see that $\Oqhat:=\widehat{\A}\otimes_R L$ and $\OqBhat:=\widehat{\B}\otimes_R L$ are Banach Hopf algebras.

\subsection{$\widehat{\B}$-comodules}\label{comod2} We now define a suitable version of comodules over $\widehat{\B}$.

\begin{notation} Given two $R$-modules $M$ and $N$, we write $\tenR{M}{N}$ to denote the $\pi$-adic completion $\widehat{M\otimes_R N}$ of $M\otimes_R N$. This construction satisfies the usual associativity and additivity properties of tensor products, and is functorial.
\end{notation}

\begin{definition} A $\widehat{\B}$-comodule is a $\pi$-adically complete $R$-module $\M$ equipped with a map $\rho: \M\to \tenR{\M}{\B}$ such that
$$
(\rho\widehat{\otimes}1)\circ \rho=(1\widehat{\otimes}\Delta)\circ \rho, \quad\text{and}\quad (1\widehat{\otimes}\varepsilon)\circ \rho=1_{\M}.
$$
A morphism of $\widehat{\B}$-comodules is an $R$-module map $f:\M\to \Nn$ such that $(f\widehat{\otimes}1)\circ \rho_{\M}=\rho_{\Nn}\circ f$. We denote the set of comodule morphisms $\M\to\Nn$ by $\Hom_{\widehat{\B}}(\M,\Nn)$.
\end{definition}

\begin{lem}
Suppose that $\M$ is a $\widehat{\B}$-comodule. Then $\M/\pi^n\M$ is a $\B$-comodule for every $n\geq 1$. Hence $\M$ is a $\widehat{U^{\text{res}}(\bo)}$-module and, moreover, if $\rho_n$ denotes the $\B$-comodule map on $\M/\pi^n\M$ and $\rho$ denotes the $\widehat{\B}$-comodule map on $\M$, then $\rho=\varprojlim \rho_n$.
\end{lem}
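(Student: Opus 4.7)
The plan is to reduce the $\widehat{\B}$-comodule structure on $\M$ modulo $\pi^n$ and identify the target $\tenR{\M}{\B}/\pi^n\tenR{\M}{\B}$ with $(\M/\pi^n\M)\otimes_R\B$, then invoke the equivalence of categories from Theorem in \ref{comodcat}. The key technical point is the identification mentioned: for any $R$-module $N$, unwinding the definition $\widehat{N}=\varprojlim N/\pi^kN$ shows that the kernel of $\widehat{N}\to N/\pi^nN$ is exactly $\pi^n\widehat{N}$, so $\widehat{N}/\pi^n\widehat{N}\cong N/\pi^n N$. Applying this to $N=\M\otimes_R\B$, and using that $(\M\otimes_R\B)/\pi^n(\M\otimes_R\B)\cong(\M/\pi^n\M)\otimes_R\B$ (which holds for any $R$-modules), one gets the desired identification.

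With this in hand, $\rho$ descends to a map $\rho_n\colon\M/\pi^n\M\to(\M/\pi^n\M)\otimes_R\B$; the comodule axioms for $\rho_n$ are immediate by reducing those for $\rho$ modulo $\pi^n$. So $\M/\pi^n\M$ is a $\B$-comodule and therefore, by Theorem \ref{comodcat}, an integrable $U^{\text{res}}(\bo)$-module. The resulting action is given explicitly by $u\cdot m=\sum b_i(u)m_i$ whenever $\rho_n(m)=\sum m_i\otimes b_i$, with the $b_i\in\B\subset \Hom_R(U^{\text{res}}(\bo),R)$; hence if $u\in\pi^n U^{\text{res}}(\bo)$ then $b_i(u)\in\pi^n R$ and $u$ acts as zero on $\M/\pi^n\M$. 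In other words, the action factors through $U^{\text{res}}(\bo)/\pi^n U^{\text{res}}(\bo)$.

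By functoriality (or by another reduction modulo $\pi^n$) the projection $\M/\pi^{n+1}\M\twoheadrightarrow\M/\pi^n\M$ is a map of $\B$-comodules, hence a $U^{\text{res}}(\bo)$-module homomorphism. Passing to the inverse limit and using $\pi$-adic completeness of $\M$, one obtains an action of $\varprojlim U^{\text{res}}(\bo)/\pi^n U^{\text{res}}(\bo)=\widehat{U^{\text{res}}(\bo)}$ on $\M=\varprojlim\M/\pi^n\M$, as required. Finally, since $\tenR{\M}{\B}$ is $\pi$-adically complete, we have $\tenR{\M}{\B}=\varprojlim\tenR{\M}{\B}/\pi^n\tenR{\M}{\B}\cong\varprojlim(\M/\pi^n\M)\otimes_R\B$, under which the continuous map $\rho$ is identified with the inverse limit of its reductions $\rho_n$.

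The only step requiring any care is the identification $\tenR{\M}{\B}/\pi^n\tenR{\M}{\B}\cong(\M/\pi^n\M)\otimes_R\B$, since in general $\pi$-adic completion does not behave perfectly with respect to quotients for arbitrary modules over a non-Noetherian ring; fortunately $R$ is a DVR, and the direct sequence-level argument above works without further hypotheses on $\M$ or $\B$.
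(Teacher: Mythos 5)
Your proposal is correct and follows essentially the same route as the paper: reduce the comodule map modulo $\pi^n$ via the identification $(\tenR{\M}{\B})/\pi^n(\tenR{\M}{\B})\cong(\M/\pi^n\M)\otimes_R\B$, invoke Theorem \ref{comodcat} to get the $U^{\text{res}}(\bo)$-module structures, note these factor through $U^{\text{res}}(\bo)/\pi^nU^{\text{res}}(\bo)$, and pass to the inverse limit. One small caveat: the key fact $\widehat{N}/\pi^n\widehat{N}\cong N/\pi^nN$ for arbitrary $N$ is true because the maximal ideal of $R$ is principal (hence finitely generated), but establishing the inclusion $\ker(\widehat{N}\to N/\pi^nN)\subseteq\pi^n\widehat{N}$ requires a Mittag-Leffler argument rather than a purely formal ``unwinding'' --- though the paper likewise asserts this isomorphism without proof, so this is consistent with the intended level of detail.
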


\begin{proof}
There are isomorphisms
$$
(\tenR{\M}{\B})/\pi^n(\tenR{\M}{\B})\cong (\M\otimes_R \B)/\pi^n(\M\otimes_R\B)\cong (\M/\pi^n\M)\otimes_R \B,
$$
and hence $\rho: \M\to \tenR{\M}{\B}$ induces a map
$$
\rho_n: \M/\pi^n\M\to (\M/\pi^n\M)\otimes_R \B
$$
for every $n\geq 1$. The comodule axioms are satisfied since they are obtained by reducing the equalities
$$
(\rho\widehat{\otimes}1)\circ \rho=(1\widehat{\otimes}\Delta)\circ \rho, \quad\text{and}\quad (1\widehat{\otimes}\varepsilon)\circ \rho=1_{\M}
$$
modulo $\pi^n$. Hence $\M/\pi^n\M$ is a $U^{\text{res}}(\bo)$-module and even a $U^{\text{res}}(\bo)/\pi^nU^{\text{res}}(\bo)$-module, and the structures are compatible with the maps $\M/\pi^{n+1}\M\to\M/\pi^n\M$. Taking inverse limits we see that $\M$ is a $\widehat{U^{\text{res}}(\bo)}$-module. The last part is immediate since $\varprojlim \rho_n=\widehat{\rho}=\rho$ as $\M$ is $\pi$-adically complete.
\end{proof}

\begin{cor}
For any two $\widehat{\B}$-comodules $\M$ and $\Nn$, there is a canonical isomorphism $\text{\emph{Hom}}_{\widehat{\B}}(\M,\Nn)\cong \varprojlim\text{\emph{Hom}}_{\B}(\M/\pi^n\M,\Nn/\pi^n\Nn)$. Moreover every $\widehat{\B}$-comodule homomorphism is $\widehat{U^{\text{res}}(\bo)}$-linear.
\end{cor}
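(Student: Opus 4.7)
The plan is to use the previous Lemma as the main input and reduce everything to compatibility with reduction modulo $\pi^n$. The first step is to construct the two maps giving the claimed isomorphism.

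In one direction, given $f\in\Hom_{\widehat{\B}}(\M,\Nn)$, reducing $f$ modulo $\pi^n$ yields $R$-linear maps $f_n:\M/\pi^n\M\to\Nn/\pi^n\Nn$. Reducing the defining equality $(f\widehat{\otimes}1)\circ\rho_{\M}=\rho_{\Nn}\circ f$ modulo $\pi^n$ and using the identifications $(\tenR{\M}{\B})/\pi^n\cong(\M/\pi^n\M)\otimes_R\B$ and the fact that $\rho=\varprojlim\rho_n$ from the Lemma, one gets that each $f_n$ is a $\B$-comodule homomorphism. Since the $f_n$ form a compatible system by construction, this defines a map $\Hom_{\widehat{\B}}(\M,\Nn)\to\varprojlim\Hom_{\B}(\M/\pi^n\M,\Nn/\pi^n\Nn)$, which is clearly injective as $\M,\Nn$ are $\pi$-adically complete (so $\pi$-adically separated).

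In the other direction, given a compatible family $(f_n)$, I would set $f:=\varprojlim f_n:\M\to\Nn$, which makes sense as $\Nn=\varprojlim\Nn/\pi^n\Nn$ is $\pi$-adically complete. The only nontrivial point is to verify that $f$ is a $\widehat{\B}$-comodule homomorphism, i.e.\ that $(f\widehat{\otimes}1)\circ\rho_{\M}=\rho_{\Nn}\circ f$ as maps $\M\to\tenR{\Nn}{\B}$. The target $\tenR{\Nn}{\B}$ is $\pi$-adically complete (it is a $\pi$-adic completion), so the equality may be checked modulo $\pi^n$ for every $n\geq 1$. Modulo $\pi^n$ both sides become $(f_n\otimes 1)\circ\rho_{\M,n}$ and $\rho_{\Nn,n}\circ f_n$ respectively (using again the Lemma to identify reductions of the completed comultiplications with the $\rho_n$), and these agree by assumption. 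This is the step that needs the most care, but it is routine once the compatibilities from the previous Lemma are at hand.

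For the second assertion, recall from the Lemma that the $\widehat{U^{\text{res}}(\bo)}$-action on $\M$ is obtained as the inverse limit of the $U^{\text{res}}(\bo)/\pi^n U^{\text{res}}(\bo)$-actions on $\M/\pi^n\M$ coming from the $\B$-comodule structures, and similarly for $\Nn$. Each $f_n$ is a $\B$-comodule homomorphism and therefore $U^{\text{res}}(\bo)$-linear (via Theorem on page where the equivalence between $\B$-comodules and integrable $U^{\text{res}}(\bo)$-modules is proved). For $u=\varprojlim u_n\in\widehat{U^{\text{res}}(\bo)}$ and $m=\varprojlim m_n\in\M$, one computes $f(um)=\varprojlim f_n(u_n m_n)=\varprojlim u_n f_n(m_n)=uf(m)$, establishing $\widehat{U^{\text{res}}(\bo)}$-linearity of $f$.
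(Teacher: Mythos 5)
Your proposal is correct and follows essentially the same route as the paper: reduce a $\widehat{\B}$-comodule map $f$ modulo $\pi^n$ to obtain a compatible system of $\B$-comodule maps $f_n$, observe that $f=\varprojlim f_n$ recovers $f$ (giving injectivity of the comparison map), produce the inverse by taking inverse limits of a compatible family and verifying the comodule axioms modulo $\pi^n$, and deduce $\widehat{U^{\text{res}}(\bo)}$-linearity from the $U^{\text{res}}(\bo)$-linearity of each $f_n$ supplied by Theorem~\ref{comodcat}. The only cosmetic difference is that you spell out the limit computation for linearity explicitly, whereas the paper states it in one line; both arguments are identical in substance.
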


\begin{proof}
Given a $\widehat{\B}$-comodule homomorphism $f:\M\to \Nn$, the induced map $f_n:\M/\pi^n\M\to\Nn/\pi^n\Nn$ is a $\B$-comodule map for every $n\geq 1$: since $f$ is a comodule homomorphism we have that $\rho_{\Nn}\circ f=\rho_\M\circ(f\widehat{\otimes} 1)$, which gives that $f_n$ is a comodule homomorphism by reducing modulo $\pi^n$. Moreover the maps $f_n$ uniquely determine $f$ since $f=\varprojlim f_n$. Hence this implies that $f$ is $\widehat{U^{\text{res}}(\bo)}$-linear since the maps $f_n$ are all $U^{\text{res}}(\bo)$-linear. All this defines an injective map
$$
\Hom_{\widehat{\B}}(\M,\Nn)\to \varprojlim\Hom_{\B}(\M/\pi^n\M,\Nn/\pi^n\Nn)
$$
and we need to check that it is surjective. But given an inverse system of maps $f_n:\M/\pi^n\M\to\Nn/\pi^n\Nn$, passing to the inverse limit gives rise to a map $f:\M\to\Nn$ which is a comodule homomorphism since the axioms are satisfied modulo $\pi^n$ for every $n\geq 1$.
\end{proof}

\subsection{Topologically integrable $\widehat{U^{\text{res}}(\bo)}$-module}\label{integrable} We now start preparing for an equivalent notion to the notion of $\widehat{\B}$-comodules. Note that by Lemma \ref{comod2}, if $\M$ is a $\widehat{\B}$-comodule, then $\M/\pi^n\M$ is an integrable $U^{\text{res}}(\bo)$-module. We want an analogous notion of integrable modules at this $\pi$-adically complete level.

\begin{definition}
Let $\M$ be a $\pi$-adically complete $\widehat{U^{\text{res}}(\bo)}$-module. Given $\lambda\in P$, we define the $\lambda$-weight space $\M_\lambda$ to be the corresponding weight space of $\M$ viewing it as a $U^{\text{res}}(\bo)$-module. We say that $\M$ is \emph{topologically integrable} as a $\widehat{U^{\text{res}}(\bo)}$-module if:
\begin{enumerate}
\item $\M$ is  \emph{topologically $(U^{\text{res}})^0$-semisimple}, i.e for every $m\in M$ there exists a family $(m_\lambda)_{\lambda\in P}$ such that $m_\lambda\in\M_\lambda$ and $\sum_{\lambda\in P}m_\lambda$ converges to $m$ in $M$; and
\item for every $i$ the action of $E_{\alpha_i}$ on $M$ is \emph{locally topologically nilpotent}, i.e for every $m\in \M$ the sequence $E_{\alpha_i}^{(r)}\cdot m\rightarrow 0$ as $r\to \infty$.
\end{enumerate}
\end{definition}

\begin{prop} Let $M$ be a $U^{\text{res}}(\bo)$-module and let $\M$ be a $\widehat{U^{\text{res}}(\bo)}$-module. Then:\begin{enumerate}
\item if $\M$ is topologically integrable, then it has a canonical $\widehat{\B}$-comodule structure; and
\item if $M$ is integrable, then $\widehat{M}$ is a topologically integrable $\widehat{U^{\text{res}}(\bo)}$-module.
\end{enumerate}
\end{prop}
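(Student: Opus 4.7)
For part (i), the plan is to pass to reductions modulo $\pi^n$ and apply the equivalence of Theorem~\ref{comodcat}. Given a topologically integrable $\widehat{U^{\text{res}}(\bo)}$-module $\M$, I first verify that $\M/\pi^n\M$ is an integrable $U^{\text{res}}(\bo)$-module for each $n\geq 1$. Topological $(U^{\text{res}})^0$-semisimplicity ensures that any convergent weight decomposition $m=\sum_\lambda m_\lambda$ reduces modulo $\pi^n$ to a \emph{finite} sum of weight vectors, since only finitely many $m_\lambda$ can lie outside $\pi^n\M$; hence $\M/\pi^n\M$ is a sum of weight spaces corresponding to characters of type $\boldsymbol{1}$. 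Local topological nilpotency gives $E_{\alpha_i}^{(r)}m\in\pi^n\M$ for $r\gg 0$, yielding the required local nilpotency on $\M/\pi^n\M$. By Theorem~\ref{comodcat}, each $\M/\pi^n\M$ acquires a canonical $\B$-comodule structure $\rho_n$.

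The next step is compatibility of the $\rho_n$ with the reduction maps $\M/\pi^{n+1}\M\to\M/\pi^n\M$. These maps are $U^{\text{res}}(\bo)$-linear, so by functoriality of the equivalence in Theorem~\ref{comodcat} they are automatically $\B$-comodule homomorphisms, which is exactly the required compatibility. Using the canonical identification $\tenR{\M}{\B}=\varprojlim(\M/\pi^n\M)\otimes_R\B$, the inverse limit $\rho:=\varprojlim\rho_n$ gives the desired map $\M\to\tenR{\M}{\B}$, and the comodule axioms pass to the limit from those satisfied by each $\rho_n$.

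For part (ii), the action of $U^{\text{res}}(\bo)$ on $M$ extends by continuity to an action of $\widehat{U^{\text{res}}(\bo)}$ on $\widehat{M}$ via the canonical isomorphism $\tenR{\widehat{U^{\text{res}}(\bo)}}{\widehat{M}}\cong\widehat{U^{\text{res}}(\bo)\otimes_R M}$. Since $M=\bigoplus_\lambda M_\lambda$ as $R$-modules, the same decomposition passes to each $M/\pi^n M$, and an inverse-limit argument identifies the weight space $(\widehat{M})_\lambda$ with $\widehat{M_\lambda}$. For topological semisimplicity, given $\hat m\in\widehat{M}$, its image in $M/\pi^n M$ has only finitely many nonzero weight components, so writing $\hat m_\lambda$ for the projection of $\hat m$ to $\widehat{M_\lambda}$, the series $\sum_\lambda \hat m_\lambda$ converges $\pi$-adically to $\hat m$. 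For local topological nilpotency, fix $\hat m$ and $n\geq 1$ and pick any lift $m_n\in M$ with $\hat m\equiv m_n\pmod{\pi^n\widehat{M}}$; by integrability of $M$, $E_{\alpha_i}^{(r)}m_n=0$ for $r\gg 0$, so $E_{\alpha_i}^{(r)}\hat m\in\pi^n\widehat{M}$ for $r\gg 0$.

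The main technical step is verifying the compatibility of the family $(\rho_n)$ in (i). I handle this by invoking the functoriality of the equivalence in Theorem~\ref{comodcat}, which automatically promotes any $U^{\text{res}}(\bo)$-linear map between integrable modules to a $\B$-comodule map; everything else is a routine manipulation of inverse limits of $\pi$-adic completions and of the weight-space decomposition under reduction modulo $\pi^n$.
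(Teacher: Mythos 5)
Your proof is correct and follows essentially the same route as the paper's: in part (i) both pass to the reductions $\M/\pi^n\M$, verify integrability there, invoke the equivalence of Theorem~\ref{comodcat} to get comodule maps $\rho_n$, and take the inverse limit (you add the helpful observation that compatibility of the $\rho_n$ with the transition maps is automatic by fullness of the equivalence, whereas the paper asserts it without comment). In part (ii) the content is the same; the paper instead writes $m=\sum_j\pi^jm_j$, decomposes each $m_j$ into a finite sum of weight vectors, and rearranges the resulting double series into $\sum_\lambda m_\lambda$ with $m_\lambda\in\pi^{n(\lambda)}\widehat{M_\lambda}$, whereas you obtain the same components $\hat m_\lambda$ more structurally as projections of $\hat m$ to $\widehat{M_\lambda}$ (using $\pi^nM=\bigoplus_\lambda\pi^nM_\lambda$ so that each projection is compatible with reduction mod $\pi^n$); both are valid, and for local topological nilpotency the two arguments coincide up to notation. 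One small overstatement you should be aware of: you assert $(\widehat M)_\lambda=\widehat{M_\lambda}$, but your argument only requires the easy inclusion $\widehat{M_\lambda}\subseteq(\widehat M)_\lambda$, which is all you actually use.
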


\begin{proof}
For (i), note that it follows immediately from the definition of topologically integrable $\widehat{U^{\text{res}}(\bo)}$-module that $\M/\pi^n\M$ is integrable as a $U^{\text{res}}(\bo)$-module for every $n\geq 1$. So there are comodule maps
$$
\rho_n:\M/\pi^n\M\to \M/\pi^n\M\otimes_R\B\cong (\M\otimes_R\B)/\pi^n(\M\otimes_R\B)
$$
for every $n\geq 1$, which are compatible with the maps $\M/\pi^{a+1}\M\to\M/\pi^a\M$. Taking inverse limits gives a map
$$
\rho: \M\to \tenR{\M}{\B}
$$
which gives a comodule structure to $\M$: the comodule axioms hold modulo $\pi^n$ for every $n\geq 1$ so hold for $\rho$. The module structure arising from $\rho$ agrees by definition with the initial module structure on $\M$.

For (ii), let $m\in \widehat{M}$. Then there exists $m_0, m_1, \ldots$ in $M$ such that
$$
m=\sum_{i=0}^{\infty} \pi^im_i.
$$
Now, as $M$ is integrable, we can find ascending chain of finite subsets $S_j\subseteq P$ such that $m_j=\sum_{\lambda\in S_j}m_{j,\lambda}$ for some $m_{j,\lambda}\in M_\lambda$. Let $S=\bigcup_{j\geq 0} S_j$. For each $\lambda\in S$, let
$$
n(\lambda)=\inf \{j : \lambda\in S_j\}.
$$
Then set
$$
m_\lambda=\sum_{j\geq n(\lambda)}\pi^j m_{j,\lambda}\in \pi^{n(\lambda)}\widehat{M}_\lambda.
$$
Since each set $S_j$ is finite, each set $\{\lambda : n(\lambda)<j\}$ is also finite and so $\sum_{\lambda\in S}m_\lambda$ converges to $m$.

Finally, write $m=\sum_{j\geq 0} \pi^jm_j$ again, and pick $N\in\N$. Since $M$ is integrable, for every $0\leq j< N$, $E_{\alpha_i}^{(r)}m_j=0$ for $r>>0$. So there exists $R>0$ such that for all $r>R$ and for any $0\leq j<N$, $E_{\alpha_i}^{(r)}m_j=0$. Then we have
$$
E_{\alpha_i}^{(r)}m=\sum_{j\geq N} \pi^j E_{\alpha_i}^{(r)}m_j\in \pi^N\widehat{M}
$$
for $r>R$. So $E_{\alpha_i}^{(r)}m\to 0$ as $r\to\infty$ as required.
\end{proof}

We aim to show a converse to Proposition \ref{integrable}(i). Similarly to the uncompleted situation, it boils down to showing that closed submodules of topologically integrable modules are topologically integrable. We are only able to do this for torsion-free modules, but this is sufficient for our needs.

\begin{definition} A Banach $\widehat{U^{\text{res}}(\bo)}_L$-module $\M$ is called \emph{topologically integrable} if its unit ball $\M^\circ$ is a topologically integrable $\widehat{U^{\text{res}}(\bo)}$-module.
\end{definition}

Note that topologically integrable $\widehat{U^{\text{res}}(\bo)}_L$-modules are automatically topologically semisimple as Banach $\widehat{(U^{\text{res}})^0_L}$-modules, in the sense of \cite[Section 5.1]{Nico2}. Thus we have:

\begin{thm}[{\cite[Theorem 5.1]{Nico2}}]
Suppose that $\M$ is a topologically integrable $\widehat{U^{\text{res}}(\bo)}_L$-module. Then for each $m\in \M $, there exists a unique family $(m_\lambda)_{\lambda\in P}$ with $m_\lambda\in\M_\lambda$ such that $\sum_{\lambda\in P}m_\lambda$ converges to $m$. Moreover, if $m\in \Nn$ where $\Nn$ is a closed $U_q^0$-invariant subspace, then each $m_\lambda\in \Nn$.
\end{thm}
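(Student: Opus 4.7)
The plan is to reduce the statement to the general topological semisimplicity framework of \cite[Section 5.1]{Nico2}, which gives exactly this kind of unique weight-space decomposition for Banach modules over the completion of the Cartan. Existence of the decomposition is essentially immediate from the hypothesis: given $m \in \M$, pick $k \in \Z$ with $\pi^k m \in \M^\circ$, apply the defining property of topological integrability to write $\pi^k m = \sum_\lambda n_\lambda$ convergently with $n_\lambda \in \M^\circ_\lambda$, and rescale. The real content is therefore uniqueness and the invariance under closed $U_q^0$-stable subspaces.

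For uniqueness, suppose $\sum_\lambda e_\lambda = 0$ convergently with $e_\lambda \in \M_\lambda$; since only finitely many $e_\lambda$ lie outside $\M^\circ$, one may rescale to assume $e_\lambda \in \M^\circ_\lambda$ for all $\lambda$. Fix $\lambda_0 \in P$ and $n \geq 1$. Convergence yields a finite set $S_n \subset P$ containing $\lambda_0$ outside of which $e_\lambda \in \pi^n \M^\circ$, so that
\[
\sum_{\lambda \in S_n} e_\lambda \equiv 0 \pmod{\pi^n \M^\circ}.
\]
Since $q$ is not a root of unity, the characters $\psi_\lambda$ on $(U^{\text{res}})^0_L$ are pairwise distinct, so a Vandermonde argument over $L$ produces $u_n \in (U^{\text{res}})^0_L \subseteq U_q^0$ with $\psi_{\lambda_0}(u_n)=1$ and $\psi_\lambda(u_n)=0$ for $\lambda \in S_n\setminus\{\lambda_0\}$. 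Applying $u_n$ then extracts $e_{\lambda_0}$ from the left-hand side modulo the tail $\sum_{\lambda \notin S_n}\psi_\lambda(u_n)e_\lambda$.

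The main technical obstacle is to control this tail: a priori $u_n$ is only in $(U^{\text{res}})^0_L$, so its action on $\M^\circ$ need not be bounded, and the values $\psi_\lambda(u_n)$ for $\lambda \notin S_n$ may have arbitrarily negative valuation. The resolution is to choose $u_n$ as an $R$-integral polynomial expression in the $K_\mu$'s and in the quantum divided binomials ${K_{\alpha_i};c \brack t}$ (which all take values in $R$ under every $\psi_\lambda$), and to exploit the fast $\pi$-adic decay of the $e_\lambda$'s with $\lambda \notin S_n$ to absorb the tail into $\pi^n \M^\circ$. This forces $e_{\lambda_0} \in \pi^n \M^\circ$ for every $n$, and hence $e_{\lambda_0}=0$; this careful matching of denominators against decay rates is the heart of the argument and is precisely what the machinery of \cite[Section 5.1]{Nico2} provides.

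For the moreover statement, let $\Nn \subseteq \M$ be closed and $U_q^0$-invariant and take $m \in \Nn$ with decomposition $m = \sum_\lambda m_\lambda$. The operators $u_n$ above lie in $U_q^0$ and therefore preserve $\Nn$, so each $u_n\cdot m$ lies in $\Nn$ and, by the uniqueness argument applied to $m - \sum_{\lambda \in S_n} m_\lambda$, approximates $m_{\lambda_0}$ to within $\pi^n \M^\circ$. Since $\Nn$ is closed in $\M$, the limit $m_{\lambda_0}$ belongs to $\Nn$ as required.
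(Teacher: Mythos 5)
The paper does not actually give a proof of this statement: it is a direct quotation of \cite[Theorem 5.1]{Nico2}, prefaced by the one-line observation that a topologically integrable $\widehat{U^{\text{res}}(\bo)}_L$-module is, by the very definition of topological integrability, topologically semisimple as a Banach $\widehat{(U^{\text{res}})^0_L}$-module, so the cited theorem applies verbatim. Your proposal instead attempts to reprove the cited theorem from scratch. The separating-elements strategy is plausibly the one used in \cite{Nico2}, but there is a genuine gap at the decisive step, and you concede it yourself.

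The gap is the construction of the $u_n$. Producing $u_n\in(U^{\text{res}})^0_L$ by Vandermonde is easy, but such $u_n$ need not act boundedly on $\M^\circ$, so applying them termwise to a convergent series is not justified and the tail is out of control. You then propose taking $u_n$ to be an $R$-integral expression in the $K_\mu$'s and the quantum binomials ${K_{\alpha_i}; c \brack t}$. But since $q\equiv 1\pmod{\pi}$, every $\psi_\lambda(K_\mu)\equiv 1\pmod{\pi}$, so integral Lagrange interpolation in the $K_\mu$'s alone fails whenever $p$ divides a difference of weight pairings; and the values $\psi_\lambda\bigl({K_{\alpha_i}; c \brack t}\bigr)={\langle \lambda, \alpha_i^\vee\rangle+c \brack t}_{q_i}$ vanish only on a one-sided range of arguments, so it is not at all clear how to manufacture $u_n\in(U^{\text{res}})^0$ with $\psi_{\lambda_0}(u_n)$ a unit and $\psi_\lambda(u_n)=0$ for every $\lambda\in S_n\setminus\{\lambda_0\}$. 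Your fallback of ``absorbing the tail into the $\pi^n$-decay'' is very likely the right idea, but the quantitative estimate matching the valuations of $\psi_\lambda(u_n)$ for $\lambda\notin S_n$ against the decay of the $e_\lambda$ is precisely what \cite[Theorem 5.1]{Nico2} has to prove, and you explicitly punt that step to it. So what you have is a credible outline of the shape of the cited proof, not a self-contained argument. The ``moreover'' part is fine modulo the same gap: once suitable $u_n\in(U^{\text{res}})^0\subset U_q^0$ exist and are uniformly bounded, the closed $U_q^0$-invariant-subspace argument goes through.
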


\subsection{An equivalence of categories}\label{equiv}

We now use above results to obtain a description of the category of Banach $\OqBhat$-comodules.

\begin{prop} Let $\M$ be a $\pi$-torsion free $\widehat{\B}$-comodule. Then $\M$ is a topologically integrable $\widehat{U^{\text{res}}(\bo)}$-module.
\end{prop}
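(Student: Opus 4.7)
The plan is to identify $\M$ with a closed subobject of a $\widehat{U^{\text{res}}(\bo)}$-module already known to be topologically integrable, namely $\tenR{\M}{\B}$ equipped with the coaction $1\widehat\otimes\widehat\Delta$. Since the counit axiom gives $(1\widehat\otimes\varepsilon)\circ\rho_\M=\id_\M$, the comodule map $\rho_\M$ is a $\widehat{U^{\text{res}}(\bo)}$-equivariant injection with continuous left inverse, and hence its image is closed. Under the coaction $1\widehat\otimes\widehat\Delta$, the action of $\widehat{U^{\text{res}}(\bo)}$ on $\tenR{\M}{\B}$ is trivial on the first factor.

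First I would verify that $\tenR{\M}{\B}$ is topologically integrable. The uncompleted module $\M\otimes_R\B$, with action only on the $\B$-factor, is integrable since $\B$ is: its weight decomposition is $\bigoplus_\lambda\M\otimes_R\B_\lambda$, and local nilpotence of each $E_{\alpha_i}^{(r)}$ on $\B$ yields the same on $\M\otimes_R\B$. Applying part (ii) of Proposition \ref{integrable} then shows that the completion $\tenR{\M}{\B}$ is topologically integrable. The local topological nilpotence of the $E_{\alpha_i}^{(r)}$'s on $\M$ follows at once by $\widehat{U^{\text{res}}(\bo)}$-equivariance of $\rho_\M$.

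For the topological $(U^{\text{res}})^0$-semisimplicity of $\M$, the assumption that $\M$ is $\pi$-torsion free lets me pass to Banach modules: $\M$ is then the unit ball of the Banach space $\M_L$, and $\rho_\M$ extends to a closed $\widehat{U^{\text{res}}(\bo)}_L$-equivariant embedding $\M_L\hookrightarrow(\tenR{\M}{\B})_L\cong\ten{\M_L}{\OqBhat}$ onto a closed $\widehat{(U^{\text{res}})^0}_L$-invariant subspace. Applying Theorem 5.1 of \cite{Nico2} (recalled in section \ref{integrable}) to the topologically semisimple Banach module $\ten{\M_L}{\OqBhat}$, for any $m\in\M_L$ one obtains a convergent weight decomposition $\rho_\M(m)=\sum_\lambda y_\lambda$ with each $y_\lambda\in\rho_\M(\M_L)$.

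The main technical point, which I expect to be the principal obstacle, is ensuring that when $m\in\M$ the weight components actually lie in the integral weight spaces $\M_\lambda$ rather than merely in $(\M_L)_\lambda$. For this I would use that the topological integrability of $\tenR{\M}{\B}$ produces a weight decomposition $\rho_\M(m)=\sum_\lambda y_\lambda$ with each $y_\lambda\in\tenR{\M}{\B}$; by uniqueness of weight decompositions in topologically semisimple Banach modules, these $y_\lambda$ must coincide with those from Theorem 5.1. Setting $m_\lambda:=(1\widehat\otimes\varepsilon)(y_\lambda)\in\M$ and using continuity of $1\widehat\otimes\varepsilon$ would then give the required convergent decomposition $m=\sum_\lambda m_\lambda$ with $m_\lambda\in\M\cap(\M_L)_\lambda=\M_\lambda$.
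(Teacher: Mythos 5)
Your proof is correct and follows essentially the same approach as the paper: embed $\M$ via $\rho_\M$ into the topologically integrable module $\tenR{\M}{\B}$ (with coaction $1\widehat{\otimes}\widehat{\Delta}$), then combine topological integrability of the target with closedness of the image and uniqueness of weight decompositions from Theorem 5.1 of \cite{Nico2}. The only cosmetic difference is at the final step: where the paper uses the fact that $\rho_\M$ is an isometry to conclude $\M=\M_L\cap\tenR{\M}{\B}$ and reads off $m_\lambda$ from that intersection, you recover $m_\lambda$ by applying $1\widehat{\otimes}\varepsilon$ to the weight components $y_\lambda\in\rho_\M(\M_L)\cap\tenR{\M}{\B}$ of $\rho_\M(m)$ — both routes give the same conclusion.
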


\begin{proof}
We have the comodule map $\M\to \tenR{\M}{\B}$ which is a split injection. As it is split, we must have
$$
\pi^n(\tenR{\M}{\B})\cap \rho(\M)=\pi^n\rho(\M)=\rho(\pi^n \M)
$$
so that $\rho$ is in fact an isometry with respect to the $\pi$-adic norms. Moreover, $\rho$ is a comodule homomorphism if we give $\tenR{\M}{\B}$ the comodule map $1\widehat{\otimes}\Delta$, c.f. Remark \ref{comodcat}.  Hence this gives rise to a $\widehat{U^{\text{res}}(\bo)}_L$-linear isometry $\M_L\to \ten{\M_L}{\OqBhat}$ by Corollary \ref{comod2}.

Note that $\tenR{\M}{\B}$ is the $\pi$-adic completion of $\M\otimes_R \B$, which is a $\B$-comodule via $1\otimes\Delta$. Since $\B$-comodules are integrable $U^{\text{res}}(\bo)$-modules by Theorem \ref{comodcat}, it follows from Proposition \ref{integrable}(ii) that $\tenR{\M}{\B}$ is topologically integrable, hence so is $\ten{\M_L}{\OqBhat}$. Now we identify $\M$ with its image in $\tenR{\M}{\B}=(\ten{\M_L}{\OqBhat})^\circ$. Since the map was an isometry we also have $\M=\M_L\cap \tenR{\M}{\B}$. Pick $m\in \M$. Then inside $\ten{\M_L}{\OqBhat}$ we automatically have $m=\sum_{\lambda\in P}m_\lambda$ and $E_{\alpha_i}^{(r)}m\to 0$ as $r\to \infty$. So we just need to check that each $m_\lambda\in\M$. However by Theorem \ref{integrable}, the $m_\lambda$ are uniquely determined by $m$ and must belong to $\M_L$ since it is complete hence closed in $\ten{\M_L}{\OqBhat}$. On the other hand, since $\tenR{\M}{\B}$ is topologically integrable and $m\in\M\subset\tenR{\M}{\B}$ we must have $m_\lambda\in \tenR{\M}{\B}$ for all $\lambda$. Therefore each $m_\lambda\in (\tenR{\M}{\B})\cap\M_L=\M$ as required.
\end{proof}

Note that by Proposition \ref{integrable}(i) there is a canonical functor between the category of topologically integrable $\widehat{U^{\text{res}}(\bo)}$-modules and $\widehat{\B}$-comodules. Indeed, given a module map $f:\M\to\Nn$ its restriction modulo $\pi^n$ is a module map between two integrable $U^{\text{res}}(\bo)$-modules by definition, hence is a comodule homomorphism. Passing to the inverse limit, $f$ is a $\widehat{\B}$-comodule homomorphism.

\begin{cor} The canonical functor between the category of topologically integrable $\widehat{U^{\text{res}}(\bo)}$-modules and the category of $\widehat{\B}$-comodules restricts to an equivalence of categories between the full subcategories of $\pi$-torsion free objects.
\end{cor}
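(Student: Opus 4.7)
The plan is to assemble the two preceding propositions into a pair of quasi-inverse functors. First, I would make the canonical functor $F$ from Proposition \ref{integrable}(i) fully explicit: on a topologically integrable $\widehat{U^{\text{res}}(\bo)}$-module $\M$, the $\widehat{\B}$-coaction is the inverse limit of the $\B$-comodule coactions on the integrable $U^{\text{res}}(\bo)$-quotients $\M/\pi^n\M$. On a morphism $f:\M\to\Nn$, the induced maps modulo $\pi^n$ are $U^{\text{res}}(\bo)$-linear between integrable modules, hence $\B$-comodule maps by Theorem \ref{comodcat}, and by Corollary \ref{comod2} their inverse limit is a $\widehat{\B}$-comodule morphism. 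Restriction to the torsion-free subcategory is automatic since $F$ does not alter the underlying $R$-module. In the other direction, I would define $G$ using Lemma \ref{comod2}, which endows any $\widehat{\B}$-comodule with a canonical $\widehat{U^{\text{res}}(\bo)}$-module structure; restricting to torsion-free comodules, Proposition \ref{equiv} shows this is topologically integrable, and Corollary \ref{comod2} provides the functoriality.

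The second step is to verify that $F$ and $G$ are quasi-inverse, and this reduces to a layer-by-layer check. Since both functors act as the identity on underlying $R$-modules, it suffices to show that they recover the original structures modulo each $\pi^n$. By construction in Proposition \ref{integrable}(i), the $\B$-comodule structure on $\M/\pi^n\M$ produced by $F$ is precisely the one corresponding to the integrable $U^{\text{res}}(\bo)$-action via the equivalence of Theorem \ref{comodcat}; applying $G$ then extracts the $U^{\text{res}}(\bo)$-action from this $\B$-comodule via the same uncompleted equivalence, necessarily returning the original action. Since $\M$ is $\pi$-adically complete and its $\widehat{U^{\text{res}}(\bo)}$-action is determined by the compatible system of $U^{\text{res}}(\bo)$-actions on the $\M/\pi^n\M$, this gives $G\circ F=\id$; the symmetric argument, running the correspondence the other way at each finite level, gives $F\circ G=\id$.

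I do not expect any serious obstacle here, as the substantive work has been absorbed into Propositions \ref{integrable} and \ref{equiv}. The truly delicate step was Proposition \ref{equiv}, where torsion-freeness was essential in order to exploit the isometric embedding $\rho:\M\to\tenR{\M}{\B}$ together with the uniqueness of weight space decompositions from Theorem \ref{integrable}. The present corollary is then a bookkeeping consequence of combining that result with the uncompleted equivalence of Theorem \ref{comodcat} and the inverse-limit description of Lemma \ref{comod2}.
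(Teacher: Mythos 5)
Your argument is correct and uses the same ingredients as the paper's proof: both depend on Proposition \ref{equiv} to ensure the torsion-free $\widehat{\B}$-comodules are exactly the image of the functor, Corollary \ref{comod2} for the morphism-level correspondence, and the finite-level equivalence of Theorem \ref{comodcat} checked layer by layer modulo $\pi^n$. The paper phrases this as a three-line verification of full, faithful, and essentially surjective, while you build an explicit quasi-inverse; the two formulations are interchangeable and the substance is identical.
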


\begin{proof}
By Proposition \ref{equiv}, the restriction of the functor to the torsion-free modules is essentially surjective. It is evidently faithful. Moreover, it is full by Corollary \ref{comod2}.
\end{proof}

If $\M$ is a topologically integrable $\widehat{U^{\text{res}}(\bo)}_L$-module then we may apply the above functor to its unit ball and extend scalars to construct a functor to $\Comod{\OqBhat}$. This gives our promised Theorem \ref{thmA}:

\begin{proof}[Proof of Theorem \ref{thmA}]
By the proof of the above Corollary, this functor is full and faithful so that we just need to show that it is essentially surjective. Now suppose that $\Nn$ is a Banach $\OqBhat$-comodule. Then there is a split injection $\rho:\Nn\to\ten{\Nn}{\OqBhat}$ which is therefore strict by the Lemma below. Moreover $\rho$ is a comodule homomorphism where we give the right hand side the comodule map $1\widehat{\otimes}\widehat{\Delta}$. Hence $\Nn$ is topologically isomorphic to a subcomodule $\M$ of $\ten{\Nn}{\OqBhat}$, where $\M$ is equipped with the subspace topology. We note that since $1\widehat{\otimes}\widehat{\Delta}$ has norm $\leq 1$, so does its restriction to $\M$, and so it preserves unit balls. Thus we see that $\M^\circ$ is a $\widehat{\B}$-comodule, and therefore is a topologically integrable $\widehat{U^{\text{res}}(\bo)}$-module by Proposition \ref{equiv}. So we have that $\M$ is in the image of our functor.
\end{proof}

\begin{lem} If $X$ and $Y$ are two $L$-Banach spaces and $f:X\to Y$ is a split continuous linear map, then $f$ is strict.
\end{lem}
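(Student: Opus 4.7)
The plan is to unpack what ``split'' means and then observe that the splitting provides a continuous inverse to $f$ on its image, directly yielding strictness. By hypothesis, there exists a continuous linear map $g : Y \to X$ satisfying $g \circ f = \mathrm{id}_X$. This identity alone forces $\ker f = 0$, so the canonical factorisation $\hat f : X / \ker f \to \mathrm{Im}(f)$ agrees, up to the obvious identification, with $f$ regarded as a map onto its image.

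Next, I would exhibit an explicit continuous inverse to $\hat f$. Let $h = g|_{\mathrm{Im}(f)} : \mathrm{Im}(f) \to X$, where $\mathrm{Im}(f)$ carries the subspace topology inherited from $Y$. The map $h$ is continuous as the restriction of a continuous map. From $g \circ f = \mathrm{id}_X$ one immediately gets $h \circ \hat f = \mathrm{id}_X$, and for any $y = f(x) \in \mathrm{Im}(f)$ we have $\hat f \circ h(y) = f(g(f(x))) = f(x) = y$, so $\hat f \circ h = \mathrm{id}_{\mathrm{Im}(f)}$. Hence $\hat f$ is a topological isomorphism, which is exactly the definition of strictness.

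There is no serious obstacle here: the content of the lemma is just that a one-sided continuous inverse in the category of Banach spaces (or even locally convex spaces) suffices to make a map strict onto its image, and the proof is essentially formal once the splitting is written down. No completeness or open-mapping input is required.
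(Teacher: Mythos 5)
Your argument is correct and is essentially the paper's proof dressed in slightly more categorical clothing: both amount to observing that the splitting $g$ restricts to a continuous inverse of $\hat f$ on $\im(f)$. The paper records this via the norm estimate $\norm{x}_X = \norm{g(f(x))}_X \leq \norm{g}\,\norm{f(x)}_Y$ and a citation to \cite[Lemma 1.1.9/2]{BGR}, whereas you write out the two-sided inverse $h = g|_{\im(f)}$ explicitly, but the underlying content — that a continuous retraction forces $f$ to be a topological embedding onto its image, with no appeal to completeness or the open mapping theorem — is the same.
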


\begin{proof}
Suppose the splitting is given by $g:Y\to X$. Then we have
$$
\norm{x}_X=\norm{g(f(x))}_X\leq \norm{g}\norm{f(x)}_Y
$$
for all $x\in X$. This implies that $f$ is strict by \cite[Lemma 1.1.9/2]{BGR}.
\end{proof}

\section{Analytic quantum flag varieties}

We now finally turn to analogues of our quantum flag varieties which consist of objects which are $\pi$-adically complete $R$-modules or $L$-Banach spaces. In doing so, we will review the concept of a quasi-abelian category, which will give us the homological tools that we will require later.

\subsection{The category $\widehat{\Ca}$}\label{Ca}

Given two $\widehat{\B}$-comodules $\M, \Nn$ we can define a tensor comodule structure on $\tenR{\M}{\Nn}$: there is an $R$-module map
$$
\M\otimes_R \B\otimes_R \Nn\otimes_R \B \to \M\otimes_R \Nn\otimes_R \B
$$
given by swapping the two middle terms first and then multiplying in $\B$. Pre-composing the $\pi$-adic completion of this map with $\rho_{\M}\widehat{\otimes}\rho_{\Nn}$ gives a map
$$
\tenR{\M}{\Nn}\to\tenR{\tenR{\M}{\Nn}}{\B}
$$
which is easily checked to satisfy the comodule axioms. Indeed since the counit and comultiplication are algebra homomorphisms it's enough to check that
$$
(\rho_{\M}\widehat{\otimes}1\widehat{\otimes}\rho_{\Nn}\widehat{\otimes}1)\circ(\rho_{\M}\widehat{\otimes}\rho_{\Nn})=(1\widehat{\otimes}\Delta\widehat{\otimes}1\widehat{\otimes}\Delta)\circ(\rho_{\M}\widehat{\otimes}\rho_{\Nn}) \text{ and }(1\widehat{\otimes}\varepsilon\widehat{\otimes}\varepsilon)\circ(\rho_{\M}\widehat{\otimes}\rho_{\Nn})=1
$$
which both follow since $\M$ and $\Nn$ are comodules.

\begin{definition} The category $\widehat{\Ca}$ denotes the category whose objects are $\pi$-adically complete $\widehat{\A}$-modules and $\widehat{\B}$-comodules $\M$ such that the action map $\tenR{\widehat{\A}}{\M}\to \M$ is a comodule homomorphism with $\tenR{\widehat{\A}}{\M}$ being given the tensor comodule described above. Morphisms in $\widehat{\Ca}$ are $R$-linear maps preserving both structures. We say that $\M\in\widehat{\Ca}$ is \emph{coherent} if it is finitely generated over $\widehat{\A}$ and denote the full subcategory of coherent modules by $\coh(\widehat{\Ca})$.

Given $\M\in\widehat{\Ca}$, its \emph{global sections} are defined to be $\Gamma(\M):=\Hom_{\widehat{\Ca}}(\widehat{\A},\M)$. The map $\Gamma(\M)\to \M$ defined by $f\mapsto f(1)$ gives an isomorphism between $\Gamma(\M)$ and the $R$-module $\{m\in \M : \rho_{\M}(m)=m\otimes 1\}$.
\end{definition}

\begin{lem} Suppose $\M\in\widehat{\Ca}$. Then $\M/\pi^n \M\in\Ca$ for all $n\geq 0$ and is coherent if $\M$ was coherent.
\end{lem}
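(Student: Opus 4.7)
The proof is essentially a matter of reducing all the defining data modulo $\pi^n$ and checking that the axioms survive. I will first establish the $\widehat{\A}$-module and $\B$-comodule structures on $\M/\pi^n\M$ separately, and then verify their compatibility.

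First, since $\widehat{\A}/\pi^n\widehat{\A} \cong \A/\pi^n\A$ is canonically a quotient of $\A$, the quotient $\M/\pi^n\M$ inherits a natural left $\A$-module structure from its $\widehat{\A}$-module structure. Second, by Lemma \ref{comod2}, the $\widehat{\B}$-coaction $\rho_\M$ descends to a $\B$-comodule coaction
$$\rho_n : \M/\pi^n\M \to (\M/\pi^n\M)\otimes_R \B,$$
obtained by identifying $(\tenR{\M}{\B})/\pi^n(\tenR{\M}{\B})$ with $(\M/\pi^n\M)\otimes_R \B$.

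The main point is to verify that the $\A$-action map on $\M/\pi^n\M$ is a comodule homomorphism with respect to the tensor comodule structure on $\A\otimes_R(\M/\pi^n\M)$. The hypothesis tells us that the map $\tenR{\widehat{\A}}{\M}\to \M$ is a comodule homomorphism, where $\tenR{\widehat{\A}}{\M}$ carries the tensor comodule structure constructed in \ref{Ca}. Using the isomorphism
$$(\tenR{\widehat{\A}}{\M})/\pi^n(\tenR{\widehat{\A}}{\M}) \;\cong\; (\A/\pi^n\A)\otimes_R (\M/\pi^n\M),$$
and the fact that the tensor comodule structure was defined by completing the analogous uncompleted construction, the reduction modulo $\pi^n$ of this tensor structure is precisely the tensor $\B$-comodule structure on $\A\otimes_R(\M/\pi^n\M)$ used to define $\Ca$. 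The commutative square expressing that the action is a comodule homomorphism therefore reduces to the analogous commutative square modulo $\pi^n$, proving $\M/\pi^n\M \in \Ca$.

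Finally, for the coherence statement: if $\M$ is finitely generated over $\widehat{\A}$, say by $m_1,\ldots,m_r$, then their images in $\M/\pi^n\M$ generate it as a module over $\widehat{\A}/\pi^n\widehat{\A} \cong \A/\pi^n\A$, hence also over $\A$, so $\M/\pi^n\M$ is coherent in $\Ca$. There is no real obstacle here; the only minor care needed is to keep track of the identifications between completed and uncompleted tensor products modulo $\pi^n$, but these are routine since $\pi$-adic completion is exact on finitely presented objects and commutes with the relevant base changes.
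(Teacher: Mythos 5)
Your proof follows the same route as the paper's: establish the $\A$-module structure via $\widehat{\A}/\pi^n\widehat{\A}\cong\A/\pi^n\A$, invoke Lemma \ref{comod2} for the $\B$-comodule structure, and reduce the compatibility square modulo $\pi^n$. The only quibble is your closing justification --- $\pi$-adic completion being ``exact on finitely presented objects'' is not the relevant fact here (and $\M$ need not be finitely presented); what is actually used is that for any $R$-module $Y$ the natural map $Y/\pi^n Y\to\widehat{Y}/\pi^n\widehat{Y}$ is an isomorphism because $(\pi)$ is finitely generated, the same identification the paper already used in the proof of Lemma \ref{comod2}.
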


\begin{proof}
Clearly $\M/\pi^n \M$ is an $\widehat{\A}/\pi^n\widehat{\A}\cong \A/\pi^n\A$-module and so an $\A$-module, and it is finitely generated if $\M$ was finitely generated. Moreover we know that $\M/\pi^n \M$ is a $\B$-comodule by Lemma \ref{comod2}. So we just need to show that the $\A$-action map is a comodule homomorphism, i.e that for all $n\geq 1$, the diagram
\[
\begin{tikzcd}
\A\otimes_R\M/\pi^n\M \arrow{r} \arrow{d} & \M/\pi^n\M \arrow{d}{\rho_n}\\
\A\otimes_R\M/\pi^n\M\otimes_R \B \arrow{r} & \M/\pi^n\M\otimes_R\B
\end{tikzcd}
\]
commutes. But this follows by tensoring with $R/\pi^n R$ the diagram
\[
\begin{tikzcd}
\tenR{\A}{\M} \arrow{r} \arrow{d} & \M \arrow{d}{\rho}\\
\tenR{\tenR{\A}{\M}}{\B} \arrow{r} & \tenR{\M}{\B}
\end{tikzcd}
\]
which commutes since $\M\in\widehat{\Ca}$.
\end{proof}

\subsection{The functor $M\mapsto \widehat{M}$}\label{completion1} We will now see how to construct objects in $\widehat{\Ca}$ from those in $\Ca$.

\begin{lem} The assignment $M\mapsto\widehat{M}$ defines a functor $\Ca\to \widehat{\Ca}$.
\end{lem}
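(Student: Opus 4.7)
The plan is to take the $R$-linear action and coaction maps on $M$ and $\pi$-adically complete them, then verify the required compatibilities by density. The crucial technical ingredient is the identification $\widehat{X\otimes_R Y}\cong \tenR{\widehat{X}}{Y}$ for any $R$-module $X$ and any $R$-module $Y$. This holds because both sides reduce modulo $\pi^n$ to $(X/\pi^nX)\otimes_R Y$ (using that $\widehat{X}/\pi^n\widehat{X}\cong X/\pi^nX$ and that tensor product commutes with quotients by ideals), so they have isomorphic $\pi$-adic completions. In particular, $\widehat{M\otimes_R \B}\cong \tenR{\widehat{M}}{\B}$, $\widehat{\A\otimes_R M}\cong \tenR{\widehat{\A}}{\widehat{M}}$, and similar for iterated tensor products.

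First, I will produce the $\widehat{\A}$-module structure. The $\A$-action map $a:\A\otimes_R M\to M$ is $R$-linear, hence $\pi$-adically continuous, so it induces $\widehat{a}:\widehat{\A\otimes_R M}\to \widehat{M}$. Via the identification above, this gives a map $\tenR{\widehat{\A}}{\widehat{M}}\to\widehat{M}$. The associativity and unitality diagrams commute after completion because they already commute at the level of $\A\otimes_R M$, which sits densely inside $\tenR{\widehat{\A}}{\widehat{M}}$.

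Next, I will produce the $\widehat{\B}$-coaction. The coaction $\rho_M:M\to M\otimes_R\B$ completes to $\widehat{\rho_M}:\widehat{M}\to \widehat{M\otimes_R \B}\cong \tenR{\widehat{M}}{\B}$. The two comodule axioms are then obtained by $\pi$-adically completing the corresponding identities for $M$; each such identity is a commuting square of $R$-linear maps, so its completion is a commuting square too. To see that the action map on $\widehat{M}$ is then a comodule homomorphism (with $\tenR{\widehat{\A}}{\widehat{M}}$ equipped with the tensor coaction coming from completing $\Delta_\A$ and $\rho_M$), I will observe that the corresponding square for $M$ in $\Ca$ commutes by hypothesis, and completion of this diagram gives exactly the required compatibility on $\widehat{M}$.

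Finally, for functoriality: any morphism $f:M\to N$ in $\Ca$ is $R$-linear, hence completes to an $R$-linear map $\widehat{f}:\widehat{M}\to \widehat{N}$. The fact that $\widehat{f}$ respects the $\widehat{\A}$-action and $\widehat{\B}$-coaction follows from completing the corresponding commuting squares for $f$. Composition and identities are preserved under $\pi$-adic completion, so this gives a functor. The only non-routine step in the whole argument is the tensor identification $\widehat{X\otimes_R Y}\cong \tenR{\widehat{X}}{Y}$; everything else is a mechanical density/continuity argument.
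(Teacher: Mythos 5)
Your proposal is correct and follows essentially the same approach as the paper: both arguments exploit that the required structures and compatibility diagrams hold modulo $\pi^n$ for every $n$ and therefore after completion, the only difference being that you phrase the argument in terms of completing maps and density while the paper phrases it in terms of inverse limits of the reduced comodule maps $\rho_n$. Your explicit isolation of the identification $\widehat{X\otimes_R Y}\cong\tenR{\widehat{X}}{Y}$ is a clean way to make precise a step the paper leaves implicit when it writes $\varprojlim(M/\pi^n M\otimes_R\B)\cong\tenR{M}{\B}$.
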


\begin{proof}
Suppose $M\in\Ca$ and let $\rho:M\to M\otimes_R \B$ denote the comodule map of $M$. Then clearly $\pi^n M$ is an integrable $U^{\text{res}}(\bo)$-submodule, hence so is the quotient $M/\pi^nM$, giving comodule maps $\rho_n: M/\pi^n M\to M/\pi^n M\otimes_R \B$ for every $n\geq 1$. So we can define
$$
\widehat{\rho}=\varprojlim \rho_n:\widehat{M}\to\varprojlim(M/\pi^nM\otimes_R \B)\cong\tenR{M}{\B},
$$
which is just the $\pi$-adic completion of $\rho$. This gives the structure of a $\widehat{\B}$-comodule since all the maps $\rho_n$ are comodule maps. We also have that $\widehat{M}$ is an $\widehat{\A}$-module and the action map is a comodule homomorphism since the $\A$-action map on $M/\pi^n M$ is a comodule homomorphism for every $n\geq 1$.

For the last part, given a morphism $f:M\to N$ in $\Ca$, we have that the induced map $\widehat{f}:\widehat{M}\to\widehat{N}$ is an $\widehat{\A}$-module map. Moreover, by functoriality of taking $\pi$-adic completion in $R$-modules, the diagram
\[
\begin{tikzcd}
\widehat{M} \arrow{r}{\widehat{f}} \arrow{d}{\widehat{\rho_M}} & \widehat{N} \arrow{d}{\widehat{\rho_N}}\\
\tenR{M}{\B} \arrow{r}{f\widehat{\otimes}1} & \tenR{N}{\B}
\end{tikzcd}
\]
commutes as required.
\end{proof}

\begin{prop} If $\M, \Nn\in \widehat{\Ca}$ then there is a canonical isomorphism
$$
\text{\emph{Hom}}_{\widehat{\Ca}}(\M,\Nn)\cong \varprojlim\text{\emph{Hom}}_{\Ca}(\M/\pi^n\M,\Nn/\pi^n\Nn).
$$
In particular we have $\Gamma(\Nn)\cong\varprojlim \Gamma(\Nn/\pi^n \Nn)$ and so $\Gamma(\widehat{M})\cong\varprojlim \Gamma(M/\pi^n M)$ for $M\in\Ca$.
\end{prop}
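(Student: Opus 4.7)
The plan is to mimic the proof of Corollary \ref{comod2}, which established the analogous statement for $\widehat{\B}$-comodule homomorphisms, and combine it with the $\widehat{\A}$-module compatibility.

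First, I would construct the map from left to right. Given $f \in \Hom_{\widehat{\Ca}}(\M,\Nn)$, the fact that $f$ is $\widehat{\A}$-linear and continuous (as a morphism of $\pi$-adically complete modules) means it descends to $\A/\pi^n\A$-linear maps $f_n \colon \M/\pi^n\M \to \Nn/\pi^n\Nn$ for each $n \geq 1$. These $f_n$ are $\B$-comodule homomorphisms: reducing the equation $(f\widehat{\otimes}1)\circ\rho_\M = \rho_\Nn\circ f$ modulo $\pi^n$ yields $(f_n\otimes 1)\circ (\rho_\M)_n = (\rho_\Nn)_n \circ f_n$, where $(\rho_\M)_n, (\rho_\Nn)_n$ are the $\B$-comodule structures on $\M/\pi^n\M$ and $\Nn/\pi^n\Nn$ provided by Lemma \ref{Ca}. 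By construction the family $(f_n)_n$ is compatible with the transition maps, so this defines an $R$-linear map $\Hom_{\widehat{\Ca}}(\M,\Nn) \to \varprojlim \Hom_{\Ca}(\M/\pi^n\M,\Nn/\pi^n\Nn)$.

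Next I would show injectivity: since $\Nn$ is $\pi$-adically complete and Hausdorff, the natural map $\Nn \to \varprojlim \Nn/\pi^n\Nn$ is an isomorphism, so $f$ is uniquely recovered as $\varprojlim f_n$. For surjectivity, given a compatible system $(f_n)$, the inverse limit $f := \varprojlim f_n$ is an $R$-linear map $\M \to \Nn$. It is $\widehat{\A}$-linear because the linearity relation $f_n(a\cdot m) = a\cdot f_n(m)$ (for $a \in \A/\pi^n\A$) passes to the inverse limit, using that $\widehat{\A} \cong \varprojlim \A/\pi^n\A$. To see that $f$ is a $\widehat{\B}$-comodule morphism, note that both $(f\widehat{\otimes} 1)\circ \rho_\M$ and $\rho_\Nn \circ f$ are maps $\M \to \tenR{\Nn}{\B} = \varprojlim (\Nn/\pi^n\Nn \otimes_R \B)$, and reducing modulo $\pi^n$ they both give $(f_n\otimes 1)\circ (\rho_\M)_n = (\rho_\Nn)_n\circ f_n$ by the comodule axiom for $f_n$; since these maps agree after every reduction, they agree in the inverse limit.

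Finally, the statement about global sections follows by specialising to $\M = \widehat{\A}$, using $\widehat{\A}/\pi^n\widehat{\A} \cong \A/\pi^n\A$ and the identifications $\Gamma(\Nn) = \Hom_{\widehat{\Ca}}(\widehat{\A},\Nn)$ and $\Gamma(\Nn/\pi^n\Nn) = \Hom_{\Ca}(\A/\pi^n\A, \Nn/\pi^n\Nn)$. For $M \in \Ca$, combining this with the identification $\widehat{M}/\pi^n\widehat{M} \cong M/\pi^nM$ from the construction in Lemma \ref{completion1} gives $\Gamma(\widehat{M}) \cong \varprojlim \Gamma(M/\pi^nM)$.

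I expect essentially no serious obstacle here: the only point requiring care is verifying that the inverse-limit map $f$ really lands in $\tenR{\Nn}{\B}$ and respects the comodule structure, which rests on the fact that $\pi$-adic completion commutes with finite tensor products of $R$-modules and with inverse limits, as already used throughout Section 5.
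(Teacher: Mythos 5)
Your proof is correct and follows essentially the same route as the paper's, which sets up the same map $f\mapsto (f_n)$, notes injectivity via $\Nn\cong\varprojlim\Nn/\pi^n\Nn$, and delegates surjectivity to ``the same argument as in Corollary \ref{comod2}'' (which you have spelled out). The one small point to make explicit: by definition $\Gamma(\Nn/\pi^n\Nn)=\Hom_{\Ca}(\A,\Nn/\pi^n\Nn)$, and the identification with $\Hom_{\Ca}(\A/\pi^n\A,\Nn/\pi^n\Nn)$ you use holds because $\Nn/\pi^n\Nn$ is $\pi^n$-torsion so any morphism from $\A$ factors through $\A/\pi^n\A$; the paper records this isomorphism explicitly.
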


\begin{proof}
Any $f:\M\to \Nn$ in $\widehat{\Ca}$ induces maps $f_n:\M/\pi^n\M\to\Nn/\pi^n\Nn$ in $\Ca$ for all $n$, and these determine $f$ uniquely by passing to the inverse limit. Hence there is a canonical injection
$$
\Hom_{\widehat{\Ca}}(\M,\Nn)\to \varprojlim\Hom_{\Ca}(\M/\pi^n\M,\Nn/\pi^n\Nn)
$$
which is surjective by the same argument as in Corollary \ref{comod2}. Hence by putting $\M=\widehat{\A}$ and using the fact that
$$
\Hom_{\Ca}(\A/\pi^n\A,\Nn/\pi^n\Nn)\cong \Hom_{\Ca}(\A,\Nn/\pi^n\Nn)=\Gamma(\Nn/\pi^n\Nn),
$$
we get the result on global sections.
\end{proof}

\begin{remark} This result is something one would expect given what the sections of an inverse limit of sheaves on a topological space are, see \cite[Proposition II.9.2]{Hartshorne}.
\end{remark}

\subsection{Weyl group localisations}\label{ore2} Recall the Ore localisations $\Aw$ of $\A$. We had a category $\Ca^w$ of $B$-equivariant $\Aw$-modules. We may analogously define categories $\widehat{\Ca^w}$ as follows. The objects are $\pi$-adically complete $\widehat{\Aw}$-modules and $\widehat{\B}$-comodules $\M$ such that the action map $\tenR{\widehat{\Aw}}{\M}\to \M$ is a comodule homomorphism with $\tenR{\widehat{\Aw}}{\M}$ being given the tensor comodule structure. Morphisms in $\widehat{\Ca^w}$ are $R$-linear maps preserving both structures. The results from sections \ref{Ca}-\ref{completion1} apply to $\widehat{\Ca^w}$ as well with identical proofs.

Now, the localisation map $\varphi:\A\to\Aw$ gives rise to a map $\widehat{\varphi}:\widehat{\A}\to\widehat{\Aw}$.

\begin{lem} Let $\M$ be an $\widehat{\A}$-module and let $n\geq 1$. We have: \begin{enumerate}
\item $\widehat{\Aw}$ is a Noetherian $R$-algebra, and is flat as an $\widehat{\A}$-module; and
\item there is an isomorphism
$$
\left(\widehat{\Aw}\otimes_{\widehat{\A}}\M\right)/\pi^n\left(\widehat{\Aw}\otimes_{\widehat{\A}}\M\right)\cong \Aw\otimes_{\A}(\M/\pi^n\M)
$$
of $\Aw$-modules.
\end{enumerate}
\end{lem}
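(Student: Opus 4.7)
The plan is to handle the three assertions—Noetherianity, part (ii), and flatness—in that order, reducing each to statements about reduction modulo $\pi^n$ where things become tractable via classical commutative algebra. For Noetherianity, I would exploit the fact that $q\equiv 1\pmod{\pi}$ to observe that $\A/\pi\A\cong \Of(G_k)$ is commutative Noetherian. Since the image of the Ore set $S_w$ in $\A/\pi\A$ is an ordinary multiplicatively closed subset, $\Aw/\pi\Aw\cong S_w^{-1}(\A/\pi\A)\cong S_w^{-1}\Of(G_k)$ is a localization of a commutative Noetherian ring, hence Noetherian; Proposition \ref{Noetherian}(i) then gives that $\widehat{\Aw}$ is Noetherian.

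For part (ii), the key observation is that $\A$ is $R$-free, so $\Aw$ is $\pi$-torsion free, which forces the canonical map to be an isomorphism $\Aw/\pi^n\Aw \xrightarrow{\sim}\widehat{\Aw}/\pi^n\widehat{\Aw}$ (and similarly for $\widehat{\A}$). I would then chain the standard base-change identities
\begin{align*}
(\widehat{\Aw}\otimes_{\widehat{\A}}\M)/\pi^n(\widehat{\Aw}\otimes_{\widehat{\A}}\M)
&\cong (\widehat{\Aw}/\pi^n\widehat{\Aw})\otimes_{\widehat{\A}/\pi^n\widehat{\A}}(\M/\pi^n\M)\\
&\cong (\Aw/\pi^n\Aw)\otimes_{\A/\pi^n\A}(\M/\pi^n\M)\\
&\cong \Aw\otimes_\A (\M/\pi^n\M),
\end{align*}
the last step using that $\Aw$ is flat over $\A$ as a left Ore localization, so that $\Aw\otimes_\A(\A/\pi^n\A)\cong\Aw/\pi^n\Aw$.

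The main technical step, and the one I expect to be the principal obstacle, is the flatness of $\widehat{\Aw}$ over $\widehat{\A}$. The strategy is to invoke the (noncommutative) flatness criterion for $\pi$-adically complete Noetherian rings: if $A$ is Noetherian with a central nonzerodivisor $\pi$ such that $A$ is $\pi$-adically complete, and $M$ is a left $A$-module which is $\pi$-adically complete, ideally separated for $\pi A$, $\pi$-torsion free, and with $M/\pi M$ flat over $A/\pi A$, then $M$ is flat over $A$. Applying this with $A=\widehat{\A}$ and $M=\widehat{\Aw}$, I would verify each hypothesis: $\widehat{\Aw}$ is $\pi$-adically complete by construction; it is $\pi$-torsion free because $\Aw$ is $\pi$-torsion free and the inverse limit preserves this (any $x=(x_n)$ with $\pi x=0$ forces $x_n\in\pi^{n-1}\Aw/\pi^n\Aw$, hence $x=0$); $\widehat{\Aw}/\pi\widehat{\Aw}\cong S_w^{-1}\Of(G_k)$ is flat over $\widehat{\A}/\pi\widehat{\A}\cong\Of(G_k)$ as a classical localization; and for any finitely generated left ideal $\mathfrak{a}\subseteq \widehat{\A}$, $\mathfrak{a}\otimes_{\widehat{\A}}\widehat{\Aw}$ is a finitely generated left module over the Noetherian ring $\widehat{\Aw}$ (in which $\pi$ lies in the Jacobson radical by completeness), hence $\pi$-adically separated by Krull's intersection theorem. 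An alternative approach would be to try to show directly that $S_w$ remains a left and right Ore set in $\widehat{\A}$, giving flatness immediately, but verifying the Ore conditions for general $\pi$-adic limits appears harder than the completeness-based flatness criterion.
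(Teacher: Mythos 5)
Your treatment of Noetherianity and of part (ii) is essentially the same as the paper's: reduce modulo $\pi$ to a localisation of the commutative Noetherian ring $\Of(G_k)$ and apply Proposition~\ref{Noetherian}(i), then chain the standard base-change isomorphisms (the paper's chain runs through $\widehat{\Aw}/\pi^n\widehat{\Aw}\otimes_{\widehat{\A}}\M$ and uses $\Aw\otimes_\A\A/\pi^n\A\cong\Aw/\pi^n\Aw$, which is the same content as yours, just unpacked differently). Where you genuinely diverge is the flatness of $\widehat{\Aw}$ over $\widehat{\A}$. The paper cites \cite[3.2.3(vii)]{Berthelot}: flatness of the completion follows once every reduction $\varphi_a:\A/\pi^a\A\to\Aw/\pi^a\Aw$ is flat, and each of these is an Ore localisation, hence flat by \cite[Proposition 2.1.16]{noncomalg} — a two-line verification requiring no completeness or separatedness input beyond Noetherianity. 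You instead invoke a local criterion of flatness for $\pi$-adically complete rings, checking torsion-freeness, completeness, ideal separatedness (via Krull) and flatness mod $\pi$. This works, but it is heavier: it requires a noncommutative version of Matsumura's local criterion plus the Krull intersection argument, and it buys nothing extra here since the Berthelot-style criterion is already available in the Noetherian $\pi$-adic setting. One small leap worth flagging in your torsion-freeness verification: showing $x_n\in\pi^{n-1}\Aw/\pi^n\Aw$ for all $n$ does not by itself give $x=0$; you additionally need that the transition map sends $\pi^n\Aw/\pi^{n+1}\Aw$ to $0$ in $\Aw/\pi^n\Aw$, so that $x_n$, being the image of $x_{n+1}\in\pi^n\Aw/\pi^{n+1}\Aw$, vanishes.
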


\begin{proof} (i) That $\widehat{\Aw}$ is Noetherian will follow from Proposition \ref{Noetherian}(i) if we show that $\Aw/\pi\Aw$ is Noetherian. But it is the localisation of the commutative Noetherian ring $\A/\pi\A$ at the image of $S_w$ in the quotient, hence it is Noetherian. Then by \cite[3.2.3(vii)]{Berthelot} the flatness will follow if we show that the maps
$$
\varphi_a:\A/\pi^a\A\to \Aw/\pi^a\Aw
$$
are all flat for $a\geq 1$. But we have more generally that $\Aw/\pi^a\Aw$ is the localisation of $\A/\pi^a\A$ at the image of $S_w$ in the quotient, and by \cite[Proposition 2.1.16]{noncomalg} localisation is flat. So the maps $\varphi_a$ are flat.

(ii) We have
\begin{align*}
\left(\widehat{\Aw}\otimes_{\widehat{\A}}\M\right)/\pi^n\left(\widehat{\Aw}\otimes_{\widehat{\A}}\M\right)&\cong \left(\widehat{\Aw}/\pi^n \widehat{\Aw}\right)\otimes_{\widehat{\Aw}} \left(\widehat{\Aw}\otimes_{\widehat{\A}}\M\right)\\
&\cong \left(\widehat{\Aw}/\pi^n \widehat{\Aw}\right)\otimes_{\widehat{\A}}\M\\
&\cong \left(\Aw/\pi^n \Aw\right)\otimes_{\widehat{\A}}\M\\
&\cong \left(\Aw\otimes_{\A} \A/\pi^n\A\right)\otimes_{\widehat{\A}}\M\\
&\cong \Aw\otimes_{\A} \left(\widehat{\A}/\pi^n\widehat{\A}\otimes_{\widehat{\A}}\M\right)\\
&\cong \Aw\otimes_{\A}(\M/\pi^n\M)
\end{align*}
as required.
\end{proof}

\begin{remark} We believe $\widehat{\Aw}$ to be a microlocalisation of $\widehat{\A}$, but we haven't worked out the details.
\end{remark}

\begin{definition} Given an $\widehat{\A}$-module $\M$ and $w\in W$, we define the \emph{$w$-localisation} of $\M$ to be the $\pi$-adic completion $\widehat{\Aw}\widehat{\otimes}_{\widehat{\A}}\M$ of $\widehat{\Aw}\otimes_{\widehat{\A}}\M$. We sometimes denote it as $S_w^{-1}\M$ by abuse of notation.
\end{definition}

\begin{prop} The functor $\widehat{f_w}^*:\M\mapsto \widehat{\Aw}\widehat{\otimes}_{\widehat{\A}}\M$ sends $\widehat{\Ca}$ to $\widehat{\Ca^w}$.
\end{prop}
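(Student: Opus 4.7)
The plan is to build the required $\widehat{\B}$-coaction on $\widehat{f_w}^*(\M)$ by assembling it out of coactions on the quotients modulo $\pi^n$ and then taking an inverse limit, using the identification from Lemma \ref{ore2}(ii). Concretely, for each $n \geq 1$, Lemma \ref{Ca} gives $\M/\pi^n\M \in \Ca$, so the localisation functor $f_w^*:\Ca \to \Ca^w$ constructed in section \ref{ore} produces an object $\Aw\otimes_\A(\M/\pi^n\M) \in \Ca^w$ carrying a $\B$-coaction $\sigma_n$. By Lemma \ref{ore2}(ii) there is a natural isomorphism
$$\widehat{f_w}^*(\M)/\pi^n\widehat{f_w}^*(\M) \;\cong\; \Aw\otimes_\A(\M/\pi^n\M)$$
of $\Aw$-modules, through which we transport $\sigma_n$ to a $\B$-coaction on $\widehat{f_w}^*(\M)/\pi^n\widehat{f_w}^*(\M)$.

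The first key step is to verify that the $\sigma_n$ form an inverse system. This should be immediate from the construction: the coaction $\sigma_n$ is induced by localising the coaction of $\M/\pi^n\M$ at $S_w$, and reduction modulo $\pi^{n-1}$ of the coaction $\rho_\M : \M \to \tenR{\M}{\B}$ yields precisely the coaction $\rho_{n-1}$. Since localisation and reduction mod $\pi$ commute (as recorded in the proof of Lemma \ref{ore2}(ii)), the compatibility is automatic. Taking the inverse limit then gives a map
$$\widehat{\rho} \;:\; \widehat{f_w}^*(\M) \;\longrightarrow\; \varprojlim_n\bigl(\Aw\otimes_\A(\M/\pi^n\M)\otimes_R \B\bigr) \;\cong\; \tenR{\widehat{f_w}^*(\M)}{\B},$$
and the comodule axioms hold on the nose because they hold modulo $\pi^n$ for every $n$.

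Next, to see that $\widehat{f_w}^*(\M)$ lies in $\widehat{\Ca^w}$, I need the $\widehat{\Aw}$-action to be a $\widehat{\B}$-comodule homomorphism, i.e.\ the square
$$
\begin{tikzcd}
\tenR{\widehat{\Aw}}{\widehat{f_w}^*(\M)} \arrow{r}\arrow{d} & \widehat{f_w}^*(\M)\arrow{d}{\widehat{\rho}}\\
\tenR{\tenR{\widehat{\Aw}}{\widehat{f_w}^*(\M)}}{\B}\arrow{r} & \tenR{\widehat{f_w}^*(\M)}{\B}
\end{tikzcd}
$$
must commute. Since everything in sight is $\pi$-adically complete, it suffices to check commutativity modulo $\pi^n$ for each $n$, where it reduces to the statement that $\Aw\otimes_\A(\M/\pi^n\M)$ is an object of $\Ca^w$. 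That, however, is precisely the content of the localisation functor construction of section \ref{ore}.

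Finally, I need functoriality: a morphism $f:\M\to\Nn$ in $\widehat{\Ca}$ induces $\widehat{\Aw}\widehat{\otimes}_{\widehat{\A}}f$, and the point is that this map is a morphism in $\widehat{\Ca^w}$. Reducing mod $\pi^n$ gives the map $\Aw\otimes_\A(f \bmod \pi^n)$, which is a morphism in $\Ca^w$ by functoriality of $f_w^*$, and hence the original map is compatible with the coactions in the limit. I do not anticipate a serious obstacle: the only subtlety is checking that the inverse system of coactions is compatible under reduction, and this follows formally from the fact that localisation commutes with base change.
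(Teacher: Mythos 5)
Your proof is correct and follows essentially the same route as the paper: identify the quotients modulo $\pi^n$ with $\Aw\otimes_\A(\M/\pi^n\M)$ via Lemma \ref{ore2}(ii), observe these lie in $\Ca^w$, check that the localised coactions form an inverse system, take the inverse limit to obtain the $\widehat{\B}$-coaction, and verify the compatibility condition and functoriality by reducing modulo $\pi^n$.
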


\begin{proof} By Lemma \ref{ore2}(ii), for any $\M\in\widehat{\Ca}$ and any $n\geq 1$ we have that
$$
\left(\widehat{\Aw}\otimes_{\widehat{\A}}\M\right)/\pi^n\left(\widehat{\Aw}\otimes_{\widehat{\A}}\M\right)\cong S_w^{-1}(\M/\pi^n\M).
$$
Now for each $n\geq 1$, $\M/\pi^n\M\in\Ca$ by Lemma \ref{Ca}, and so its localisation is in $\Ca^w$. The comodule maps $\rho_n$ on $\M/\pi^n\M$ are compatible with the reduction maps and hence so are their localisations $S_w^{-1}\rho_n$. Thus we may define a comodule map
$$
S_w^{-1}\rho=\varprojlim S_w^{-1}\rho_n: S_w^{-1}\M\to S_w^{-1}\M\widehat{\otimes}_R \B
$$
and the $\widehat{\Aw}$-action is a comodule homomorphism since it is modulo $\pi^n$ for all $n$.

Finally $\widehat{f_w}^*$ really is a functor since it sends a given morphism $\varphi:\M\to\Nn$ to $\varprojlim S_w^{-1}\varphi_n$ where $\varphi_n:\M/\pi^n\M\to\Nn/\pi^n\Nn$.
\end{proof}

Recall the notation from Section \ref{Cech}. For each $i\in J$ we have a forgetful functor $(\widehat{f_{w_i}})_*:\widehat{\Ca^{w_i}}\to\widehat{\Ca}$ which is a right adjoint to $\widehat{f_{w_i}}^*$: this is easily seen from Proposition \ref{completion1} since their restrictions modulo $\pi^n$ are adjoints for all $n$. Let $\sigma_i:=(\widehat{f_{w_i}})_*\circ \widehat{f_{w_i}}^*$ and, for any $\mathbf{i}=(i_1,\ldots, i_n)\in J^n$, set $\sigma_{\mathbf{i}}=\sigma_{i_1}\circ\cdots\circ \sigma_{i_n}$. We may then completely analogously as in \ref{Cech} write down a complex
$$
C^{\text{aug}}: \id_{\Ca}\to \bigoplus_{i\in J} \sigma_i\to \bigoplus_{\mathbf{i}\in J^2} \sigma_{\mathbf{i}}\to \bigoplus_{\mathbf{i}\in J^3} \sigma_{\mathbf{i}}\to\cdots
$$
which we may post-compose with the functor of taking global sections to obtain a complex $\check{C}^{\text{aug}}$, which we still call the \emph{augmented standard complex} of $\Gamma$. We often consider those complexes without the left hand most term, which we denote by $C$ and $\check{C}$. The complex $\check{C}$ is called the \emph{standard complex}.

\begin{remark} Note that by Lemma \ref{ore2}(ii) and Proposition \ref{completion1}, for any $\M\in\widehat{\Ca}$, we have isomorphisms
$$
C(\M)^{\text{aug}}\cong \varprojlim C^{\text{aug}}(\M/\pi^n\M)\quad\text{and}\quad \check{C}(\M)^{\text{aug}}\cong \varprojlim \check{C}^{\text{aug}}(\M/\pi^n\M),
$$
and the same is true for the non-augmented complexes.
\end{remark}

\begin{definition} Given $\M\in\widehat{\Ca}$ and $i\geq 0$, we define the \emph{\v{C}ech cohomology} $\check{H}^i(\M)$ to be the $i$-th cohomology group of $\check{C}(\M)$.
\end{definition}

\subsection{Quasi-abelian categories}\label{qacs}

In the next three subsections, we recall some of Schneiders' theory of quasi-abelian categories, following \cite{qacs}, and apply it to the category of Banach comodules over a Banach coalgebra.

We first introduce some notation. Let $\ban$ denote the category of Banach spaces over $L$. For an $L$-Banach algebra $A$, we denote by $\Mod{A}$ the category of Banach $A$-modules, and recall that for an $L$-Banach coalgebra $C$, we write $\Comod{C}$ for the category of Banach $C$-comodules.

\begin{definition}
Let $\C$ be an additive category with kernels and cokernels.
\begin{enumerate}
\item We say that a morphism $f:E\to F$ in $\C$ is \emph{strict} if the induced morphism
$$
\coim(f)\to \im(f)
$$
is an isomorphism, where $\im(f)$ is the kernel of the morphism $F\to \coker(f)$ and $\coim(f)$ is the cokernel of the morphism $\ker(f)\to E$.
\item We say that $\C$ is \emph{quasi-abelian} if it satisfies the following:
\begin{itemize}
\item In a cartesian square
\[
\begin{tikzcd}
E' \arrow{r}{f'} \arrow{d} & F' \arrow{d}\\
E \arrow{r}{f} & F
\end{tikzcd}
\]
if $f$ is a strict epimorphism, then so is $f'$.
\item In a co-cartesian square
\[
\begin{tikzcd}
E \arrow{r}{f} \arrow{d} & F \arrow{d}\\
E' \arrow{r}{f'} & F'
\end{tikzcd}
\]
if $f$ is a strict monomorphism, then so is $f'$.
\end{itemize}
\end{enumerate}
\end{definition}

\begin{example} Of course, abelian categories trivially satisfy the above definition since all morphisms are then strict. Moreover, $\ban$ and $\Mod{A}$ are quasi-abelian by \cite[Appendix A, Lemma A.30]{KreBB} and \cite[Prop 1.5.1]{qacs} respectively. Furthermore, the forgetful functor $\Mod{A}\to \ban$ preserves limits and colimits for any $L$-Banach algebra $A$. Hence a morphism is strict in $\Mod{A}$ if and only if it is strict in $\ban$.
\end{example}

\begin{remark}
Note that in $\ban$ (and in $\Mod{A}$ and $\Comod{C}$), while the kernel of a morphism $f:X\to Y$ is just the usual algebraic kernel (which is automatically closed), the cokernel of the same map is the canonical projection $Y\to Y/\overline{\im(f)}$. Thus the categorical image of $f$ in $\ban$ (i.e the kernel of the cokernel) is in fact the closure of the set theoretical image $\im(f)$.
\end{remark}

\begin{lem} Let $C$ be an $L$-Banach coalgebra. Then $\Comod{C}$ is quasi-abelian.
\end{lem}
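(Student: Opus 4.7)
The plan is to reduce the verification of the quasi-abelian axioms to those of $\ban$ via the forgetful functor $U : \Comod{C} \to \ban$. First I will establish that $\Comod{C}$ has kernels and cokernels which are preserved by $U$. For the cokernel of a morphism $f : \M \to \Nn$, I take $\Nn/\overline{\im(f)}$ with the induced comodule structure: that $\overline{\im(f)}$ is a subcomodule of $\Nn$ follows from the Lemma of Section \ref{funal}, which identifies $\ten{\overline{\im(f)}}{C}$ with a closed subspace of $\ten{\Nn}{C}$, combined with the continuity of $\rho_\Nn$ applied to the inclusion $\im(f)\subseteq\overline{\im(f)}$. For the kernel, I take the algebraic kernel $K=\{m\in\M : f(m)=0\}$ with the subspace topology.

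The main step is to show that $K$ inherits a comodule structure, i.e.\ $\rho_\M(K)\subseteq \ten{K}{C}$ viewed as a closed subspace of $\ten{\M}{C}$ via the strict embedding from the Lemma of Section \ref{funal}. For $m\in K$, one has $(f\widehat{\otimes}\id_C)(\rho_\M(m))=\rho_\Nn(f(m))=0$, and the plan is to leverage the coassociativity and counity axioms of $\rho_\M$ together with the strict embedding of $\ten{K}{C}$ into $\ten{\M}{C}$ to force $\rho_\M(m)$ into this subspace. I expect this to be the principal technical obstacle, analogous to the corresponding step in the more general categorical treatment of \cite{Craig1}.

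Once kernels and cokernels in $\Comod{C}$ are so described and preserved by $U$, the forgetful functor also preserves (and reflects) images, coimages and hence strict morphisms; here one uses that $U$ reflects isomorphisms, since the inverse of a bijective comodule map is automatically a comodule map by a direct computation with $\rho_\M$ and $\rho_\Nn$. Moreover, pullbacks and pushouts in $\Comod{C}$ are constructed from biproducts together with kernels or cokernels and are therefore likewise preserved by $U$. To verify the first quasi-abelian axiom, suppose $f : E \to F$ is a strict epimorphism in $\Comod{C}$ fitting into a cartesian square with base change $f' : E' \to F'$. Applying $U$ produces a cartesian square in $\ban$ whose bottom arrow is a strict epimorphism; since $\ban$ is quasi-abelian, $U(f')$ is strict epi, and since $U$ reflects strict morphisms, so is $f'$. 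The second axiom, concerning pushouts of strict monomorphisms, follows by the dual argument using the preservation of cokernels by $U$.
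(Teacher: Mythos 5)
Your overall strategy — reduce the quasi-abelian axioms to $\ban$ by checking that the forgetful functor $U$ creates kernels, cokernels, pullbacks and pushouts, and then use that strictness is detected in $\ban$ — is exactly the strategy the paper takes; the paper simply outsources the comodule-theoretic inputs to references (Diarra's Lemma II.1.1 for the subcomodule claims, and the proof of \cite[Appendix A, Lemma A.30]{KreBB} for the preservation of (co)cartesian squares). Your treatment of the cokernel is correct as stated, and the closing reduction argument via $U$ is fine.

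The gap is in the kernel step, and it is more serious than your phrasing suggests. From $(f\widehat{\otimes}\id_C)(\rho_\M(m))=0$ you want to conclude $\rho_\M(m)\in\ten{K}{C}$, i.e. that $\ker(f\widehat{\otimes}\id_C)$ coincides with $\ten{K}{C}$ inside $\ten{\M}{C}$. Factor $f$ as $\M\twoheadrightarrow\M/K\hookrightarrow\Nn$; then $\ker(f\widehat{\otimes}\id_C)=\ten{K}{C}$ precisely when $\ten{(\M/K)}{C}\to\ten{\Nn}{C}$ is injective. When $f$ is not strict, $\M/K\to\Nn$ is a non-closed continuous injection, so the Lemma of Section \ref{funal} does not apply, and coassociativity and counity are not the right tools: they give no control over the kernel of $f\widehat{\otimes}\id_C$ for a non-strict $f$. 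What \emph{does} close the gap is a flatness statement for $\widehat{\otimes}_L C$ on arbitrary bounded injections, which over the discretely valued field $L$ follows from the fact that every $L$-Banach space $C$ is isomorphic to $c_0(J)$ for some index set $J$ (so $\ten{X}{C}\cong c_0(J,X)$ and $g\widehat{\otimes}\id_C$ acts componentwise, manifestly preserving injectivity, and exactness of the strict sequence $0\to K\to\M\to\M/K\to 0$ after $\widehat{\otimes}_L C$ follows similarly). You should either supply this argument explicitly or cite \cite[Lemma II.1.1]{Diarra} as the paper does; as written, the "plan" to leverage the comodule axioms does not constitute a proof and points in the wrong direction.
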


\begin{proof} This category is clearly additive and, moreover, it follows from \cite[Lemma II.1.1]{Diarra} that kernel and images of homomorphisms of right Banach comodules are subcomodules (where by image we mean the closure of the set-theoretic image). Hence $\Comod{C}$ has kernels and cokernels. Note that by the proof of \cite[Appendix A, Lemma A.30]{KreBB}, the forgetful functor $\Comod{C}\to\ban$ sends (co)cartesian squares to (co)cartesian squares. Thus, since a morphism in $\Comod{C}$ is strict if and only if it is strict in $\ban$, it follows that $\Comod{C}$ is quasi-abelian.
\end{proof}

\subsection{Injective objects}\label{injectives} We now explain how to do homological algebra in quasi-abelian categories.

\begin{definition}
Let $\C$ and $\E$ be quasi-abelian categories.
\begin{enumerate}
\item A null sequence $E\overset{e}{\rightarrow} E'\overset{e'}{\rightarrow} E''$ in $\C$ is called \emph{exact} if the canonical map $\im(e)\to\ker(e')$ is an isomorphism. If furthermore $e$ (resp. $e'$) is strict then we say that this complex is \emph{strictly exact} (resp. \emph{strictly coexact}). A complex $E_1\to\cdots\to E_n$ is called exact, resp. strictly (co)exact if each subsequence $E_{i-1}\to E_i\to E_{i+1}$ is exact, resp. strictly (co)exact.
\item An additive functor $F:\C\to\E$ is called \emph{left exact} if it sends every strictly exact sequence
$$
0\to E\to E'\to E''\to 0
$$
in $\C$ to a strictly exact sequence
$$
0\to F(E)\to F(E')\to F(E'')
$$
in $\E$. In other words $F$ is left exact if it preserves kernels of strict morphisms.

We say that $F$ is \emph{strongly left exact} if it sends every strictly exact sequence
$$
0\to E\to E'\to E''
$$
in $\C$ to a strictly exact sequence
$$
0\to F(E)\to F(E')\to F(E'')
$$
in $\E$. In other words $F$ is strongly left exact if it preserves kernels of arbitrary morphisms.
\item Similarly, we say that $F$ is \emph{right exact} if it sends every strictly exact sequence
$$
0\to E\to E'\to E''\to 0
$$
in $\C$ to a strictly coexact sequence
$$
F(E)\to F(E')\to F(E'')\to 0
$$
in $\E$. There is an analogous notion of strongly right exact functors.
\item We say that $F$ is \emph{exact} if it is both left exact and right exact, i.e it sends every strictly exact sequence
$$
0\to E\to E'\to E''\to 0
$$
in $\C$ to a strictly exact sequence
$$
0\to F(E)\to F(E')\to F(E'')\to 0
$$
in $\E$.
\item An object $I$ in $\C$ is called \emph{injective} if the functor $E\mapsto \Hom(E,I)$ is exact, i.e for any strict monomorphism $E\to F$, the induced map $\Hom(F,I)\to\Hom(E,I)$ is surjective. Dually an object $P$ is called \emph{projective} if the functor $E\mapsto \Hom(P,E)$ is exact, i.e for any strict epimorphism $E\to F$, the induced map $\Hom(P,E)\to\Hom(P,F)$ is surjective.
\item We say that $\C$ has \emph{enough injectives} if for any object $E$ in $\C$, there is a
strict monomorphism $E\to I$ where $I$ is injective. Dually we say that $\C$ has \emph{enough projectives} if for every $E$ there is a strict epimorphism $P\to E$ from a projective object $P$.
\end{enumerate}
\end{definition}

\begin{example} In $\ban$, a map $f:X\to Y$ induces a continuous injection 
$$
X/\ker(f)\to \im(f)\hookrightarrow \overline{\im(f)},
$$
and $f$ is strict if and only if this injection is in fact an isomorphism of Banach spaces. This is equivalent to $\im(f)$ being closed by the Banach Open Mapping Theorem \cite[Corollary 8.7]{NFA}.

Now, a sequence
$$
0\to X\overset{f}{\to} Y\overset{g}{\to} Z\to 0 
$$
in $\ban$ is exact if and only if the following conditions are satisfied:
\begin{itemize}
\item $f$ is injective;
\item $\ker(g)=\overline{\im(f)}$; and
\item $g$ has dense image, i.e $\overline{\im(g)}=Z$.
\end{itemize}
The sequence is strict exact when $f$ and $g$ are furthermore strict. By the above criterion for strictness, we see that these three conditions then become that $f$ is injective, $g$ is surjective and $\ker(g)=\im(f)$, i.e the sequence is exact as a sequence of $L$-vector space. Hence we see that a sequence
$$
0\to X\to Y\to Z\to 0
$$
in $\ban$ is strict exact if and only if it is exact as a sequence of $L$-vector space. We will simply say in such a setting that the sequence is algebraically exact. The same will hold in $\Mod{A}$ and $\Comod{C}$ for an $L$-Banach algebra $A$ and an $L$-Banach coalgebra $C$ respectively.
\end{example}

Suppose that $C$ is an $L$-Banach coalgebra. Then there is an adjunction $(\phi^*,\phi_*)$ between $\Comod{C}$ and $\ban$ just like in \ref{recap1}. Namely $\phi^*$ is the forgetful functor while $\phi_*:\M\to \ten{\M}{C}$ with coaction $\id_{\M}\widehat{\otimes}\Delta$. The bijection giving rise to this sends a map $f:\M\to \ten{\Nn}{C}$ in $\Comod{C}$ to $(\id_{\Nn}\widehat{\otimes} \varepsilon)\circ f:\M\to \Nn$, whose inverse sends a map $g:\M\to \Nn$ in $\ban$ to $(g\widehat{\otimes}\id_C)\circ\rho_{\M}:\M\to \ten{\Nn}{C}$. Having established this we have: 

\begin{prop} Let $A$ be an $L$-Banach algebra and $C$ be an $L$-Banach coalgebra. Then \emph{$\Mod{A}$} and \emph{$\Comod{C}$} have enough injectives.
\end{prop}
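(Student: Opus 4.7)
The strategy is to reduce both assertions to the fact that $\ban$ itself has enough injectives, and then to transfer injectives along the two forgetful functors by means of adjoint functors that preserve them. In each case the relevant forgetful functor $\phi^*$ into $\ban$ has a right adjoint $\phi_*$, so that $\phi_*$ preserves injective objects once $\phi^*$ is shown to be exact, and the unit of adjunction $\M \to \phi_*\phi^*(\M)$ will turn out to be a strict monomorphism. Combined with a chosen embedding of $\phi^*(\M)$ into an injective object of $\ban$, this yields a strict monomorphism from $\M$ into an injective.

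First I would establish that $\ban$ has enough injectives. Since $L$ is a complete discretely valued field, it is spherically complete, so the non-archimedean Hahn--Banach theorem \cite[Cor.~9.4]{NFA} applies. For any $V \in \ban$, letting $X$ be the unit ball of the continuous dual $V^* = \mathcal{B}(V, L)$, the evaluation map $V \to \ell^\infty(X)$, $v \mapsto (f(v))_{f \in X}$, is an isometric (hence strict) monomorphism by Hahn--Banach. A second application of Hahn--Banach shows that $\ell^\infty(X)$ is injective: given a strict monomorphism $\M \hookrightarrow \Nn$ in $\ban$ and a bounded linear map $g \colon \M \to \ell^\infty(X)$, each component $g_x \colon \M \to L$ extends to $\Nn$ with the same norm, and the resulting $(\widetilde{g_x})_{x \in X}$ defines an extension $\Nn \to \ell^\infty(X)$ of $g$.

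For $\Mod{A}$, consider the forgetful functor $\phi^* \colon \Mod{A} \to \ban$; it admits a right adjoint $\phi_*(V) := \mathcal{B}(A, V)$ with $A$-action $(a \cdot h)(b) := h(ba)$, via the natural bijection sending $f \colon \M \to \mathcal{B}(A, V)$ to $(\varepsilon_1 \circ f) \colon \M \to V$, where $\varepsilon_1$ denotes evaluation at $1$. Strict exactness in $\Mod{A}$ is detected in $\ban$, so $\phi^*$ is exact and $\phi_*$ preserves injectives. The unit $\eta_\M \colon \M \to \mathcal{B}(A, \phi^*\M)$, $m \mapsto (a \mapsto am)$, is an isometric embedding since $\|\eta_\M(m)\| \geq \|\eta_\M(m)(1)\| = \|m\|$ while the reverse inequality follows from the standing assumption that the action map on $\M$ has norm at most $1$. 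Given $\M \in \Mod{A}$, pick a strict mono $j \colon \phi^*\M \hookrightarrow I$ into an injective $I$ in $\ban$ as in the first paragraph; then $\mathcal{B}(A, j) \circ \eta_\M \colon \M \to \mathcal{B}(A, I)$ is a strict monomorphism into an injective $A$-module. The analogous construction for $\Comod{C}$ uses the adjunction $(\phi^*, \phi_*)$ discussed in the excerpt, with $\phi_*(V) = \ten{V}{C}$; again $\phi^*$ is exact so $\phi_*$ preserves injectives, and the unit $\rho_\M \colon \M \to \ten{\M}{C}$ is split by $\id_\M \widehat{\otimes} \varepsilon$, hence strict by Lemma \ref{equiv}. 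Post-composing with $\phi_*$ applied to an embedding of $\phi^*\M$ into an injective of $\ban$ gives the required strict monomorphism. The only genuine input is enough injectives in $\ban$; everything else is a formal adjoint-functor argument.
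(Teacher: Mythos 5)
Your argument is correct and follows the same overall strategy as the paper: transfer injectives from $\ban$ along the right adjoint $\phi_*$ of the forgetful functor, using that the adjunction unit is a strict monomorphism. The paper cites \cite[Lemmas 4.25, 4.26 and A.42]{KreBB} for the $\Mod{A}$ case and for enough injectives in $\ban$, whereas you unpack those citations explicitly --- Hahn--Banach over the spherically complete field $L$ to build injective Banach spaces $\ell^\infty(X)$, and the cofree module $\mathcal{B}(A,-)$ with the isometric unit $m\mapsto(a\mapsto am)$ --- but the underlying mechanism is identical, and your treatment of $\Comod{C}$ via the coaction $\rho_\M$ split by $\id_\M\widehat{\otimes}\varepsilon$ matches the paper's verbatim.
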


\begin{proof}
The arguments in \cite[Lemma 4.25]{KreBB} for $\Mod{A}$ generalise straightforwardly to the noncommutative case. Note that we use the fact that $\ban$ has enough injectives by \cite[Appendix A, Lemma A.42]{KreBB}. For $\Comod{C}$, note that since the forgetful functor $\phi^*$ preserves strict monomorphisms, and since $\phi_*$is its right adjoint, it follows from \cite[Lemma 4.26]{KreBB} that $\phi_*$ preserves injective objects. Now let $\M$ be an object of $\Comod{C}$. Since $\ban$ has enough injectives there exists a strict monomorphism $f:\M\hookrightarrow I$ of Banach spaces where $I$ is injective. This induces a map $\hat{f}:=f\widehat{\otimes}\id_C:\ten{\M}{C}\to\ten{I}{C}$ where we view $\ten{M}{C}$ as $\phi_*(\phi^*(M))$. By the above $\phi_*(I)=\ten{I}{C}$ is injective in $\Comod{C}$, and we have the adjunction map $\rho:\M\to \ten{\M}{C}$ which is just the comodule map. Therefore we have a morphism
$$
\iota:=\hat{f}\circ\rho:\M\to\ten{I}{C}
$$
in $\Comod{C}$ from $\M$ to an injective object. We claim that it is a strict monomorphism in $\ban$, which implies the result. Now note that since $\rho$ has a left inverse given by $1\widehat{\otimes}\bar{\varepsilon}$, it follows by Lemma \ref{equiv} that $\rho$ is a strict monomorphism. Moreover $\hat{f}$ is a strict monomorphism by Lemma \ref{funal}(ii). It now follows from the Lemma below that $\iota$ is a strict monomorphism as well.
\end{proof}

\begin{lem} In $\ban$, a composite of strict injections is strict.
\end{lem}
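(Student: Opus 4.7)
The plan is to reduce the statement to a very simple normed-space inequality. Recall that in $\ban$, a morphism $f:X\to Y$ is strict precisely when the induced map $X/\ker(f)\to \im(f)$ is a topological isomorphism. For an injection this simplifies: $f$ is a strict monomorphism if and only if $f$ is a topological embedding, equivalently (by the Banach open mapping theorem applied to $X\to \im(f)$, or directly from the definition of the quotient norm) if and only if there exists a constant $C>0$ such that
\[
\|x\|_X\leq C\,\|f(x)\|_Y\quad\text{for all }x\in X.
\]
This reformulation is the only non-formal input and it is essentially immediate from \cite[Lemma 1.1.9/2]{BGR}, which was already invoked in the preceding Lemma.

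Given this, suppose $f:X\to Y$ and $g:Y\to Z$ are strict monomorphisms in $\ban$, with constants $C_1,C_2>0$ respectively. First I would note that $g\circ f$ is clearly injective and continuous. Then, for any $x\in X$, chaining the two inequalities gives
\[
\|x\|_X\leq C_1\|f(x)\|_Y\leq C_1C_2\|g(f(x))\|_Z,
\]
so $g\circ f$ satisfies the same kind of bound with constant $C_1C_2$, hence is a strict monomorphism by the reformulation above.

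There is really no obstacle in this argument; the only subtlety is making sure that one uses the correct characterisation of strictness for an injection, which in the Banach setting means being a topological embedding and not merely a bounded algebraic injection. Once that is in place, the composition is handled by multiplying the two comparison constants.
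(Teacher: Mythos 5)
Your proof is correct and is essentially the same as the paper's: both characterise strict monomorphisms in $\ban$ as topological embeddings via a norm comparison, and conclude by multiplying the comparison constants. The only (cosmetic) difference is that you use a one-sided bound $\|x\|_X\leq C\|f(x)\|_Y$ and let continuity supply the other direction, whereas the paper writes out the two-sided sandwich explicitly.
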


\begin{proof}
If $f:X\to Y$ and $g:Y\to Z$ are strict injections, then by definition this means they define topological isomorphisms onto their image. Equivalently, there are real numbers $C_1, C_2, D_1, D_2>0$ such that for all $x\in X$ and all $y\in Y$, we have
$$
C_1\norm{x}_X\leq \norm{f(x)}_Y\leq C_2\norm{x}_X \quad\text{and}\quad D_1\norm{y}_Y\leq \norm{g(y)}_Z\leq D_2\norm{y}_Y.
$$
But by putting $y=f(x)$, this gives $C_1D_1\norm{x}_X\leq \norm{g(f(x))}_Z\leq C_2D_2\norm{x}_X$, so that $g\circ f$ also defines a homeomorphism onto its image, and is therefore strict.
\end{proof}

\begin{remark} The fact that a composite of strict monomorphisms is strict holds more generally in any quasi-abelian category, see \cite[Proposition 1.1.7]{qacs}, but we thought it would be clearer to give a direct proof.
\end{remark}

\subsection{Derived categories}\label{derived} We now turn to derived categories and derived functors.

\begin{definition}
Let $\C$ be a quasi-abelian category.
\begin{enumerate}
\item Let $K(\C)$ be the homotopy category of $\C$, i.e the category of complexes modulo homotopies. Then the \emph{derived category} of $\C$ is defined to be
$$
D(\C)=K(\C)/N(\C)
$$
where $N(\C)$ is the full subactegory of strictly exact sequences.
\item Let $F:\C\to \E$ be an additive functor between $\C$ and another quasi-abelian category $\E$. A full additive subcategory $\I$ of $\C$ is called \emph{$F$-injective} if:
\begin{enumerate}
\item for any object $V$ of $\C$ there is a strict monomorphism $V\to I$ where $I$ is an object of $\I$
\item for any strictly exact sequence
$$
0\to V'\to V\to V''\to 0
$$
in $\C$, if $V$ and $V''$ are objects of $\I$, then $V'$ is as well
\item for any strictly exact sequence
$$
0\to V'\to V\to V''\to 0
$$
in $\C$ where $V$, $V'$ and $V''$ are objects of $\I$, the sequence
$$
0\to F(V')\to F(V)\to F(V'')\to 0
$$
is strictly exact in $\E$
\end{enumerate}
\item Given $F:\C\to\E$ as above, we say that $F$ is \emph{right derivable} if it has a right derived functor $RF:D^+(\C)\to D^+(\E)$ which satisfies the usual universal property.
\item Assume $F$ is right derivable. We say that an object $I$ of $\C$ is \emph{$F$-acyclic} if $RF(I)\cong F(I)$.
\end{enumerate}
\end{definition}

We now introduce the left heart of the derived category. By \cite[Definition 1.2.17]{qacs}, there is a left $t$-structure on the derived category $D(\C)$ of a quasi-abelian category $\C$, whose heart we denote by $\LH(\C)$, called the \emph{left heart} of the quasi-abelian category $\C$. The left heart is an abelian category and there is a natural functor $I:\C\to \LH(\C)$.

\begin{lem}\begin{enumerate} \item \emph{(\cite[Corollary 1.2.27]{qacs})} The functor $I$ is a fully faithful embedding. Moreover a sequence in $\C$ is strictly exact if and only if it's exact in $\LH(\C)$.
\item \emph{(\cite[Proposition 1.2.31]{qacs})} The functor $I$ induces an equivalence of derived categories $D(I): D(\C)\to D(\LH(\C))$.
\item \emph{(\cite[Corollary 1.2.19]{qacs})} For $n\in\Z$ let $LH^n:D(\C)\to\LH(\C)$ denote the $n$-th cohomology functor. Then, for $E\in D(\C)$, we have that $LH^n(E)=0$ if and only if $E$ is strictly exact in degree $n$.
\end{enumerate}
\end{lem}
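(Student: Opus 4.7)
The plan is to deduce all three statements from the abstract theory of $t$-structures applied to an explicit $t$-structure on $D(\C)$. By Schneiders' construction, $D(\C)$ carries a left $t$-structure $(D^{\leq 0}(\C), D^{\geq 0}(\C))$ whose truncation functors are built directly from kernels and cokernels in $\C$: roughly, a complex lies in $D^{\leq 0}(\C)$ when it is strictly exact in all positive degrees, while $\LH(\C)$ is the full subcategory of objects quasi-isomorphic to two-term complexes $[E \overset{f}{\to} F]$ concentrated in degrees $-1, 0$ with $f$ a strict monomorphism. By the Beilinson-Bernstein-Deligne theorem, $\LH(\C)$ is then automatically abelian.

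For part (i), the functor $I:\C\to\LH(\C)$ sends $X$ to the complex concentrated in degree $0$. Fully faithfulness follows from standard $t$-structure yoga: $\Hom$ sets in $\LH(\C)$ between heart objects equal $\Hom$ sets in $D(\C)$, and for $X,Y\in\C$ concentrated in degree 0 every roof $X \overset{s}{\leftarrow} Z^\bullet \to Y$ with $s$ a quasi-isomorphism can be reduced to an ordinary morphism $X\to Y$ in $\C$ via the explicit description of $\LH(\C)$. The characterization of strict exactness requires unwinding how kernels, images, and cokernels behave in $\LH(\C)$: for $f:X\to Y$ in $\C$, the kernel in $\LH(\C)$ is $\ker(f)$ and the cokernel is $\coker(f)$, but the image (in the abelian sense) is encoded by the two-term complex $[\ker(f)\to X]$ representing $\coim(f)$, which is isomorphic to $\im(f)$ only when $f$ is strict. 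Thus exactness of $0\to X\to Y\to Z\to 0$ in $\LH(\C)$ translates, after comparing images and kernels as subobjects in the heart, precisely to strict exactness in $\C$.

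For part (ii), the induced functor $D(I):D(\C)\to D(\LH(\C))$ is the realization functor associated to the left $t$-structure. I would first check that $D(I)$ intertwines the $t$-structures on both sides and induces an equivalence on hearts by part (i), then apply a standard comparison theorem identifying $D(\LH(\C))$ with $D(\C)$ on bounded complexes; the delicate step is extending to unbounded derived categories, which requires Schneiders' careful construction of suitable resolutions in $\LH(\C)$. I expect this to be the main technical obstacle of the lemma.

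Finally, part (iii) is essentially bookkeeping once (i) is in hand: the cohomology object $LH^n(E)$ is defined as $\tau^{\leq 0}\tau^{\geq 0}(E[n])$ in the left $t$-structure, and by the very construction of that $t$-structure this object vanishes exactly when $E$ is strictly exact at position $n$. Overall, the hardest part of the lemma will be (ii), since (i) and (iii) reduce to unwinding the explicit description of the left $t$-structure, whereas (ii) genuinely requires the realization formalism for unbounded $t$-structures.
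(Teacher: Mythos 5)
The statement you were asked to prove is, in the paper, not proved at all: all three parts are direct citations to Schneiders (\cite[Corollary 1.2.27, Proposition 1.2.31, Corollary 1.2.19]{qacs}), so there is no in-paper argument to compare against. Your proposal is therefore a blind reconstruction of Schneiders' own proofs, and it is at the level of a heuristic sketch rather than a proof: (ii) in particular is delegated entirely to ``the realization formalism for unbounded $t$-structures,'' which is precisely the content one would have to supply.

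There is also one genuine error in the description of $\LH(\C)$. You say its objects are quasi-isomorphic to two-term complexes $[E\overset{f}{\to}F]$ in degrees $-1,0$ with $f$ a \emph{strict} monomorphism. In Schneiders' construction the representing maps $f$ are monomorphisms but not strict monomorphisms in general; strictness is exactly what fails. Indeed, if $f$ were strict then $[E\to F]$ would be isomorphic in $D(\C)$ to $\coker(f)$ placed in degree $0$, so every object of $\LH(\C)$ would already come from $\C$ and the embedding $I$ would be an equivalence --- which it is not for a genuinely non-abelian quasi-abelian $\C$. The whole point of the left heart is to adjoin formal cokernels of non-strict monomorphisms. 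This error propagates into your argument for (i): the discrepancy you invoke between $\coim(f)$ and $\im(f)$ lives exactly in the passage from strict to non-strict morphisms, and with your (incorrect) strictness assumption on representatives that discrepancy would never arise. The intended argument for (i) compares the abelian-categorical exactness of $I(X)\to I(Y)\to I(Z)$ in $\LH(\C)$ against the coincidence of coimage and image of the maps in $\C$, using that $I$ preserves kernels and cokernels but that images in $\LH(\C)$ are computed via the non-strict monomorphism representatives. If you replace ``strict monomorphism'' by ``monomorphism'' throughout and then carefully rerun your image/coimage comparison, the rest of your sketch of (i) and (iii) is a fair paraphrase of Schneiders; (ii) would still require the substantive resolution arguments that you acknowledge but do not supply.
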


We gather all the important results that we need in the following:

\begin{prop}
Let $F:\C\to \E$ be an additive functor between quasi-abelian categories $\C$ and $\E$.
\begin{enumerate}
\item \emph{(\cite[Prop 1.3.5]{qacs})} Assume that $\C$ has an $F$-injective subcategory. Then $F$ has a right derived functor $RF:D^+(\C)\to D^+(\E)$ (in this situation we say that $F$ is \emph{explicitly right derivable}).
\item \emph{(\cite[Remark 1.3.21]{qacs})} Assume that $\C$ has enough injectives. Then the full subcategory of injective objects is an $F$-injective subcategory.
\item \emph{(\cite[Remark 1.3.7]{qacs})} Assume that $F$ has a right derived functor $RF:D^+(\C)\to D^+(\E)$, and suppose that for any object $V$ of $\C$, there is a monomorphism $V\to I$ where $I$ is $F$-acyclic. Then the $F$-acyclic objects form an $F$-injective subcategory.
\item \emph{(\cite[Proposition 1.3.8 \& Proposition 1.3.14]{qacs})} Assume that $F$ is explicitly right derivable and let $\I$ be an $F$-injective subcategory of $\C$. Suppose that $F$ is strongly left exact and that, for any monomorphism $I_0\to I_1$ in $\C$ between objects of $\I$, $FI_0\to FI_1$ is a monomorphism. Then $F$ extends to an explicitly right derivable left exact functor $G:\LH(\C)\to\LH(\E)$ such that $RG\cong RF$.
\end{enumerate}
\end{prop}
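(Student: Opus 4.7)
The plan for each part is to follow Schneiders' framework, since these statements are direct adaptations of standard results from his theory of quasi-abelian categories; I would invoke his monograph where appropriate but sketch the underlying arguments as follows.

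For (i), I would first verify that every object $V\in\C$ admits an $F$-injective resolution $V\to I^\bullet$ with $I^n\in\I$ for all $n$, by applying axiom (a) inductively and invoking axiom (b) to ensure that the cokernels of the successive strict monomorphisms remain in $\I$. Given such resolutions, define $RF(V^\bullet):=F(I^\bullet)$ for a bounded-below complex and verify by standard homotopy-lifting arguments that this is functorial, independent of the choice of resolution, and satisfies the universal property of the right derived functor. For (ii), the key verification is axiom (b): given a strictly exact $0\to V'\to V\to V''\to 0$ with $V,V''$ injective, the injectivity of $V''$ (together with the strict epimorphism $V\to V''$) produces a splitting by lifting $\id_{V''}$, so $V'$ is a direct summand of the injective object $V$ and hence injective itself. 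Axiom (c) then follows because $F$ automatically preserves split exact sequences.

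For (iii), the argument hinges on the long exact cohomology sequence: a strictly exact $0\to V'\to V\to V''\to 0$ yields a strictly exact triangle in $D^+(\C)$, and applying $RF$ produces a strictly exact triangle in $D^+(\E)$. The vanishing of $R^iF(V)$ and $R^iF(V'')$ for $i>0$ forces the same vanishing for $V'$, showing that the $F$-acyclic objects are closed under the required kernels, while axiom (a) is exactly the hypothesis of the statement. For (iv), the plan is to transport $RF$ across the equivalence $D(I):D(\C)\xrightarrow{\cong} D(\LH(\C))$ of Lemma \ref{derived}(ii), then extract its degree-zero truncation as the sought functor $G:\LH(\C)\to\LH(\E)$. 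Strong left exactness of $F$ ensures that this truncation is genuinely left exact, and the hypothesis that monomorphisms in $\I$ go to monomorphisms under $F$ guarantees that $F$-injective objects remain $G$-injective after the embedding, so that the derived functor of $G$ agrees with $RF$.

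The main obstacle throughout is the interplay between strict and non-strict morphisms, which requires substantially more care than the abelian analogue. Concretely, (i) demands that lifting of chain maps up to homotopy be carried out using only strict monomorphisms to stay inside $\I$; (ii) relies crucially on the subtle point that injectivity in the quasi-abelian sense permits lifting along strict epimorphisms; and (iv) is the most delicate: one must confirm that the truncation functor on $D^+(\LH(\C))$ commutes appropriately with $F$, and that the equivalence $D(I)$ transports $F$-injective resolutions to $G$-injective resolutions. I would expect (iv) to be the primary source of technical difficulty and would treat it last, after the earlier parts have established the formal machinery.
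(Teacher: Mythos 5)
The paper does not actually prove this proposition: each part is simply quoted from Schneiders' monograph with a precise citation, so there is no in-paper proof to compare your sketches against. That said, two of your sketches contain genuine errors, and both trace back to the same source.

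Your argument for (ii) is wrong. You claim that in a strictly exact sequence $0\to V'\to V\to V''\to 0$ with $V''$ injective, the sequence must split. This is false: in $\mathbb{Z}$-modules the sequence $0\to\mathbb{Z}\to\mathbb{Q}\to\mathbb{Q}/\mathbb{Z}\to 0$ has both $\mathbb{Q}$ and $\mathbb{Q}/\mathbb{Z}$ injective, yet $\mathbb{Z}$ is not injective, so the sequence does not split. Injectivity of $V''$ does not produce a section of the strict epimorphism $V\to V''$ (that would follow from projectivity). The same issue surfaces in your sketch of (iii): the long exact sequence gives $R^iF(V')=0$ for $i\geq 2$, but at $i=1$ it only identifies $R^1F(V')$ with the cokernel of $F(V)\to F(V'')$, which need not vanish.

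The underlying cause is that condition (b) in the paper's definition of $F$-injective subcategory appears to have its implication reversed relative to Schneiders' Definition 1.3.2: the correct condition is that if $V'$ and $V$ lie in $\I$ then $V''$ does, not the other way around. With that correction both of your arguments go through cleanly. For (ii): $V'$ injective and $V'\to V$ a strict mono, so $\id_{V'}$ lifts along $V'\to V$ by injectivity of $V'$, giving a retraction; hence $V''$ is a direct summand of the injective $V$ and so injective. For (iii): with $V',V$ acyclic, the long exact sequence gives $R^iF(V)\to R^iF(V'')\to R^{i+1}F(V')$ with both outer terms zero for $i\geq 1$, so $V''$ is acyclic, and condition (c) is immediate. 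Your sketches for (i) and (iv) are reasonable outlines of Schneiders' arguments and I see no error in them, though (iv) in particular would need substantial detail to check the compatibility of the truncation with the derived functor.
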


\subsection{The analytic quantum flag variety}\label{compflag}

Recall the Banach Hopf algebras $\Oqhat$ and $\OqBhat$, and that $\Oqhat$ is Noetherian by Corollary \ref{Noetherian}. Note that, completely analogously to section \ref{Ca}, given two Banach $\OqBhat$-comodules $\M$ and $\Nn$, we may define a tensor comodule structure on $\ten{\M}{\Nn}$.

\begin{definition}
We let $\flaghat$ denote the category whose objects are triples $(\M,\alpha,\beta)$ where $\M$ is an $L$-Banach space, $\alpha:\ten{\Oqhat}{\M}\to \M$ is a left $\Oqhat$-module action and $\beta:\M\to\ten{\M}{\OqBhat}$ is a right $\OqBhat$-comodule action, such that $\alpha$ is a comodule homomorphism where $\ten{\Oqhat}{\M}$ is given the tensor comodule structure. The morphisms are just the continuous linear maps which are both module and comodule homomorphisms.

A triple $(\M,\alpha,\beta)$ where $\M$ is a finitely generated $\Oqhat$-module is called \emph{coherent}, and we let $\coh(\flaghat)$ denote the full subcategory of coherent modules.
\end{definition}

\begin{remark}
Of course, $\Oqhat\in\flaghat$ and is coherent by definition. Note that by \cite[Prop 2.1]{SchTeit03}, any finitely generated module over a Noetherian Banach algebra $A$ automatically come equipped with a canonical topology, which is the unique topology making them into Banach modules. Namely, given a finite generating set for a module $\M$ we obtain a surjective map $A^a\to \M$, and we give $\M$ the quotient topology or in other words set the unit ball of $\M$ to be the image of $(A^\circ)^a$ under that map. This topology does not depend on the choice of generating set. We also have that all module maps between finitely generated modules are automatically continuous and strict. Hence we will always assume that our coherent modules are equipped with this topology.
\end{remark}

By the above remark, the full subcategory of finitely generated modules in $\Mod{\Oqhat}$ is abelian. Similarly $\coh(\flaghat)$ is also abelian. However there is no guarantee that it has enough injectives. Instead we will work in $\flaghat$.

We begin by generalising one of the adjunction from Section \ref{recap1}. There is an adjoint pair of functors $(\theta^*,\theta_*)$ between $\Mod{\Oqhat}$ and $\flaghat$. The functor $\theta_*:\Mod{\Oqhat}\to\flaghat$ is given by $N\mapsto \ten{N}{\OqB}$ where $\Oqhat$ acts on $\theta_*(N)$ via the tensor action and the $\OqBhat$-coaction comes from the second factor, while $\theta^*:\flaghat\to\Mod{\Oqhat}$ is just the forgetful functor. The bijection making this an adjunction is as follows: let $M\in \flaghat$ and $N\in\Mod{\Oqhat}$, and let $\rho: M\to\ten{M}{\OqBhat}$ and $\bar{\varepsilon}:\OqBhat\to L$ be the comodule map and the counit of $\OqBhat$ respectively; given a module homomorphism $f:M\to N$, construct a morphism $g:M\to \ten{N}{\OqBhat}$ in $\flaghat$ by taking the composite $(\ten{f}{\id})\circ \rho$. Conversely, given a morphism $g:M\to \ten{N}{\OqBhat}$ in $\flaghat$, we construct a module homomorphism $f:M\to N$ by taking the composite $(\ten{\id}{\bar{\varepsilon}})\circ g$. Having established this, we can now prove:

\begin{lem}
The category $\flaghat$ is quasi-abelian and has enough injectives.
\end{lem}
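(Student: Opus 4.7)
The plan has two parts. First, to show that $\flaghat$ is quasi-abelian, I would argue that the forgetful functor $\flaghat \to \ban$ preserves kernels and cokernels. Given a morphism $f: \M \to \Nn$ in $\flaghat$, its kernel in $\ban$ is a closed subspace; continuity of the action $\alpha$ and coaction $\beta$, together with Lemma \ref{funal}(ii) which identifies $\ten{\ker(f)}{\OqBhat}$ with a closed subspace of $\ten{\M}{\OqBhat}$, shows this subspace is preserved by both structures and so lifts to a kernel in $\flaghat$. Dually, the image of $f$ in $\ban$, and hence its closure, is preserved by $\alpha$ and $\beta$ (using continuity again and the behaviour of completed tensor products under strict quotients, Lemma \ref{funal}(iii)), so the quotient $\Nn/\overline{\im(f)}$ inherits the required structures and is the cokernel in $\flaghat$. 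It follows that a morphism in $\flaghat$ is strict iff it is strict in $\ban$, and (co)cartesian squares in $\flaghat$ likewise descend to and are built from (co)cartesian squares in $\ban$ with inherited structures. Since $\ban$ is quasi-abelian, the two axioms about strict (epi/mono)morphisms being preserved under pullback/pushout transfer to $\flaghat$.

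Second, to show that $\flaghat$ has enough injectives, I would mimic the argument of Proposition \ref{injectives} using the adjunction $(\theta^*,\theta_*)$ established above. The forgetful functor $\theta^*$ is exact by the first part (both kernels and cokernels are computed underlyingly in $\Mod{\Oqhat}$), so by \cite[Lemma 4.26]{KreBB} its right adjoint $\theta_*$ preserves injective objects. Given $\M \in \flaghat$, Proposition \ref{injectives} produces a strict monomorphism $f: \theta^*\M \hookrightarrow I$ in $\Mod{\Oqhat}$ with $I$ injective. The unit of the adjunction at $\M$ unwinds, via the explicit bijection recorded just before the statement, to the coaction map $\rho_\M : \M \to \ten{\M}{\OqBhat} = \theta_*\theta^*\M$, which is split by $\id_\M \widehat{\otimes}\, \bar{\varepsilon}$ by the counit axiom and hence is a strict monomorphism by Lemma \ref{equiv}. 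Composing with $\theta_*(f) = f \widehat{\otimes}\, \id_{\OqBhat}$, a strict monomorphism by Lemma \ref{funal}(ii), and invoking Lemma \ref{injectives} that composites of strict monomorphisms in $\ban$ are strict, we obtain the desired strict monomorphism $\M \hookrightarrow \theta_*(I)$ into an injective object of $\flaghat$.

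The routine but slightly delicate step is the first one: verifying carefully that taking kernels, images and quotients in $\ban$ produces objects of $\flaghat$, which ultimately rests on Lemma \ref{funal} and in particular on the fact that completed tensor products with a fixed Banach space preserve both strict monomorphisms and strict epimorphisms. Once that is in place, the injectives argument is a direct transplant of the proof for $\Comod{C}$ given earlier.
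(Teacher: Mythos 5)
Your plan matches the paper's intended argument --- the paper's own proof is a one-liner stating that the argument is ``entirely analogous'' to Lemma~\ref{qacs} and Proposition~\ref{injectives} using the adjunction $(\theta^*,\theta_*)$ --- and the ``enough injectives'' half of your write-up is exactly that transplant and is correct.

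In the quasi-abelian half there is one step that Lemma~\ref{funal}(ii) does not actually deliver. To equip $\ker(f)$ with a $\OqBhat$-comodule structure you need $\beta_\M(\ker f) \subseteq \ten{\ker f}{\OqBhat}$. From $f$ being a comodule map you only get $(f\widehat{\otimes}\id)(\beta_\M(m)) = 0$ for $m\in\ker f$, i.e.\ $\beta_\M(\ker f)\subseteq\ker(f\widehat{\otimes}\id)$. Lemma~\ref{funal}(ii) gives the strict embedding $\ten{\ker f}{\OqBhat}\hookrightarrow\ten{\M}{\OqBhat}$, which at best yields the inclusion $\ten{\ker f}{\OqBhat}\subseteq\ker(f\widehat{\otimes}\id)$ --- the wrong direction. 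The needed equality $\ker(f\widehat{\otimes}\id)=\ten{\ker f}{\OqBhat}$ is true, but for a different reason: over a discretely valued $L$ every Banach space is isomorphic to some $c_0(I)$, so $-\widehat{\otimes}_L \OqBhat$ is exact and in particular preserves kernels of arbitrary bounded maps. You would need to state this, or more simply cite what the paper already set up in Lemma~\ref{qacs}: by Diarra's Lemma~II.1.1 kernels and closures of images of Banach comodule morphisms are subcomodules, and an object of $\flaghat$ is in particular a Banach $\OqBhat$-comodule, so the kernel (and the closure of the image) taken in $\ban$ are automatically subcomodules; compatibility with the $\Oqhat$-module structure is then the easy part. (Your treatment of the cokernel, incidentally, is fine as written: the forward inclusion $\beta_\Nn(\im f)\subseteq\ten{\overline{\im f}}{\OqBhat}$ is immediate, and continuity plus Lemma~\ref{funal}(ii) extends it to the closure.) The remaining reductions --- strictness in $\flaghat$ detected in $\ban$, (co)cartesian squares descending via the proof of [KreBB, Lemma~A.30] --- and the injectives argument are correct and are the argument the paper has in mind.
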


\begin{proof}
The proof is entirely analogous to the proofs of Lemma \ref{qacs} and Proposition \ref{injectives}, using the adjunction $(\theta^*,\theta_*)$.
\end{proof}

We now define a global section functor. First note that for any $M, N\in\flaghat$, the space $\Hom_{\flaghat}(M,N)$ is a closed subspace of the Banach space $B(M,N)$ of all bounded linear maps from $M$ to $N$. In particular it is itself a Banach space.

\begin{definition}
The global section functor $\Gamma:\flaghat\to \ban$ is defined to be the functor
$$
\Gamma: \M\mapsto \Hom_{\flaghat}(\Oqhat,\M).
$$
Alternatively, let $\M^{B_q}:=\{m\in \M: \rho(m)=m\otimes 1\}$, a closed subspace of $\M$. We may think of global sections as taking $B_q$-invariants since there is a canonical isomorphism
\begin{align*}
\Gamma(\M)&\overset{\sim}{\longrightarrow} \M^{B_q}\\
f&\longmapsto f(1)
\end{align*}
by the Banach Open Mapping Theorem \cite[Corollary 8.7]{NFA}.
\end{definition}

Since $\Gamma$ is just a hom functor, it is left exact. In fact it clearly satisfies the conditions of Proposition \ref{derived}(iv). Hence, as $\flaghat$ has enough injectives, this functor is right derivable and extends to a left exact functor $\LH(\flaghat)\to\LH(\ban)$ with the same cohomology, and we denote by $R^i\Gamma$ the corresponding cohomology groups in $\LH(\ban)$.

\subsection{Coherent lattices}\label{basechange2} The functor $\M\mapsto \M_L$ of course sends $\widehat{\Ca}$ to $\flaghat$. We show that this functor is in fact surjective.

\begin{prop} Let $\M\in \widehat{\Ca}$ and $\Nn\in \flaghat$. Then:
\begin{enumerate}
\item $\Gamma(\M_L)\cong \Gamma(\M)\otimes_R L$; and
\item $\Nn$ contains an $R$-lattice $F_0\Nn$ which is an element of $\widehat{\Ca}$. Moreover, $F_0\Nn$ can be chosen to be coherent if $\Nn\in \text{\emph{coh}}(\flaghat)$.
\end{enumerate}
\end{prop}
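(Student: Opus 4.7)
The strategy for (i) is to imitate the proof of Proposition \ref{base_change}. I would build the natural map $\Gamma(\M) \otimes_R L \to \Gamma(\M_L)$ sending $m \otimes l$ to $lm$ and show that it is bijective. Injectivity is routine since any element of its kernel comes from $\pi$-torsion in $\Gamma(\M)$, which already dies in $\Gamma(\M) \otimes_R L$. For surjectivity, I would take $m \in (\M_L)^{B_q}$, lift $\pi^a m$ to an element $m' \in \M$ for some $a \geq 0$, and use the fact that $\rho_{\M_L}$ is obtained from $\rho_\M$ by extension of scalars to conclude that $\rho_\M(m') - m' \otimes 1 \in \tenR{\M}{\B}$ is $\pi$-torsion; multiplying by a further power $\pi^b$ then produces $\pi^b m' \in \Gamma(\M)$, whence $m = \pi^{-a-b}(\pi^b m') \in \Gamma(\M) \otimes_R L$.

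For (ii), my plan is to take $F_0\Nn := \Nn^\circ$, the unit ball of the Banach space $\Nn$. The paper's convention that all action maps between Banach spaces have operator norm at most $1$ is essentially tailor-made for this: combined with the identifications $\Oqhat^\circ = \widehat{\A}$ and $(\ten{\Nn}{\OqBhat})^\circ = \tenR{\Nn^\circ}{\widehat{\B}}$, it ensures that the $\Oqhat$-action and the coaction $\rho$ restrict to an $\widehat{\A}$-action and a $\widehat{\B}$-comodule structure on $\Nn^\circ$. The compatibility diagram defining $\widehat{\Ca}$ is inherited from the corresponding diagram in $\flaghat$. For $\pi$-adic completeness, I would note that the $\pi$-adic topology on $\Nn^\circ$ coincides with the subspace topology from $\Nn$ (using that $|L^\times|$ is discretely valued), and $\Nn^\circ$ is closed in the complete Banach space $\Nn$.

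Finally, to upgrade $F_0\Nn$ to a coherent object when $\Nn$ is coherent, I would fix a surjection $\phi: \Oqhat^k \to \Nn$. The Banach Open Mapping Theorem implies that $\phi(\widehat{\A}^k)$ is open in $\Nn$, hence contains $\pi^N \Nn^\circ$ for some $N \geq 0$; equivalently, $\Nn^\circ$ is an $\widehat{\A}$-submodule of the finitely generated $\widehat{\A}$-module $\pi^{-N}\phi(\widehat{\A}^k)$. Noetherianity of $\widehat{\A}$ from Corollary \ref{Noetherian} then forces $\Nn^\circ$ itself to be finitely generated. The real substance of the proof is confined to carefully matching $\pi$-adic with Banach-topological data, but this is largely automatic given the paper's contraction conventions and the standard behaviour of completed tensor products on unit balls.
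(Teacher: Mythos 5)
Your argument for part (i) matches the paper's, which simply refers back to the proof of Proposition \ref{base_change}. However, for part (ii) there is a genuine gap: your proposal to take $F_0\Nn := \Nn^\circ$, the unit ball of the original norm, rests on the claim that the coaction $\rho$ restricts to a map $\Nn^\circ \to \tenR{\Nn^\circ}{\widehat{\B}}$. The paper's convention that ``Banach $A$-modules have action map of norm at most $1$'' applies to \emph{module actions} only. The comodule structure map $\rho$ is merely required to be continuous, i.e.\ bounded, and there is nothing in the definition of a Banach $\OqBhat$-comodule forcing $\|\rho\| \leq 1$. (The comodule identity $(1\widehat{\otimes}\widehat{\varepsilon})\circ\rho=\mathrm{id}$ gives the reverse bound $\|\rho(m)\|\geq\|m\|$, not $\|\rho(m)\|\leq\|m\|$.) So $\rho(\Nn^\circ)$ need not land in $\tenR{\Nn^\circ}{\widehat{\B}}$, and $\Nn^\circ$ need not carry a $\widehat{\B}$-comodule structure.

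The paper's proof fixes precisely this by first replacing the norm on $\Nn$ by the one induced from its strict embedding $\rho:\Nn\hookrightarrow\ten{\Nn}{\OqBhat}$, and then defining $F_0\Nn := \Nn \cap (\Nn^\circ\widehat{\otimes}_R\widehat{\B})$ to be the unit ball for this new (equivalent) norm. With this change, the coaction on $\Nn$ becomes the restriction of $1\widehat{\otimes}\widehat{\Delta}$, which genuinely has norm $\leq 1$, and so does preserve the new unit ball. The module action also preserves it, because $\widehat{\A}$ preserves $\Nn^\circ\widehat{\otimes}_R\widehat{\B}$. Your argument for the coherent case via the open mapping theorem and Noetherianity of $\widehat{\A}$ survives this change essentially unaltered: the new unit ball is commensurable with $\Nn^\circ$, so it is still contained in $\pi^{-b}\phi(\widehat{\A}^a)$ for some $b$, and Noetherianity gives finite generation.
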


\begin{proof}
(i) Same as the first part of the proof of Proposition \ref{base_change}.

(ii) Denote the unit ball of $\Nn$ by $\Nn^\circ$. Notice that the adjunction morphism $\Nn\to \theta_*\theta^*(\Nn)=\ten{\Nn}{\OqBhat}$ is just the comodule map $\rho$, which has a left inverse given by $1\widehat{\otimes}\bar{\varepsilon}$, so which is a strict injection by Lemma \ref{equiv}. Thus we see that $\Nn$ is isomorphic to a subobject of $\ten{\Nn}{\OqBhat}$ in $\flaghat$, where $\ten{\Nn}{\OqBhat}$ has the tensor $\Oqhat$ module structure and comodule structure given by $1\widehat{\otimes}\widehat{\Delta}$. From now on we identify $\Nn$ with its image in $\ten{\Nn}{\OqBhat}$ via $\rho$, and in particular equipped with the subspace norm.

Let $F_0\Nn=\Nn\cap \Nn^\circ\widehat{\otimes}_R\widehat{\B}$ denote the unit ball of $\Nn$ with respect to that new norm. Note that the comodule map on $\ten{\Nn}{\OqBhat}$ has norm at most 1, hence so is its restriction to $\Nn$, and thus we see that $F_0\Nn$ is a $\widehat{\B}$-comodule. Moreover, since the $\Oqhat$-action on $\ten{\Nn}{\OqBhat}$ has norm at most 1, i.e $\widehat{\A}$ preserves the unit ball $\Nn^\circ\widehat{\otimes}_R\widehat{\B}$, it follows that $F_0\Nn$ is an $\widehat{\A}$-module. Hence $F_0\Nn$ belongs in $\widehat{\Ca}$. 

For the last part it is just left to check that it is finitely generated if $\Nn\in\coh(\flaghat)$. By definition, the topology on $\Nn$ is the quotient topology from a surjection $(\Oqhat)^a\twoheadrightarrow \Nn$ for some choice of generators in $\Nn$, and the unit ball $\Nn^\circ$ is given by the image of $(\widehat{\A})^a$. But now, by the equivalence of norms, there exists $b\geq 0$ such that
$$
F_0\Nn\subseteq \pi^{-b} \Nn^\circ,
$$
and hence $F_0\Nn$ is finitely generated since $\Nn^\circ$ is and $\widehat{\A}$ is Noetherian.
\end{proof}

\subsection{Exactness of Weyl group localisation functors}\label{exactloc} We now start working towards a \v{C}ech complex for computing the cohomology of elements of $\flaghat$. For each $w\in W$, let $\Oqwhat:=\widehat{\Aw}\otimes_R L$ and let $\flaghat_w$ denote the category of $B$-equivariant $\Oqwhat$-modules. Specifically, the objects are triples $(\M,\alpha,\beta)$ where $\M$ is an $L$-Banach space, $\alpha:\ten{\Oqwhat}{\M}\to \M$ is a left $\Oqwhat$-module action and $\beta:\M\to\ten{\M}{\OqBhat}$ is a right $\OqBhat$-comodule action, such that $\alpha$ is a comodule homomorphism where $\ten{\Oqwhat}{\M}$ is given the tensor comodule structure. The morphisms are just the continuous linear maps which are both module and comodule homomorphisms.

We first recall some general facts before applying them to this situation. Let $A$ be a Banach algebra, $\M$ a Banach right $A$-module, $\Nn$ a Banach left $A$-module. By \cite[2.1.7]{BGR} we may then define the completed tensor product $\M\widehat{\otimes}_A\Nn$ to be the completion of the tensor product $\M\otimes_A\Nn$ with respect to the semi-norm
$$
\norm{x}:=\inf\Big\{\max_{1\leq i\leq r}\norm{m_i}_{\M}\cdot\norm{n_i}_{\Nn} : x=\sum_{i=1}^r m_i\otimes n_i, m_i\in \M, n_i\in \Nn\Big\}.
$$
Now suppose that $B$ is another Banach algebra, and assume there is a continuous algebra homomorphism $f:A\to B$. Then this induces a functor $f^*:\Mod{A}\to\Mod{B}$ given by $\M\mapsto B\widehat{\otimes}_A\M$. This functor has a right adjoint given by restriction. We now investigate when the functor $f^*$ is strict exact. What we need is the following result of Bode:

\begin{prop}\emph{(\cite[Proposition 1.3]{Andreas1})} Suppose $A$ and $B$ are Banach algebras, with unit balls $A^\circ$ and $B^\circ$ respectively. Furthermore, suppose that $A^\circ$ and $B^\circ$ are Noetherian $R$-algebras and assume that there is a continuous algebra homomorphism $f:A\to B$ such that $f(A^\circ)\subseteq B^\circ$. If $f$ makes $B^\circ$ into a flat $A^\circ$-module, then the functor $f^*$ is strict exact.
\end{prop}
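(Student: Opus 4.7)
The plan is to reduce to a statement about lattices. First, the functor $f^*$ is a left adjoint (to restriction of scalars) and hence preserves colimits; in particular it sends strict epimorphisms in $\Mod{A}$ to strict epimorphisms in $\Mod{B}$. The substance of the claim is therefore that $f^*$ preserves strict monomorphisms: given any strictly exact sequence $0\to \M'\overset{\iota}{\to}\M\overset{p}{\to}\M''\to 0$ in $\Mod{A}$ (equivalently, an algebraically exact sequence of Banach $A$-modules with strict maps, by the Banach open mapping theorem), one must verify that $f^*(\iota)$ is again injective; algebraic exactness of $0\to f^*\M'\to f^*\M\to f^*\M''\to 0$ together with the open mapping theorem will then upgrade this to strict exactness.

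To do this, I would first rescale the norms via the gauge norms associated to suitable unit balls so that $\iota(\M'^\circ)=\iota(\M')\cap \M^\circ$ and $p(\M^\circ)=\M''^\circ$. This produces a short exact sequence of $A^\circ$-modules $0\to \M'^\circ\to \M^\circ\to \M''^\circ\to 0$ in which every term is $\pi$-adically complete and $\pi$-torsion-free. Since $\M''^\circ$ is flat over $R$, the sequence remains exact after tensoring with $R/\pi^n$. Flatness of $B^\circ$ over $A^\circ$ then yields a short exact sequence
$$0\to (B^\circ\otimes_{A^\circ}\M'^\circ)/\pi^n\to (B^\circ\otimes_{A^\circ}\M^\circ)/\pi^n\to (B^\circ\otimes_{A^\circ}\M''^\circ)/\pi^n\to 0$$
for every $n\geq 1$. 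The transition maps in these three inverse systems (indexed by $n$) are surjective, so the Mittag--Leffler criterion gives an exact sequence of the $\pi$-adic completions $\widehat{B^\circ\otimes_{A^\circ}\M^{\circ(\bullet)}}$.

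Finally, I would identify $B\widehat{\otimes}_A\M$ with $\widehat{B^\circ\otimes_{A^\circ}\M^\circ}\otimes_R L$, and similarly for $\M'$ and $\M''$. This identification rests on the fact that the unit ball of $B\widehat{\otimes}_A\M$ coincides with the $\pi$-adic completion of $B^\circ\otimes_{A^\circ}\M^\circ$ up to $\pi$-power-torsion, which is annihilated upon inverting $\pi$. Applying the exact functor $-\otimes_R L$ to the short exact sequence of completions then produces the desired strictly exact sequence $0\to f^*\M'\to f^*\M\to f^*\M''\to 0$ in $\Mod{B}$.

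The main obstacle, and the reason the hypotheses are arranged as they are, is that $\pi$-adic completion is not exact on arbitrary inverse systems: our Banach modules are not assumed finitely generated, so no Artin--Rees style shortcut is available. What saves the argument is that taking unit balls yields lattices that are automatically $\pi$-torsion-free, which together with flatness of $B^\circ$ over $A^\circ$ forces surjectivity of the transition maps and allows invocation of Mittag--Leffler. A secondary subtlety is pinning down the relationship between $B\widehat{\otimes}_A\M$ and the $\pi$-adic completion of $B^\circ\otimes_{A^\circ}\M^\circ$, which is where the careful normalisation of the unit balls in the second paragraph is needed.
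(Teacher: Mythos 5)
The paper cites this as \cite[Proposition 1.3]{Andreas1} without reproducing a proof, so there is no argument of the paper's own to compare against; you are essentially being asked to supply Bode's proof from scratch. Your proposal is correct and is the natural argument. The reduction to a strict short exact sequence of unit balls (via the subspace norm on $\M'$ and the quotient norm on $\M''$, both of which in the ultrametric setting do yield $\iota(\M'^\circ)=\iota(\M')\cap\M^\circ$ and $p(\M^\circ)=\M''^\circ$ on the nose), the use of $\pi$-torsion-freeness of $\M''^\circ$ together with flatness of $B^\circ$ over $A^\circ$ to get short exact sequences modulo $\pi^n$, and the Mittag--Leffler passage to completions are all sound; the final upgrade from algebraic exactness to strict exactness of Banach spaces is automatic by the open mapping theorem, as you note. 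Moreover, the identification $B\widehat{\otimes}_A\M\cong\widehat{B^\circ\otimes_{A^\circ}\M^\circ}\otimes_R L$ that you invoke is precisely the content of the Lemma stated and proved directly after this Proposition in the paper (section \ref{exactloc}), and your sketch of it is in the same spirit: the injectivity of $B^\circ\otimes_{A^\circ}\M^\circ\hookrightarrow B\otimes_A\M$ coming from flatness and the unit-ball identification of \cite[Lemma 2.2]{Andreas1}, followed by completion. One small inaccuracy: you say the unit ball of $B\widehat{\otimes}_A\M$ agrees with $\widehat{B^\circ\otimes_{A^\circ}\M^\circ}$ ``up to $\pi$-power-torsion,'' but since $B^\circ\otimes_{A^\circ}\M^\circ$ is $\pi$-torsion-free here (flatness), there is no torsion and the identification is exact before inverting $\pi$; the caveat is harmless but unnecessary. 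Finally, it is worth observing that your argument never uses the Noetherian hypotheses on $A^\circ$ and $B^\circ$; those are present in Bode's statement (and are used elsewhere in the paper, e.g.\ to equip finitely generated $\Oqhat$-modules with their canonical Banach topology), but flatness alone suffices for strict exactness.
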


Now, we will need the following general result:

\begin{lem}
Suppose that $A$ and $B$ are as in the Proposition and let $\M$ be a $\pi$-adically complete $A^\circ$-module which is $\pi$-torsion free. Then we have an isomorphism
$$
\widehat{B^\circ\otimes_{A^\circ} \M}\otimes_R L\cong B\widehat{\otimes}_A \M_L
$$
and the $B^\circ$-module $B^\circ\otimes_{A^\circ} \M$ is $\pi$-torsion free.
\end{lem}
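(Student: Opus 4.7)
The plan is to first prove the $\pi$-torsion freeness of $N := B^\circ \otimes_{A^\circ} \M$, and then use this to identify $N$ with the unit ball of $B \otimes_A \M_L$ equipped with the tensor-product seminorm, from which the isomorphism follows by Hausdorff completion.

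First I would establish the $\pi$-torsion freeness. Since $\M$ is $\pi$-torsion free, multiplication by $\pi$ fits into a short exact sequence $0 \to \M \xrightarrow{\pi} \M \to \M/\pi\M \to 0$ of $A^\circ$-modules. The flatness of $B^\circ$ over $A^\circ$ hypothesised in Proposition \ref{exactloc} makes $B^\circ \otimes_{A^\circ} -$ exact, so multiplication by $\pi$ remains injective on $N$.

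Next I would identify $N$ with a distinguished lattice inside $B \otimes_A \M_L$. Since tensor product commutes with localisation at $\pi$, there is a natural identification $B \otimes_A \M_L \cong N \otimes_R L$, and by the torsion freeness just obtained the canonical map $N \to B \otimes_A \M_L$ is an injection. The key step is to verify that its image coincides with the unit ball $U$ of the tensor-product seminorm from \cite[2.1.7]{BGR}. The inclusion of $N$ inside $U$ is clear, as simple tensors $b \otimes m$ with $b \in B^\circ$, $m \in \M$ have norm at most $1$. Conversely, for $x \in U$ one uses discreteness of the valuation on $L$ to find a representation $x = \sum_i b_i \otimes m_i$ with each $\norm{b_i} \norm{m_i} \leq 1$; writing $b_i = \pi^{s_i} b_i'$ and $m_i = \pi^{t_i} m_i'$ with $b_i' \in B^\circ$, $m_i' \in \M$ and $\norm{b_i} = |\pi|^{s_i}$, $\norm{m_i} = |\pi|^{t_i}$, one obtains $s_i + t_i \geq 0$, so $b_i \otimes m_i = \pi^{s_i + t_i}(b_i' \otimes m_i')$ lies in $N$.

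Once the identification $N = U$ is in place, the restriction of the tensor-product seminorm to $N$ coincides with the gauge norm of $N$ viewed as unit ball, which is precisely the $\pi$-adic norm. Hence the Hausdorff completion $B \widehat{\otimes}_A \M_L$ is obtained by $\pi$-adically completing $N$ and inverting $\pi$, yielding the desired isomorphism $\widehat{N} \otimes_R L \cong B \widehat{\otimes}_A \M_L$. I expect the main subtlety to be the rescaling argument identifying $U$ with $N$, where the discreteness of the valuation on $L$ is essential; the remaining steps are formal.
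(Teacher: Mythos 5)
Your proof is correct, and it follows a slightly different path from the one in the paper. The paper's argument goes: flatness of $B^\circ$ over $A^\circ$ turns the inclusion $\M\hookrightarrow\M_L$ into an injection $B^\circ\otimes_{A^\circ}\M\hookrightarrow B^\circ\otimes_{A^\circ}\M_L\cong B\otimes_A\M_L$; then one cites \cite[Lemma 2.2]{Andreas1} to identify the image with the unit ball of $B\otimes_A\M_L$ under the tensor-product seminorm; $\pi$-torsion freeness is then read off because the unit ball of a normed $L$-vector space has no torsion. You instead prove torsion-freeness first, by applying $B^\circ\otimes_{A^\circ}-$ to the exact sequence $0\to\M\xrightarrow{\ \pi\ }\M$ (note that $1\otimes\pi$ and $\pi\cdot$ coincide on $N$ since $\pi$ is central), then use the base change $B\otimes_A\M_L\cong N\otimes_R L$ to obtain the injection, and finally give a self-contained rescaling argument for the unit-ball identification rather than invoking Bode's lemma. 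Your rescaling step is fine: since the tensor-product seminorm takes values in $\abs{\pi}^{\Z}\cup\{0\}$, an infimum $\leq 1$ forces the existence of a representation with $\max_i\norm{b_i}\norm{m_i}\leq 1$, after which the term-by-term rescaling $b_i\otimes m_i=\pi^{s_i+t_i}(b_i'\otimes m_i')$ with $s_i+t_i\geq 0$ places $x$ in $N$; together with the trivial reverse inclusion this shows the restriction of the seminorm to $N_L$ is exactly the gauge norm of $N$, so the two completions agree. What your version buys is independence from \cite[Lemma 2.2]{Andreas1} at the cost of spelling out the discreteness argument; what the paper's buys is brevity by delegating that step to the cited lemma. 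Both routes are sound.
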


\begin{proof}
By the discussion in section \ref{funal}, without loss of generality we may assume that the topologies on $A$ and $B$ are given by the $\pi$-adic topologies on $A^\circ$ and $B^\circ$ respectively. In this setting, by flatness, we have an injection
$$
B^\circ\otimes_{A^\circ} \M\hookrightarrow B^\circ\otimes_{A^\circ} \M_L\cong B\otimes_A \M_L
$$
which is a strict injection onto the unit ball of $B\otimes_A \M_L$ by \cite[Lemma 2.2]{Andreas1} (where we give the left hand side the $\pi$-adic topology). Thus we immediately get that $B^\circ\otimes_{A^\circ} \M$ is $\pi$-torsion free, and the required isomorphism is obtained by taking completions.
\end{proof}

Now there is a continuous algebra homomorphism $\Oqhat\to\Oqwhat$ induced from the localisation and this induces a functor $\widehat{f_w}^*:\Mod{\Oqhat}\to\Mod{\Oqwhat}$ with right adjoint $(\widehat{f_w})_*$ given by restriction as above.

\begin{cor} Let $w\in W$. Then:
\begin{enumerate}
\item for any $\pi$-adically complete $\widehat{\A}$-module $\Nn$,
$$
(\widehat{\Aw}\widehat{\otimes}_{\widehat{\A}}\Nn)_L\cong \Oqwhat\widehat{\otimes}_{\Oqhat}\Nn_L.
$$
In particular, if $\Nn\in\widehat{\Ca}$, then $(\widehat{f_w})^*(\Nn_L)\cong (\widehat{f_w})^*(\Nn)\otimes_R L$.
\item the functor $(\widehat{f_w})^*$ defined above is strict exact and sends $\flaghat$ to $\flaghat_w$.
\end{enumerate}
\end{cor}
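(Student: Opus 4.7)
The plan is to deduce both parts from Bode's Proposition and the preceding Lemma, with the crucial inputs being Lemma \ref{ore2}(i) (which gives that $\widehat{\Aw}$ is Noetherian and flat over $\widehat{\A}$) and Corollary \ref{Noetherian} (which gives Noetherianity of $\widehat{\A}$). The continuous algebra homomorphism $\Oqhat \to \Oqwhat$ restricts to the localisation map $\widehat{\A} \to \widehat{\Aw}$ on unit balls, so all the hypotheses of Bode's Proposition are satisfied. This immediately delivers strict exactness of $\widehat{f_w}^*$ on $\Mod{\Oqhat}$ for part (ii).

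For part (i), the preceding Lemma applied directly to $A^\circ=\widehat{\A}$, $B^\circ=\widehat{\Aw}$ gives the desired isomorphism when $\Nn$ is $\pi$-torsion free. For a general $\pi$-adically complete $\Nn$, I would reduce to the torsion-free case by showing that $\pi$-torsion contributes nothing to either side: on the right-hand side it is killed by $\otimes_R L$ in the formation of $\Nn_L$, and on the left-hand side flatness of $\widehat{\Aw}$ over $\widehat{\A}$ ensures that $\widehat{\Aw}\otimes_{\widehat{\A}}\Nn_{\mathrm{tor}}$ is still $\pi$-power torsion and hence vanishes after inverting $\pi$. The "in particular" clause is then a direct unpacking: for $\Nn\in\widehat{\Ca}$, $\widehat{f_w}^*(\Nn)=\widehat{\Aw}\widehat{\otimes}_{\widehat{\A}}\Nn$ by definition (Proposition \ref{ore2}), and tensoring with $L$ matches the Banach completed tensor product by what we just proved.

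For the remaining content of part (ii)---that $\widehat{f_w}^*$ sends $\flaghat$ into $\flaghat_w$---the task is to endow $\Oqwhat\widehat{\otimes}_{\Oqhat}\M$ with a compatible Banach $\OqBhat$-comodule structure for $\M\in\flaghat$. The strategy is to pass through lattices: by Proposition \ref{basechange2}(ii), $\M$ contains a lattice $F_0\M\in\widehat{\Ca}$ with $(F_0\M)_L=\M$; by Proposition \ref{ore2}, $\widehat{\Aw}\widehat{\otimes}_{\widehat{\A}}F_0\M$ carries a canonical $\widehat{\B}$-comodule structure as an object of $\widehat{\Ca^w}$; and by part (i), inverting $\pi$ produces exactly $\Oqwhat\widehat{\otimes}_{\Oqhat}\M$, which thereby acquires the required $\OqBhat$-comodule map. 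That the $\Oqwhat$-action remains a comodule homomorphism is inherited from the same property at the integral level, since the relevant diagrams in $\widehat{\Ca^w}$ remain commutative after applying $-\otimes_R L$.

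The main obstacle is showing that this construction is canonical, i.e.\ independent of the choice of lattice $F_0\M$. I expect this to follow from the observation that any two lattices $F_0\M, F_0'\M\in\widehat{\Ca}$ satisfying $(F_0\M)_L=(F_0'\M)_L=\M$ are commensurable up to powers of $\pi$, so their $w$-localisations differ by a bounded isomorphism that becomes the identity after $\otimes_R L$; combined with naturality of the construction in Proposition \ref{ore2}, this forces the induced Banach comodule structure to agree. Once canonicity is settled, functoriality in $\M$ and the compatibility with morphisms in $\flaghat$ are routine.
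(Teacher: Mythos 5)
Your proposal follows the paper's own route almost exactly: part (i) is the preceding Lemma specialised to $A^\circ=\widehat{\A}$, $B^\circ=\widehat{\Aw}$ (with Lemma \ref{ore2}(i) supplying the Noetherianity and flatness hypotheses of Bode's Proposition), and part (ii) combines strict exactness from that Proposition with the passage through lattices via Propositions \ref{basechange2} and \ref{ore2}. So the core of the argument is correct and identical in approach.

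One caveat concerns the extra care you take with torsion in (i). The preceding Lemma in the paper is stated only for $\pi$-torsion-free modules, and the paper's proof invokes it without comment; you correctly notice this and attempt a reduction. However, as sketched the reduction is not quite airtight: for a general (non-finitely-generated) $\pi$-adically complete $\Nn$ with torsion, the quotient by the torsion submodule need not remain $\pi$-adically complete, and one has to say precisely which Banach-module structure $\Nn_L$ is meant to carry — a claim like ``flatness of $\widehat{\Aw}$ ensures the torsion vanishes after inverting $\pi$'' elides the interaction between the $\pi$-adic completion in $\widehat{\otimes}_{\widehat{\A}}$ and the torsion. In practice this is moot, since every use of the Corollary (including in your own proof of (ii)) passes through lattices $F_0\M$ from Proposition \ref{basechange2}(ii), which are $\pi$-torsion free by construction, so the Lemma applies directly and your reduction step is never exercised. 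The canonicity worry you raise for (ii) is legitimate but resolves more simply than your commensurability argument: the comodule map on $\Oqwhat\widehat{\otimes}_{\Oqhat}\M$ is determined by continuity from its values on the dense image of $\Aw\otimes_\A F_0\M$, and any two choices of lattice induce the same map there because both are localisations of the original comodule map on $\M$.
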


\begin{proof}
Part (i) follows immediately by putting $A=\Oqhat$ and $B=\Oqwhat$ in the above Lemma, and using Lemma \ref{ore2}(i)

For (ii), it follows immeditaly from the Proposition that $(\widehat{f_w})^*$ is strict exact by Lemma \ref{ore2}(i). Now by Proposition \ref{basechange2}, given any $\M\in\flaghat$ we may assume that $\M=\Nn\otimes_R L$ for some $\Nn\in\widehat{\Ca}$. The result therefore follows from (i) and from Proposition \ref{ore2}.
\end{proof}

\subsection{Global sections on $\flaghat_w$}\label{acyclic1} We now investigate exactness properties of global sections on the various localisations $\flaghat_w$. Recall from section \ref{ore} that there is a comodule map $\Delta_w:\Aw\to \Aw\otimes_R \B$ which makes $\Aw\otimes_R \B$ into an $\Aw$-module. After taking $\pi$-adic completion and extending scalars, there is an analgous comodule map $\widehat{\Delta_w}:\Oqwhat\to\ten{\Oqwhat}{\OqBhat}$. Note then that given any Banach $\Oqwhat$-module $\M$, we may give an $\Oqwhat$-module structure to $\ten{\M}{\OqBhat}$ via the map $\widehat{\Delta_w}$, and an $\OqBhat$-comodule structure by coacting on the right factor. Thus $\M\mapsto \ten{\M}{\OqBhat}$ defines a functor from $\Mod{\Oqwhat}\to\flaghat_w$, and this has a left adjoint given by the forgetful functor (this is completely analogous to the adjunction in section \ref{Rflag}). With the same proofs as in Lemma \ref{compflag} and Proposition \ref{basechange2}, we obtain:

\begin{lem} Let $w\in W$. Then:
\begin{enumerate}
\item the category $\flaghat_w$ is quasi-abelian and has enough injectives; and
\item for any $\M\in\flaghat_w$, there exists $F_0\M\in\widehat{\Ca^w}$ such that $\M\cong (F_0\M)_L$.
\end{enumerate}
\end{lem}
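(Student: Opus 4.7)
The plan is to imitate directly the proofs of Lemma \ref{compflag} and Proposition \ref{basechange2}, using the analogous Weyl-group-localised adjunction between $\Mod{\Oqwhat}$ and $\flaghat_w$ given by $\theta^* = $ forgetful and $\theta_* : N \mapsto \ten{N}{\OqBhat}$, where the $\Oqwhat$-action is the tensor action via $\widehat{\Delta_w}$ on the first factor and the $\OqBhat$-coaction is on the second factor. The bijection realising this adjunction is constructed from the comodule map $\rho$ and the counit $\bar{\varepsilon}$ exactly as in the paragraph just before Lemma \ref{compflag}, and a direct check (using that $\widehat{\Delta_w}$ is a comodule homomorphism) shows that $\theta_*$ really does land in $\flaghat_w$.

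For part (i), I would first note that $\flaghat_w$ is additive and that kernels and cokernels exist and are computed on the underlying Banach space (cokernel being the quotient by the closure of the image); these are automatically preserved by both the $\Oqwhat$-action and the $\OqBhat$-coaction. Since the forgetful functor $\flaghat_w \to \ban$ preserves (co)cartesian squares and since strictness is detected in $\ban$, the two quasi-abelian axioms are inherited from $\ban$, as in Lemma \ref{qacs}. For enough injectives, I would repeat the argument of Proposition \ref{injectives}: $\Mod{\Oqwhat}$ has enough injectives by the same Noetherian-Banach-algebra argument as $\Mod{\Oqhat}$, and the right adjoint $\theta_*$ preserves injectives since $\theta^*$ preserves strict monomorphisms (by \cite[Lemma 4.26]{KreBB}). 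Given $\M \in \flaghat_w$, embed $\theta^*(\M)$ strictly into some injective $I \in \Mod{\Oqwhat}$; then the composition $\M \xrightarrow{\rho} \theta_*\theta^*(\M) \xrightarrow{\theta_*(f)} \theta_*(I)$ is a strict monomorphism in $\flaghat_w$ (the first map splits via $1 \widehat{\otimes} \bar{\varepsilon}$ so is strict by Lemma \ref{equiv}, and the second is strict by Lemma \ref{funal}(ii); strictness is preserved under composition by Lemma \ref{injectives}).

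For part (ii), I would again mimic the proof of Proposition \ref{basechange2}(ii). The comodule map $\rho: \M \to \ten{\M}{\OqBhat}$ is a strict injection (same splitting argument), so after renorming I may identify $\M$ with its image inside $\ten{\M}{\OqBhat}$, where the latter carries the tensor $\Oqwhat$-module structure (through $\widehat{\Delta_w}$) and the coaction $1 \widehat{\otimes} \widehat{\Delta}$. Setting $F_0\M := \M \cap (\M^\circ \widehat{\otimes}_R \widehat{\B})$, this is a $\pi$-adically complete $R$-lattice in $\M$; the fact that the action map on $\ten{\M}{\OqBhat}$ has operator norm at most $1$ means that $\widehat{\Aw}$ preserves $\M^\circ \widehat{\otimes}_R \widehat{\B}$, and similarly the coaction $1 \widehat{\otimes} \widehat{\Delta}$ restricts to a map $F_0\M \to F_0\M \widehat{\otimes}_R \widehat{\B}$. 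This places $F_0\M$ in $\widehat{\Ca^w}$, and by construction $(F_0\M)_L \cong \M$.

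Neither step presents a genuine obstacle; the main thing to get right is the bookkeeping of which side of the tensor product carries which structure under $\widehat{\Delta_w}$, and the verification that the various norms introduced (operator norm $\leq 1$ for the action, the subspace norm on $\M \hookrightarrow \ten{\M}{\OqBhat}$) behave as expected so that the relevant $R$-lattices are stable under the module and comodule structures. Both of these are essentially checked in the proofs of Lemma \ref{compflag} and Proposition \ref{basechange2}, and carry over verbatim after replacing $\Oqhat$ by $\Oqwhat$ and $\widehat{\A}$ by $\widehat{\Aw}$.
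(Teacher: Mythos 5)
Your proposal is correct and matches the paper's argument: the paper's proof of this lemma consists precisely of the one-line remark ``With the same proofs as in Lemma \ref{compflag} and Proposition \ref{basechange2}, we obtain,'' and you have spelled out exactly the intended substitutions ($\Oqhat \rightsquigarrow \Oqwhat$, $\widehat{\A}\rightsquigarrow\widehat{\Aw}$, the adjunction via $\widehat{\Delta_w}$) together with the norm bookkeeping that makes the lattice $F_0\M$ stable under the module and comodule structures. One minor inaccuracy worth noting: the ``enough injectives'' argument for $\Mod{\Oqwhat}$ does not actually rely on Noetherianity of $\Oqwhat$ (Lemma 4.25 of \cite{KreBB}, as invoked in Proposition \ref{injectives}, applies to arbitrary $L$-Banach algebras), though $\Oqwhat$ does happen to be Noetherian by Lemma \ref{ore2}(i).
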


Now we define the global section functor on $\flaghat_w$ to be the composite $\Gamma\circ(\widehat{f_w})_*$. Analogously to Lemma \ref{Cech}, we have:

\begin{prop} For any $w\in W$, the global section functor $\Gamma\circ(\widehat{f_w})_*$ is strict exact and objects of $\flaghat_w$ have acyclic image under $(\widehat{f_w})_*$.
\end{prop}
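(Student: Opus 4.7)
The plan is to reduce both assertions to the algebraic statement of Lemma \ref{Cech}, which establishes exactness of $\Gamma$ on $\Ca^w$, by passing through integral lattices and reducing modulo powers of $\pi$. Given a strictly exact sequence
\[ 0 \to \M' \to \M \to \M'' \to 0 \]
in $\flaghat_w$, I would invoke Lemma \ref{acyclic1}(ii) to pick $\Nn \in \widehat{\Ca^w}$ with $\Nn_L = \M$, then set $\Nn' := \Nn \cap \M'$ and take $\Nn''$ to be the image of $\Nn$ in $\M''$, which equals $\Nn/\Nn'$. Strictness of the original sequence ensures $\Nn'$ and $\Nn''$ are unit balls for norms equivalent to the subspace and quotient norms, hence $\pi$-adically complete and $\pi$-torsion free, while inheriting $\widehat{\Aw}$-module and $\widehat{\B}$-comodule structures from $\Nn$. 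This puts both in $\widehat{\Ca^w}$ and yields a short exact sequence $0 \to \Nn' \to \Nn \to \Nn'' \to 0$ there.

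Since $\Nn''$ is $\pi$-torsion free, reduction modulo $\pi^n$ preserves exactness, producing a short exact sequence in $\Ca^w$ for every $n \geq 1$. Applying Lemma \ref{Cech} yields exact sequences
\[ 0 \to \Gamma(\Nn'/\pi^n\Nn') \to \Gamma(\Nn/\pi^n\Nn) \to \Gamma(\Nn''/\pi^n\Nn'') \to 0. \]
The same exactness applied to each $0 \to \Nn^{(-)}/\pi \to \Nn^{(-)}/\pi^{n+1} \to \Nn^{(-)}/\pi^n \to 0$ shows that the transition maps in each inverse system are surjective, so Mittag-Leffler applies and $\varprojlim$ preserves exactness. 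Combining with Proposition \ref{completion1} (which identifies $\Gamma(\Nn) \cong \varprojlim \Gamma(\Nn/\pi^n\Nn)$, and similarly for $\Nn', \Nn''$) and then tensoring with the exact functor $- \otimes_R L$, I obtain an algebraically exact sequence
\[ 0 \to \Gamma(\M') \to \Gamma(\M) \to \Gamma(\M'') \to 0, \]
where the identification $\Gamma(\Nn)_L \cong \Gamma(\M)$ comes from extending Proposition \ref{basechange2}(i) to $\widehat{\Ca^w}$ (the original proof applies verbatim). Algebraic exactness of a sequence of Banach spaces is equivalent to strict exactness by the Open Mapping Theorem, so this proves strict exactness of $\Gamma \circ (\widehat{f_w})_*$.

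For the acyclicity statement, the functor $(\widehat{f_w})_*$ preserves injectives since it is right adjoint to the strict exact functor $(\widehat{f_w})^*$ of Corollary \ref{exactloc}(ii). Hence any injective resolution $\M \to I^\bullet$ in $\flaghat_w$ pushes forward to an injective resolution of $(\widehat{f_w})_*(\M)$, so
\[ R^i\Gamma((\widehat{f_w})_*(\M)) \cong R^i(\Gamma \circ (\widehat{f_w})_*)(\M), \]
which vanishes for $i > 0$ by the strict exactness just proved. The main technical obstacle will be verifying that $\Nn'$ and $\Nn''$ genuinely sit in $\widehat{\Ca^w}$ with compatible structures---in particular, that the coaction $\rho_{\Nn}$ restricts to a map $\Nn' \to \Nn' \widehat{\otimes}_R \widehat{\B}$. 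This can be checked modulo $\pi^n$, where Theorem \ref{comodcat} identifies $\B$-subcomodules of $\Nn/\pi^n\Nn$ with integrable $U^{\text{res}}(\bo)$-submodules; stability of $\Nn'/\pi^n\Nn'$ under $U^{\text{res}}(\bo)$ then follows from the stability of $\M'$ under $\widehat{U^{\text{res}}(\bo)}_L$.
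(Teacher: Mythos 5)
Your proof is correct and follows essentially the same strategy as the paper: pass to $\pi$-torsion-free lattices in $\widehat{\Ca^w}$, reduce modulo $\pi^n$ to land in $\Ca^w$ where Lemma \ref{Cech} gives exactness of $\Gamma$, verify Mittag-Leffler to pass to the inverse limit, then extend scalars; and then deduce acyclicity from the fact that $(\widehat{f_w})_*$ preserves injectives as the right adjoint of the strict exact functor $(\widehat{f_w})^*$. The only cosmetic difference is that the paper equips $\M$, $\K$, $\Nn$ directly with equivalent norms so that the unit balls literally are the lattices, and verifies the comodule structure on them by observing that the coaction has norm $\leq 1$ (rather than arguing mod $\pi^n$ as you sketch at the end); and the paper invokes its Lemma \ref{MLcond} for the Mittag-Leffler step, whereas you rederive the surjectivity of the transition maps directly from exactness of $\Gamma$ on $\Ca^w$ — both arguments amount to the same thing.
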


\begin{proof} Suppose we have a strict short exact sequence
$$
0\to\K\to\M\to\Nn\to 0
$$
in $\flaghat_w$. By part (ii) of the Lemma, we may assume that the unit ball $\M^\circ$ of $\M$ is in $\widehat{\Ca^w}$. By strictness, we may assume that $\K$ and $\Nn=\M/\K$ are equipped with the subspace and quotient topologies respectively. Note that from these assumptions, the comodule maps on $\M$, $\K$ and $\Nn$ all have norm at most 1, and so the unit balls are all in $\widehat{\Ca^w}$. Furthermore, we have a short exact sequence
$$
0\to \K^\circ\to\M^\circ\to \Nn^\circ\to 0
$$
which induces short exact sequences
$$
0\to \K^\circ/\pi^n\K^\circ\to\M^\circ/\pi^n\M^\circ\to \Nn^\circ/\pi^n\Nn^\circ\to 0
$$
for every $n\geq 1$ by strictness. Now $\K^\circ/\pi^n\K^\circ\in\Ca^w$ and so it is acyclic by Lemma \ref{Cech}. Hence we get a tower of short exact sequences
$$
0\to \Gamma(\K^\circ/\pi^n\K^\circ)\to\Gamma(\M^\circ/\pi^n\M^\circ)\to \Gamma(\Nn^\circ/\pi^n\Nn^\circ)\to 0.
$$
Moreover, since $\K^\circ$ has no $\pi$-torsion, we may apply Lemma \ref{MLcond} below to obtain that $(\Gamma(\K^\circ/\pi^n\K^\circ))_{n\geq 1}$ satisfies the Mittag-Leffler condition. Thus by applying Proposition \ref{completion1}, we can pass to the inverse limit to get a short exact sequence
$$
0\to\Gamma(\K^\circ)\to\Gamma(\M^\circ)\to\Gamma(\Nn^\circ)\to 0.
$$
Extending scalars to $L$ and applying Proposition \ref{basechange2}(i), we get that
$$
0\to\Gamma(\K)\to\Gamma(\M)\to\Gamma(\Nn)\to 0.
$$
is algebraically exact, hence strict exact.

The last part follows exactly as in Lemma \ref{Cech}, using the fact that $(\widehat{f_w})^*$ is strict exact by Corollary \ref{exactloc}(ii) and so its right adjoint preserves injectives by \cite[Lemma 4.26]{KreBB}.
\end{proof}

\subsection{Lemma}\label{MLcond} \emph{Suppose that $\M\in\widehat{\Ca}$ is $\pi$-torsion free and that $\M/\pi\M$ is $\Gamma$-acyclic. Then for every $n\geq 1$ the natural map
$$
\Gamma(\M/\pi^{n+1}\M)\to \Gamma(\M/\pi^n\M)
$$
is surjective. In particular, $(\Gamma(\M/\pi^n\M))_{n\geq 1}$ satisfies the Mittag-Leffler condition.}

\begin{proof} Since $\M$ has no $\pi$-torsion, we have a short exact sequence
$$
0\to \M/\pi\M\overset{\pi^n}{\to} \M/\pi^{n+1}\M\to\M/\pi^n\M\to 0
$$
in $\Ca$ for each $n\geq 1$. Thus by acyclicity of $\M/\pi\M$ we get that the maps
$$
\Gamma(\M/\pi^{n+1}\M)\to\Gamma(\M/\pi^n\M)
$$
are surjective for every $n\geq 1$, as required.
\end{proof}

\subsection{\v{C}ech cohomology}\label{Cech2} We are now in a position to construct a \v{C}ech complex and prove that it computes the cohomology. Just as in \ref{Cech} and \ref{ore2}, we may define an augmented complex $C^{\text{aug}}$ and an augmented standard complex $\check{C}^{\text{aug}}$ as well as their non-augmented counter-parts $C$ and $\check{C}$.

\begin{thm} For any $\M\in\flaghat$, the complex $C^{\text{aug}}(\M)$ is strict exact and is an acyclic resolution of $\M$ in $\flaghat$.
\end{thm}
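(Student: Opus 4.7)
The plan is to reduce the statement to the integral category $\widehat{\Ca}$ and then to its reductions modulo $\pi^n$, where Proposition \ref{Cech} already gives exactness, and finally to lift everything back to $\flaghat$ through inverse limits and extension of scalars.

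First, using Proposition \ref{basechange2}(ii) I would pick an integral lattice $F_0\M \in \widehat{\Ca}$ with $(F_0\M)_L \cong \M$. Iteratively applying Corollary \ref{exactloc}(i) together with Proposition \ref{ore2}, each term $\sigma_{\mathbf{i}}(\M)$ of $C^{\text{aug}}(\M)$ becomes naturally isomorphic to $\sigma_{\mathbf{i}}(F_0\M) \otimes_R L$, where on the right $\sigma_{\mathbf{i}}$ denotes the corresponding composition of endofunctors of $\widehat{\Ca}$ from section \ref{ore2}. Hence $C^{\text{aug}}(\M) \cong C^{\text{aug}}(F_0\M) \otimes_R L$ as complexes. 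By Remark \ref{ore2}, $C^{\text{aug}}(F_0\M) \cong \varprojlim_n C^{\text{aug}}(F_0\M/\pi^n F_0\M)$, and each complex in the inverse system is exact in $\Ca$ by Proposition \ref{Cech}. The functors $\sigma_i$ are exact on $\Ca$, since $\widehat{f_{w_i}}^*$ is exact by Lemma \ref{ore2}(i) (flatness of $\widehat{\Aw}$ over $\widehat{\A}$) and the forgetful functors are exact, so the transition maps of the inverse system $\{\sigma_{\mathbf{i}}(F_0\M/\pi^n F_0\M)\}_n$ remain surjective and the Mittag-Leffler condition holds trivially at every term. Consequently the inverse limit preserves exactness, and $C^{\text{aug}}(F_0\M)$ is exact in $\widehat{\Ca}$. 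Tensoring with $L$, which is exact, then yields that $C^{\text{aug}}(\M)$ is algebraically exact in $\flaghat$. As noted in the Example following Proposition \ref{injectives}, in $\ban$ (and therefore in $\flaghat$) algebraic exactness forces each image to coincide with a kernel, hence to be closed, so the Banach Open Mapping Theorem guarantees that every map in the complex is strict, and $C^{\text{aug}}(\M)$ is strict exact.

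For acyclicity of the terms of the non-augmented complex $C(\M)$, let $\mathbf{i} = (i_1, \ldots, i_k)$ and write $\Nn := \sigma_{(i_2,\ldots,i_k)}(\M) \in \flaghat$. Then $\sigma_{\mathbf{i}}(\M) = (\widehat{f_{w_{i_1}}})_*\bigl(\widehat{f_{w_{i_1}}}^*(\Nn)\bigr)$, and $\widehat{f_{w_{i_1}}}^*(\Nn) \in \flaghat_{w_{i_1}}$ by Corollary \ref{exactloc}(ii). Proposition \ref{acyclic1} then yields that $(\widehat{f_{w_{i_1}}})_*\widehat{f_{w_{i_1}}}^*(\Nn)$ is $\Gamma$-acyclic, regardless of the structure of $\Nn$, so every term of $C(\M)$ is acyclic as required. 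The main technical obstacle is the inverse-limit step used to transfer exactness from $\Ca$ up to $\widehat{\Ca}$, but as explained this is handled cleanly by the trivial Mittag-Leffler condition coming from the surjectivity of the transition maps, which in turn relies crucially on the exactness of the Weyl group localisation functors established in Corollary \ref{exactloc}.
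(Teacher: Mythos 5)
Your proposal is correct and follows essentially the same route as the paper: pass to an integral lattice $F_0\M\in\widehat{\Ca}$, use Corollary \ref{exactloc}(i) to identify $C^{\text{aug}}(\M)$ with $C^{\text{aug}}(F_0\M)\otimes_R L$, then identify $C^{\text{aug}}(F_0\M)$ with the inverse limit of the exact complexes $C^{\text{aug}}(F_0\M/\pi^n F_0\M)$ and conclude by Mittag-Leffler, finally handling acyclicity of the terms via Proposition \ref{acyclic1}. One small citation slip: for exactness of $\sigma_i$ on the uncompleted category $\Ca$ (needed for surjectivity of the transition maps), the relevant fact is that Ore localization $f_{w_i}^*$ is exact (as in Lemma \ref{Cech}), not the flatness of $\widehat{\Aw}$ over $\widehat{\A}$ from Lemma \ref{ore2}(i), which concerns the completed functors.
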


\begin{proof}
The fact that the terms in $C(\M)$ are $\Gamma$-acyclic follows from Proposition \ref{acyclic1}. Moreover the last part follows from the first part by Proposition \ref{derived}(iii). So we are left to show that $C^{\text{aug}}(\M)$ is strict exact. By Proposition \ref{basechange2} we may pick $F_0\M\in\widehat{\Ca}$ such that $\M=(F_0\M)_L$. Also, for any $\Nn\in\widehat{\Ca}$ we have that $(\widehat{f_w})_*(\M)\cong (\widehat{f_w})_*(\Nn)\otimes_R L$ by Corollary \ref{exactloc}(i). Thus we see that
$$
C^{\text{aug}}(\M)=C^{\text{aug}}(F_0\M)\otimes_R L
$$
and it will suffice to show that $C^{\text{aug}}(F_0\M)$ is exact (algebraically).

Now we have that
$$
C^{\text{aug}}(F_0\M)\cong \varprojlim C^{\text{aug}}(F_0\M/\pi^nF_0\M)
$$
by Remark \ref{ore2}. However the complexes $C^{\text{aug}}(F_0\M/\pi^nF_0\M)$ are exact for all $n\geq 1$ by Lemma \ref{Ca} and Proposition \ref{Cech}. Moreover the maps
$$
C^{\text{aug}}(F_0\M/\pi^{n+1}F_0\M)\to C^{\text{aug}}(F_0\M/\pi^nF_0\M)
$$
are all surjective, so the inverse system of complexes satisfies the Mittag-Leffler condition. The induced maps between the cohomology groups of thoses complexes, which are all zero, trivially also satisfy the Mittag-Leffler condition. Hence the result follows from \cite[Proposition 0.13.2.3]{EGAIII.1}.
\end{proof}

From this we may deduce our promised Theorem \ref{thmB}:

\begin{proof}[Proof of Theorem \ref{thmB}]
This now follows immediately from the previous Theorem by Proposition \ref{derived}(iii).
\end{proof}

\begin{cor} The global section functor on $\flaghat$ has finite cohomological dimension. More specifically, $R^i\Gamma=0$ for $i>N+1$.
\end{cor}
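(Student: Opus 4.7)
The plan is to combine Theorem \ref{thmB} with the cohomological dimension bound on the integral category. By Theorem \ref{thmB}, $R^i\Gamma(\M) = LH^i(\check{C}(\M))$, and by Lemma \ref{qacs}(iii) this vanishes precisely when $\check{C}(\M)$ is strictly exact at degree $i$. Because the terms of $\check{C}(\M)$ are Banach spaces and the differentials are continuous linear maps, the Banach Open Mapping Theorem guarantees that algebraic exactness (i.e. $\ker = \im$ as $L$-vector subspaces) forces closed image, hence strict exactness. So the task reduces to showing that $\check{C}(\M)$ is algebraically exact in degrees greater than $N+1$.

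By Proposition \ref{basechange2}, I may write $\M = (F_0\M)_L$ with $F_0\M \in \widehat{\Ca}$. Using Remark \ref{ore2}, $\check{C}(F_0\M) \cong \varprojlim_n \check{C}(F_0\M/\pi^n F_0\M)$, and since extending scalars along $R \to L$ is exact, it suffices to prove that $\check{C}(F_0\M)$ has vanishing cohomology in degrees $> N+1$. For each $n$, the quotient $F_0\M/\pi^n F_0\M$ lies in $\Ca$, so by Proposition \ref{Cech} together with Proposition \ref{line}(iv) we have $H^i(\check{C}(F_0\M/\pi^n F_0\M)) = R^i\Gamma(F_0\M/\pi^n F_0\M) = 0$ for all $i > N$.

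The next step is to pass to the inverse limit via a Milnor-type exact sequence
$$
0 \to R^1\varprojlim_n H^{i-1}\bigl(\check{C}(F_0\M/\pi^n F_0\M)\bigr) \to H^i\bigl(\check{C}(F_0\M)\bigr) \to \varprojlim_n H^i\bigl(\check{C}(F_0\M/\pi^n F_0\M)\bigr) \to 0.
$$
This requires the transition maps in the tower of complexes to be level-wise surjective, which is the main technical point. It follows from the fact that the kernel of $F_0\M/\pi^{n+1}F_0\M \twoheadrightarrow F_0\M/\pi^n F_0\M$ is isomorphic to $F_0\M/\pi F_0\M \in \Ca$, that each functor $\sigma_{\mathbf{i}}$ is exact as a composition of the exact functors $f_w^*$ and $(f_w)_*$, and that $\sigma_{\mathbf{i}}(F_0\M/\pi F_0\M)$ is $\Gamma$-acyclic by Lemma \ref{Cech}, so that $\Gamma$ preserves the short exact sequence. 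Given this, for any $i > N+1$ both $i$ and $i-1$ exceed $N$, so both the $\varprojlim$ and the $R^1\varprojlim$ terms vanish, yielding $H^i(\check{C}(F_0\M)) = 0$.

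Thus $\check{C}(\M) = \check{C}(F_0\M) \otimes_R L$ is algebraically exact in degrees $> N+1$, hence strictly exact, hence $R^i\Gamma(\M) = 0$ for $i > N+1$. The $+1$ in the bound reflects precisely the contribution of the $R^1\varprojlim$ term — the only obstacle to pushing the bound down to $N$.
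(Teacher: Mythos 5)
Your argument is correct and takes essentially the same route as the paper: reduce to $\check{C}(F_0\M)=\varprojlim_n\check{C}(F_0\M/\pi^nF_0\M)$, note that each term of the tower has vanishing cohomology in degrees $>N$ (Propositions \ref{Cech} and \ref{line}(iv)), establish level-wise surjectivity of the transition maps via $\Gamma$-acyclicity of the slice, and absorb the $R^1\varprojlim$ into the $+1$. The paper cites \cite[Proposition 0.13.2.3]{EGAIII.1} and packages the surjectivity step into Lemma \ref{MLcond} applied to the torsion-free terms $\Nn_i$ of $C(F_0\M)$, whereas you invoke the Milnor sequence directly and obtain surjectivity by applying the exact functors $\sigma_{\mathbf{i}}$ to the exact sequence $0\to F_0\M/\pi\to F_0\M/\pi^{n+1}\to F_0\M/\pi^{n}\to 0$ — but these are the same computation.
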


\begin{proof} Take $\M\in\flaghat$. By the Theorem, the cohomology $R\Gamma(\M)$ is computed by the complex $\check{C}(\M)$. Now let $F_0\M\in\widehat{\Ca}$ be the lattice in $\M$ given by Proposition \ref{basechange2}, so that $\check{C}(\M)=\check{C}(F_0\M)\otimes_R L$, and we have
$$
\check{C}(F_0\M)\cong \varprojlim \check{C}(F_0\M/\pi^nF_0\M).
$$
Now $C(F_0\M)$ is a complex of the form
$$
C(F_0\M):\Nn_1\to \Nn_2\to\cdots
$$
such that each $\Nn_i$ is $\pi$-torsion free by Lemma \ref{exactloc} and, for every $n\geq 1$, each $\Nn_i/\pi^n\Nn_i$ is $\Gamma$-acyclic by Lemma \ref{Cech}. Hence the complexes
$$
\check{C}(F_0\M/\pi^nF_0\M): \Gamma (\Nn_1/\pi^n \Nn_1)\to\Gamma (\Nn_2/\pi^n \Nn_2)\to\cdots 
$$
satisfy the Mittag-Leffler condition by Lemma \ref{MLcond}. Moreover, for every $n\geq 1$, the $i$-th cohomology group of $\check{C}(F_0\M/\pi^nF_0\M)$ is zero for $i>N$ by Proposition \ref{Cech} and Proposition \ref{line}(iii). Thus the induced maps
$$
R^i\Gamma(F_0\M/\pi^{n+1}F_0\M)\to R^i\Gamma(F_0\M/\pi^nF_0\M)
$$
trivially satisfy the Mittag-Leffler condition for $i>N$, and we may therefore invoke \cite[Proposition 0.13.2.3]{EGAIII.1} to obtain that the complex $\check{C}(F_0\M)$ is exact in degrees bigger than $N+1$. Hence the complex $\check{C}(\M)$ is strict exact in degrees bigger than $N+1$ and the result now follows by Lemma \ref{derived}(iii).
\end{proof}

\section{The Beilinson-Bernstein Theorem for $\widehat{\Dflag}$}

In this Section, we define suitable notions of $D$-modules on our analytic quantum flag varieties and apply the results from the previous Sections to obtain a Beilinson-Bernstein localisation theorem.

Recall the notion of a module algebra over a Hopf algebra from section \ref{recap2}. If $H$ denotes a Hopf $R$-algebra, then its $\pi$-adic completion $\widehat{H}$ may be thought of as a Hopf algebra-like object, with maps $\widehat{\Delta}:\widehat{H}\to\tenR{H}{H}$, $\widehat{\varepsilon}:\widehat{H}\to R$ and $\widehat{S}:\widehat{H}\to \widehat{H}$ satisfying the usual axioms. We will then say that an $R$-algebra $A$ which is also an $\widehat{H}$-module is an $\widehat{H}$-module algebra if, when viewed as an $H$-module, it is an $H$-module algebra.

\subsection{The category $\widehat{\Da^\lambda}$}\label{Dhatmod} We now define those elements of $\widehat{\Ca}$ which play the role of $D$-modules. Recall that, by Lemma \ref{comod2}, if $\M$ is a $\widehat{\B}$-comodule then it is a $\widehat{U^{\text{res}}(\bo)}$-module. Also note that the inclusion $U^{\geq 0}\subseteq U^{\text{res}}(\bo)$ induces an $R$-algebra homomorphism $\widehat{U^{\geq 0}}\to\widehat{U^{\text{res}}(\bo)}$ so that $\widehat{U^{\text{res}}(\bo)}$-modules are naturally $\widehat{U^{\geq 0}}$-modules.

Since $\A$ is a $U$-module algebra and since $\pi^a U$ is a Hopf ideal in $U$ for all $a\geq 1$, $\A/\pi^a\A$ is a $U/\pi^a U$-module algebra for all $a\geq 1$ and thus $\widehat{\A}$ is a $\widehat{U}$-module algebra. Hence since $\widehat{\D}=\tenR{\widehat{\A}}{\widehat{U}}$ as an $R$-module, we see that we may think of $\widehat{\D}$ as the smash product algebra of $\widehat{\A}$ and $\widehat{U}$. Moreover it is a $\widehat{U}$-module algebra. Similarly to section \ref{Verma}, we then see that $\widehat{\D}$ is a $\widehat{U^{\text{res}}}$-module algebra since $\D$ is a $U^{\text{res}}$-module algebra. Finally observe that the inclusion $U^{\geq 0}\to \D$ induces a map $\widehat{U^{\geq 0}}\to\widehat{\D}$, so that every $\widehat{\D}$-module is naturally a $\widehat{U^{\geq 0}}$-module.

\begin{definition} Let $\lambda\in T_P^R$. We define the category $\widehat{\Da^\lambda}$ to have objects triples $(\M, \alpha, \beta)$ where $\M$ is a $\pi$-adically complete $R$-module, $\alpha: \tenR{\widehat{\D}}{\M}\to\M$ is a $\widehat{\D}$-module structure, and $\beta:\M\to\tenR{\M}{\B}$ is a $\widehat{\B}$-comodule structure. By the above this induces a $\widehat{U^{\text{res}}(\bo)}$-module structure on $\M$, which we also denote by $\beta$. We require these to satisfy the following:
\begin{enumerate}
\item the $\widehat{U^{\geq 0}}$-actions on $\M\otimes_R R_\lambda$ given by $\beta\otimes \lambda$ and $\alpha\vert_{U^{\geq 0}}\otimes 1$ are equal; and
\item the action map $\alpha:\tenR{\D}{\M}\to \M$ is $\widehat{U^{\text{res}}(\bo)}$-linear.
\end{enumerate}
The morphisms are the $R$-module maps which are both $\widehat{\B}$-comodule homomorphisms and $\widehat{\D}$-module homomorphisms. As usual we call $\M\in \widehat{\Da^\lambda}$ \emph{coherent} if it is finitely generated as $\widehat{\D}$-module, and denote by $\coh(\widehat{\Da^\lambda})$ the full subcategory of such objects.
\end{definition}

As always there is a forgetful functor $\widehat{\Da^\lambda}\to\widehat{\Ca}$ and we define the global sections of $\M\in \widehat{\Da^\lambda}$ to be its global sections as an object of $\widehat{\Ca}$.

As we saw earlier that the $\pi$-adic completion functor sends the category $\Ca$ to the category $\widehat{\Ca}$, it is straightforward to see that $\pi$-adic completion will also send the category $\Da^\lambda$ to $\widehat{\Da^\lambda}$. In particular we have a ``structure sheaf'' $\widehat{\D^\lambda}$. We now check that it represents global sections, which in particular gives that $\Gamma(\widehat{\D^\lambda})$ is a ring.

\begin{lem} Let $\M\in \widehat{\Da^\lambda}$. Then $\Gamma(\M)\cong\text{\emph{Hom}}_{\widehat{\Da^\lambda}}(\widehat{\D^\lambda},\M)$.
\end{lem}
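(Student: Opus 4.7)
The plan is to reduce the representability of $\Gamma$ on $\widehat{\Da^\lambda}$ to its counterpart on the uncompleted category $\Da^\lambda$ (recalled in section~\ref{Verma}), by exploiting the fact that both $\widehat{\D^\lambda}$ and any $\M \in \widehat{\Da^\lambda}$ are controlled by their reductions modulo $\pi^n$.

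First I would establish an analogue of Corollary~\ref{comod2} and Proposition~\ref{completion1} in the setting of $\widehat{\Da^\lambda}$: for any $\Nn, \M \in \widehat{\Da^\lambda}$, there is a canonical isomorphism
$$
\Hom_{\widehat{\Da^\lambda}}(\Nn, \M) \cong \varprojlim_n \Hom_{\Da^\lambda}(\Nn/\pi^n\Nn, \M/\pi^n\M).
$$
The argument is essentially a repetition of those earlier results: each $\Nn/\pi^n\Nn$ and $\M/\pi^n\M$ inherits a $\D$-module and $\B$-comodule structure from the $\widehat{\D}$-action and $\widehat{\B}$-coaction (with the compatibility conditions (i) and (ii) descending modulo $\pi^n$ since they are equalities of continuous maps between $\pi$-adically complete objects), a morphism in $\widehat{\Da^\lambda}$ induces a compatible system of morphisms in $\Da^\lambda$, and conversely any such compatible system assembles by inverse limits into a unique morphism in $\widehat{\Da^\lambda}$, the axioms all being satisfied modulo $\pi^n$ for every $n$.

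Next, I would specialize to $\Nn = \widehat{\D^\lambda}$. Since $\widehat{\D^\lambda}/\pi^n\widehat{\D^\lambda} \cong \D^\lambda/\pi^n\D^\lambda$ and since any $\Da^\lambda$-morphism $\D^\lambda \to \M/\pi^n\M$ automatically factors through $\D^\lambda/\pi^n\D^\lambda$ (its target being killed by $\pi^n$), this yields
$$
\Hom_{\Da^\lambda}(\D^\lambda/\pi^n\D^\lambda, \M/\pi^n\M) \cong \Hom_{\Da^\lambda}(\D^\lambda, \M/\pi^n\M) \cong \Gamma(\M/\pi^n\M),
$$
where the last isomorphism uses the representability of $\Gamma$ on $\Da^\lambda$ by $\D^\lambda$ discussed in section~\ref{Verma}. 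Combining with the identification $\Gamma(\M) \cong \varprojlim_n \Gamma(\M/\pi^n\M)$ from Proposition~\ref{completion1} (valid since $\Gamma$ on $\widehat{\Da^\lambda}$ is by definition $\Gamma$ of the underlying object of $\widehat{\Ca}$), the result follows.

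The only real work lies in the first step, and it is a routine if tedious verification that inverse limits of compatible systems of morphisms in $\Da^\lambda$ preserve the $\widehat{\D}$-module structure, the $\widehat{\B}$-comodule structure, and the conditions (i) and (ii). I do not expect any genuine obstacle here, since the category $\widehat{\Da^\lambda}$ is by design entirely controlled by its reductions modulo $\pi^n$.
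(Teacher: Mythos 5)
Your proof is correct and follows essentially the same route as the paper: both establish an inverse-limit description of $\Hom_{\widehat{\Da^\lambda}}$ in terms of $\Hom_{\Da^\lambda}$ modulo $\pi^n$ (the paper cites this as "the same argument as in the proof of Proposition \ref{completion1}"), then combine it with the representability of $\Gamma$ on $\Da^\lambda$ by $\D^\lambda$, the factoring of maps $\D^\lambda\to\M/\pi^n\M$ through $\D^\lambda/\pi^n\D^\lambda$, and the identification $\Gamma(\M)\cong\varprojlim\Gamma(\M/\pi^n\M)$. The only difference is presentational: you state the inverse-limit identification as a general lemma first, whereas the paper chains the isomorphisms directly.
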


\begin{proof}
By Proposition \ref{completion1} we have isomorphisms
\begin{align*}
\Gamma(\M)\cong \varprojlim \Gamma(\M/\pi^a \M)&\cong \varprojlim \Hom_{\Da^\lambda}(\D^\lambda, \M/\pi^a \M)\\
&\cong \varprojlim \Hom_{\Da^\lambda}(\D^\lambda/\pi^a\D^\lambda, \M/\pi^a \M)
\end{align*}
since global sections in $\Da^\lambda$ are represented by $\D^\lambda$. Now the same argument as in the proof of Proposition \ref{completion1} shows that the latter is isomorphic to $\Hom_{\widehat{\Da^\lambda}}(\widehat{\D^\lambda},\M)$.
\end{proof}

\subsection{Torsion-free coherent modules}\label{generators} Recall that the functor $M\mapsto M(\mu)$ sends objects in $\Da^\lambda$ to $\Da^{\lambda+\mu}$. Given $\M\in\widehat{\Ca}$ and $\mu\in T_P^R$, we define $\M(\mu):=\M\otimes_R R_{-\mu}$ which belongs to $\widehat{\Ca}$ with the tensor comodule structure (since $R_{-\mu}$ is $\pi$-adically complete, it is a $\widehat{\B}$-comodule).

\begin{notation} Given $\M\in \widehat{\Ca}$, we write $\gr_0 \M:=\M/\pi\M\in\Ca$ (by Lemma \ref{Ca}). Following \cite[Section 2.7]{Wadsley1} we call $\gr_0 \M$ the \emph{slice} of $\M$.
\end{notation}

\begin{thm} Assume $p$ is a good prime. Let $\M\in$\emph{coh}$(\widehat{\Da^\lambda})$ be $\pi$-torsion free. Then there is a surjection
$$
\left(\widehat{\D^{\lambda+\mu}}(-\mu)\right)^a\twoheadrightarrow\M
$$
for some $a\geq 1$ and some $\mu\in P$.
\end{thm}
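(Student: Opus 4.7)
The plan is to twist $\M$ by a sufficiently dominant weight so that its slice becomes generated by finitely many global sections and $\Gamma$-acyclic, then lift these generators to $\M(\mu)$ using a Mittag-Leffler argument, and finally conclude by complete Nakayama.

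First I would note that $\M(\mu) = \M \otimes_R R_{-\mu}$ is still coherent and $\pi$-torsion free (since $R_{-\mu}$ is free of rank one over $R$), and that $\gr_0 \M(\mu) = (\gr_0\M)(\mu)$ in $\coh(\Da^{\lambda+\mu})$. Since $\pi \gr_0 \M = 0$ and $p$ is assumed to be a good prime, Theorem \ref{twists} applies to $\gr_0 \M$: for all $\mu \in P$ sufficiently dominant, $(\gr_0\M)(\mu)$ is both $\Gamma$-acyclic and generated by finitely many global sections $\bar s_1,\ldots,\bar s_a \in \Gamma(\gr_0\M(\mu))$. Fix such a $\mu$.

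Next I would lift these sections. Because $\M(\mu)$ is $\pi$-torsion free and $\gr_0 \M(\mu)$ is $\Gamma$-acyclic, Lemma \ref{MLcond} gives that the inverse system $(\Gamma(\M(\mu)/\pi^n \M(\mu)))_{n \geq 1}$ has surjective transition maps, hence is Mittag-Leffler. Combined with the identification $\Gamma(\M(\mu)) \cong \varprojlim \Gamma(\M(\mu)/\pi^n\M(\mu))$ from Proposition \ref{completion1}, it follows that the reduction map
\[
\Gamma(\M(\mu)) \longrightarrow \Gamma(\gr_0 \M(\mu))
\]
is surjective. Lift $\bar s_1,\ldots,\bar s_a$ to $s_1,\ldots,s_a \in \Gamma(\M(\mu))$. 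Since $\widehat{\D^{\lambda+\mu}}$ represents $\Gamma$ on $\widehat{\Da^{\lambda+\mu}}$ (Lemma \ref{Dhatmod}), these sections correspond to a morphism in $\widehat{\Da^{\lambda+\mu}}$:
\[
f \colon \bigoplus_{i=1}^a \widehat{\D^{\lambda+\mu}} \longrightarrow \M(\mu),
\]
whose reduction modulo $\pi$ is the $\Da^{\lambda+\mu}$-morphism induced by $\bar s_1,\ldots,\bar s_a$, which is surjective by choice of the $\bar s_i$.

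Finally I would invoke complete Nakayama. Put $\mathcal{Q} = \coker(f)$, a finitely generated $\widehat{\D}$-module (since $\M(\mu)$ is coherent). The cokernel of $f$ modulo $\pi$ is the cokernel of $f$ in $\Da^{\lambda+\mu}$, which is zero by construction, so $\mathcal{Q} = \pi \mathcal{Q}$. Because $\widehat{\D}$ is $\pi$-adically complete and Noetherian (Proposition \ref{defn_of_Dq}), we have $\pi \in J(\widehat{\D})$, and so Nakayama's lemma forces $\mathcal{Q} = 0$; hence $f$ is surjective. Untwisting by $-\mu$ yields the desired surjection $\bigoplus_{i=1}^a \widehat{\D^{\lambda+\mu}}(-\mu) \twoheadrightarrow \M$. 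The main obstacle is the lifting step, and specifically verifying that the topological completeness of $\M(\mu)$ together with Mittag-Leffler is enough to promote generators mod $\pi$ to generators of $\M(\mu)$; once surjectivity of $\Gamma(\M(\mu)) \to \Gamma(\gr_0 \M(\mu))$ is secured, Nakayama closes the argument.
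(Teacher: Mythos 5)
Your proof is correct and follows essentially the same route as the paper's: twist by a sufficiently dominant $\mu$ so that $\gr_0\M(\mu)$ is generated by global sections and $\Gamma$-acyclic (Theorem \ref{twists}), lift generators through the tower using Lemma \ref{MLcond} and Proposition \ref{completion1}, and conclude by a complete Nakayama argument (which the paper packages as Lemma \ref{generators}). Your phrasing of the final step via $\pi \in J(\widehat{\D})$ is an equivalent way of stating what the paper proves directly from completeness.
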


\begin{proof} By Theorem \ref{twists}, $(\gr_0\M)(\mu)$ is $\Gamma$-acyclic and generated by its global sections for $\mu>>0$. Fix such a $\mu$. Then there is a map $(\D^{\lambda+\mu})^a\twoheadrightarrow (\gr_0\M)(\mu)$ in $\Da^{\lambda+\mu}$ for some $a\geq 1$. Now let $\Nn=\M(\mu)$. Since $\M$ has no $\pi$-torsion, so does $\Nn$ and we obtain a surjection
$$
\Gamma(\Nn/\pi^{n+1}\Nn)\twoheadrightarrow \Gamma(\Nn/\pi^n\Nn)\quad \forall n\geq 1
$$
by Lemma \ref{MLcond} since $(\gr_0\M)(\mu)$ is $\Gamma$-acyclic. Hence, starting from a generating set of $(\gr_0\M)(\mu)$ in its global sections, we can inductively  construct $w_1,\ldots, w_a\in\varprojlim \Gamma(\Nn/\pi^n\Nn)$. Since $\Gamma(\Nn)\cong \varprojlim \Gamma(\Nn/\pi^n\Nn)$ by Proposition \ref{completion1}, these elements correspond to global sections in $\Nn$ and they define a map $(\widehat{\D^{\lambda+\mu}})^a\to\Nn$ which must be surjective by the Lemma below, since it is surjective modulo $\pi$. Thus, twisting by $-\mu$, we obtain a map
$$
\left(\widehat{\D^{\lambda+\mu}}(-\mu)\right)^a\twoheadrightarrow\M
$$
as required.
\end{proof}

\begin{lem} If $D$ is a $\pi$-adically complete Noetherian $R$-algebra and $M$ is a finitely generated $D$-module, then $M$ is $\pi$-adically complete. Moreover if $\pi M=M$, then $M=0$.
\end{lem}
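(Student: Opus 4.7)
The plan is to derive both statements from the preliminary fact that $\pi$ lies in the Jacobson radical $J(D)$. First I would establish this by a geometric series argument: since $D$ is $\pi$-adically complete and $\pi$ is central in $D$, for any $a\in D$ the series $\sum_{k\geq 0}(\pi a)^k$ converges in $D$ to a two-sided inverse of $1-\pi a$. Hence $1-\pi a\in D^\times$ for all $a\in D$, which is exactly the statement that $\pi\in J(D)$.

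Given this, the second claim is immediate: if $\pi M=M$, then $M$ is a finitely generated $D$-module with $\pi M=M$ and $\pi D\subseteq J(D)$, so Nakayama's lemma forces $M=0$.

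For the first claim, I would proceed in two steps. For separatedness, since $M$ is finitely generated over the Noetherian ring $D$ and $\pi D$ is a two-sided ideal contained in $J(D)$, Krull's intersection theorem gives $\bigcap_n \pi^n M=0$; this relies on the noncommutative Artin--Rees lemma, which applies directly because $\pi$ is central and $D$ is Noetherian. For surjectivity of the canonical map $M\to \widehat{M}=\varprojlim M/\pi^n M$, fix a generating set $m_1,\ldots,m_a$ and the corresponding surjection $\phi:D^a\twoheadrightarrow M$. Given a compatible system $(\bar{x}_k)\in\widehat{M}$, lift it to a Cauchy sequence $(x_k)$ in $M$ and, after passing to a subsequence, write $x_{k+1}-x_k=\pi^k y_k$ with $y_k\in M$. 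Lift each $y_k$ to $\tilde{y}_k\in D^a$. The series $\tilde{x}_1+\sum_{k\geq 1}\pi^k\tilde{y}_k$ converges in $D^a$ because $D^a$ is $\pi$-adically complete, and applying $\phi$ produces an element of $M$ whose image in $M/\pi^n M$ equals $\bar{x}_n$ for every $n$. Combined with separatedness, this shows $M\to\widehat{M}$ is an isomorphism.

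The main technical obstacle is ensuring that the standard commutative-algebra inputs (Nakayama, Krull intersection, Artin--Rees) carry over to the noncommutative ring $D$. This is harmless in our situation precisely because $\pi$ is central: the ideal $\pi D$ is two-sided and polynomially normalising, so the classical proofs adapt verbatim.
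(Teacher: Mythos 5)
Your proof is correct, but it is organised quite differently from the paper's. The paper's proof of the completeness statement is a one-liner: it cites \cite[3.2.3]{Berthelot} for the isomorphism $\widehat{M}\cong\widehat{D}\otimes_D M$ (which holds for finitely generated modules over a $\pi$-adically complete Noetherian ring), and concludes $\widehat{M}\cong D\otimes_D M\cong M$; for the second claim the paper observes that $\pi M=M$ forces $\pi^nM=M$ for all $n$, so $M\cong\widehat{M}=\varprojlim M/\pi^nM=0$. You instead prove everything from scratch. First you show $\pi\in J(D)$ via the geometric series $\sum(\pi a)^k$ (using completeness and centrality of $\pi$), which is correct and gives you Nakayama's lemma for $\pi$. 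From there your second claim is immediate. For completeness of $M$, you argue separatedness via the noncommutative Krull intersection theorem (valid here because $\pi$ is central, so $\pi D$ has the Artin--Rees property, and $\pi D\subseteq J(D)$), and you argue surjectivity of $M\to\widehat{M}$ by lifting a compatible system through a chosen finite presentation $D^a\twoheadrightarrow M$ and using $\pi$-adic completeness of $D^a$. All of this is sound; the minor remark that you need to ``pass to a subsequence'' is unnecessary once you choose $x_k$ reducing to $\bar x_k$ directly, but it does no harm. In effect you are reproving the relevant special case of Berthelot's lemma by hand, which is a more self-contained and more elementary route; the paper's argument is shorter but relies on an external reference, and it derives the Nakayama-type conclusion from the inverse-limit description of $\widehat M$ rather than from $\pi\in J(D)$.
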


\begin{proof}
By \cite[3.2.3]{Berthelot}, $\widehat{M}\cong \widehat{D}\otimes_D M\cong D\otimes_D M\cong M$ so that $M$ is $\pi$-adically complete. If now $M=\pi M$ then $M=\pi^nM$ for all $n\geq 1$ and so $M\cong \widehat{M}=\varprojlim M/\pi^nM=0$.
\end{proof}

\subsection{Analytic quantum $D$-modules}\label{defofdhat} We now move on to define a suitable category of $D$-modules in $\flaghat$. We let $\Dqnhat:=\widehat{\D}\otimes_R L$ and $\Uqbnhat:=\widehat{U^{\geq 0}}\otimes_R L$. Suppose that $A$ is an $L$-Banach algebra, and $H$ is a torsion-free $R$-Hopf algebra. We will say that $A$ is a $\widehat{H_L}$-module algebra if, viewed as a $H_L$-module, it is a module algebra. In such a situation we may then define the smash product algebra $\widehat{H_L}\# A$ to be the completion of the smash product $H_L\# A$.

After extending scalars, we see from our discussion in section \ref{Dhatmod} that $\Oqhat$ is a $\Uqnhat$-module algebra, $\Dqnhat\cong \Oqhat\#\Uqnhat$ is a $\widehat{U^{\text{res}}_L}$-module algebra and there is a continuous algebra homomorphism $\Uqbnhat\to\Dqnhat$. Moreover we have a continuous algebra homomorphism $\Uqbnhat\to \widehat{U^{\text{res}}(\bo)_L}$, and so any Banach $\widehat{U^{\text{res}}(\bo)_L}$-module will also be a Banach $\Uqbnhat$-module. We've already observed that $R_\lambda$ is a $\widehat{\B}$-comodule. Extending scalars, that makes $L_\lambda$ into a $\OqBhat$-comodule. Finally, recall from Theorem \ref{thmA} that a $\OqBhat$-comodule is the same thing as a topologically integrable $\widehat{U^{\text{res}}(\bo)_L}$-module, and comodule homomorphisms are just $\widehat{U^{\text{res}}(\bo)_L}$-linear maps.

\begin{definition}
Let $\lambda\in T_P^R$. A \emph{$(B_q, \lambda)$-equivariant $\Dqnhat$-module} is a triple $(\M, \alpha, \beta)$ where $\M$ is an $L$-Banach space, $\alpha:\ten{\Dqnhat}{\M}\to \M$ is a left $\Dqnhat$-module action and $\beta: \M\to \ten{\M}{\OqBhat}$ is a right $\OqBhat$-comodule action. The map $\beta$ induces a left $\widehat{U^{\text{res}}(\bo)_L}$-action on $\M$ which we also denote by $\beta$. These actions must satisfy:
\begin{itemize}
\item[(i)] The $\Uqbnhat$-actions on $\M\otimes_L L_\lambda$ given by $\beta\otimes \lambda$ and $\alpha\vert_{\Uqbnhat}\otimes 1$ are equal.
\item[(ii)] The map $\alpha$ is $\widehat{U^{\text{res}}(\bo)_L}$-linear with respect to the $\beta$-action on $\M$ and the above action on $\Dqnhat$.
\end{itemize}
The above form a category which we denote by $\widehat{\Dflag}$. We call an object $\M\in\widehat{\Dflag}$ \emph{coherent} if it is finitely generated as a $\Dqnhat$-module. We denote by $\cohDn$ the full subcategory of $\widehat{\Da^\lambda}$ consisting of coherent modules. Since $\Dqnhat$ is Noetherian by Proposition \ref{defn_of_Dq}, this category is abelian.
\end{definition}

Note that condition (ii) in the definition above implies that the restriction of $\alpha$ to $\ten{\Oqhat}{\M}$ is $U^{\text{res}}(\bo)_L$-linear. Since both sides are topologically integrable, this map is therefore a comodule homomorphism. Thus we see that there is a forgetful functor $\widehat{\Dflag}\to \flaghat$.

Recall from Remark \ref{compflag} that any object $\M\in \cohDn$ is canonically equipped with the quotient topology coming from a surjection $(\Dqnhat)^a\to \M$ given by a finite set of generators.

\begin{thm} Let $\M\in \widehat{\Dflag}$. Then $\M$ contains an $R$-lattice $F_0\M$ which is an element of $\widehat{\Da^\lambda}$. Moreover, $F_0\M$ can be chosen to be an element of $\text{\emph{coh}}(\widehat{\Da^\lambda})$ if $\M\in \text{\emph{coh}}(\widehat{\Dflag})$.
\end{thm}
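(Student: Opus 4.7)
The plan is to parallel the proof of Proposition \ref{basechange2}(ii) and then enlarge the resulting lattice to accommodate the $\widehat{\D}$-action. First, forget the $\Dqnhat$-module structure on $\M$ and view it as an object of $\flaghat$. Proposition \ref{basechange2}(ii) then produces a lattice $\Lambda \subseteq \M$ belonging to $\widehat{\Ca}$, which is finitely generated over $\widehat{\A}$ when $\M$ is coherent. Set
\[
F_0\M := \widehat{\D}\cdot \Lambda,
\]
the $\widehat{\D}$-submodule of $\M$ generated by $\Lambda$. Because the convention on Banach $\Dqnhat$-modules forces $\widehat{\D} = \Dqnhat^\circ$ to preserve the unit ball $\M^\circ$, and since $\Lambda \subseteq \pi^{-N}\M^\circ$ for some $N$, the module $F_0\M$ is contained in $\pi^{-N}\M^\circ$ and is therefore a lattice in $\M$. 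It is clearly stable under $\widehat{\D}$ by construction.

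To see that $F_0\M$ is also a $\widehat{\B}$-comodule, I would unfold condition (ii) of the definition of $\widehat{\Dflag}$: the action map $\alpha$ is $\widehat{U^{\text{res}}(\bo)}$-linear, equivalently a $\widehat{\B}$-comodule map with respect to the adjoint $\widehat{\B}$-coaction $\delta$ on $\Dqnhat$. Since the adjoint action of $U^{\text{res}}$ preserves $\D$ (as used in Section \ref{Dhatmod}), $\delta$ restricts to a coaction $\widehat{\D} \to \tenR{\widehat{\D}}{\widehat{\B}}$. Applying this to $\alpha$ yields, for $d\in\widehat{\D}$ and $x\in\Lambda$,
\[
\rho(d\cdot x)\;\in\; \tenR{(\widehat{\D}\cdot\Lambda)}{\widehat{\B}}\;=\;\tenR{F_0\M}{\widehat{\B}},
\]
so $F_0\M$ is closed under the coaction. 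Conditions (i) and (ii) in the definition of $\widehat{\Da^\lambda}$ are then inherited automatically from the corresponding conditions on $\M$, since both are equalities of maps defined on all of $\M$ and so restrict to $F_0\M$.

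It remains to verify $\pi$-adic completeness. In the coherent case, $\Lambda$ is finitely generated over $\widehat{\A}$, so $F_0\M = \widehat{\D}\cdot \Lambda$ is finitely generated over the $\pi$-adically complete Noetherian ring $\widehat{\D}$ (Proposition \ref{defn_of_Dq}), hence $\pi$-adically complete by Lemma \ref{generators}; this yields the coherent version. For the general case, one replaces $F_0\M$ with its $\pi$-adic closure inside the bounded lattice $\pi^{-N}\M^\circ$, which is automatically $\pi$-adically complete and inherits the $\widehat{\D}$-action and $\widehat{\B}$-coaction by continuity. The main technical subtlety is the comodule verification in the second paragraph: one needs the adjoint $\widehat{U^{\text{res}}(\bo)}$-action to preserve $\widehat{\D}$ at the integral level (not merely on $\Dqnhat$), and to interact correctly with $\rho$ via $\alpha$—which is precisely what condition (ii) of $\widehat{\Dflag}$ says, read through the comodule/integrable-module equivalence of Theorem \ref{thmA}.
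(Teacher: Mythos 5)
Your setup (pulling out a lattice $\Lambda$ via Proposition \ref{basechange2}(ii), letting $\widehat{\D}$ act, using the boundedness of the $\widehat{\D}$-action to control the resulting lattice, and closing up in the general case) matches the paper's, and the coherent-case finite generation and $\pi$-adic completeness arguments are fine. The gap is in the second paragraph, where you assert that the adjoint $\widehat{U^{\text{res}}(\bo)}$-action on $\widehat{\D}$ "restricts to a coaction $\widehat{\D}\to\tenR{\widehat{\D}}{\widehat{\B}}$" and then conclude $\rho(d\cdot x)\in\tenR{F_0\M}{\widehat{\B}}$. No such coaction exists: $\widehat{\D}$ is a $\widehat{U^{\text{res}}(\bo)}$-module algebra (the adjoint action does preserve $\D$ and hence $\widehat{\D}$ at the integral level), but it is not a $\widehat{\B}$-comodule — equivalently, not topologically integrable. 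Indeed $\D = \A\# U$ involves the adjoint action on $U$, which is famously not locally finite (this is the whole reason the finite part $U^{\text{fin}}$ is introduced). A module being a $\widehat{U^{\text{res}}(\bo)}$-module does not make it a $\widehat{\B}$-comodule; what is missing is precisely the integrability (topological semisimplicity of the torus action plus topological nilpotency of the $E_{\alpha_i}$'s), and that is the substance that still has to be proved for $F_0\M$.

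What condition (ii) of $\widehat{\Dflag}$ actually gives you is that $F_0\M$ is a $\widehat{U^{\text{res}}(\bo)}$-submodule of $\M$; from there you must still verify that this submodule is topologically integrable in order to produce the $\widehat{\B}$-coaction via Proposition \ref{integrable}(i). The paper does this by a density and reduction-mod-$\pi^a$ argument: set $\K=\D\cdot\Lambda$ (dense in $F_0\M$), note that every element of $\D$ is a \emph{finite} sum of $(U^{\text{res}})^0$-weight vectors (because $\A$ is integrable and the PBW basis of $U^\pm$ is a weight basis), and that every element of $\Lambda$ is a convergent sum of weight vectors, hence a finite sum mod $\pi^a$; this shows $F_0\M/\pi^a F_0\M$ is a sum of weight spaces. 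Local nilpotency of the $E_{\alpha_i}^{(r)}$ mod $\pi^a$ is inherited from the topological integrability of $\M$ (since $F_0\M$ is a closed bounded lattice, the subspace topology is the $\pi$-adic one). Together these make each $F_0\M/\pi^a F_0\M$ an integrable $U^{\text{res}}(\bo)$-module, i.e.\ a $\B$-comodule, and passing to the inverse limit gives the $\widehat{\B}$-coaction. Your proof needs to replace the appeal to a nonexistent coaction on $\widehat{\D}$ by this kind of direct verification of integrability.
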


\begin{proof}
By Proposition \ref{basechange2}(ii) and its proof, there exists some $R$-lattice $\Nn\subset \M$ such that $\Nn\in\widehat{\Ca}$ and there is some $m\geq 0$ such that
\begin{equation}\label{useful_eqn}
Nn\subseteq \pi^{-m}\M^\circ.
\end{equation}
We now define $F_0\M$ to be the closure of $\widehat{\D}\cdot \Nn$. By construction it is a $\widehat{\D}$-module and contains $\Nn$. Moreover, since $M^\circ$ is a closed $\widehat{\D}$-submodule of $M$, we get from (\ref{useful_eqn}) that
\begin{equation}\label{useful_eqn2}
F_0\M\subseteq \pi^{-m}M^\circ.
\end{equation}
So we see that $F_0\M$ is a closed lattice inside $\M$, hence is $\pi$-adically complete. We just need to show that it is also a $\widehat{\B}$-comodule, as the compatibility condition will be inherited from the one in $\M$. Note that the $\widehat{\D}$-action on $\M$ is $\widehat{U^{\text{res}}(\bo)}$-linear by the axioms of $\widehat{\Dflag}$, and so we see that $F_0\M$ is a $\widehat{U^{\text{res}}(\bo)}$-submodule of $\M$.

Let $\K:=\D\cdot \Nn$, which is a $U^{\text{res}}(\bo)$-module. Since the action of any $E_{\alpha_i}$ on $\M$ is locally topologically nilpotent, we see that for any $a\geq 1$, the action of any $E_{\alpha_i}^{(r)}$ on $F_0\M/\pi^aF_0\M$ is zero for $r>>0$. Note that every element of $F_0\M/\pi^aF_0\M$ can be represented by an element of $\K$ since $\K$ is dense in $F_0\M$. The PBW basis in $U^{\pm}$ is a weight basis for the adjoint action of $U^{\text{res}}$, and every element of $\A$ is a finite sum of weight vectors since $\A$ is integrable. Thus, by the triangular decomposition for $U$, we see that every element of $\D$ is a finite sum of weight vectors for the action $U^{\text{res}}$. Moreover, by Proposition \ref{equiv}, $\Nn$ is topologically semisimple and so every element is a convergent sum of weight vectors, hence a finite sum of weight vectors modulo $\pi^a$. Thus every element of $\K$ has image in $F_0\M/\pi^aF_0\M$ which is a finite sum of weight vectors. Thus we see that $F_0\M/\pi^aF_0\M$ is integrable, i.e a $\B$-comodule. Passing to the inverse limit, we see that $F_0\M$ is a $\widehat{\B}$-comodule as required.

Now if $\M$ is coherent, then $\M^\circ$ is a quotient of a finite direct sum of $\widehat{\D}$'s. Since $\M^\circ$ is a finitely generated module, we see from (\ref{useful_eqn2}) that $F_0\M$ is as well as $\widehat{\D}$ is Noetherian by Proposition \ref{defn_of_Dq} (in fact $F_0\M=\widehat{\D}\cdot \Nn$ in this case since the right hand side is then finitely generated and so $\pi$-adically complete by \cite[3.2.3(v)]{Berthelot}).
\end{proof}

\subsection{The functor $M\mapsto \widehat{M_L}$}\label{MtoMhat}

The functor $-\otimes_R L$ maps $\coh(\widehat{\Da^\lambda})$ to $\cohDn$ and so by pre-composing with the functor of taking $\pi$-adic completion, we obtain a functor $M\mapsto \widehat{M_L}$ from $\coh(\Da^\lambda)$ to $\cohDn$. Hence in particular we have an object $\widehat{\D_q^\lambda}:=\widehat{\D_L^\lambda}$.

\begin{lem} The global section functor on $\widehat{\Dflag}$ is represented by $\widehat{\D_q^\lambda}$.
\end{lem}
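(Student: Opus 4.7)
The plan is to reduce the statement to the integral version provided by Lemma~\ref{Dhatmod} and then pass to $L$ via base change. Given $\M\in\widehat{\Dflag}$, I would first invoke Theorem~\ref{defofdhat} to choose a lattice $F_0\M \in \widehat{\Da^\lambda}$ with $(F_0\M)_L\cong\M$. Lemma~\ref{Dhatmod} then supplies a natural isomorphism
$$
\Gamma(F_0\M) \;\cong\; \Hom_{\widehat{\Da^\lambda}}\!\left(\widehat{\D^\lambda}, F_0\M\right),
$$
and applying $-\otimes_R L$ turns the left-hand side into $\Gamma(\M)$ by Proposition~\ref{basechange2}(i). It remains to identify $\Hom_{\widehat{\Da^\lambda}}(\widehat{\D^\lambda}, F_0\M)\otimes_R L$ with $\Hom_{\widehat{\Dflag}}(\widehat{\D_q^\lambda},\M)$.

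For this comparison I would construct mutually inverse natural maps. In the forward direction, any morphism $f:\widehat{\D^\lambda}\to F_0\M$ in $\widehat{\Da^\lambda}$ extends to $f_L:\widehat{\D_q^\lambda}\to\M$, which is automatically a morphism in $\widehat{\Dflag}$ since the $\Dqnhat$-action, the $\OqBhat$-coaction, and the axiom (i) compatibility of Definition~\ref{defofdhat} are all obtained by base changing from $F_0\M$ and $\widehat{\D^\lambda}$. For the reverse direction, given any $g\in\Hom_{\widehat{\Dflag}}(\widehat{\D_q^\lambda},\M)$, continuity of $g$ together with the commensurability of the two $R$-lattices $F_0\M$ and $\M^\circ$ inside $\M$ yields some $n\geq 0$ with $\pi^n g(\widehat{\D^\lambda})\subseteq F_0\M$, so that $\pi^n g$ restricts to an $R$-linear map $\widehat{\D^\lambda}\to F_0\M$.

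The main technical point I expect is to verify that this restriction is indeed a morphism in $\widehat{\Da^\lambda}$: one must show it preserves the $\widehat{\D}$-action and, more delicately, the $\widehat{\B}$-coaction (as opposed to only the $\OqBhat$-coaction). This ought to be automatic, however, since by construction in Theorem~\ref{defofdhat} the $\widehat{\D}$-action and $\widehat{\B}$-coaction on $F_0\M$ are precisely the restrictions to this sublattice of the Banach $\Dqnhat$-action and $\OqBhat$-coaction on $\M$; because $g$ respects the latter, the restriction $\pi^n g|_{\widehat{\D^\lambda}}$ respects the former. One then checks that these two assignments are inverse to each other up to the natural identification provided by tensoring with $L$, and packaging everything gives the required isomorphism $\Gamma(\M)\cong \Hom_{\widehat{\Dflag}}(\widehat{\D_q^\lambda},\M)$, showing that $\widehat{\D_q^\lambda}$ represents global sections on $\widehat{\Dflag}$.
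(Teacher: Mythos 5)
Your proposal is correct and takes the same route as the paper, whose proof simply cites Theorem~\ref{defofdhat}, Proposition~\ref{basechange2}(i), and Lemma~\ref{Dhatmod}. The only difference is that you explicitly spell out the identification $\Hom_{\widehat{\Da^\lambda}}(\widehat{\D^\lambda},F_0\M)\otimes_R L\cong\Hom_{\widehat{\Dflag}}(\widehat{\D_q^\lambda},\M)$ (via the forward map $f\mapsto f_L$ and the backward map using boundedness of $g$ together with the commensurability of $F_0\M$ and $\M^\circ$), which the paper leaves implicit.
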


\begin{proof}
By Theorem \ref{defofdhat}, given a module $\M\in \widehat{\Dflag}$, we have a lattice $F_0\M\subseteq \M$ such that $F_0\M$ is in $\Da^\lambda$. The result therefore follows by Proposition \ref{basechange2}(i) and Lemma \ref{Dhatmod}.
\end{proof}

The next few results are analogous to \cite[Theorem 6.6]{Wadsley1}.

\begin{thm} Suppose that $M\in\text{\emph{coh}}(\Da^\lambda)$ is such that $M_L$ is generated by its global sections as a $\D_q$-module. Then $\widehat{M_L}$ is generated by its global sections.
\end{thm}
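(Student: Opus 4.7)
The plan is to lift the given generators from $M_L$ to $M$, form a candidate surjection onto $\widehat{M_L}$, and show that $\pi$-adic completion preserves its surjectivity up to $\pi$-torsion. Since $\Gamma(M_L) = \Gamma(M) \otimes_R L$ by Proposition \ref{base_change}, a generating set of global sections for $M_L$ over $\D_q$ can, after multiplying by a sufficiently large power of $\pi$, be taken to lie in $\Gamma(M)$; denote these lifts by $m_1, \ldots, m_n$. They induce a morphism $\phi: (\D^\lambda)^n \to M$ in $\Da^\lambda$ whose cokernel $C := M/\im(\phi)$ is finitely generated over $\D$ and satisfies $C_L = 0$. Choosing finitely many generators of $C$, each of which is annihilated by some power of $\pi$, we find $k \geq 0$ with $\pi^k C = 0$; in particular $C$ is already $\pi$-adically complete.

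The key technical claim is that the sequence
$$0 \to \widehat{\im(\phi)} \to \widehat{M} \to C \to 0$$
is exact. Applying $-\otimes_R R/\pi^n R$ to $0 \to \im(\phi) \to M \to C \to 0$ and using the long exact sequence of $\mathrm{Tor}$ yields
$$0 \to C[\pi^n] \to \im(\phi)/\pi^n \im(\phi) \to M/\pi^n M \to C/\pi^n C \to 0,$$
where $C[\pi^n] := \ker(\pi^n : C \to C)$. For $n \geq k$ both $C[\pi^n]$ and $C/\pi^n C$ equal $C$; crucially, the transition maps of the inverse system on $C[\pi^n]$ are given by multiplication by $\pi$, which is $k$-step nilpotent on $C$, so $\varprojlim C[\pi^n] = \varprojlim^1 C[\pi^n] = 0$. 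Splitting the four-term sequence into two short exact sequences and passing to inverse limits (using the standard Mittag-Leffler vanishing for the quotient systems $(\im(\phi)/\pi^n \im(\phi))$ and $(M/\pi^n M)$) gives the claimed exact sequence.

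Since $\pi$-adic completion is right exact and commutes with finite direct sums, $\widehat{\phi}: (\widehat{\D^\lambda})^n \to \widehat{M}$ factors as a surjection $(\widehat{\D^\lambda})^n \twoheadrightarrow \widehat{\im(\phi)}$ composed with the injection $\widehat{\im(\phi)} \hookrightarrow \widehat{M}$, so $\coker(\widehat{\phi}) \cong C$ is $\pi$-torsion. Extending scalars to $L$ produces a surjection $\widehat{\phi}_L : (\widehat{\D_q^\lambda})^n \twoheadrightarrow \widehat{M_L}$ sending the $i$-th basis vector to the image of $m_i$ in $\Gamma(\widehat{M_L}) = \Gamma(\widehat{M}) \otimes_R L$ (via $\Gamma(M) \to \Gamma(\widehat{M})$ and Proposition \ref{basechange2}(i)); this exhibits $\widehat{M_L}$ as generated by finitely many global sections. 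The main obstacle is controlling $\pi$-adic completion in the exact sequence argument, but the fact that $C$ is annihilated by a fixed power of $\pi$ renders the Mittag-Leffler bookkeeping routine.
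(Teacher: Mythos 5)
Your proposal follows the same overall strategy as the paper: lift generators from $\Gamma(M_L)$ to $\Gamma(M)$, form the map $\phi:(\D^\lambda)^a\to M$, observe the cokernel $C$ is killed by a bounded power of $\pi$, and then show that $\pi$-adic completion converts the presentation into a surjection after inverting $\pi$. The bookkeeping differs. You appeal to a Tor long exact sequence and a $\varprojlim/\varprojlim^1$ vanishing argument on the $\mathrm{Tor}_1$ terms. The paper instead exploits the containment $\pi^m M\subseteq N:=\im(\phi)$ to see that the $\pi$-adic filtration on $N$ and the subspace filtration $N\cap\pi^nM$ have finite difference, hence the same completion; this gives $\widehat{N}\cong\varprojlim (N+\pi^nM)/\pi^nM$ directly, and the short exact sequences $0\to(N+\pi^nM)/\pi^nM\to M/\pi^nM\to C/\pi^nC\to 0$ plus trivial Mittag-Leffler do the rest, with no homological algebra needed. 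One minor inaccuracy in your write-up: the Tor sequence actually reads
\[
\mathrm{Tor}_1^R(M,R/\pi^n)\to C[\pi^n]\to N/\pi^nN\to M/\pi^nM\to C/\pi^nC\to 0,
\]
so your claimed four-term sequence starting at $0\to C[\pi^n]$ is only correct when $M$ is $\pi$-torsion free, which is not part of the hypothesis. This is harmless for your argument, since the kernel of $N/\pi^nN\to M/\pi^nM$ is a quotient of $C[\pi^n]$ on which the transition maps are still multiplication by $\pi$, hence eventually zero, so both $\varprojlim$ and $\varprojlim^1$ still vanish. But the finite-difference-of-filtrations observation in the paper sidesteps torsion in $M$ entirely and is worth internalising as the cleaner device.
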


\begin{proof}
Since $\D_q$ is Noetherian by Proposition \ref{recap2} and since $M_L$ is finitely generated as a $\D_q$-module, we can find $m_1, \ldots, m_a\in\Gamma(M_L)$ generating $M_L$ as a $\D_q$-module. By rescaling we may assume that $m_1, \ldots, m_a$ lie in $\Gamma(M)$ as $\Gamma(M_L)=\Gamma(M)\otimes_R L$ by Proposition \ref{base_change}. Let $\alpha:(\D^\lambda)^a\to M$ be the map in $\coh(\Da^\lambda)$ defined by these global sections, let $N=\im \alpha$, and let $C=\coker(\alpha)\in\coh(\Da^\lambda)$. By assumption, $C_L=0$ and thus there is some $m\geq 0$ such that $\pi^mC=0$. In other words, $\pi^m M\subseteq N$.

Now we have a short exact sequence
$$
0\to N\to M \to C\to 0
$$
in $\coh(\Da^\lambda)$, which induces a tower of short exact sequences
$$
0\to (N+\pi^nM)/\pi^nM\to M/\pi^nM\to C/\pi^n C\to 0
$$
for $n\geq 1$. Since the inverse system $((N+\pi^nM)/\pi^nM)_n$ trivially satisfies the Mittag-Leffler condition, passing to the inverse limit yields a short exact sequence
$$
0\to\varprojlim (N+\pi^nM)/\pi^nM\to \widehat{M}\to\widehat{C}\to 0.
$$
On the other hand, since $\pi^m M\subseteq N$, it follows that for every $n\geq m$, we have $\pi^n N\subseteq N\cap\pi^nM\subseteq \pi^{n-m}N$. Thus we obtain that $\widehat{N}\cong \varprojlim (N+\pi^nM)/\pi^nM$. Moreover, since taking $\pi$-adic completion preserves surjections, $(\widehat{\D^\lambda})^a$ surjects onto $\widehat{N}$. Putting everything together, this gives that the sequence
$$
(\widehat{\D^\lambda})^a\overset{\widehat{\alpha}}{\rightarrow}\widehat{M}\to\widehat{C}\to 0
$$
is also exact. But since $\pi^mC=0$, we have that $\pi^m\widehat{C}=0$ also, and thus $\widehat{C}\otimes_R L=0$. Therefore by applying the exact functor $-\otimes_R L$ to the above exact sequence, we deduce that $\widehat{\alpha}_L:(\widehat{\D_q^\lambda})^a\to\widehat{M_L}$ is surjective, and the result follows by the previous Lemma.
\end{proof}

\begin{cor} Assume that $p$ is a good prime and that $\lambda\in T_P^k$ is regular and dominant. If $\M\in\text{\emph{coh}}(\widehat{\Dflag})$ then $\M$ is generated by its global sections.
\end{cor}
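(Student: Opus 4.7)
The plan is to reduce the general coherent $\M$ to a quotient of finite sums of ``twisted structure sheaves'' $\widehat{\D_q^{\lambda+\mu}}(-\mu)$ and show those are generated by their global sections via the uncompleted Beilinson--Bernstein theorem together with Theorem \ref{MtoMhat}.

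First I will fix $\M\in\cohDn$ and apply Theorem \ref{defofdhat} to obtain a coherent $R$-lattice $F_0\M\in\coh(\widehat{\Da^\lambda})$ with $(F_0\M)_L=\M$. Crucially, $F_0\M$ sits inside a multiple of the Banach unit ball $\M^\circ$ of the $L$-Banach space $\M$, so it is automatically $\pi$-torsion free. This allows me to invoke Theorem \ref{generators} (which needs $p$ good), producing a surjection
\[
\alpha:\bigl(\widehat{\D^{\lambda+\mu}}(-\mu)\bigr)^a\twoheadrightarrow F_0\M
\]
in $\coh(\widehat{\Da^\lambda})$ for some $\mu\in P$ and some $a\geq 1$. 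Applying the exact functor $-\otimes_R L$ yields a surjection $\alpha_L:\bigl(\widehat{\D_q^{\lambda+\mu}}(-\mu)\bigr)^a\twoheadrightarrow\M$ in $\cohDn$.

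Next I will show that each $\widehat{\D_q^{\lambda+\mu}}(-\mu)$ is generated by its global sections. Since completion commutes with twisting by the rank one free module $R_\mu$, one has $\widehat{\D_q^{\lambda+\mu}}(-\mu)\cong\widehat{(\D^{\lambda+\mu}(-\mu))_L}$ with $\D^{\lambda+\mu}(-\mu)\in\coh(\Da^\lambda)$. Its $L$-base change $\D_q^{\lambda+\mu}(-\mu)$ is an object of $\Dflag$ (the twist is $\lambda$, not $\lambda+\mu$), and this is where the assumption that $\lambda$ is regular and dominant enters: Theorem \ref{BB} then asserts that $\D_q^{\lambda+\mu}(-\mu)$ is generated by its global sections as a $\D_q$-module. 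Theorem \ref{MtoMhat} now upgrades this to the statement that the completion $\widehat{\D_q^{\lambda+\mu}}(-\mu)$ is generated by its global sections in $\cohDn$.

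Finally, I will combine the two steps. Morphisms in $\widehat{\Dflag}$ send $B_q$-invariants to $B_q$-invariants by functoriality of $\Gamma$, so the images under $\alpha_L$ of a finite set of global sections generating $\bigl(\widehat{\D_q^{\lambda+\mu}}(-\mu)\bigr)^a$ are themselves global sections of $\M$ and generate it as a $\Dqnhat$-module. There is no significant obstacle beyond checking these compatibilities; the only thing to be vigilant about is ensuring that the lattice supplied by Theorem \ref{defofdhat} is torsion free so that Theorem \ref{generators} may actually be applied, and that the role of the regularity hypothesis is played only by invoking the uncompleted Beilinson--Bernstein theorem on the single object $\D_q^{\lambda+\mu}(-\mu)$.
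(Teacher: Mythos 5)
Your proposal is correct and follows essentially the same route as the paper: take the coherent lattice from Theorem \ref{defofdhat}, apply Theorem \ref{generators} (correctly noting the lattice is $\pi$-torsion free since it lives in an $L$-vector space), extend scalars, and use the Beilinson--Bernstein equivalence for $\Dflag$ together with Theorem \ref{MtoMhat} to get that $\widehat{\D_q^{\lambda+\mu}}(-\mu)$ is generated by global sections. Your explicit verification of the torsion-free hypothesis and of the category in which $\D_q^{\lambda+\mu}(-\mu)$ lives is slightly more careful than the paper's terse proof, but the logic is identical.
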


\begin{proof}
By Theorem \ref{defofdhat} there is a lattice $F_0\M$ in $\M$ which is an element of $\coh(\Da^\lambda)$. By Theorem \ref{generators} there is a surjection
$$
\left(\widehat{\D^{\lambda+\mu}(-\mu)}\right)^a\twoheadrightarrow F_0\M
$$
By extending scalars we get a surjection
$$
\left(\widehat{\D^{\lambda+\mu}_q(-\mu)}\right)^a\twoheadrightarrow \M.
$$
But by the Theorem, which applies by the proof of \cite[Theorem 4.12]{QDmod1}, $\widehat{\D^{\lambda+\mu}_q(-\mu)}$ is generated by its global sections. Hence so is $\M$ by the Lemma.
\end{proof}

\subsection{Exactness of global sections}\label{acyclic2} We can now prove that global sections is exact on $\cohDn$. All the results from now on will require the assumption that $p$ is a good prime.

\begin{thm} Assume $p$ is a good prime. Then for any $\lambda\in T_P^k$, $\widehat{\D_q^\lambda}$ is $\Gamma$-acyclic.
\end{thm}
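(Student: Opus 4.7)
The plan is to compute $R\Gamma(\widehat{\D_q^\lambda})$ by means of the standard \v{C}ech complex, reducing via the integral lattice $\widehat{\D^\lambda}$ to the $\Gamma$-acyclicity of $\D^\lambda/\pi^n\D^\lambda$ already established. Since $\widehat{\D_q^\lambda}=\widehat{\D^\lambda}\otimes_R L$ by Lemma \ref{MtoMhat}, and by Theorem \ref{thmB} $R\Gamma(\widehat{\D_q^\lambda})$ is represented by $\check{C}(\widehat{\D_q^\lambda})$, Corollary \ref{exactloc}(i) applied iteratively through the composite functors $\sigma_{\mathbf{i}}$ shows that the \v{C}ech complex commutes with extension of scalars to $L$, so $\check{C}(\widehat{\D_q^\lambda})\cong \check{C}(\widehat{\D^\lambda})\otimes_R L$. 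The task reduces to showing that $\check{C}(\widehat{\D^\lambda})$ is algebraically exact in positive degrees; strict exactness of $\check{C}(\widehat{\D_q^\lambda})$ will then follow because, in a complex of Banach spaces arising by tensoring $\pi$-torsion free $R$-modules with $L$, algebraic exactness forces the images to be closed and hence the maps to be strict, whereupon Lemma \ref{derived}(iii) yields the vanishing of $R^i\Gamma$ in $\LH(\ban)$ for $i>0$.

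By Remark \ref{ore2} we have $\check{C}(\widehat{\D^\lambda})\cong \varprojlim_n \check{C}(\D^\lambda/\pi^n\D^\lambda)$. The crucial input is Corollary \ref{twists} specialised to the dominant weight $\mu=0\in P^+$: under the good-prime hypothesis, $\D^\lambda/\pi^n\D^\lambda$ is $\Gamma$-acyclic for every $n\geq 1$, so by Proposition \ref{Cech} each finite-level complex $\check{C}(\D^\lambda/\pi^n\D^\lambda)$ has vanishing cohomology in positive degrees.

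To pass to the limit I would mimic the Mittag-Leffler argument from Corollary \ref{Cech2}, invoking \cite[Proposition 0.13.2.3]{EGAIII.1}. The $i$-th term of $\check{C}(\widehat{\D^\lambda})$ is $\Gamma(\sigma_{\mathbf{i}}(\widehat{\D^\lambda}))$; since $\widehat{\D^\lambda}$ is $\pi$-torsion free and each $\widehat{\A_{w_j}}$ is flat over $\widehat{\A}$ by Lemma \ref{ore2}, the iterated localisation $\sigma_{\mathbf{i}}(\widehat{\D^\lambda})$ is also $\pi$-torsion free, and its slice coincides with the iterated Weyl group localisation of $\D^\lambda/\pi\D^\lambda$, which lies in the corresponding category $\Ca^{w_{\mathbf{i}}}$ and is therefore $\Gamma$-acyclic by Lemma \ref{Cech}. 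Lemma \ref{MLcond} then supplies the Mittag-Leffler condition termwise, and the cohomology systems vanish in positive degrees trivially, so the inverse limit of cohomologies computes the cohomology of the limit complex and vanishes. The main obstacle is simply the bookkeeping needed to ensure that the iterated localisations commute compatibly with reduction modulo $\pi^n$, so that the Mittag-Leffler argument can be applied uniformly across degrees; beyond this, the proof is a direct assembly of the preparatory results.
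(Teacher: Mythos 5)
Your proof is correct and follows essentially the same route as the paper's: reduce via Theorem \ref{thmB} to computing the \v{C}ech complex of $\widehat{\D^\lambda}$, observe via Remark \ref{ore2} that this is the inverse limit of the complexes $\check{C}(\D^\lambda/\pi^n\D^\lambda)$, apply Corollary \ref{twists} (with $\mu=0\in P^+$) to get termwise acyclicity, and conclude using the Mittag-Leffler argument from \cite[Proposition 0.13.2.3]{EGAIII.1}. The paper works with the augmented complex $\check{C}^{\text{aug}}$ rather than $\check{C}$, but this is a cosmetic difference. One point worth spelling out in your argument: to kill $R^1\varprojlim$ at cohomological degree $1$ you need the (ML) condition for the \emph{degree-zero} cohomology system $\Gamma(\D^\lambda/\pi^n\D^\lambda)$, not merely the vanishing in positive degrees; this is exactly what Lemma \ref{MLcond} provides (using that $\widehat{\D^\lambda}$ is $\pi$-torsion free and $\D^\lambda_k$ is $\Gamma$-acyclic), and the paper cites that lemma precisely for this purpose.
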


\begin{proof} By Corollary \ref{twists} and Lemma \ref{MLcond}, the projective system $R^i\Gamma(\D^\lambda/\pi^n\D^\lambda)_n$ satisfies the Mittag-Leffler condition for each $i\geq 0$. So by \cite[Proposition 0.13.2.3]{EGAIII.1} applied to $\check{C}^{\text{aug}}(\D^\lambda/\pi^n\D^\lambda)$ and by Proposition \ref{Cech} combined with Remark \ref{ore2}, we get that
$$
H^i\check{C}^{\text{aug}}(\widehat{\D^\lambda})\cong \varprojlim R^i\Gamma(\D^\lambda/\pi^n\D^\lambda)
$$
for all $i\geq 1$. Hence by Corollary \ref{twists}, $H^i\check{C}^{\text{aug}}(\widehat{\D^\lambda})=0$, and so by Theorem \ref{thmB} we have that
$$
R^i\Gamma(\widehat{\D_q^\lambda})\cong H^i\check{C}^{\text{aug}}(\widehat{\D^\lambda})\otimes_R L=0
$$
for $i\geq 1$ as required.
\end{proof}

\begin{cor} Suppose $\lambda$ is regular and dominant. If $\M\in\text{\emph{coh}}(\widehat{\Dflag})$ then $\M$ is $\Gamma$-acyclic and $\Gamma(\M)$ is a finitely presented module over $D:=\Gamma(\widehat{\D_q^\lambda})$.
\end{cor}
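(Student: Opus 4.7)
The plan is to combine the two recently established facts, namely that every coherent module is generated by finitely many global sections (Corollary \ref{MtoMhat}) and that $\widehat{\D_q^\lambda}$ is $\Gamma$-acyclic (Theorem \ref{acyclic2}), together with the finite cohomological dimension of $\Gamma$ on $\flaghat$ (Corollary \ref{Cech2}). First I would use Corollary \ref{MtoMhat} applied to $\M$ to obtain finitely many global sections $m_1,\ldots,m_a\in\Gamma(\M)$ generating $\M$ over $\Dqnhat$; the corresponding map $(\widehat{\D_q^\lambda})^a\to\M$ in $\widehat{\Dflag}$ is then a strict epimorphism, since both source and target are coherent $\Dqnhat$-modules and all maps between coherent Banach modules over a Noetherian Banach algebra are automatically strict (Remark \ref{compflag}). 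Writing $\K$ for its kernel, Noetherianity of $\Dqnhat$ (Proposition \ref{defn_of_Dq}) ensures that $\K\in\coh(\widehat{\Dflag})$.

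Iterating this construction yields a resolution
\[
\cdots\to (\widehat{\D_q^\lambda})^{a_2}\to (\widehat{\D_q^\lambda})^{a_1}\to (\widehat{\D_q^\lambda})^{a_0}\to \M\to 0
\]
in $\widehat{\Dflag}$, whose $n$-th syzygy $\K_n$ is again coherent. Breaking this into strict short exact sequences and invoking the long exact sequence of $R\Gamma$ in the left heart (Lemma \ref{derived} and Proposition \ref{derived}), the acyclicity of $\widehat{\D_q^\lambda}$ from Theorem \ref{acyclic2} collapses the connecting maps into isomorphisms
\[
R^i\Gamma(\M)\cong R^{i+1}\Gamma(\K_1)\cong R^{i+2}\Gamma(\K_2)\cong\cdots\cong R^{i+n}\Gamma(\K_n)
\]
for every $i\geq 1$ and every $n\geq 1$. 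Taking $n$ large enough that $i+n>N+1$, Corollary \ref{Cech2} forces the right-hand side to vanish, giving $R^i\Gamma(\M)=0$ for all $i\geq 1$, that is, $\M$ is $\Gamma$-acyclic.

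With acyclicity in hand, $\Gamma$ is exact on the short exact sequence
\[
0\to\K\to(\widehat{\D_q^\lambda})^a\to\M\to 0,
\]
yielding a strictly exact sequence $0\to\Gamma(\K)\to D^a\to\Gamma(\M)\to 0$ of $D$-modules. Applying Corollary \ref{MtoMhat} once more, this time to the coherent module $\K$, gives a strict surjection $(\widehat{\D_q^\lambda})^b\twoheadrightarrow\K$, and the same exactness argument produces a surjection $D^b\twoheadrightarrow\Gamma(\K)$. Hence $\Gamma(\K)$ is finitely generated and the displayed sequence exhibits $\Gamma(\M)$ as a finitely presented $D$-module.

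The main technical point to handle carefully is the passage to the long exact sequence of cohomology in the quasi-abelian category $\widehat{\Dflag}$: one needs to verify that the forgetful functor $\widehat{\Dflag}\to\flaghat$ preserves strict epimorphisms and monomorphisms so that the $R^i\Gamma$'s computed in $\flaghat$ agree with those obtained from injective resolutions taken within $\widehat{\Dflag}$. Beyond that, everything proceeds by the standard dimension-shifting argument; no fundamentally new input is required.
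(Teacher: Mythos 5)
Your proposal is correct and follows essentially the same route as the paper: generate $\M$ by finitely many global sections via Corollary \ref{MtoMhat}, use Noetherianity of $\Dqnhat$ to keep syzygies coherent, dimension-shift with the acyclicity of $\widehat{\D_q^\lambda}$ (Theorem \ref{acyclic2}), and invoke the cohomological dimension bound of Corollary \ref{Cech2} to conclude; the finite presentation then falls out of exactness of $\Gamma$ on $\cohDn$. The technical worry you flag at the end is easily dispatched exactly as you anticipate, since maps between coherent $\Dqnhat$-modules are automatically strict by Remark \ref{compflag}, so a short exact sequence in $\cohDn$ is carried to a strictly exact sequence in $\flaghat$ under the forgetful functor.
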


\begin{proof} By Corollary \ref{MtoMhat} and since $\Dqnhat$ is Noetherian by Proposition \ref{defn_of_Dq}, we can find a resolution
$$
\F_N\overset{f_N}{\to} \F_{N-1}\overset{f_{N-1}}{\to}\cdots\overset{f_1}{\to} \F_0\overset{f_0}{\to} \M\to 0
$$ 
where the $\F_i$ are all of the form $(\widehat{\D_q^\lambda})^a$. Then by the Theorem each $\F_i$ is $\Gamma$-acyclic. Let $\M_i=\ker{f_i}$. Then by the long exact sequence on cohomology, we get
$$
R^j\Gamma(\M)=R^{j+1}\Gamma(\M_0)=R^{j+2}\Gamma(\M_1)=\cdots=R^{j+N+1}\Gamma(\M_N)=0
$$
for every $j>0$ by Corollary \ref{Cech2}. So $\M$ is $\Gamma$-acyclic.

For the last part, as above we have a resolution
$$
(\widehat{\D_q^\lambda})^a\to(\widehat{\D_q^\lambda})^b\to\M\to 0
$$
for some $a, b\geq 0$. By the above the global section functor is exact on $\cohDn$, so it follows that we have a resolution
$$
D^a\to D^b\to \Gamma(\M)\to 0
$$
making $\Gamma(\M)$ finitely presented as required.
\end{proof}

\subsection{The localisation functor}\label{BB2} By Corollary \ref{acyclic2}, we get a functor
$$
\Gamma:\cohDn\to \text{f.p }D\text{-mod}.
$$
Now let $M$ be a finitely presented (or even finitely generated) $D$-module. Then we define its \emph{localisation} to be the $\Dqnhat$-module
$$
\loc_\lambda(M)=\widehat{\D_q^\lambda}\otimes_D M.
$$
Note that this is a finitely generated $\Dqnhat$-module so is automatically Banach, and in fact a quotient of a finite direct sum of $\widehat{\D_q^\lambda}$. This defines a well-defined $\OqBhat$-comodule structure on $\loc_\lambda(M)$ because $D$ is a trivial subcomodule of $\widehat{\D_q^\lambda}$. This therefore defines a functor
$$
\loc_\lambda:\text{f.p }D\text{-mod}\to\cohDn,
$$
which is a left adjoint to global section. Indeed the adjunction morphisms are given as follows: the map $\loc_\lambda(\Gamma(\M))\to \M$ is given by the $\Dqnhat$-action and is well-defined because $\widehat{\D_q^\lambda}$ represents global sections, while the map $M\to \Gamma(\loc_\lambda(M))$ is just the natural map $m\mapsto 1\otimes m$.

We can now get that the category $\cohDn$ is $D$-affine:

\begin{thm} Suppose that $\lambda\in T_P^k$ is regular and dominant, and assume $p$ is a good prime. Then the functor $\Gamma:\cohDn\to \text{f.p }D\text{-mod}$ is an equivalence of categories with quasi-inverse $\loc_\lambda$.
\end{thm}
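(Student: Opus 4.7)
The plan is to verify that the two adjunction morphisms, the counit $\epsilon_\M:\loc_\lambda(\Gamma(\M))\to\M$ and the unit $\eta_M: M\to \Gamma(\loc_\lambda(M))$, are isomorphisms for $\M\in \cohDn$ and $M$ a finitely presented $D$-module respectively. The main ingredients are already in place: by Theorem \ref{acyclic2}, $\widehat{\D_q^\lambda}$ is $\Gamma$-acyclic; by Corollary \ref{acyclic2}, $\Gamma$ is exact on $\cohDn$ and takes values in f.p.\ $D$-modules; and by Corollary \ref{MtoMhat}, every coherent $\Dqnhat$-module is generated by its global sections. Note also that by construction $\Gamma(\widehat{\D_q^\lambda})=D$ and $\loc_\lambda(D)=\widehat{\D_q^\lambda}$, so $\epsilon$ and $\eta$ are identities on these ``free'' objects.

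For the counit, I would take $\M\in\cohDn$, pick a surjection $(\widehat{\D_q^\lambda})^a\twoheadrightarrow \M$ given by Corollary \ref{MtoMhat}, and factor it as $(\widehat{\D_q^\lambda})^a=\loc_\lambda(D^a)\to \loc_\lambda(\Gamma(\M))\xrightarrow{\epsilon_\M}\M$, which immediately shows $\epsilon_\M$ is surjective. To show injectivity, let $\Nn=\ker\epsilon_\M$; since $\Gamma(\M)$ is finitely presented over $D$ (Corollary \ref{acyclic2}) and $\Dqnhat$ is Noetherian (Proposition \ref{defn_of_Dq}), $\loc_\lambda(\Gamma(\M))$ is coherent, hence so is $\Nn$. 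Applying $\Gamma$ (exact on $\cohDn$) to $0\to \Nn\to \loc_\lambda(\Gamma(\M))\to \M\to 0$ gives $\Gamma(\Nn)\hookrightarrow \Gamma(\loc_\lambda(\Gamma(\M)))\twoheadrightarrow \Gamma(\M)$, and the triangle identity $\Gamma(\epsilon_\M)\circ\eta_{\Gamma(\M)}=\id$ shows $\Gamma(\epsilon_\M)$ is split surjective; thus $\Gamma(\Nn)=0$. Applying Corollary \ref{MtoMhat} to the coherent module $\Nn$ forces $\Nn=0$.

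For the unit, choose a presentation $D^a\to D^b\to M\to 0$. Applying the right exact functor $\loc_\lambda$ gives a presentation of $\loc_\lambda(M)$ in $\cohDn$, and applying the exact functor $\Gamma$ to this yields a commutative diagram
\[
\begin{tikzcd}
D^a \arrow{r}\arrow{d}{=} & D^b\arrow{r}\arrow{d}{=} & M\arrow{r}\arrow{d}{\eta_M}& 0 \\
D^a\arrow{r} & D^b \arrow{r} & \Gamma(\loc_\lambda(M)) \arrow{r} & 0
\end{tikzcd}
\]
with exact rows, where the first two vertical maps are identities because $\eta_{D^n}$ is the identity. The Five Lemma then gives $\eta_M\cong \id_M$. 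Since essentially every formal step reduces to one of the three preceding ``hard'' theorems already available, the proof itself is short; the only place where genuine care is needed is in checking that $\ker\epsilon_\M$ is coherent so that Corollary \ref{MtoMhat} applies to it, and that the exactness of $\Gamma$ on $\cohDn$ applies to the short exact sequences produced (which uses that $\Dqnhat$ is Noetherian to stay within $\cohDn$ when taking kernels).
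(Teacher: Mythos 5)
Your proof takes essentially the same route as the paper's, but there is a genuine gap in the injectivity step for the counit. You observe that the triangle identity $\Gamma(\epsilon_\M)\circ\eta_{\Gamma(\M)}=\id$ makes $\Gamma(\epsilon_\M)$ a split surjection and then conclude ``thus $\Gamma(\Nn)=0$.'' This inference does not hold: a split surjection can have nonzero kernel, and from the exact sequence $0\to\Gamma(\Nn)\to\Gamma(\loc_\lambda(\Gamma(\M)))\to\Gamma(\M)\to 0$ being split you only get $\Gamma(\loc_\lambda(\Gamma(\M)))\cong\Gamma(\Nn)\oplus\Gamma(\M)$, which says nothing about $\Gamma(\Nn)$. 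What you actually need is that $\eta_{\Gamma(\M)}$ is itself an isomorphism; then the triangle identity makes $\Gamma(\epsilon_\M)$ its two-sided inverse, hence injective, and $\Gamma(\Nn)=\ker\Gamma(\epsilon_\M)=0$.

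The fix is only a matter of ordering, and then your argument coincides with the paper's. Your unit argument (the Five-Lemma step on a presentation $D^a\to D^b\to M\to 0$) is correct as written and makes no reference to the counit, so carry it out first, and then apply it to the finitely presented $D$-module $M=\Gamma(\M)$. This shows $\eta_{\Gamma(\M)}$ is an isomorphism, so $\Gamma(\epsilon_\M)$ is an isomorphism (not merely split surjective), giving $\Gamma(\Nn)=0$ and then $\Nn=0$ by Corollary~\ref{MtoMhat}. The paper streamlines the counit step slightly: it notes that exactness of $\Gamma$ together with generation by global sections makes $\Gamma$ conservative on $\cohDn$, so $\epsilon_\M$ is an isomorphism if and only if $\Gamma(\epsilon_\M)$ is, and then quotes the triangle identity plus the unit step; your version spells out surjectivity and injectivity separately, which is fine once the kernel computation is repaired. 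Your surjectivity argument, via factoring a surjection $(\widehat{\D_q^\lambda})^a\twoheadrightarrow\M$ through $\loc_\lambda(\Gamma(\M))$, and your check that $\Nn$ is coherent so that Corollary~\ref{MtoMhat} and exactness of $\Gamma$ apply to it, are both correct.
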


\begin{proof} The proof is now standard. We have to show that the two adjunction morphisms are isomorphisms. We first show that the adjunction morphism
$$
\psi_M:M\to\Gamma(\loc_\lambda(M))
$$
is an isomorphism for any finitely presented $D$-module $M$. By definition of $\loc_\lambda$, it is clear that $\psi_M$ is an isomorphism whenever $M$ is a free $D$-module. For a general finitely presented $M$, we have a presentation $D^a\to D^b\to M\to 0$. Since the global sections functor is exact by Corollary \ref{acyclic2} and since $\loc_\lambda$ is right exact, applying $\Gamma\circ \loc_\lambda$ to this presentation and the Five Lemma yields that $\psi_M$ is an isomorphism for any finitely presented module $M$.

It remains to show that the other adjunction morphism
$$
\phi_{\M}:\loc_\lambda(\Gamma(\M))\to\M
$$
is an isomorphism for every $\M\in\cohDn$. By exactness of $\Gamma$ and since every object of $\cohDn$ is generated by its global sections by Corollary \ref{MtoMhat}, the map $\phi_{\M}$ is an isomorphism if and only if $\Gamma(\phi_{\M})$ is an isomorphism.  But the composite
$$
\Gamma(\M)\overset{\psi_{\Gamma(\M)}}{\longrightarrow}\Gamma(\loc_\lambda(\Gamma(\M)))\overset{\Gamma(\phi_{\M})}{\longrightarrow}\Gamma(\M)
$$
is an isomorphism by general properties of adjunctions, and the map $\psi_{\Gamma(\M)}$ is an isomorphism by the above, therefore $\Gamma(\phi_{\M})$ is an isomorphism.
\end{proof}

\subsection{Global sections of $\widehat{\D_q^\lambda}$}\label{global1} We recall from \cite[Proposition 4.8]{QDmod1} the construction of a canonical map $U^{\text{fin}}\to \Gamma(\D^\lambda)$. First, it follows from Lemma \ref{intfinite1} that the canonical surjection $U\to \M_\lambda$ restricts to a surjection $U^{\text{fin}}\to \M_\lambda$. That map is a homomorphism of $\B$-comodules, where $\M_\lambda$ and $U^{\text{fin}}$ are viewed as integrable $U^{\text{res}}(\bo)$-modules via the adjoint action. This gives rise to a surjection
\begin{equation}\label{map1}
p^*(U^{\text{fin}})=\A\otimes_RU^{\text{fin}}\to \A\otimes_R \M_\lambda\cong \D^\lambda.
\end{equation}
But since $U^{\text{fin}}$ is in fact an $\A$-comodule, i.e. an integrable $U^{\text{res}}$ via the adjoint action, it follows from the tensor identity (e.g. \cite[Proposition 2.16]{Andersen}) that
$$
U^{\text{fin}}\cong \ind(U^{\text{fin}})=\Gamma(p^*(U^{\text{fin}})).
$$
This isomorphism is explicitly given by $u\mapsto \sum S^{-1}(u_2)\otimes u_1$ where we use Sweedler's notation for the coaction $U^{\text{fin}}\to U^{\text{fin}}\otimes \A$ induced by the adjoint action of $U^{\text{res}}$ on $U^{\text{fin}}$ (cf \cite[Lemma 3.13(a)]{QDmod1}).

Now, taking global sections in (\ref{map1}) and using this isomorphism, we obtain a map
$$
f:U^{\text{fin}}\to \Gamma(\D^\lambda)
$$
as promised. The above construction can of course be repeated, for any $n\geq 1$, by replacing $U^{\text{fin}}$ and $\D^\lambda$ by $U^{\text{fin}}/\pi^nU^{\text{fin}}$ and $\D^\lambda/\pi^n\D^\lambda$ respectively to obtain maps
$$
f_n: U^{\text{fin}}/\pi^nU^{\text{fin}}\to \Gamma (\D^\lambda/\pi^n\D^\lambda).
$$
By functoriality of the tensor identity, we see that the maps $f_n$ satisfy the property that the diagrams

\begin{minipage}{0.5\textwidth}
\[
\begin{tikzcd}
U^{\text{fin}} \arrow{d}\arrow{r}{f} & \Gamma(\D^\lambda) \arrow{d}\\
U^{\text{fin}}/\pi^nU^{\text{fin}} \arrow{r}{f_n} & \Gamma (\D^\lambda/\pi^n\D^\lambda)
\end{tikzcd}
\]
\end{minipage}\hfill
\begin{minipage}{0.5\textwidth}
\[
\begin{tikzcd}
U^{\text{fin}}/\pi^mU^{\text{fin}} \arrow{d}\arrow{r}{f_m} & \Gamma(\D^\lambda/\pi^m\D^\lambda) \arrow{d}\\
U^{\text{fin}}/\pi^nU^{\text{fin}} \arrow{r}{f_n} & \Gamma (\D^\lambda/\pi^n\D^\lambda)
\end{tikzcd}
\]
\end{minipage}
commute for all $m\geq n$. Thus we may take the inverse limit of the maps $f_n$ to obtain a map
$$
\widehat{f}:=\varprojlim f_n: \widehat{U^{\text{fin}}}\to \Gamma(\widehat{\D^\lambda}).
$$
Note that $I_\lambda:=(\ker(\chi_\lambda))$ is contained in the kernel of the surjection $U^{\text{fin}}\to \M_\lambda$. Moreover, this ideal is preserved under the adjoint action of $U^{\text{res}}$ since it is centrally generated. Thus it is an $\A$-subcomodule of $U^{\text{fin}}$ and hence, by the above construction, is in the kernel of $f:U^{\text{fin}}\to \Gamma(\D^\lambda)$. By the commutativity of the left hand diagram above, it follows that $(I_\lambda+\pi^n U^{\text{fin}})/\pi^n U^{\text{fin}}$ lies in the kernel of $f_n$ for all $n\geq 1$.

Now let $U^\lambda:=U^{\text{fin}}/I_\lambda$ and let $\widehat{U^\lambda_q}:=\widehat{U^\lambda_L}$. Then there is a short exact sequence
$$
0\to \varprojlim (I_\lambda+\pi^n U^{\text{fin}})/\pi^n U^{\text{fin}}\to \widehat{U^{\text{fin}}}\to \widehat{U^\lambda}\to 0
$$
so that we see from the above that $\widehat{f}$ factors through a map $\widehat{U^\lambda}\to \Gamma(\widehat{\D^\lambda})$. Extending scalars this gives a map
$$
\widehat{f_\lambda}:\widehat{U_q^\lambda}\to \Gamma(\widehat{\D_q^\lambda})
$$
Let $V$ denote the unit ball of $\widehat{U_q^\lambda}$, i.e. the image of $\widehat{U^{\text{fin}}}$ in $\widehat{U_q^\lambda}$. In order to study the above map, it will be useful to consider the restriction modulo $\pi$ of $V\to \Gamma(\widehat{\D^\lambda})$, which induces a map
$$
\overline{f_\lambda}:V/\pi V\to U(\g_k)_{\chi_\lambda}
$$
by Corollary \ref{cohomology} whenever $p$ is very good and $\lambda\in T_P^k$. Also, recall the quantum Harish-Chandra isomorphism $\varphi:Z(U)\to (U^0\cap L(2P))^W$ as well as the classical Harish-Chandra isomorphism $\psi: U(\g_k)^{G_k}\to S(\h_k)^W$ from section \ref{HC}.

\begin{prop} Let $\lambda\in T_P^k$ and assume that $p$ is a very good prime. Then $\overline{f_\lambda}$ is surjective. If furthermore $p$ does not divide $|W|$, then it is an isomorphism.
\end{prop}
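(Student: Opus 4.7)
The plan is to prove both statements by explicitly identifying $\overline{f_\lambda}$ with a classical map and then analysing its kernel and image using the structural results from Sections \ref{intfinite2}, \ref{Gkmod} and \ref{HC}.

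First I would unpack $\overline{f_\lambda}$. By construction, it factors through the induced map $f_1: U^{\text{fin}}_k \to \Gamma(\D^\lambda_k) \cong U(\g_k)_{\chi_\lambda}$ (Corollary \ref{cohomology}). The key observation is that $f_1$ admits a simple classical description: since $(\M_\lambda)_k \cong \M(\lambda)$ for $\lambda \in T_P^k$ (Definition \ref{cohomology}) and the $K_\mu$ for $\mu \in 2P$ act trivially on $\M(\lambda)$, the quantum surjection $U^{\text{fin}}_k \twoheadrightarrow (\M_\lambda)_k$ factors as $U^{\text{fin}}_k \twoheadrightarrow U(\g_k) \twoheadrightarrow \M(\lambda)$, where the first map is the augmentation on $k(2P/2Q)$ coming from the isomorphism $U^{\text{fin}}_k \cong U(\g_k) \otimes_k k(2P/2Q)$ of Corollary \ref{intfinite2}, and the second is the classical Verma quotient. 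Applying the tensor identity and comparing with \cite[Proposition 3.4.1]{BMR} as in the proof of Proposition \ref{cohomology}, this implies that $f_1$ is the composition $U^{\text{fin}}_k \twoheadrightarrow U(\g_k) \twoheadrightarrow U(\g_k)_{\chi_\lambda}$, with the second map being the classical Beilinson--Bernstein quotient.

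Surjectivity is then immediate: the splitting $\phi: U(\g_k) \hookrightarrow U^{\text{fin}}_k$ from Proposition \ref{intfinite2} gives a composite $U(\g_k) \to V/\pi V \xrightarrow{\overline{f_\lambda}} U(\g_k)_{\chi_\lambda}$ which, by the previous step, equals the natural surjection $U(\g_k) \twoheadrightarrow U(\g_k)_{\chi_\lambda}$.

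For the isomorphism claim under $p \nmid |W|$, I would directly compute $\ker(U^{\text{fin}}_k \twoheadrightarrow V/\pi V)$, which is the image of $I_\lambda = U^{\text{fin}} \cdot \ker\chi_\lambda$ in $U^{\text{fin}}_k$. By Corollary \ref{Gkmod}, the image of $Z(U)$ lies in $(U^{\text{fin}}_k)^{G_k} \cong U(\g_k)^{G_k} \otimes_k k(2P/2Q)$, and under the hypothesis $p \nmid |W|$, Theorem \ref{HC} tells us that the reduction map $q_1: Z(U) \to U(\g_k)^{G_k} \otimes k(2P/2Q)$ is surjective. Consequently the image of $\ker\chi_\lambda$ equals $\ker(\bar\chi_\lambda \otimes \varepsilon)$, where $\bar\chi_\lambda$ is the central character on $U(\g_k)^{G_k}$ induced by $\lambda$ and $\varepsilon$ is the augmentation on $k(2P/2Q)$ (which is the reduction of $\lambda|_{k(2P/2Q)}$ since $\lambda \in T_P^k$). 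Multiplying by $U^{\text{fin}}_k = U(\g_k) \otimes k(2P/2Q)$ and simplifying, this yields
\[
\ker(U^{\text{fin}}_k \twoheadrightarrow V/\pi V) = U(\g_k)\cdot\ker\bar\chi_\lambda \otimes k(2P/2Q) + U(\g_k)\otimes \ker\varepsilon,
\]
which matches the kernel of $f_1$ computed from the factorisation above. Hence $\overline{f_\lambda}$ is injective.

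The main obstacle will be the first step: carefully justifying that the reduction of the quantum map $f$ mod $\pi$ agrees with the classical Beilinson--Bernstein map after applying the identifications of Section \ref{cohomology}. This requires tracking both the tensor identity (to describe global sections) and the compatibility of the quantum and classical Verma module surjections, but should be a routine verification once the identifications of Section \ref{cohomology} and the splitting of Proposition \ref{intfinite2} are invoked.
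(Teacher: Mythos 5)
Your overall strategy is a valid alternative to the paper's: you compute $\ker(U^{\text{fin}}_k\twoheadrightarrow V/\pi V)$ directly and compare it with $\ker(f_1)$, whereas the paper instead produces a factoring $g\colon U(\g_k)_{\chi_\lambda}\to V/\pi V$ and concludes injectivity from that. The two are logically equivalent and both hinge on the same core identification: the character that $\chi_\lambda$ induces on $Z(U)/\pi Z(U)\cong U(\g_k)^{G_k}\otimes_k k(2P/2Q)$ is $\bar\chi_\lambda\otimes\varepsilon$. But this is precisely where your write-up has a gap, and your parenthetical ``which is the reduction of $\lambda|_{k(2P/2Q)}$ since $\lambda\in T_P^k$'' does not justify it. The difficulty is that $K_\mu$ for $\mu\in 2P$ is \emph{not} an element of $Z(U)$ (it is only central after reduction mod $\pi$), so one cannot simply restrict the group-like character $\lambda$ to it; one must produce a genuine central element of $U$ that reduces to $1\otimes K_\mu$ and evaluate $\chi_\lambda$ there, and \emph{that} requires the surjectivity of $q_1$ from Theorem \ref{HC} together with an actual computation. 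The paper carries out this computation explicitly by taking $z_\mu\in Z(U)$ with $\varphi(z_\mu)=\sum_{\gamma\in W\mu}K_\gamma$, showing $z_\mu\equiv|W\mu|\,K_\mu\pmod\pi$ and $\chi_\lambda(z_\mu)\equiv|W\mu|\pmod\pi$, and then dividing by $|W\mu|$ (which is where $p\nmid|W|$ is used a second time). An alternative and arguably cleaner way to close your gap is to observe that the induced character is dictated by the action of $Z(U)/\pi Z(U)$ on $\M(\lambda)$, and that any central lift of $1\otimes K_\mu$ acts by the scalar $\overline{\lambda(\mu)}=1$ because $K_\mu$ acts by $1$ on $\M(\lambda)$ for $\mu\in 2P$, $\lambda\in T_P^k$ (this uses both $\lambda\in T_P^k$ and the centrality of $K_\mu$ mod $\pi$), while for $u\in U(\g_k)^{G_k}$ a central lift acts by $\bar\chi_\lambda(u)$ by definition. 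Either way, something must be said here.

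Relatedly, you flag the first step (identifying $f_1$ with the composite $U^{\text{fin}}_k\twoheadrightarrow U(\g_k)\twoheadrightarrow U(\g_k)_{\chi_\lambda}$) as ``the main obstacle.'' The paper asserts this identification at roughly the same level of detail as you do, treating it as following from the naturality of the tensor identity together with Corollary \ref{cohomology}; so it is not really where the difficulty lies. The genuinely delicate point, and the one your proposal is missing, is the character identification just discussed. Once that is supplied, your kernel computation in the last display is correct, and the surjectivity argument via the splitting $\phi$ (equivalently, the retraction $\theta$ used in the paper) is fine.
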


\begin{proof}
Recall that Corollary \ref{intfinite2} says that $\Ufbar\cong U(\g_k)\otimes_k k(2P/2Q)$. In particular, there is a projection map $\theta:\Ufbar\to U(\g_k)$ which induces a canonical surjection $\Ufbar\to U(\g_k)_{\chi_\lambda}$ which we also denote by $\theta$ by abuse of notation. But the map $\overline{f_\lambda}$ is factored from the map $\Ufbar\to \Gamma(\D^\lambda_k)$ induced by $\widehat{f}:\widehat{U^{\text{fin}}}\to \Gamma(\widehat{\D^\lambda})$, and upon identifying $\Gamma(\D^\lambda_k)$ with $U(\g_k)_{\chi_\lambda}$, via Corollary \ref{cohomology}, this map becomes $\theta$. Thus the surjectivity of $\theta$ implies the surjectivity of $\overline{f_\lambda}$.

So we see that we have a commutative diagram
\[
\begin{tikzcd}
 & U^{\text{fin}}_k \arrow[swap]{ld}{\theta}\arrow{d}\arrow{rd}{\theta} & \\
U(\g_k)_{\chi_\lambda} \arrow[dashrightarrow]{r}{g} & V/\pi V \arrow{r}{\overline{f_\lambda}} & U(\g_k)_{\chi_\lambda}
\end{tikzcd}
\]
It would now suffice to show that the surjection $U^{\text{fin}}_k\to V/\pi V$ factors through $U(\g_k)_{\chi_\lambda}$ via a map $g$ as in the diagram, whenever $p$ does not divide $|W|$. Indeed, any $u\in \ker \overline{f_\lambda}$ lifts to some $v\in \ker \theta$, and if there is such a factorisation it then follows that $u=g(\theta(v))=0$.

Let $\mu\in 2P$. By Theorem \ref{HC} there exists $z_\mu\in Z(U)$ such that $\varphi(z_\mu)=\sum_{\gamma\in W\mu}K_\gamma$. Now $z_\mu-\sum_{\gamma\in W\mu}K_\gamma\in U$ reduces modulo $\pi$ to an element of $(\Ufbar)^{G_k}\cong U(\g_k)\otimes_k k(2P/2Q)$, using Corollary \ref{Gkmod}, and is in the kernel of $\psi\otimes \text{id}$ by commutativity of the square in Theorem \ref{HC}. But since $\psi\otimes \text{id}$ is an isomorphism by \cite[Theorem 9.3]{Jantzen3}, it follows that
$$
z_\mu\equiv \sum_{\gamma\in W\mu}K_\gamma\pmod{\pi}.
$$
Moreover, by Proposition \ref{Weyl}, we have that $K_\gamma\equiv K_\mu\pmod{\pi}$ for all $\gamma\in W\mu$ and so we in fact get
\begin{equation}\label{neweqn3}
z_\mu\equiv |W\mu|\cdot K_\mu \pmod{\pi}.
\end{equation}
Now, by definition of $V$ and of $\chi_\lambda$, the image of $z_\mu\in U^{\text{fin}}$ in $V$ is $\chi_\lambda(z_\mu)=\sum_{\gamma\in W\mu}q^{\langle \lambda, \gamma\rangle}$. Hence, since $q^{\frac{1}{d}}\equiv 1\pmod{\pi}$, its image in $V/\pi V$ is $|W\mu|$. By our assumption on $p$, $|W\mu|\neq 0$ in $k$ and thus from (\ref{neweqn3}) we see that $K_\mu\in \Ufbar$ gets mapped to 1 in $V/\pi V$.

Since $\mu$ was arbitrary and $\Ufbar\cong U(\g_k)\otimes_k k(2P/2Q)$ by Corollary \ref{intfinite2}, it follows that the map $U^{\text{fin}}_k\to V/\pi V$ factors through $U(\g_k)$. To get that it factors further through $U(\g_k)_{\chi_\lambda}$, pick $u\in U(\g_k)^{G_k}$. Since $U(\g_k)^{G_k}\otimes_k k(2P/2Q)\cong Z(U)/\pi Z(U)$ by Theorem \ref{HC}, we may lift $u$ to an element of $Z(U)$. So the image of $u$ in $V$ is equal to $\chi_\lambda(u)$ as required, and we are done.
\end{proof}

We can now prove our main result:

\begin{thm} Let $\lambda\in T_P^k$ and assume that $p$ is a very good prime. Then the map $\widehat{f_\lambda}:\widehat{U_q^\lambda}\to \Gamma(\widehat{\D_q^\lambda})$ is surjective. In particular, $\Gamma(\widehat{\D_q^\lambda})$ is Noetherian. If furthermore $p$ does not divide the order of the Weyl group $W$, then $\widehat{f_\lambda}$ is an isomorphism.
\end{thm}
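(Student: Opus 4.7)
The plan is to lift the mod-$\pi$ information from Proposition \ref{global1} to a statement about the $\pi$-adic completions $V:=\widehat{U^\lambda}$ and $\Gamma(\widehat{\D^\lambda})$ (the respective unit balls of $\widehat{U^\lambda_q}$ and $\Gamma(\widehat{\D^\lambda_q})$), and then extend scalars to $L$. Both lattices are $\pi$-adically complete and $\pi$-torsion free, so once the correct comparison modulo $\pi$ is set up, standard Nakayama-and-completeness arguments will do the job.

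The key preliminary step is identifying the mod-$\pi$ target: I would show $\Gamma(\widehat{\D^\lambda})/\pi\Gamma(\widehat{\D^\lambda})\cong \Gamma(\D^\lambda_k)\cong U(\g_k)_{\chi_\lambda}$, the latter isomorphism by Corollary \ref{cohomology}. By Proposition \ref{completion1}, $\Gamma(\widehat{\D^\lambda})=\varprojlim\Gamma(\D^\lambda/\pi^n\D^\lambda)$; and since $\D^\lambda$ is $\pi$-torsion free with $\D^\lambda_k$ acyclic by Proposition \ref{twists} (using that $p$ is good), Corollary \ref{twists} together with Lemma \ref{MLcond} imply that every $\D^\lambda/\pi^n\D^\lambda$ is $\Gamma$-acyclic and the transition maps of the inverse system are surjective. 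Applying $\Gamma$ to the short exact sequences
\[
0\to \pi\D^\lambda/\pi^n\D^\lambda\to \D^\lambda/\pi^n\D^\lambda\to \D^\lambda_k\to 0
\]
(which stay exact by acyclicity), using the isomorphism $\pi\D^\lambda/\pi^n\D^\lambda\cong\D^\lambda/\pi^{n-1}\D^\lambda$, and passing to inverse limits then produces the claimed identification.

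For the surjectivity claim, with $p$ very good Proposition \ref{global1} gives $\overline{f_\lambda}:V/\pi V\to U(\g_k)_{\chi_\lambda}$ surjective, which via the identification above means that $V\to \Gamma(\widehat{\D^\lambda})$ is surjective modulo $\pi$. A standard successive-approximation lift—pick $v_0\in V$ lifting a given $y\in\Gamma(\widehat{\D^\lambda})$ modulo $\pi$, write $y-\widehat{f}(v_0)=\pi y_1$, iterate to obtain $v_0,v_1,\ldots\in V$, and sum $v=\sum_{n\geq 0}\pi^n v_n$, which converges since $V$ is $\pi$-adically complete—then yields surjectivity of $V\to \Gamma(\widehat{\D^\lambda})$. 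Extending scalars gives $\widehat{f_\lambda}$ surjective, and Noetherianity of $\Gamma(\widehat{\D^\lambda_q})$ is immediate as a quotient of the Noetherian Banach algebra $\widehat{U^\lambda_q}$ (which is Noetherian by Proposition \ref{Noetherian}(i), since $U^\lambda/\pi U^\lambda$ is a quotient of $\Ufbar\cong U(\g_k)\otimes_k k(2P/2Q)$ from Corollary \ref{intfinite2}, and this is Noetherian by Hilbert's basis theorem).

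For the isomorphism claim when $p\nmid|W|$, set $K:=\ker(V\to \Gamma(\widehat{\D^\lambda}))$. Since $\Gamma(\widehat{\D^\lambda})$ is $\pi$-torsion free, reducing the short exact sequence $0\to K\to V\to \Gamma(\widehat{\D^\lambda})\to 0$ modulo $\pi$ remains exact, and the right-most map becomes an isomorphism by Proposition \ref{global1} together with the identification from paragraph two. Hence $K=\pi K$. As $V$ is Noetherian, $K$ is finitely generated; the standard form of Nakayama's lemma supplies $a\in V$ with $a\equiv 1\pmod{\pi}$ and $aK=0$. But such an $a$ is a unit in $V$, its inverse being the convergent series $\sum_{n\geq 0}(1-a)^n$, so $K=0$. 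Extending scalars completes the proof. The main obstacle is the inverse-limit computation in the second paragraph, which pins down the reduction mod $\pi$ of the target; everything afterward is a routine Nakayama-and-completeness exercise.
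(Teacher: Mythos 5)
Your argument is correct, but it takes a genuinely different route from the paper. The paper works level by level: it shows by induction on $n$, using the five lemma applied to the exact sequences built from $0\to\D^\lambda_k\overset{\cdot\pi^n}{\to}\D^\lambda/\pi^{n+1}\D^\lambda\to\D^\lambda/\pi^n\D^\lambda\to 0$ and $\Gamma$-acyclicity of $\D^\lambda_k$, that each $f_{\lambda,n}:V/\pi^nV\to\Gamma(\D^\lambda/\pi^n\D^\lambda)$ is surjective (resp.\ an isomorphism), then a separate diagram chase establishes that the kernels $K_n=\ker f_{\lambda,n}$ form a Mittag-Leffler tower so that $\widehat{f_\lambda}=\varprojlim f_{\lambda,n}$ is surjective (resp., more immediately, an isomorphism as an inverse limit of isomorphisms). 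You instead first pin down the single piece of global information $\Gamma(\widehat{\D^\lambda})/\pi\Gamma(\widehat{\D^\lambda})\cong\Gamma(\D^\lambda_k)$ by the inverse-limit computation, and then invoke abstract completeness machinery (successive approximation, Nakayama). Your approach is more modular and arguably cleaner, but it front-loads the reliance on Noetherianity of $V=\widehat{U^\lambda}$ and $\pi$-adic completeness of $V$; the paper's inverse-limit route avoids both for the isomorphism claim (an inverse limit of isomorphisms is an isomorphism, no finiteness hypotheses needed).

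One small repair: the Cayley--Hamilton form of Nakayama that you quote, producing $a\in V$ with $a\equiv 1\pmod\pi$ and $aK=0$, is a \emph{commutative}-ring statement, and $V$ is not commutative. However, $\pi V$ does lie in the Jacobson radical of $V$ (since $V$ is $\pi$-adically complete, $1-\pi v$ is a unit via the geometric series for any $v$), so ordinary non-commutative Nakayama applied to the finitely generated left ideal $K$ with $\pi K=K$ gives $K=0$ directly. Alternatively, as noted above, once you know $V/\pi^nV\to\Gamma(\D^\lambda/\pi^n\D^\lambda)$ is an isomorphism for each $n$ (which your setup also yields by downward propagation from the $n=1$ case using $\pi$-torsion freeness), you can just take inverse limits without any Noetherianity hypothesis.
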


\begin{proof}
We first show that each map $f_{\lambda, n}:V/\pi^n V\to \Gamma(\D^\lambda/\pi^n \D^\lambda)$ is surjective, respectively an isomorphism, depending on the condition on $p$. Here $f_{\lambda, 1}$ is the map $\overline{f_\lambda}$ from earlier. We proceed by induction on $n$. The case $n=1$ follows from the previous Proposition. In general, the short exact sequences
$$
0\to V/\pi V\overset{\cdot\pi^n}{\to} V\pi^{n+1} V\to V/\pi^nV\to 0
$$
and
$$
0\to \D^\lambda_k\overset{\cdot\pi^n}{\to} \D^\lambda/\pi^{n+1} \D^\lambda\to \D^\lambda /\pi^n\D^\lambda\to 0
$$
give rise to a commutative diagram with exact rows
\[
\begin{tikzcd}
0\arrow{r} &  V \arrow{r}{\cdot\pi^n} \arrow{d}{\overline{f_\lambda}} & V/\pi^{n+1} V\arrow{r} \arrow{d}{f_{\lambda, n+1}} & V/\pi^nV\arrow{r}\arrow{d}{f_{\lambda, n}} & 0\\
0\arrow{r} & \Gamma(\D^\lambda_k)\arrow{r}{\cdot\pi^n} & \Gamma(\D^\lambda/\pi^{n+1} \D^\lambda) \arrow{r} & \Gamma(\D^\lambda /\pi^n\D^\lambda) \arrow{r} & 0
\end{tikzcd}
\]
where we used the acyclicity of $\D^\lambda_k$ (Corollary \ref{cohomology}). The induction hypothesis and the Five Lemma then give that $f_{n+1}$ is surjective, respectively an isomorphism.

If $p$ does not divide $|W|$, we immediately obtain that $f_\lambda=\varprojlim f_{\lambda, n}$ is an isomorphism. So we now assume that $p$ is only a very good prime.

Let $K_n=\ker(f_{\lambda, n})$. Then we have a commutative diagram
\[
\begin{tikzcd}
 & 0\arrow{d} & 0\arrow{d} & 0\arrow{d} & \\
0 \arrow{r} & K_1 \arrow{r}{\cdot\pi^n} \arrow{d} & K_{n+1}\arrow{r}\arrow{d} & K_n \arrow{r}\arrow{d} & 0\\
0\arrow{r} &  V \arrow{r}{\cdot\pi^n} \arrow{d}{\overline{f_\lambda}} & V/\pi^{n+1} V\arrow{r} \arrow{d}{f_{\lambda, n+1}} & V/\pi^nV\arrow{r}\arrow{d}{f_{\lambda, n}} & 0\\
0\arrow{r} & \Gamma(\D^\lambda_k)\arrow{r}{\cdot\pi^n}\arrow{d} & \Gamma(\D^\lambda/\pi^{n+1} \D^\lambda) \arrow{r}\arrow{d} & \Gamma(\D^\lambda /\pi^n\D^\lambda) \arrow{r}\arrow{d} & 0\\
 & 0 & 0 & 0 & 
\end{tikzcd}
\]
where the bottom two rows and all columns are exact. Then a simply diagram chase gives that the top row is exact. In particular, the maps $K_{n+1}\to K_n$ are surjective and so $(K_n)_{n\geq 1}$ satisfies the Mittag-Leffler condition. Passing to the inverse limit and using Proposition \ref{completion1}, we therefore see that the sequence
$$
0\to \varprojlim K_n\to V\overset{\widehat{f_\lambda}}{\to} \Gamma(\widehat{\D^\lambda})\to 0
$$
is exact and $\widehat{f_\lambda}$ is surjective as claimed.

Finally, the Noetherianity of $\Gamma(\widehat{\D_q^\lambda})$ now follows because $\widehat{U^\lambda}$ is Noetherian by Corollary \ref{intfinite2} and Lemma \ref{Noetherian}(i).
\end{proof}

This gives our promised result from the introduction:

\begin{proof}[Proof of Theorem \ref{thmC}]
Put together Theorem \ref{BB2} and Theorem \ref{global1}. Note that the Noetherianity of $\Gamma(\widehat{\D_q^\lambda})$ implies that finitely presented modules over it are the same as finitely generated modules.
\end{proof}

The proof of Theorem \ref{global1} also has the following consequence:

\begin{cor}
Suppose that $p$ is a very good prime. Then $\Gamma(\widehat{\D^\lambda})\cong\widehat{\Gamma(\D^\lambda)}$. Suppose additionally that $\lambda$ is dominant. Then $\D^\lambda\in\Ca$ is $\Gamma$-acyclic.
\end{cor}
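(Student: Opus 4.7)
The plan is to extract both assertions from the Mittag--Leffler machinery in the proof of Theorem \ref{global1}, together with the mod-$\pi^n$ acyclicity provided (for $\mu = 0$) by Corollary \ref{twists}. Write $G := \Gamma(\D^\lambda)$, $A_n := \Gamma(\D^\lambda/\pi^n\D^\lambda)$, and $R^1\Gamma(\D^\lambda)[\pi^n]$ for the $\pi^n$-torsion submodule of $R^1\Gamma(\D^\lambda)$.

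For the first assertion, since $\D^\lambda = \A \otimes_R \M_\lambda$ is $R$-free, it is in particular $\pi$-torsion free. Applying $\Gamma$ to the short exact sequence $0 \to \D^\lambda \xrightarrow{\pi^n} \D^\lambda \to \D^\lambda/\pi^n\D^\lambda \to 0$ and invoking Corollary \ref{twists} with $\mu = 0$ (which needs only $p$ good) to kill $R^1\Gamma(\D^\lambda/\pi^n\D^\lambda)$ yields
\[
0 \to G/\pi^n G \to A_n \to R^1\Gamma(\D^\lambda)[\pi^n] \to 0.
\]
Taking the inverse limit, using Proposition \ref{completion1} to identify $\Gamma(\widehat{\D^\lambda}) \cong \varprojlim A_n$ and the surjectivity of the transition maps in the system $(G/\pi^n G)$, produces an exact sequence
\[
0 \to \widehat{\Gamma(\D^\lambda)} \to \Gamma(\widehat{\D^\lambda}) \to \varprojlim R^1\Gamma(\D^\lambda)[\pi^n] \to 0.
\]
This already supplies injectivity. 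For surjectivity, I would appeal to the proof of Theorem \ref{global1}, where the integral map $\widehat{f_\lambda}\colon \widehat{U^{\text{fin}}} \to \Gamma(\widehat{\D^\lambda})$ is shown to be surjective. By naturality, $\widehat{f_\lambda}$ factors through $\widehat{G}$ as the completion $\widehat{U^{\text{fin}}} \to \widehat{G}$ of the map $f\colon U^{\text{fin}} \to G$ from the start of Section \ref{global1} followed by the natural map $\widehat{G} \to \Gamma(\widehat{\D^\lambda})$; surjectivity of $\widehat{f_\lambda}$ therefore forces surjectivity of the latter.

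For the second assertion, the exactness of each Weyl group localisation and of global sections on $\Ca^w$ (Lemma \ref{Cech}) produces a short exact sequence of \v{C}ech complexes
\[
0 \to \check{C}(\D^\lambda) \xrightarrow{\pi^n} \check{C}(\D^\lambda) \to \check{C}(\D^\lambda/\pi^n\D^\lambda) \to 0.
\]
The associated long exact sequence in cohomology, combined with the mod-$\pi^n$ acyclicity of Corollary \ref{twists} and the identification $R^i\Gamma(\D^\lambda) = H^i\check{C}(\D^\lambda)$ from Proposition \ref{Cech}, shows that $R^i\Gamma(\D^\lambda)$ is $\pi$-divisible for every $i \geq 1$ and $\pi$-torsion free for every $i \geq 2$. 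The dominance hypothesis on $\lambda$ now enters, via the acyclicity of $\D_q^\lambda$ in $\flag$ (the dominant-$\lambda$ half of the quantum Beilinson--Bernstein theorem of Backelin and Kremnizer) combined with Proposition \ref{base_change}, to force $R^i\Gamma(\D^\lambda)$ to be $\pi$-torsion for all $i \geq 1$. Together with the $\pi$-torsion freeness this immediately gives $R^i\Gamma(\D^\lambda) = 0$ for $i \geq 2$.

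The main obstacle is the case $i = 1$, where $R^1\Gamma(\D^\lambda)$ is simultaneously $\pi$-torsion and $\pi$-divisible, and this combination does not in general force vanishing (compare $L/R$). Here the first assertion becomes indispensable: the isomorphism $\widehat{\Gamma(\D^\lambda)} \cong \Gamma(\widehat{\D^\lambda})$ established above forces $\varprojlim R^1\Gamma(\D^\lambda)[\pi^n] = 0$, the transition maps $R^1\Gamma(\D^\lambda)[\pi^{n+1}] \to R^1\Gamma(\D^\lambda)[\pi^n]$ being multiplication by $\pi$. Given any $m \in R^1\Gamma(\D^\lambda)[\pi]$, successive application of $\pi$-divisibility yields lifts $m_n \in R^1\Gamma(\D^\lambda)[\pi^n]$ with $\pi m_{n+1} = m_n$ and $m_1 = m$; the resulting element of the vanishing inverse limit must be zero, forcing $m = 0$. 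Hence $R^1\Gamma(\D^\lambda)[\pi] = 0$, and combined with the $\pi$-torsion of $R^1\Gamma(\D^\lambda)$ this gives $R^1\Gamma(\D^\lambda) = 0$.
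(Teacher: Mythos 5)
Your proof is correct but diverges from the paper's in a couple of instructive places. For the isomorphism $\Gamma(\widehat{\D^\lambda})\cong\widehat{\Gamma(\D^\lambda)}$, the paper does not need acyclicity of $\D^\lambda/\pi^n\D^\lambda$ at all: the surjectivity of $f_n$, established in the proof of Theorem~\ref{global1}, combined with the commutative square relating $f$ and $f_n$, gives directly that $\Gamma(\D^\lambda)\to\Gamma(\D^\lambda/\pi^n\D^\lambda)$ is surjective; left exactness then yields $\Gamma(\D^\lambda/\pi^n\D^\lambda)\cong\Gamma(\D^\lambda)/\pi^n\Gamma(\D^\lambda)$, and the conclusion follows from Proposition~\ref{completion1}. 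Your Mittag--Leffler argument, which isolates $\varprojlim R^1\Gamma(\D^\lambda)[\pi^n]$ as the obstruction and kills it via the factoring of $\widehat{f}$ through $\widehat{\Gamma(\D^\lambda)}$, is heavier but essentially equivalent in content, since the surjectivity of $\widehat{f}$ is itself deduced from the surjectivity of the $f_n$. For the acyclicity of $\D^\lambda$, the paper works directly with the long exact sequence of $0\to\D^\lambda\overset{\pi}{\to}\D^\lambda\to\D^\lambda_k\to 0$ in $\Ca$ (not via \v{C}ech complexes) and shows that multiplication by $\pi$ is an \emph{automorphism} of $R^i\Gamma(\D^\lambda)$ for all $i\geq 1$: for $i\geq 2$ this is immediate from acyclicity of $\D^\lambda_k$, and for $i=1$ the surjectivity of $\Gamma(\D^\lambda)\to\Gamma(\D^\lambda_k)$ kills the connecting map $\Gamma(\D^\lambda_k)\to R^1\Gamma(\D^\lambda)$, giving $\pi$-injectivity as well. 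Combined with the $\pi$-torsion coming from Proposition~\ref{base_change} and dominance, this gives vanishing uniformly, with no need for a separate inverse-limit argument in the case $i=1$. You correctly identify $i=1$ as the delicate case, where $\pi$-divisibility plus $\pi$-torsion alone is insufficient (your $L/R$ warning is apt), and your workaround via $\varprojlim R^1\Gamma(\D^\lambda)[\pi^n]=0$ is valid; the paper simply observes that the same input (surjectivity on global sections mod $\pi$) already furnishes the missing $\pi$-injectivity, which is the more economical way to close the gap. Both proofs ultimately invoke the first assertion at exactly the same point and for the same underlying reason.
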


\begin{proof}
By commutativity of the square
\[
\begin{tikzcd}
U^{\text{fin}} \arrow{d}\arrow{r}{f} & \Gamma(\D^\lambda) \arrow{d}\\
U^{\text{fin}}/\pi^nU^{\text{fin}} \arrow{r}{f_n} & \Gamma (\D^\lambda/\pi^n\D^\lambda)
\end{tikzcd}
\]
and since $f_n$ is surjective (the map $f_{\lambda, n}$ from the last proof is factored from $f_n$), it follows that the canonical map $\Gamma(\D^\lambda)\to \Gamma (\D^\lambda/\pi^n\D^\lambda)$ is surjective for all $n\geq 1$. But by exactness of
$$
0\to \Gamma(\D^\lambda)\overset{\cdot \pi^n}{\to}\Gamma(\D^\lambda)\to\Gamma(\D^\lambda/\pi^n\D^\lambda)
$$
it follows that $\Gamma(\D^\lambda/\pi^n\D^\lambda)\cong \Gamma(\D^\lambda)/\pi^n\Gamma(\D_\lambda)$ and so, by Proposition \ref{completion1}, that
$$
\Gamma(\widehat{\D^\lambda})\cong\varprojlim \Gamma(\D^\lambda/\pi^n\D^\lambda)\cong \widehat{\Gamma(\D^\lambda)}
$$
as required.

Next, the short exact sequence
$$
0\to \D^\lambda \overset{\cdot \pi}{\to} \D^\lambda\to \D^\lambda_k\to 0
$$
induces a long exact sequence
$$
\cdots\to R^{i-1}\Gamma(\D^\lambda)\to R^{i-1}\Gamma(\D^\lambda_k)\to R^i\Gamma(\D^\lambda)\overset{\cdot \pi}{\to} R^i\Gamma(\D^\lambda)\to R^i\Gamma(\D^\lambda_k)\to\cdots
$$
From this we deduce that multiplication by $\pi$ yields an automorphism of $R^i\Gamma(\D^\lambda)$ for all $i\geq 1$. For $i\geq 2$, this follows immediatly from the acyclicity of $\D^\lambda_k$ (Corollary \ref{cohomology}). For $i=1$, we additionally need the surjectivity of $\Gamma(\D^\lambda)\to \Gamma (\D^\lambda_k)$ obtained above. However, $R^i\Gamma(\D^\lambda)\otimes_R L=R^i\Gamma(\D_q^\lambda)=0$ since $\lambda$ is dominant, by Proposition \ref{base_change} and by the proof of \cite[Theorem 4.12]{QDmod1}. Hence $R^i\Gamma(\D^\lambda)$ is $\pi$-torsion for all $i\geq 1$. From the above, it follows that $R^i\Gamma(\D^\lambda)=0$ for all $i\geq 1$.
\end{proof}

We also believe that large enough twists of $\D^\lambda$ should be acyclic, which would remove the $\pi$-torsion condition in Theorem \ref{twists}. But it is not clear how to prove that, given that we don't necessarily know what the global sections of these twists are modulo $\pi$.

\appendix

\section{Hopf duals of $R$-Hopf algebras and their comodules}

We wish to establish some duality facts to do with Hopf algebras over $R$ which are well known when working over fields but for which we couldn't find references for Hopf algebras over more general commutative rings. Most of our proofs work using the usual arguments but one has to be a bit careful when dealing with torsion.

\subsection{Hopf duals over $R$}\label{hopfdual}
For the entirety of this Section, $H$ will denote a fixed Hopf $R$-algebra. For our purposes, it will be enough to work in the case where $H$ is \emph{torsion-free}. First we wish to define a notion of Hopf dual. Since $H$ has no torsion, it embeds as a sub-Hopf algebra of $H_L=H\otimes_R L$. We will define the Hopf dual to be a sub-Hopf algebra of $(H_L)^\circ$. Let $\mathcal{J}$ denote the set of ideal $I$ in $H$ such that $H/I$ is a finitely generated $R$-module. Moreover, denote by $\mathcal{J}'$ the set of ideals $I$ in $H$ such that $H/I$ is free of finite rank. Finally, write $H^*$ for $\Hom_R(H, R)$. Note that $H^*$ is always torsion-free since $R$ is a domain: if $\pi f=0$ then $\pi f(u)=0$ for all $u\in H$ and so $f(u)=0$ for all $u$.

\begin{definition} We define the Hopf dual of $H$ to be
$$
H^\circ:=\{f\in H^* : f|_I=0 \text{ for some } I\in \mathcal{J}\}.
$$
By the above $H^\circ$ is torsion-free.
\end{definition}

If $n\geq 0$ and $x\in H$ we have for any $f\in H^*$ that $f(x)=0$ if and only if $f(\pi^n x)=0$. Thus if $0\neq f\in H^\circ$ then $f|_I=0$ for some $I\in\mathcal{J}$ where $H/I$ is not torsion. Moreover we then have $f|_{I_L\cap H}=0$ and so by replacing $I$ with $I_L\cap H$ we may in addition assume that $H/I$ is torsion-free. Since $R$ is a PID this shows that
$$
H^\circ=\{f\in H^* : f|_I=0 \text{ for some } I\in \mathcal{J}'\}.
$$
Moreover by extending scalars we may identify $H^\circ$ with an $R$-submodule of $H_L^\circ$. From this it follows by the standard arguments that $H^\circ$ is the algebra of matrix coefficients of $H$-modules which are free of finite rank over $R$. Since this collection of $H$-modules is closed under taking tensor products, direct sums and duals, and we can take dual bases, we have proved

\begin{lem} $H^\circ$ is an sub-Hopf $R$-algebra of $H_L^\circ$. In particular the algebra maps on $H^\circ$ are just the dual maps of the coalgebra maps on $H$ and vice-versa.
\end{lem}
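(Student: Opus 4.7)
The plan is to exploit the description of $H^\circ$, implicit in the remark preceding the statement, as the $R$-span of matrix coefficients $c^M_{f,m}(x) = f(xm)$ where $M$ runs over $H$-modules that are free of finite rank over $R$, $m \in M$, and $f \in M^*$. First I would verify this description: given $\varphi \in H^\circ$ vanishing on some $I \in \mathcal{J}'$, the quotient $M = H/I$ is free of finite rank, and $\varphi$ equals the matrix coefficient $c^M_{\overline{\varphi}, \bar 1}$ where $\overline{\varphi}$ is the functional on $M$ induced by $\varphi$ and $\bar 1$ is the class of $1$. Conversely, any matrix coefficient of such an $M$ vanishes on $\mathrm{Ann}_H(M)$, which lies in $\mathcal{J}'$: it is the kernel of $H \to \mathrm{End}_R(M)$, hence a saturated $R$-submodule of the free $R$-module $H$ with finite-rank free quotient.

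The second step is to record that the class of $H$-modules which are free of finite rank over $R$ is closed under finite direct sums, under $R$-tensor products with diagonal action via $\Delta_H$, and under $R$-linear duals with action $(x \cdot f)(m) = f(S_H(x)m)$, and that it contains the trivial module $R$ (with action through $\varepsilon_H$). These are standard and rely only on the fact that free $R$-modules of finite rank admit $R$-dual bases.

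The third step is to check that each of the five Hopf algebra structure maps on $H_L^\circ$ restricts to $H^\circ$, by tracking matrix coefficients. Multiplication yields $c^M_{\varphi, m} \cdot c^N_{\psi, n} = c^{M \otimes_R N}_{\varphi \otimes \psi,\, m \otimes n}$; the unit is $\varepsilon_H = c^R_{\mathrm{id}_R, 1} \in H^\circ$; the counit evaluates to $c^M_{\varphi, m}(1) = \varphi(m) \in R$; the antipode sends $c^M_{\varphi, m}$ to $c^{M^*}_{\mathrm{ev}_m, \varphi}$. The delicate case is the comultiplication: fixing an $R$-basis $e_1, \ldots, e_n$ of $M$ with dual basis $e_1^*, \ldots, e_n^*$, the identity $xm = \sum_i e_i^*(xm)\, e_i$ gives
$$
\Delta_{H_L^\circ}(c^M_{\varphi, m}) = \sum_{i=1}^n c^M_{\varphi, e_i} \otimes c^M_{e_i^*, m},
$$
which visibly lies in $H^\circ \otimes_R H^\circ$.

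The main obstacle is precisely this comultiplication step: a priori $\Delta_{H_L^\circ}$ only lands in $H_L^\circ \otimes_L H_L^\circ$, and one must show that on $H^\circ$ the image already lies in the $R$-tensor $H^\circ \otimes_R H^\circ$, so that $H^\circ$ inherits a genuine $R$-coalgebra structure rather than only an $L$-coalgebra structure after extension of scalars. The dual-basis identity above resolves this because $M$ is free over $R$, not merely finitely generated. Once all five restrictions are in place, the Hopf algebra axioms for $H^\circ$ are automatic, being inherited from those already holding on $H_L^\circ$, which also yields the final ``in particular'' clause of the statement.
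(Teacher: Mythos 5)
Your proposal is correct and follows essentially the same route as the paper: identify $H^\circ$ with the $R$-algebra of matrix coefficients of $H$-modules free of finite rank over $R$, observe closure of that class under direct sums, tensor products, and duals, and use the dual-basis identity to land $\Delta$ inside $H^\circ\otimes_R H^\circ$ rather than merely $H_L^\circ\otimes_L H_L^\circ$. You have simply written out the details that the paper compresses into ``by the standard arguments'' and ``we can take dual bases''; the only small imprecision is the aside that $H$ is free over $R$ (it is only assumed torsion-free), but your conclusion that $\mathrm{Ann}_H(M)\in\mathcal{J}'$ stands, since $H/\mathrm{Ann}_H(M)$ embeds in the finite free module $\mathrm{End}_R(M)$ and is therefore finite free over the PID $R$.
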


\begin{remark} Some of the above arguments were implicit in Lusztig's work, see \cite[7.1]{Lusztig1}.
\end{remark}

\subsection{$H^\circ$-comodules as $H$-modules}\label{comod}

We now wish to establish some correspondence between comodules over $H^\circ$ and certain $H$-modules. We call an $H$-module $M$ \emph{locally finite} if for all $m\in M$, $Hm$ is finitely generated over $R$.

\begin{prop} Every $H^\circ$-comodule has a canonical structure of a locally finite $H$-module with respect to which every comodule homomorphism is an $H$-modules homomorphism. In other words there is a canonical faithful embedding of categories between the category of $H^\circ$-comodules and the category of locally finite $H$-modules. 
\end{prop}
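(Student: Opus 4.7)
The plan is to mimic the classical coalgebra-to-module construction, being slightly careful because we work over $R$ rather than a field. Given a right $H^\circ$-comodule $M$ with coaction $\rho:M\to M\otimes_R H^\circ$, I would define a left $H$-action on $M$ by
$$ h\cdot m := (\id_M\otimes\mathrm{ev}_h)(\rho(m)), $$
where $\mathrm{ev}_h:H^\circ\to R$ denotes evaluation at $h\in H$ (which makes sense because $H^\circ\subseteq H^*$). Concretely, writing $\rho(m)=\sum_i m_i\otimes f_i$ as a finite sum (possible since $\rho(m)$ lives in $M\otimes_R H^\circ$), we have $h\cdot m=\sum_i f_i(h)m_i$. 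Local finiteness is then immediate: $Hm$ is contained in the $R$-submodule spanned by the finitely many $m_i$'s.

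Next I would verify the two module axioms. Unitality $1_H\cdot m=m$ is simply the counit axiom $(\id\otimes\varepsilon)\rho=\id$ for the comodule, after noting that the counit of $H^\circ$ is exactly $\mathrm{ev}_{1_H}$. For associativity, the key point is the compatibility between multiplication on $H$ and comultiplication on $H^\circ$: since $H^\circ$ is the Hopf dual as described in Lemma~\ref{hopfdual}, its comultiplication $\Delta_{H^\circ}$ is dual to multiplication in $H$, so $f(hk)=(\mathrm{ev}_h\otimes\mathrm{ev}_k)(\Delta_{H^\circ}(f))$. Applying $\id_M\otimes\mathrm{ev}_h\otimes\mathrm{ev}_k$ to both sides of the coassociativity identity $(\rho\otimes\id)\rho(m)=(\id\otimes\Delta_{H^\circ})\rho(m)$ then yields exactly $h\cdot(k\cdot m)=(hk)\cdot m$.

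For the functoriality and faithfulness claims, suppose $\varphi:M\to N$ is an $H^\circ$-comodule homomorphism. Writing $\rho_M(m)=\sum m_i\otimes f_i$, the comodule condition $\rho_N\circ\varphi=(\varphi\otimes\id)\circ\rho_M$ gives $\rho_N(\varphi(m))=\sum\varphi(m_i)\otimes f_i$, hence applying $\id\otimes\mathrm{ev}_h$ yields $h\cdot\varphi(m)=\sum f_i(h)\varphi(m_i)=\varphi(h\cdot m)$, so $\varphi$ is $H$-linear. The functor acts as the identity on underlying $R$-module maps, so faithfulness is automatic.

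The only mild subtlety, and the place where working over $R$ rather than a field could cause trouble, is in the associativity check: one needs the natural map $H^\circ\otimes_R H^\circ\to (H\otimes_R H)^*$ to be injective so that $\Delta_{H^\circ}(f)$ is uniquely determined by the identity $f(hk)=\Delta_{H^\circ}(f)(h,k)$ and the manipulation via $\mathrm{ev}_h\otimes\mathrm{ev}_k$ is unambiguous. However this is built into the construction of $H^\circ$ in \S\ref{hopfdual}: elements of $H^\circ$ kill ideals $I\in\mathcal{J}'$ with $H/I$ free of finite rank, so $H^\circ$ naturally sits inside $(H_L)^\circ$ where the classical arguments apply verbatim, and the tensor product $H^\circ\otimes_R H^\circ$ embeds into $(H_L)^\circ\otimes_L (H_L)^\circ$ by torsion-freeness. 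This eliminates the last potential obstacle and completes the proof.
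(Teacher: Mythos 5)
Your proof is correct and follows the same standard dualization argument used in the paper's proof. One small remark: the injectivity of $H^\circ\otimes_R H^\circ\to(H\otimes_R H)^*$ that you worry about at the end is not actually needed for associativity — applying $\mathrm{ev}_h\otimes\mathrm{ev}_k$ to both sides of coassociativity is unambiguous on its own, and Lemma \ref{hopfdual} already supplies the identity $(\mathrm{ev}_h\otimes\mathrm{ev}_k)(\Delta_{H^\circ}f)=f(hk)$; that injectivity result (Corollary \ref{comod}) is instead the key technical input for the \emph{converse} direction, i.e.\ Propositions \ref{fullyfaithful} and \ref{key_hopf_fact}.
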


\begin{proof} This is just the usual argument. If $M$ is an $H^\circ$-comodule with coaction $\rho:M\to M\otimes_R H^\circ$, write $\rho(m)=\sum m_1\otimes m_2$. Then we set
$$
u\cdot m=\sum m_2(u)m_1
$$
for all $u\in H$. It follows from the comodule axioms that this gives a well defined module structure, i.e that $1\cdot m=m$ and that $u\cdot(u'\cdot m)=(uu')\cdot m$ for all $u,u'\in H$ and all $m\in M$. Moreover by definition of the module structure, $H\cdot m$ is finitely generated over $R$ for all $m\in M$. Finally it follows from the definition of the action that any comodule homomorphism is also a module homomorphism.
\end{proof}

Next, we want to show that the functor we just defined is full, i.e that every $H$-module map between two $H^\circ$-comodules is a comodule homomorphism. We first need a technical result. Suppose $M$ is a locally finite $H$-module. Note that we have an $R$-module injection $\phi_M:M\to\Hom_R(H,M)$ given by $\phi_M(m)(u)=um$ for all $u\in H$ and $m\in M$. Moreover we have a map
$$
\theta_M: M\otimes_R H^\circ \to \Hom_R(H,M)
$$
given by $\theta_M(m\otimes f)(u)=f(u)m$. When the $H$-module structure on $M$ arises from an $H^\circ$-comodule structure then we have $\phi_M=\theta_M\circ \rho$. Therefore we can use this expression for $\phi_M$ as an alternative definition of the module structure on $M$. We claim that the map $\theta_M$ is injective. More generally we have the following

\begin{lem} Let $A$ and $B$ be $R$-modules, $A^*=\Hom_R(A,R)$ and suppose $C$ is any $R$-submodule of $A^*$ such that $A^*/C$ has no $\pi$-torsion. Let $M$ be any $R$-module and set
$$
\theta_{M,C}:\Hom_R(B,M)\otimes_R C \to \Hom_R(A\otimes_R B,M)
$$
to be defined by $\theta_{M,C}(g\otimes f)(x\otimes y)=f(x)g(y)$. Then the map $\theta_{M,C}$ is injective.
\end{lem}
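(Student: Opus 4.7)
The plan is to reduce to the case where $C$ is replaced by a finitely generated free $R$-submodule $C''$ of $A^*$ with $A^*/C''$ torsion-free, and then to exhibit a dual basis in $A$ via a double-dual computation.

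Given $\xi = \sum_{i=1}^n g_i \otimes f_i \in \ker(\theta_{M,C})$, I would let $V'$ be the $L$-linear span of $f_1, \ldots, f_n$ inside $A^* \otimes_R L$ and take $C'' := V' \cap A^*$. Three preparatory facts are needed: that $C'' \subseteq C$ (which follows from the torsion-freeness of $A^*/C$, since any $f \in C''$ satisfies $\pi^k f \in R f_1 + \cdots + R f_n \subseteq C$ for $k$ large), that $A^*/C''$ is torsion-free (obvious from the embedding $A^*/C'' \hookrightarrow (A^* \otimes_R L)/V'$), and that $C''$ is finitely generated as an $R$-module, and hence free of rank $m := \dim_L V'$. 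The last point is the delicate one: I propose to bound $C''$ by a finitely generated $R$-lattice in $V'$ by choosing $y_1, \ldots, y_m \in A$ and a power of $\pi^{-1}$ so that the scaled matrix $(\pi^{-k} f_i(y_j))$ becomes invertible over $L$, forcing the coordinates of any $f \in V' \cap A^*$ along the basis $\{f_i\}$ to lie in a uniform $R$-lattice determined by the values $f(y_j) \in R$.

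With $C''$ in hand, pick an $R$-basis $e_1, \ldots, e_m$ of $C''$ and write the image $\xi''$ of $\xi$ in $\Hom_R(B,M) \otimes_R C''$ as $\sum_{j=1}^m h_j \otimes e_j$. The vanishing $\theta_{M,C''}(\xi'') = 0$ unwinds to $\sum_j e_j(x) h_j = 0$ in $\Hom_R(B,M)$ for every $x \in A$. The key remaining step is to produce $x_1, \ldots, x_m \in A$ with $e_j(x_i) = \delta_{ij}$; evaluating at $x = x_i$ then immediately gives $h_i = 0$, whence $\xi'' = 0$, and $\xi = 0$ follows since $\xi$ is the image of $\xi''$ under $\Hom_R(B,M) \otimes_R C'' \to \Hom_R(B,M) \otimes_R C$.

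The existence of the $x_i$ is the main obstacle, and my plan for it is as follows. Let $K := \bigcap_j \ker(e_j)$, so that the map $\Phi: A \to R^m$, $x \mapsto (e_j(x))_j$, factors through an injection $A/K \hookrightarrow R^m$. Since every element of $C''$ kills $K$, we have inclusions $C'' \subseteq (A/K)^* \subseteq A^*$; moreover, dualising $K \hookrightarrow A$ shows that $A^*/(A/K)^*$ embeds into $K^*$ and is therefore torsion-free. The short exact sequence
$$
0 \to (A/K)^*/C'' \to A^*/C'' \to A^*/(A/K)^* \to 0
$$
then forces $(A/K)^*/C''$ to be torsion-free, being a submodule of the torsion-free $A^*/C''$. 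A rank comparison using $C'' \subseteq (A/K)^*$ shows that $A/K$ is free of rank exactly $m$, so $(A/K)^*/C''$ is simultaneously torsion-free and of rank zero, and hence vanishes, giving $C'' = (A/K)^*$. Double duality for free $R$-modules of finite rank identifies $(C'')^*$ canonically with $A/K$, so $\Phi$ is the composition $A \twoheadrightarrow A/K \overset{\sim}{\to} (C'')^* \cong R^m$, which is surjective; preimages of the standard basis vectors yield the required $x_i$.
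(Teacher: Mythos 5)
Your proof is correct, but it takes a genuinely different route from the paper's. The paper works on the $\Hom_R(B,M)$ side: it applies the structure theorem over the PID $R$ to the finitely generated submodule $N = \langle g_1,\dots,g_s\rangle$ to split it as $\bigoplus Rn_i \oplus \bigoplus Rt_k$ (free plus $\pi$-torsion), then evaluates $\theta_{M,C}(u)$ and reads off that $\sum_j r_{ij}f_j = 0$ while $\sum_j r'_{kj}f_j \in \pi^{a_k}A^*$; the torsion-freeness of $A^*/C$ is used exactly once, to pull $\sum_j r'_{kj}f_j$ back to $\pi^{a_k}C$, at which point the torsion of $t_k$ kills the corresponding term. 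You instead work on the functional side: you replace $C$ by the finitely generated free submodule $C'' = V'\cap A^*$ (which still has $A^*/C''$ torsion-free), and then the real content is your construction of a dual family $x_1,\dots,x_m\in A$ with $e_j(x_i)=\delta_{ij}$ via the chain $C''\subseteq (A/K)^*\subseteq A^*$, the rank comparison, and reflexivity of finite free modules. That is a nice argument and it is sound; the trade-off is that your route needs more infrastructure (you implicitly use that $A^*\otimes_R L \hookrightarrow \Hom_L(A_L,L)$ is injective in order to deduce that the $f_i$ are linearly independent as functionals on $A$ and hence can be separated by points, and you need reflexivity of free modules of finite rank), whereas the paper's use of the torsion-free hypothesis is more local and the whole thing fits in a few lines. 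One small phrasing issue: ``choosing \dots a power of $\pi^{-1}$ so that the scaled matrix $(\pi^{-k}f_i(y_j))$ becomes invertible over $L$'' is not quite what you mean, since scaling by a scalar does not change invertibility over $L$; what you need (and what your subsequent sentence actually uses) is simply that $(f_i(y_j))$ has nonzero determinant, so that $P^{-1}$ exists over $L$, giving the uniform bound $\pi^{-N}$ on the coordinates of elements of $C''$.
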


\begin{proof} Suppose that $0\neq u=\sum_{i=1}^s g_i\otimes f_i\in \Hom_R(B,M)\otimes_R C$. The $R$-submodule $N$ of $\Hom_R(B,M)$ generated by the $g_i$ is finitely generated, so since $R$ is a PID we can pick a generating set $n_1,\ldots, n_l, t_1,\ldots, t_m$ for $N$ such that $n_1,\ldots, n_l$ are torsion-free while $t_1,\ldots, t_m$ are $\pi$-torsion, and
$$
N=\bigoplus_{i=1}^l Rn_i \oplus \bigoplus_{j=1}^m Rt_j.
$$
For each $1\leq j\leq m$, let $a_j$ be the positive integer such that $Rt_j\cong R/\pi^{a_j}R$.

Now, to show that $\theta_{M,C}(u)\neq 0$ it suffices to show that the restriction of $\theta_{M,C}$ to the span of the $n_i\otimes f_j$ and $t_k\otimes f_j$ is injective. So suppose we are given
$$
v=\sum r_{ij} n_i\otimes f_j +\sum r'_{kj} t_k\otimes f_j\in \ker{\theta_{M,C}}.
$$
Evaluating at $x\otimes y$ we get $\sum_{i,j} r_{ij}f_j(x)n_i(y) + \sum_{k,j} r'_{kj} f_j(x)t_k(y)=0$ for all $x\in A$ and $y\in B$. In particular we have $\sum_{i,j} r_{ij}f_j(x)n_i + \sum_{k,j} r'_{kj} f_j(x)t_k=0$ for any fixed $x\in A$. Since we have a direct sum decomposition of $N$ it follows that
$$
\sum_{j} r_{ij}f_j(x)=0 \quad\text{and}\quad \sum_{j} r'_{kj} f_j(x)\in \pi^{a_k}R
$$
for all $x\in A$ and all $1\leq i\leq l$ and $1\leq k\leq m$. In particular, for all $k$, $\sum_{j} r'_{kj} f_j=\pi^{a_k}g_k$ for some $g_k\in C$ since $A^*/C$ has no $\pi$-torsion.

Therefore we have
$$
\sum_{j} r_{ij}f_j=0 \quad\text{and}\quad \sum_{j} r'_{kj} f_j\in \pi^{a_k}C,
$$
and hence
$$
n_i\otimes \sum_{j} r_{ij}f_j=0=t_k\otimes \sum_{j} r'_{kj} f_j
$$
for all $i,k$, and so $v=0$ as required.
\end{proof}

\begin{cor} Let $M$ be an $R$-module.
\begin{enumerate}
\item The map $\theta_M: M\otimes_R H^\circ \to \Hom_R(H,M)$ is injective.
\item The map $M\otimes_R H^\circ\otimes_R H^\circ\to\Hom_R(H\otimes_R H,M)$ sending $m\otimes f\otimes g$ to $x\otimes y\mapsto  f(x)g(y)m$ is injective.
\end{enumerate}
\end{cor}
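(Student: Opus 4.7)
The plan is to derive both parts directly from the preceding Lemma, using (i) as an ingredient for (ii).

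For part (i), I would take $A = H$, $B = R$, and $C = H^\circ$ in the Lemma. Under the canonical identifications $\Hom_R(R, M) \cong M$ and $H \otimes_R R \cong H$, the map $\theta_{M, H^\circ}$ of the Lemma agrees with $\theta_M$. The only thing left is to verify the hypothesis that $H^*/H^\circ$ has no $\pi$-torsion: given $f \in H^*$ with $\pi f \in H^\circ$, there is by definition an ideal $I$ of $H$ with $H/I$ finitely generated over $R$ such that $\pi f$ vanishes on $I$, and since $R$ is a domain this forces $f|_I = 0$, so $f \in H^\circ$.

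For part (ii), I would factor the map through $\Hom_R(H, \Hom_R(H, M))$ using the tensor-hom adjunction $\Hom_R(H \otimes_R H, M) \cong \Hom_R(H, \Hom_R(H, M))$. Under this identification a direct computation shows that the map in (ii) coincides with the composite
\[
M \otimes_R H^\circ \otimes_R H^\circ \xrightarrow{(\theta_M \otimes \mathrm{id}) \circ \tau} \Hom_R(H, M) \otimes_R H^\circ \xrightarrow{\theta_{\Hom_R(H, M)}} \Hom_R(H, \Hom_R(H, M)),
\]
where $\tau$ is the swap of the two copies of $H^\circ$. The first arrow is injective since $\tau$ is an isomorphism, $\theta_M$ is injective by part (i), and $H^\circ$ is flat over $R$ (it was observed in the definition that $H^\circ$ is torsion-free, and torsion-free modules over the DVR $R$ are flat), so tensoring with $H^\circ$ preserves the injection. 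The second arrow is injective by part (i) applied with $M$ replaced by $\Hom_R(H, M)$.

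There is no serious obstacle to carrying this out. The two points to be careful about are: first, verifying the $\pi$-torsion-freeness hypothesis of the Lemma for $H^\circ \subseteq H^*$ in part (i); and second, for part (ii), using the flatness of $H^\circ$ over $R$ to propagate the injectivity of $\theta_M$ through a tensor product, together with tracking the tensor-hom adjunction carefully to confirm that the advertised composition really recovers the map in (ii).
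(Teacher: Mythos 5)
Your proof is correct, and it takes a mildly different route from the paper's for part (ii). Part (i) is the same argument as the paper's; the only addition is that you spell out why $H^*/H^\circ$ is $\pi$-torsion-free, which the paper also asserts but states as an immediate consequence of the definition of $H^\circ$. For part (ii), the paper factors the map as $\varpi \circ (\theta_M \otimes 1)$ where $\varpi : \Hom_R(H,M)\otimes_R H^\circ \to \Hom_R(H\otimes_R H, M)$ is defined by $\varpi(f\otimes g)(x\otimes y)=g(y)f(x)$; injectivity of $\theta_M\otimes 1$ is argued exactly as you do (part (i) plus flatness of $H^\circ$, which holds since $H^\circ$ is torsion-free over the DVR $R$), while injectivity of $\varpi$ comes from a second application of the Lemma with $B=H$. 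You instead route through $\Hom_R(H, \Hom_R(H,M))$ via tensor-hom adjunction, pre-composing with the swap $\tau$ of the two $H^\circ$-factors, and then invoke part (i) twice: once for the module $M$ and once with $M$ replaced by $\Hom_R(H,M)$. The swap $\tau$ is genuinely needed for your factorization to recover the exact map in the statement, since $\theta_{\Hom_R(H,M)}$ evaluates its $H^\circ$-input on the outer variable while the paper's $\varpi$ evaluates it on the inner one; you correctly include it. The trade-off is that your version uses part (i) as a black box and never re-invokes the Lemma with a fresh choice of $B$, at the cost of tracking the tensor-hom identification $\Hom_R(H\otimes_R H,M)\cong\Hom_R(H,\Hom_R(H,M))$; the paper avoids that bookkeeping but needs the Lemma's added generality in the parameter $B$. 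Both are correct and roughly equal in length.
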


\begin{proof}
Let $A=H$ and $C=H^\circ$. From the definition of $H^\circ$ it follows that $A^*/C$ is torsion-free. Then (i) follows immediately from the Lemma by putting $B=R$. For (ii) note that this map is simply the composite
\[
\begin{tikzcd}
M\otimes_R H^\circ\otimes_R H^\circ \arrow{r}{\theta_M\otimes 1} & \Hom_R(H,M)\otimes_R H^\circ\arrow{r}{\varpi} & \Hom_R(H\otimes_R H,M)
\end{tikzcd}
\]
where $\varpi(f\otimes g)(x\otimes y)=g(y)f(x)$. The map $\theta_M\otimes 1$ is injective by (i) and because $H^\circ$ is flat while the map $\varpi$ is injective by putting $B=H$ in the Lemma.
\end{proof}

We can now deduce the result we were aiming for.

\subsection{Proposition}\label{fullyfaithful} \emph{The functor associating any $H^\circ$-comodule to the corresponding $H$-module is a fully faithful embedding.}

\begin{proof} From what we have done already we just need to show that any $H$-module map $f:M\to N$ between two $H^\circ$-comodules is a comodule homomorphism. Write $\rho_M$ and $\rho_N$ for the coactions on $M$ and $N$ respectively, and pick $m\in M$ and $u\in H$. Then we know that $um=\sum m_2(u)m_1$ and we have $uf(m)=\sum m_2(u)f(m_1)$ since $f$ is a module homomorphism. On the other hand by definition of the action on $N$ we have $uf(m)=\sum f(m)_2(u)f(m)_1$. To show that $f$ is a comodule map we need to show that
$$
\sum f(m_1)\otimes m_2=\sum f(m)_1\otimes f(m)_2
$$
or in other words that $\rho_N\circ f=(f\otimes 1)\circ\rho_M$.

Write $\tilde{\rho}_1=\rho_N\circ f$ and $\tilde{\rho}_2=(f\otimes 1)\circ\rho_M$. Moreover recall the map $\phi: M\to\Hom_R(H,M)$ given by $\phi(m)(u)=um$. Then let
$$
\tilde{\phi}=\phi\circ f:M\to\Hom_R(H, N)
$$
so that $\tilde{\phi}(m)(u)=uf(m)$. Then by definition $\tilde{\phi}=\theta_N\circ\tilde{\rho}_1$. On the other hand by our above observation we see that $\tilde{\phi}=\theta_N\circ\tilde{\rho}_2$. Since $\theta_N$ is injective the result follows.
\end{proof}

From now on, if $M$ is a locally finite $H$-module we will say that it is an $H^\circ$-comodule to mean that its $H$-module structure arises from an $H^\circ$-comodule structure.

In order for the above functor to be an isomorphism of categories we therefore just need to show that it is surjective. This may not be true in general, however we can write a very simple necessary and sufficient condition for an isomorphism of categories to hold. Suppose $M$ is a locally finite $H$-module and let $\phi_M: M\to \Hom_R(H,M)$ be given by $\phi_M(m)(x)=x\cdot m$. We have the map $\theta_M:M\otimes_R H^\circ$ as before.

\subsection{Proposition}\label{key_hopf_fact} \emph{A locally finite $H$-module $M$ is an $H^\circ$-comodule if and only if $\phi_M(m)$ belongs to the image of $\theta_M$ for all $m\in M$.}

\begin{proof} If $M$ is a comodule with coaction $\rho$, then by our observation preceding Lemma \ref{comod} we have $\phi_M=\theta_M\circ\rho$ where $\phi_M$ comes from the induced $H$-module structure, and the result is clear. Conversely assume $\phi_M(m)$ belongs to the image of $\theta_M$ for all $m\in M$. Fix $m\in M$. Then there exists $m_1, \ldots, m_n\in M$ and $f_1, \ldots, f_n\in H^\circ$ such that for all $x\in H$, $x\cdot m=\sum_{i=1}^n f_i(x)m_i$ and we define
$$
\rho(m)=\sum_{i=1}^n m_i\otimes f_i,
$$
i.e $\rho(m)$ is the unique element of $M\otimes_R H^\circ$ such that $\theta_M(\rho(m))=\phi_M(m)$. We now have to check that this satisfies the comodule axioms. By definition, the counit on $H^\circ$ is defined by $\varepsilon(f)=f(1)$ and so
$$
(1\otimes\varepsilon)\circ\rho(m)=\sum_{i=1}^n f_i(1)m_i=1\cdot m=m
$$
as required. Finally we aim to show that the following diagram commutes:
\[
\begin{tikzcd}
M \arrow{r}{\rho} \arrow{d}{\rho} & M\otimes_R H^\circ \arrow{d}{1\otimes \Delta}\\
M\otimes_R H^\circ \arrow{r}{\rho\otimes 1} & M\otimes_R H^\circ\otimes_R H^\circ
\end{tikzcd}
\]
By Corollary \ref{comod}(ii), the natural map $M\otimes_R H^\circ\otimes_R H^\circ\to\Hom_R(H\otimes_R H,M)$ is injective. Hence it suffices to show that $(1\otimes\Delta)\circ\rho(m)$ and $(\rho\otimes 1)\circ\rho(m)$ act in the same way on $H\otimes_R H$ for all $m\in M$. But the former sends $x\otimes y$ to $(xy)\cdot m$ while the latter sends $x\otimes y$ to $x\cdot(y\cdot m)$ for any $x,y\in H$, which are clearly equal.
\end{proof}

Since Lemma \ref{comod} was quite general, the same argument as in the above proof shows the following

\begin{lem} Suppose $M$ is a locally finite $H$-module and let $C$ be a subcoalgebra of $H^\circ$ such that $H^*/C$ is torsion-free. If $\phi_M(m)$ belongs to the image of $\theta_{M,C}$ for all $m\in M$ then $M$ is a $C$-comodule.
\end{lem}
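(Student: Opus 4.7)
The plan is to mimic the proof of Proposition \ref{key_hopf_fact} verbatim, with $H^\circ$ replaced by $C$, exploiting the torsion-freeness of $H^*/C$ at the two places where injectivity of the relevant $\theta$-maps was needed. First I would observe that Lemma \ref{comod} applied with $A = H$, $B = R$ and the subcoalgebra $C \subseteq H^* \cong \Hom_R(R, H)^*$ (using that $H^*/C$ is torsion-free) yields injectivity of
\[
\theta_{M,C}: M \otimes_R C \longrightarrow \Hom_R(H, M), \qquad \theta_{M,C}(m \otimes f)(x) = f(x)\,m.
\]
Combined with the hypothesis that $\phi_M(m) \in \im \theta_{M,C}$ for every $m \in M$, this produces a unique element $\rho(m) \in M \otimes_R C$ with $\theta_{M,C}(\rho(m)) = \phi_M(m)$. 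Uniqueness together with $R$-linearity of $\phi_M$ and $\theta_{M,C}$ forces $\rho: M \to M \otimes_R C$ to be $R$-linear.

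Next I would check the counit axiom. Writing $\rho(m) = \sum_i m_i \otimes f_i$, by construction we have $(1 \otimes \varepsilon)(\rho(m)) = \sum_i f_i(1)\,m_i = \theta_{M,C}(\rho(m))(1) = \phi_M(m)(1) = 1 \cdot m = m$, as required.

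For coassociativity, the key point is to produce an injection $M \otimes_R C \otimes_R C \hookrightarrow \Hom_R(H \otimes_R H, M)$ analogous to Corollary \ref{comod}(ii). This factors as
\[
M \otimes_R C \otimes_R C \xrightarrow{\theta_{M,C} \otimes 1} \Hom_R(H, M) \otimes_R C \xrightarrow{\varpi} \Hom_R(H \otimes_R H, M),
\]
with $\varpi(g \otimes f)(x \otimes y) = f(y) g(x)$. The first arrow is injective because $\theta_{M,C}$ is and because $C$, being a torsion-free module over the PID $R$, is flat; the second is injective by Lemma \ref{comod} applied with $A = B = H$ (again using that $H^*/C$ is torsion-free). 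Evaluating the two candidate maps $(1 \otimes \Delta) \circ \rho(m)$ and $(\rho \otimes 1) \circ \rho(m)$ through this composite injection, one finds that they send $x \otimes y$ to $(xy) \cdot m$ and to $x \cdot (y \cdot m)$ respectively, which coincide because $M$ is an $H$-module.

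The only genuinely new point compared with Proposition \ref{key_hopf_fact} is the need for the torsion-free hypothesis on $H^*/C$, and I expect this to be where the slight care is required: both injectivity statements above depend on it through Lemma \ref{comod}. Everything else is a bookkeeping exercise transferring the earlier proof from $H^\circ$ to $C$, with no additional computations needed.
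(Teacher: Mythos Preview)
Your proposal is correct and is precisely the argument the paper has in mind: the paper's own ``proof'' of this Lemma simply says that the same argument as for Proposition~\ref{key_hopf_fact} works, invoking the generality of Lemma~\ref{comod}, and you have spelled out exactly that argument with $H^\circ$ replaced by $C$ throughout. The only additional observations needed --- that $C$ is torsion-free (hence flat) and that the hypothesis $H^*/C$ torsion-free is what feeds into Lemma~\ref{comod} at both injectivity steps --- are exactly the ones you make.
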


\bibliographystyle{abbrv}
\bibliography{bibforme}

\end{document}